\newcommand{\N}{\mathbb{N}}
\newcommand{\Z}{\mathbb{Z}}
\newcommand{\R}{\mathbb{R}}
\newcommand{\C}{\mathbb{C}}
\newtheorem{theorem}{Theorem}[section]
\newtheorem{proposition}[theorem]{Proposition}
\newtheorem{lemma}[theorem]{Lemma}
\theoremstyle{remark}\newtheorem{remark}[theorem]{Remark}
\theoremstyle{definition}\newtheorem{definition}[theorem]{Definition}
\numberwithin{equation}{section}
\newcommand{\Ph}{{\mathcal P}}
\newcommand{\U}{{\mathcal U}}
\newcommand{\resto}{{\mathcal R}}
\newcommand{\Tr}{{\mathcal T}}
\def\norma#1{\left\| #1\right\|}
\def \p{\partial}
\def \rmi{{\rm i}}
\def \im{{\rm i}}
\def\uno{{\kern+.3em {\rm 1} \kern -.22em {\rm l}}}
\title{On stability of standing waves of nonlinear Dirac equations}
\begin{document}

\author{Nabile Boussaid   and  Scipio Cuccagna}

\begin{abstract} We consider the stability problem for standing waves of
nonlinear Dirac models. Under a suitable definition of linear stability, and
under some restriction on the spectrum, we prove at the same time orbital and
asymptotic stability. We are not able to get the full result proved in
\cite{Cuccagna1} for the nonlinear Schr\"odinger equation, because of the strong
indefiniteness of the energy.
\end{abstract}

\maketitle

\section{Introduction}
In this paper we study the stability   of standing waves of a class of nonlinear
Dirac equations (NLDE). We assume that these standing waves are smooth,  have
exponential decay to 0 at infinity and that they are  smoothly dependent on a
parameter. We then partially characterize, under a number of further technical
hypotheses, their stability and their instability. We succeed partially in
transposing to NLDE  results proved for the  nonlinear Schr\"odinger equations
(NLS)  in \cite{Cuccagna1}  and in previous references. We recall   that
\cite{CazenaveLions,shatah,shatahstrauss,Weinstein,Weinstein2,
GrillakisShatahStrauss,GrillakisShatahStrauss2} contain a quite satisfactory
characterization of the orbital stability of standing waves of the NLS.
They do not apply to the Dirac equation, due to
the strong indefiniteness of the energy. In this paper we initiate
a theory of stability in the case of the NLDE, using ideas coming
from the theory  of asymptotic stability which are less sensitive to
indefiniteness of the energy. This idea is explored also in
\cite{PelinovskyStefanov} in a very special situation.

\subsection{The nonlinear Dirac equation}
\label{subsec:NLDE} We consider for $m>0$  a NLDE
\begin{equation}\label{Eq:NLDE}
\left\{\begin{matrix}
\rmi u_t -D_m u +g(u\overline{u} )\beta u=0\,\\
u(0,x)=u_0(x)
\end{matrix}\right. (t,x) \in \R \times \R^3
\end{equation}
where   $D_m=-\rmi \sum _{j=1}^3\alpha_j\partial_{x_j}+m\beta$,
with for $j=1,2,3$
\begin{equation*}\index{$\alpha_j$}\index{$\beta$}\index{$\sigma_j$}
\alpha _j=  \begin{pmatrix}  0 &
\sigma _j  \\
\sigma_j & 0
\end{pmatrix} \, , \quad \beta =  \begin{pmatrix}  I _{\C^2} &
0 \\
0 & -I _{\C^2}
 \end{pmatrix}
\, , \quad
\sigma _1=\begin{pmatrix}  0 &
1  \\
1 & 0
 \end{pmatrix} \, ,
\sigma _2= \begin{pmatrix}  0 &
\rmi   \\
-\rmi  & 0
 \end{pmatrix}  \, ,
\sigma _3=\begin{pmatrix}  1 &
0  \\
0 & -1
 \end{pmatrix}.
\end{equation*}
The unknown $u$ is   $\C^4$-valued. Given two vectors of $\C^4$,
$uv:=u\cdot v$ is the inner product in $\C^4$,
${v}^{\ast}$ is the complex conjugate, $u\cdot  {v}^{\ast}$ is the
hermitian product in $\C^4$, which we write as $uv^\ast =u\cdot
{v}^{\ast}$. We set
$ \overline{u}:=\beta {u}^{\ast}$,
so that $u\overline{u} =u\cdot \beta  {u}^{\ast}$\index{$u\overline{u} $}.
We have
\begin{equation*}\label{Eq:Commutation} \aligned & \alpha _j \alpha _\ell +
\alpha _\ell \alpha _j=2\delta
_{j\ell} I _{\C^4}\, , \quad   \alpha _j \beta + \beta \alpha
_j=0\, , \quad \beta ^2=I _{\C^4}.\endaligned
\end{equation*}
Thus the operator $D_m$ is self-adjoint on
$L^2(\R ^  3, \C ^4),$ with domain $H^1(\R ^3, \C ^4)$ and we have $
D^2_m=-\Delta+m^2$. The   spectrum is  $ \sigma   (D_m)=(-\infty,
-m]\cup [m,+ \infty),$ see \cite[Theorem 1.1]{Thaller}.

\subsection{State of the art}
The equation  in \S  \ref{subsec:NLDE} arises in   Dirac models used
to model either extended particles with self-interaction or
particles in space-time with geometrical
 structure. In the latter case, physicists have shown that a relativistic
 theory sometimes imposes a fourth order nonlinear potential
 (i.e., a cubic nonlinearity) such as the square of a quadratic form
on $\C^4$; see  \cite{Ranada} and the references therein.
The associated stationary equation is called the Soler model,
\cite{Soler}, as it was proposed by Soler to model the elementary
fermions.

In our study, we   assume the existence of stationary solutions as well as a number of properties like  the smooth dependence on a parameter, the smoothness and the fact that they are rapidly decaying. These are not well established properties.
 Stationary solutions were   actively studied in the last thirty years.
 References \cite{CazenaveVazquez,Merle,BalabaneCazenaveDouadyMerle,BalabaneCazenaveVazquez}  used a dynamical systems approach.
For the use of the variational structure of the
stationary equation, see
\cite{EstebanSere}. For an approach yielding stationary solutions
of the NLDE from solutions of the NLS, see   \cite{Ounaies,Guan}.

Turning to the question of stability,
\cite{StraussVazquez} discusses the  Soler model  within the framework of
\cite{shatahstrauss}, without attempting a proof.
Some partial results involving small standing waves obtained by bifurcation from linear ones, with $D_m$
replaced by $H:=D_m+V$  with $V$ a nice potential, are   in  \cite{Boussaid,Boussaid2}.
  \cite{Boussaid2}   shows that if a resonance
condition holds, there is a stable manifold outside
  which any initial condition leads to instability. If
the resonance condition is not fulfilled, the stability problem is
left open. The results we present here answer this question and can be used to clarify \cite{Boussaid}.
  \cite{KomechKomech} proves the existence of global
attractors in  a model involving a  Dirac equation coupled to
an harmonic oscillator.
The stability problem for the 1 dimensional NLDE is
discussed under very restrictive hypotheses   in \cite{PelinovskyStefanov} which reproduces for the 1 D NLDE an analogue of the result in \cite{SofferWeinstein2}.

\subsection{Hypotheses}
We   assume  the following hypotheses {\ref{Assumption:H1}}--{\ref{Assumption:H12}}.
\begin{enumerate}[label={\bf(H:\arabic{*})}, ref={\bf(H:\arabic{*})}]
\item\label{Assumption:H1} $g(0)=0$, $g\in C^\infty(\R,\R)$.

\item\label{Assumption:H2} There
exists an open interval $\mathcal{O}\subseteq (m/3,m)$\index{$\mathcal{O}$} such
that $
    D_m u-\omega u-g(u\overline{u} )\beta u=0
$ admits a  $C^\infty$  family of solutions $\omega \in
\mathcal{O}\to \phi _ {\omega } \in H^{k,\tau }(\R ^3) $ for any
$(k,\tau )$, see \eqref{weighted} for a definition. In spherical
coordinates $x_1= \rho \cos (\vartheta ) \sin (\varphi )$, $x_2= \rho
\sin (\vartheta ) \sin (\varphi )$, $x_3= \rho \cos  (\varphi )$,
    these standing waves are of the form
$$\index{$\phi _\omega$}\phi _\omega
(x)=\left [ \begin{matrix} a(\rho ) \left [ \begin{matrix} 1\\ 0  \end{matrix}
\right ]\\
\rmi b(\rho ) \left [ \begin{matrix}   \cos \varphi \\ e^{\rmi
\vartheta }\sin \varphi
\end{matrix} \right ]
\end{matrix} \right ] $$
with $a(\rho )$ and $b(\rho )$ real valued and satisfying the
following properties:
$$\aligned &
 a, b \in C^\infty([0,\infty), \R)
  \, ,  \quad
\forall \rho \geq 0,\quad  a^2(\rho )-b^2(\rho )\geq 0,
 \\& a ^{(j)} \text{ and } \,  b^{(j)} \, \text{decay exponentially at infinity for all $j$.}
    \endaligned
$$
Notice that  {  $\phi _\omega(-x)=\beta \phi_\omega(x)$ and $\phi
_\omega(-x_1,-x_2,x_3)=S_3\phi_\omega(x_1, x_2,x_3)$ with
$S_3:=\begin{pmatrix}\sigma_3&0\\0&\sigma_3\end{pmatrix}$.}

\item\label{Assumption:H3} Let $q(\omega )= \| \phi  _{\omega} \|
_{L^2}^2.$\index{$q(\omega )$} We
assume  $  q'(\omega ) \neq 0$ for all $\omega \in \mathcal{O}$.

\item\label{Assumption:H4} For any   $x\in \R ^3 $ we consider in
\eqref{Eq:NLDE} initial data s.t.  { $u_0(-x) =\beta u_0(
x )$ and $u_0(-x_1,-x_2,x_3)=S_3 u_0(x_1, x_2,x_3)$}.

\item\label{Assumption:H5} Let $\mathcal{H}_\omega$ be the linearized operator
around $e^{\rmi
t\omega}\phi_\omega$, see Sect. \ref{sec:spectrum}. We assume that $\mathcal{H}_\omega$
satisfies the definition of linear stability in Definition \ref{def:LinStab}.

\item\label{Assumption:H6}
Consider    {  $\mathbf{X}:=\{ (\Upsilon_1,\Upsilon_2) \in L^2
(\R^3,(\C^4)^2):  (\Upsilon_1( -x ),\Upsilon_2(-x_1,-x_2,x_3)\equiv (\beta
\Upsilon_1 (x ),-\beta \Upsilon_1 (x ),$  $  (\Upsilon_1(
-x_1,-x_2,x_3),\Upsilon_2(-x_1,-x_2,x_3)\equiv (S_3
\Upsilon_1 (x ),-S_3\Upsilon_1 (x ))\}$}, see  Sect.
\ref{sec:spectrum} and under
Lemma \ref{lem:SymmLin1}. $\mathbf{X}$
  is invariant for the action of
$\mathcal{H}_\omega $.  Consider the
 restriction of $\mathcal{H}_\omega $  in $\mathbf{X}$. Then
    $\mathcal{H}_\omega $
    has $2n$
   nonzero eigenvalues, counted with
    multiplicity, all contained in $(\omega  -m,m-
\omega )$. The positive eigenvalues can be listed as
$0 <\lambda _1(\omega )\le ...
\le \lambda _n(\omega )< m- \omega  $,
where we repeat each eigenvalue according to the multiplicity. For
each $\lambda _j(\omega )$, also $ -\lambda _j(\omega )$ is an
eigenvalue (this symmetry follows from\eqref{Eq:SymmLin3}).   There
are no other eigenvalues except for 0.

\item\label{Assumption:H7} The points
and $\pm (m- \omega ) $ and $\pm (m+ \omega ) $ are not resonances
for $\mathcal{H}_\omega $, see \eqref{Eq:resonance}-- \eqref{Eq:resonancebis} below.

\item\label{Assumption:H8} Suppose that $\lambda \in \R$ with $|\lambda | >m-\omega $ is a resonance for $\mathcal{H}_\omega $, that is one of the   following two equations admits a nontrivial solution:
\begin{eqnarray}\label{Eq:resonance} &
(1+R^{+}_{\mathcal{H}_{\omega ,0}}(\lambda )V_{\omega} ) u=0,
 \quad  u \in L^ {2,-\tau}(\R ^3, \C ^8) \text{ for some  $\tau>1/2$
  ;} \\& \label{Eq:resonancebis}
(1+R^{-}_{\mathcal{H}_{\omega ,0}}(\lambda )V_{\omega} ) u=0,
 \quad  u \in L^ {2,-\tau}(\R ^3, \C ^8) \text{ for some  $\tau>1/2$
   .}
\end{eqnarray}
 Then if $u$ satisfies  either  \eqref{Eq:resonance} or \eqref{Eq:resonancebis} we have $u \in L^ {2 }(\R ^3, \C ^8) $ and $\lambda$ is an eigenvalue of $\mathcal{H}_{\omega  }$.

\item\label{Assumption:H9} There are natural numbers $N_j$\index{$N_j$} defined
by the
property   $0<N_j\lambda _j(\omega )<
m-\omega < (N_j+1)\lambda _j(\omega )$.

\item\label{Assumption:H10} There is no multi index $\mu \in \mathbb{Z}^{k}$
with $|\mu|:=|\mu_1|+...+|\mu_k|\leq 2N_1+3$ such that $\mu \cdot
\lambda =m\pm  \omega$.

\item\label{Assumption:H11} If $\lambda _{j_1}<...<\lambda _{j_k}$ are $k$
distinct
  $\lambda$'s, and $\mu\in \Z^k$ satisfies
  $|\mu| \leq 2N_1+3$, then we have
$$
\mu _1\lambda _{j_1}+\dots +\mu _k\lambda _{j_k}=0 \iff \mu=0\ .
$$

\item\label{Assumption:H12} The nonlinear Fermi golden rule \eqref{eq:FGR} is
true.

\end{enumerate}
The space of functions satisfying {\ref{Assumption:H4}} is invariant by
\eqref{Eq:NLDE}. Except for the smoothness with respect to the parameter
$\omega$,  for some non-linearities {\ref{Assumption:H2}}  is a consequence of
\cite{EstebanSere}.   Continuous dependence on
$\omega$ for some examples is proved in \cite{Guan}.

\begin{remark} \label{rem:embei0}
  $2\omega$ is always an eigenvalue of $ \mathcal{H}_\omega $ in $
L^2(\R^3,\C^8)$,   \cite{Comech}.   So
for $3\omega  >m$ we have  $2\omega \in ( m-\omega, m+\omega) $ is an embedded eigenvalue.
We can avoid  it thanks to the symmetry {\ref{Assumption:H4}} since the
  eigenvectors do not belong to $\textbf{X}$, see Lemma \ref{lem:spectrum} below
and subsequent comments.  Reducing to the space  $\mathbf{X}$  reduces the number of parameters,   simplifying the problem.  The   parameters eliminated involve translation and orientation of the solutions.
For  work on
moving ground states of the NLS see
 \cite{Cuccagna6}.

\end{remark}

\begin{remark}
\label{rem:resonances}By {\ref{Assumption:H6}}--{\ref{Assumption:H8}}  there are no  resonances
   for the restriction of $\mathcal{H}_\omega $ in  $\mathbf{X}$.
{\ref{Assumption:H8}} is proved in the case of the NLS assumption
in
\cite{CuccagnaPelinovskyVougalter}.  In the case of the Dirac
system we are not able to prove it  except for
resonances contained in $(-\omega +m, \omega -m)$ or for large
energies. This is yet a consequence of the strong indefiniteness of
the energy of the Dirac system. We expect that
{\ref{Assumption:H8}} can be eliminated.

\end{remark}
\subsection{Main results}
The main result in this article is the following one.
\begin{theorem}\label{th:AsStab}
Suppose that $ \mathcal{O}\subset (m/3,m)$ and fix {  $k_0\ge 4$} , $k_0\in \Z$.
Pick $\omega_1\in \mathcal{O}$ and let $\phi_{\omega_1}(x)$ be a
standing wave of
 \eqref{Eq:NLDE}.
 Let $u(t,x)$ be a solution to \eqref{Eq:NLDE}. Assume
\ref{Assumption:H1}--\ref{Assumption:H12}.
Then,    there exist an $\epsilon_0>0$ and a $C>0$ such that for
any $\epsilon \in (0,\epsilon _0)$ and  for any $u_0$ with $\inf _{\gamma \in \R}
\|u_0-e^{\rmi \gamma  }\phi_ {\omega _1} \|_{H^{k_0}}<\epsilon  ,$
there exist $\omega_+\in \mathcal{O}$, $\theta\in C^1(
\mathbb{R};  \mathbb{R})$ and $h_+ \in H^{k_0}$ with $\| h_+\|
_{H^{k_0}}+|\omega_+-\omega_1|\le C \epsilon $ such that

$$
\lim_{t\to +\infty}\|u(t,\cdot)-e^{\rmi
\theta(t)}\phi_{\omega_{+}}-e^{-\rmi tD_m }h_+\|_{H^{k_0}}=0 .
$$
\end{theorem}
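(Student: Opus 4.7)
The plan is to follow the modulation-and-normal-form strategy developed for the NLS in \cite{Cuccagna1}, adapting it to cope with the strong indefiniteness of the Dirac energy. First I would write the solution in the ansatz
$$
u(t,x) = e^{\rmi \Theta(t)}\bigl(\phi_{\omega(t)} + r(t,x)\bigr),
$$
with $r$ constrained by symplectic-type orthogonality to the generalized kernel of $\mathcal{H}_{\omega(t)}$ restricted to $\mathbf{X}$. Since \ref{Assumption:H4} kills the translation and orientation parameters via the reduction to $\mathbf{X}$, only $\omega$ and $\Theta$ need to be modulated; \ref{Assumption:H3} together with an implicit-function argument guarantees that $\omega(t)$ is uniquely defined as long as $r$ stays small in $H^{k_0}$. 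Plugging the ansatz into \eqref{Eq:NLDE} produces a system of the form $\rmi\partial_t r = \mathcal{H}_{\omega(t)} r + F(r,\omega) + (\text{modulation error})$ with $F$ at least quadratic in $r$.

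Next I would spectrally decompose $r$ using \ref{Assumption:H5}--\ref{Assumption:H8}:
$$
r(t) = \sum_{j=1}^n \bigl(z_j(t)\,\xi_j(\omega(t)) + \overline{z_j(t)}\,\xi_{-j}(\omega(t))\bigr) + f(t),
$$
where $\xi_{\pm j}$ are eigenfunctions of $\mathcal{H}_{\omega}|_{\mathbf{X}}$ attached to $\pm\lambda_j(\omega)$ and $f$ lies in the continuous spectral subspace. This gives ODEs $\rmi \dot z_j = \lambda_j(\omega) z_j + \mathcal{N}_j(z,f)$ coupled to a dispersive Dirac-like equation for $f$. Using the non-resonance hypotheses \ref{Assumption:H9}--\ref{Assumption:H11} I would perform successive near-identity normal-form transformations on the discrete variables, eliminating all non-resonant monomials in $(z,\overline z)$ of order $\le 2N_1+3$, and keeping only the non-removable couplings of order $N_j+1$ whose frequency $\mu\cdot\lambda$ lies outside the gap $(-(m-\omega),m-\omega)$ and therefore couples the discrete modes to the continuous spectrum.

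At that stage the nonlinear Fermi Golden Rule \ref{Assumption:H12} provides a strictly positive dissipation for the discrete modes: computing $\frac{d}{dt}\sum_j |z_j|^2$ yields a leading negative term proportional to a positive quadratic form in the resonant monomials, with errors controlled by dispersive norms of $f$. Combined with Strichartz, smoothing and local decay estimates for the free Dirac propagator $e^{-\rmi t D_m}$ and for the linearized flow on the continuous subspace of $\mathcal{H}_{\omega}$---the latter requiring a limiting absorption principle for which \ref{Assumption:H7}--\ref{Assumption:H8} are exactly what is needed, in the spirit of \cite{CuccagnaPelinovskyVougalter}---one closes a bootstrap showing $z(t)\to 0$ at an algebraic rate and $f(t) = e^{-\rmi t D_m}h_+ + o_{H^{k_0}}(1)$ for some scattering datum $h_+$. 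The modulation parameters then admit limits $\omega(t)\to\omega_+$ and $\Theta(t)-\omega_+ t \to \theta_\infty$; rewriting $e^{\rmi\Theta(t)}\phi_{\omega(t)} = e^{\rmi\theta(t)}\phi_{\omega_+} + o(1)$ with $\theta(t):=\Theta(t)+o(1)$ yields the stated asymptotics.

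The main obstacle is the one flagged throughout the introduction and in Remark~\ref{rem:resonances}: because the conserved Dirac energy is strongly indefinite, there is no Lyapunov functional that controls $\|r\|_{H^{k_0}}$ a priori, so the entire bootstrap must be driven by dispersion together with the FGR-induced monotonicity. Concretely, the dispersive and smoothing estimates for the linearized operator $\mathcal{H}_\omega$, which is not self-adjoint, must be upgraded to the full continuous spectrum of $D_m$ shifted by $\pm\omega$, while keeping track of the threshold behaviour ruled out by \ref{Assumption:H7}; and the FGR computation has to survive the sequence of normal-form changes of variables performed at every order $\le 2N_1+3$. These two requirements are what dictates the regularity threshold $k_0\ge 4$ and are where the proof is most delicate.
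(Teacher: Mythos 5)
There is a genuine gap, and it sits exactly at the point you yourself identify as the most delicate one. You perform ``successive near-identity normal-form transformations on the discrete variables'' directly on the ODEs for $z$, and then assert that the resulting coupling to the continuum produces a \emph{positive} quadratic form via \ref{Assumption:H12}. But the sign structure of the Fermi Golden Rule term is not available at that level of generality: for a non-self-adjoint linearization with strongly indefinite energy, the coefficient multiplying $|\zeta^\alpha|^2$ in $\frac{d}{dt}\sum_j\lambda_j|\zeta_j|^2$ is of the form $\Im\langle R^{+}_{\mathcal H}(r)\mathbf H_r,\Sigma_3\mathbf H_r^*\rangle$ only because the same vectors $H^0_{\mu\nu}$ appear both as sources in the $f$-equation and as coupling coefficients in the $z$-equations, with the conjugation symmetries \eqref{eq:ExpHcoeff2}, \eqref{chi.11}. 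These symmetries are exactly what the paper's Hamiltonian machinery is for: the reformulation of Section \ref{Sec:Hamiltonian}, the Darboux change of variables of Section \ref{section:Darboux} (needed because the modulation coordinates are not canonical and one must also keep the equation semilinear), and the \emph{canonical} Birkhoff transformations of Section \ref{section:Birkhoff}. A normal form carried out mode-by-mode on the equations of motion, without preserving the symplectic structure, gives coefficients with no reason to combine into a quadratic form at all, so the dissipation mechanism you invoke is not justified; this is the point stressed in the paper (following \cite{Cuccagna1}) that exploiting the Hamiltonian structure is essential.

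Second, you never use the hypothesis $\mathcal O\subset(m/3,m)$, which appears explicitly in the statement and is not decorative. Even once the FGR coefficient is identified as the quadratic form above, its non-negativity is unclear precisely because of the indefiniteness: writing $\mathbf F_r=\mathcal Z_+\mathbf H_r=(a,b)$, the two diagonal channels of $\mathcal H_{\omega_0,0}$ contribute with opposite signs through $\Sigma_3$, namely $\lim_{\varepsilon\searrow 0}\Im\langle R_{D_m}(r+\omega+\im\varepsilon)a,a^*\rangle-\Im\langle R_{D_m}(r-\omega+\im\varepsilon)b,b^*\rangle$. Non-negativity (Lemma \ref{lemma:FGR81}, inequality \eqref{eq:FGR84}) holds because the resonant frequencies $r=\lambda\cdot\mu$ produced by the normal form satisfy $m-\omega_0<r<m+\omega_0$, so only the first channel sees continuous spectrum and the second term vanishes; and it is the constraint $3\omega_0>m$ (together with \ref{Assumption:H9}) that excludes $r>m+\omega_0$. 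Your bootstrap therefore needs both ingredients you omitted: the canonical normal form to get the quadratic-form structure, and $\omega>m/3$ to get its sign, after which \ref{Assumption:H12} upgrades non-negativity to the strict coercivity that closes the estimates. (A smaller inaccuracy: the paper obtains $z\to0$ and $L^2_t$ bounds on resonant monomials, not an algebraic decay rate, but that does not affect the conclusion.)
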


\begin{remark}
\label{rem:unnatural}   The constraint  $3\omega >m$ allows to exploit the nonlinear Fermi
Golden Rule (FGR) like for the NLS in
\cite{Cuccagna1} by circumventing  the strong indefiniteness of the Dirac system.
 We expect that
that   the hypothesis $3\omega >m$ can be eliminated. Specifically, it is used to guarantee that appropriate
multiples of the eigenvalues belong to portions of the spectrum where
there is no superposition   of the continuous spectrum of distinct coordinates.
This fact  and our results continue  to hold
if $3\omega <m$ and  $ (2N_j +1)
\omega > m$
for all $j=1,...,n$, see Remark \ref{rem:H3bis}.
\end{remark}

\begin{remark}
\label{rem:enindef}
Energy indefiniteness affects our methods because it   results in superposition   of the continuous spectrum of distinct coordinates.
There are two points
where our methods  are affected. The first is discussed in Remark
\ref{rem:unnatural}. The second point
is when we take {\ref{Assumption:H8}} as an hypothesis, see Remark \ref{rem:resonances}.
\end{remark}

\begin{remark}\label{rem:examples}
We do not know of examples of $g$ and
$\omega$ satisfying   our spectral assumptions. The situation is not very different
from the case of the NLS where the spectrum is unknown except in few cases. Rigorous analysis of examples is certainly a difficult open problem.
Like for the NLS, see \cite{ChangGustafsonNakanishiTsai}, one can consider   numerical analysis. For some example in 1--D see \cite{BerkolaikoComech,Chugunova}. For NLDE ,  by a non
relativistic bifurcation
argument, see \cite{Ounaies,Guan,EstebanLewinSere}, it is possible to extend what is known for the NLS equation to the
NLDE for $\omega$ close to $m$.
\end{remark}

\begin{remark}
\label{rem:cook up}  A partial justification of our hypotheses can be given using
bifurcation theory from  linear problem, see
\cite{Boussaid,Boussaid2,PelinovskyStefanov,SofferWeinstein,SofferWeinstein2}.
It is easy to prove the existence of  ``small solitons''
for which {\ref{Assumption:H1}}--{\ref{Assumption:H11}}   hold. In this
context the symmetry  {$\phi _\omega(-x)=\beta \phi_\omega(x)$
and} in
{\ref{Assumption:H4}} are unnecessary. In particular {\ref{Assumption:H6}}
holds
replacing $\mathbf{X}$ with  $L^2(\R^3,\C^8)$.  In the set up of small
solitons, {\ref{Assumption:H6}} and  {\ref{Assumption:H8}} are always true while
 {\ref{Assumption:H7}} and {\ref{Assumption:H9}}--{\ref{Assumption:H11}} hold
generically.  In the context of  small solitons it is easy to prove existence of
examples with just one eigenvalue $\lambda (\omega ) $ with $N=1$ for which
 \ref{Assumption:H12}   holds, in fact is generic,  thanks to   the easy form  the FGR
takes,  see formula (1.5) \cite{tsaiyau} for the NLS.
\end{remark}

\begin{remark}
\label{rem:hypotheses} Under {\ref{Assumption:H1}}--{\ref{Assumption:H11}},
we prove that, in an appropriate coordinate system, some key coefficients of
the    discrete modes equations are non negative.    If \ref{Assumption:H12}
holds, then these coefficients are positive and our proof tells us that
the continuous modes disperse and the discrete ones decay to 0. We
expect   \ref{Assumption:H12} to hold generically. Our proof extends with
minor modifications to the case of small solitons discussed in Remark \ref{rem:cook up}, where even the case of   just one eigenvalue $\lambda (\omega ) $ with $N=1$ (in fact even   the case with no eigenvalues) was an open problem.
\end{remark}

\begin{remark}
\label{rem:translation} One can envisage
extending   Theorem \ref{th:AsStab} to moving { {red} and
rotating} solitons. This would require
dropping \ref{Assumption:H4}. Then, since $3\omega >m$, one would face the
embedded eigenvalue $2\omega$. Problems arising from the possible failure of
the dispersive estimates in Sect. \ref{sec:dispersion} might  be solvable,
considering that \cite{CuccagnaPelinovskyVougalter} proves smoothing estimates
in the presence of embedded eigenvalues. However, looking  at the nonlinear  FGR
(which considers multiples of the eigenvalues),  we also have the problem that $4\omega > m+\omega$. So $4\omega$ belongs to a portion of the spectrum where
there is superposition of continuous spectrum of distinct components and the hypothesis $3\omega >m$ is of no help  to avoid this.
\end{remark}

Consider $\xi \in \ker (\mathcal{H} _{\omega } -\lambda _j(\omega
))$.
One of the requirements for linear stability in Definition
\ref{def:LinStab} is that if $\xi \neq 0$ then $\langle \xi ,
\Sigma _3 \xi ^* \rangle >0$. As it might seem artificial, we prove what
follows.
\begin{theorem}\label{th:orbital instability}
Suppose that $ \mathcal{O}\subset (m/3,m)$.
Pick $\omega \in \mathcal{O}$ and let $\phi_{\omega }(x)$ be a standing wave of
 \eqref{Eq:NLDE}.  Replace {\ref{Assumption:H5}} with the following assumption:
 \begin{enumerate}[label={\bf(H:\arabic{*}')},
ref={\bf(H:\arabic{*}')}]\setcounter{enumi}{4}
\item\label{Assumption:H5'}      We assume that $\mathcal{H}_\omega$
satisfies  all the conditions of  Definition \ref{def:LinStab} except for
condition (4) which we restate as follows. That is, we assume  that for any
eigenvalue $\lambda  >0$ the quadratic form $\xi \to \langle\xi    ,\Sigma_3\xi ^* \rangle  $ is non degenerate in $\ker ( \mathcal{H}_\omega  - \lambda    )   $. We assume that there exists at least one  eigenvalue $\lambda  >0$ such that
the quadratic form is non positive in
 $\ker ( \mathcal{H}_\omega  - \lambda    )   $.
 \end{enumerate}
 Assume
{\ref{Assumption:H1}}--{\ref{Assumption:H4}},
{\ref{Assumption:H5'}} and
{\ref{Assumption:H6}}--{\ref{Assumption:H12}}.
 Then $\phi_{\omega }(x)$ is orbitally unstable.
\end{theorem}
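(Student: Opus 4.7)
The plan is to piggy-back on the same modulation / normal form / Fermi Golden Rule machinery that proves Theorem \ref{th:AsStab}, and to exploit the fact that under \ref{Assumption:H5'} the Krein sign $\sigma_{j_0} = \mathrm{sign}\,\langle \xi_{j_0} , \Sigma_3 \xi_{j_0}^*\rangle = -1$ of an eigenvector $\xi_{j_0}$ associated to a positive eigenvalue $\lambda_{j_0}$ reverses the sign of the leading FGR term in the equation for the corresponding discrete mode. First I would write a perturbed solution as $u(t,x) = e^{\rmi\theta(t)}(\phi_{\omega(t)}(x) + r(t,x))$, pick $(\omega(t),\theta(t))$ by the standard symplectic orthogonality, and decompose $r = \sum_j (z_j\xi_j + \bar z_j \xi_j^*) + f$ with $f$ in the continuous spectral subspace of $\mathcal{H}_\omega$. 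All of \ref{Assumption:H1}--\ref{Assumption:H4}, \ref{Assumption:H6}--\ref{Assumption:H12} are in force exactly as in Theorem \ref{th:AsStab}, and the same coordinate change, a priori estimates and dispersive theory apply; only the sign weight attached to $z_{j_0}$ differs.

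Then I would execute the same chain of Darboux and Birkhoff normal-form changes up to order $2N_1+3$. Hypotheses \ref{Assumption:H10}--\ref{Assumption:H11} guarantee that the only monomials that survive are those producing the nonlinear coupling between a discrete mode $z_j$ (with $N_j\lambda_j<m-\omega<(N_j+1)\lambda_j$) and a continuum excitation with spectral parameter $N_j\lambda_j$ above threshold. After integrating out the fast oscillation of $f$, one obtains for each $j$ a scalar effective equation of the schematic form
$$
\frac{d}{dt}\bigl(\sigma_j\,|z_j|^2\bigr) \;=\; -\,\Gamma_j\,|z_j|^{2(N_j+1)} \;+\; \mathcal{R}_j(t),
$$
where $\Gamma_j\ge 0$ is the nonlinear FGR coefficient, strictly positive at $j=j_0$ by \ref{Assumption:H12}, and the remainder $\mathcal{R}_j$ is genuinely subdominant as long as $\|f\|$ and the other $|z_k|$ remain small. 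With $\sigma_{j_0}=-1$ this becomes an instability inequality $\tfrac{d}{dt}|z_{j_0}|^2 \ge \Gamma_{j_0}\,|z_{j_0}|^{2(N_{j_0}+1)} - |\mathcal{R}_{j_0}(t)|$.

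For the escape argument I would choose, for arbitrarily small $\epsilon$, initial data at distance $\epsilon$ from the orbit of $\phi_\omega$ with the $\xi_{j_0}$-component of size $\epsilon$ and all other components zero. As long as the solution stays in a fixed tubular neighbourhood of the orbit the Theorem \ref{th:AsStab} a priori estimates apply, so I compare $|z_{j_0}(t)|^2$ with the comparison ODE $\dot y = \Gamma_{j_0} y^{N_{j_0}+1}$; ignoring the remainder the latter reaches any prescribed $\delta_0^2$ in time $T_\epsilon = O(\epsilon^{-2N_{j_0}})$. The Strichartz and smoothing estimates for $f$, together with the FGR damping of the good modes, keep $|\mathcal{R}_{j_0}|$ integrable on $[0,T_\epsilon]$, so $|z_{j_0}(T_\epsilon)|\ge\delta_0$. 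Since the modulated chart is bi-Lipschitz near the orbit, this yields $\inf_{\gamma\in\R}\|u(T_\epsilon,\cdot)-e^{\rmi\gamma}\phi_{\omega'}\|_{H^{k_0}} \gtrsim \delta_0$ uniformly in $\omega'\in\mathcal{O}$ near $\omega$, which is the definition of orbital instability.

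The main obstacle is proving, uniformly on the long window $[0,T_\epsilon]$, that the cross-coupling of $z_{j_0}$ with the good discrete modes and with the continuous mode $f$ cannot generate a term of the same order $|z_{j_0}|^{2(N_{j_0}+1)}$ with a sign that would cancel the instability term $\Gamma_{j_0}|z_{j_0}|^{2(N_{j_0}+1)}$. This is precisely what the non-resonance hypotheses \ref{Assumption:H10}--\ref{Assumption:H11} and the refined Birkhoff normal form are designed to rule out: every cross-mode contribution appears at strictly higher order in the effective equation for $z_{j_0}$. Combined with the smoothing and dispersive estimates for $e^{-\rmi t \mathcal{H}_\omega}$ restricted to the continuous subspace (exactly as used in Theorem \ref{th:AsStab}), the remainder $\mathcal{R}_{j_0}$ stays subdominant on $[0,T_\epsilon]$, the differential inequality for $|z_{j_0}|^2$ closes, and orbital instability follows.
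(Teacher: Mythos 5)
There is a genuine gap, and it sits exactly where you locate the ``main obstacle.'' First, the decoupled per-mode equation $\frac{d}{dt}(\sigma_j|z_j|^2)=-\Gamma_j|z_j|^{2(N_j+1)}+\mathcal{R}_j$ is not what the normal form produces. The Fermi Golden Rule identity in this setting, \eqref{eq:FGR51}, controls only the \emph{signed sum} $\sum_j\varepsilon_j\lambda_j^0|\zeta_j|^2$, and the sign-definite term on its right-hand side is the quadratic form $\Im\langle R^+_{\mathcal{H}}(r)\mathbf{H}_r,\Sigma_3\mathbf{H}_r^*\rangle$ with $\mathbf{H}_r=\sum_{\lambda^0\cdot\alpha=r}\zeta^\alpha H^0_{\alpha 0}$, which couples \emph{all} monomials hitting the same resonance value $r$. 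Hypotheses \ref{Assumption:H10}--\ref{Assumption:H11} exclude resonances among the $\lambda_j$ up to order $2N_1+3$, but they do not diagonalize this form mode by mode, and \ref{Assumption:H12} is a lower bound for the whole form in terms of $\sum_\alpha|\zeta^\alpha|^2$, not a positive coefficient $\Gamma_{j_0}$ multiplying $|z_{j_0}|^{2(N_{j_0}+1)}$ alone. So the scalar instability inequality you start from is not available.

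Second, even granting such an inequality, the quantitative closure of your comparison-ODE argument fails with the estimates actually at hand: the remainders are only controlled in integrated norms, $\sum_j\|\mathcal{D}_j\overline{\zeta}_j\|_{L^1_t}\lesssim\epsilon^2$ (Lemma \ref{lemma:FGR1}), which is precisely the size of the total growth $\int_0^{T_\epsilon}|z_{j_0}|^{2(N_{j_0}+1)}\,dt$ you want to extract on the window $T_\epsilon\sim\epsilon^{-2N_{j_0}}$; ``integrable remainder'' therefore does not make the differential inequality close, since main term and error are of the same order, and a pointwise-in-time domination is nowhere proved. Note also that the a priori estimates of Theorem \ref{proposition:mainbounds} are closed in the paper only when all $\varepsilon_j=1$; with a negative signature one must instead \emph{assume} $|z(t)|\le\epsilon$ for all $t$, which is the formulation of Theorem \ref{theorem-1.3}. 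This is why the paper argues by contradiction rather than by escape: if the wave were orbitally stable, Theorem \ref{theorem-1.3} would apply, and choosing data $z_{j_0}(0)=\delta$, $z_j(0)=0$ for $j\neq j_0$, $f(0)=0$, the time-integrated signed identity \eqref{eq:4310}, combined with the refined remainder bound made possible by $f(0)=0$, forces a manifestly nonnegative quantity to satisfy $\mathcal{Y}^2\lesssim-\delta+o(1)\delta$, which is absurd (Theorem \ref{theorem-1.4}). If you wish to keep a direct escape argument, you would have to supply a genuinely new pointwise control of $\mathcal{R}_{j_0}$ relative to $|z_{j_0}|^{2(N_{j_0}+1)}$, which the machinery of Sections \ref{sec:dispersion}--\ref{sec:FGR} does not provide.
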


We will follow the argument developed in
\cite{Cuccagna1} for the NLS. The NLDE is harder than the
NLS. For example,   the regularity of $\phi _\omega $ in $\omega$ for  NLDE  is
unknown.
The classical methods to prove orbital
stability in
\cite{CazenaveLions,Weinstein2,GrillakisShatahStrauss,
GrillakisShatahStrauss2},  based as they are on the positivity of
certain functionals,  do not apply to NLDE because of the strong
indefiniteness of the energy. We already mentioned some initial
results for the Dirac equation   in
\cite{Boussaid,Boussaid2,PelinovskyStefanov}. Like in these
articles,
  we exploit the  dispersive properties of the linearizations,
  adapting  the methods
  used to prove asymptotic stability for the NLS
  initiated in
\cite{SofferWeinstein,SofferWeinstein2,BuslaevPerelman,BuslaevPerelman2} and
developed by a substantial number of authors, see the references in
\cite{Cuccagna1}.
 One of the difficult issues
for the NLS, was, and still is,  to prove that  the energy of the discrete modes associated
to the eigenvalues in {\ref{Assumption:H6}} leaks   either in the radiation part or in the standing wave.
The solution to this problem was initiated in \cite{BuslaevPerelman2}, where the
eigenvalues
are close to the continuous spectrum,  and solved in quite   general form in
\cite{Cuccagna1}, see also \cite{BambusiCuccagna,Cuccagna4}.
We recall that there is leaking because, in appropriate coordinates, the
nonlinear interaction between discrete and continuous modes yields
some dissipative coefficients in the equations of the discrete modes, in a way
similar to the classical Fermi Golden Rule (FGR). This phenomenon was first
established in special cases for the NLS in  \cite{BuslaevPerelman2}. The coefficients were identified  generally  in \cite{cuccagnamizumachi}, which built on
\cite{zhousigal}. Their dissipative nature was established in \cite{Cuccagna1}.
We refer to
\cite{Cuccagna1} for a
discussion of the fact that it is essential to exploit the hamiltonian
structure of the equation.  For work \cite{Cuccagna6}
extending the result in \cite{Cuccagna1}
to moving ground states    see Remark \ref{rem:embei0}.

In this article we follow  the same framework of
\cite{Cuccagna1} obtaining similar results. In particular the key coefficients
in the discrete modes equations are shown to be quadratic forms, see Lemma \ref{lemma:FGR81}. By the energy indefiniteness, see Remark \ref{rem:enindef},  the sign of these quadratic forms is unclear. We can overcome this uncertainty
if we assume $3\omega>m$, since in this case   there is no superposition of continuous spectrum of distinct components and the quadratic forms are easily
proved to be non negative.

We need to develop some of the
linear
theory of dispersion, which in the case of the NLS had been developed in the
course of a decade, see \cite{Cuccagna5,CuccagnaPelinovskyVougalter}. Key to
dispersion theory is the proof of
smoothing estimates for Schr\"odinger operators with magnetic
potentials in \cite{ErdoganGoldbergSchlag}.
There are two points       in the article where the strong indefiniteness
of the energy interferes   with our method and they are discussed in Remark \ref{rem:enindef}.
  We expects these difficulties to be technical and solvable.   Notice that in in \cite{Boussaid,Boussaid2,PelinovskyStefanov}
 these difficulties do not arise because   smallness of  solitons
  yields  absence of
resonances   for free and
 the FGR  is not addressed because of their restrictive hypotheses.

The instability result in Theorem \ref{th:orbital instability}
arises from our desire to justify  Assumption \ref{Assumption:H5}  in
our definition
of linear stability, see Definition \ref{def:LinStab}.  The proof
of Theorem \ref{th:orbital instability} is similar to
\cite{Cuccagna2}. That is, we show that orbital stability implies
asymptotic stability, and we then show that this is incompatible
with {\ref{Assumption:H5'}}. All the proofs are conditional on
{\ref{Assumption:H12}}, that is that a certain non negative
quantity is actually positive. Presumably this is true generically.

\subsection{Notation and preliminaries} We consider spaces
\begin{equation}\label{weighted}\index{$H^{k,s}$}
H^{k,s}  (\R^3,\C^4)=\left\{ f\in {\mathcal S}'(\R^3),\,
\|\langle x\rangle^s\langle \nabla \rangle^k f\|_2
<\infty\right\}
\end{equation}
for $s,k\in\R$  with  norm
$
\|f\|_{ H^{k,s} }= \|\langle x\rangle^s\langle
\nabla\rangle^k f\|_2.
$ Sometimes we will write $H^{k,s} _x$ to emphasize the independent
variable $x$.
If~$k=0$, we write~$L^{2,s} $ instead of~$H^{0,s}$.

For~$k\in\R$ and~$1\leq p,q\leq\infty$, the Besov
space~$B^k_{p,q}(\R^3,\C^d)$\index{$B^k_{p,q}$} is the space of all   tempered
distributions $f\in {\mathcal S}'(\R^3,\C^d)$  such that
\[
   \|f\|_{B^k_{p,q}}=  (\sum_{j\in\N}2^{jkq} \|\varphi_j *
f\|_p^q )^{\frac{1}{q}}<+\infty
\]
with~$\widehat{\varphi} \in {\mathcal C}^\infty_0(\R^n\setminus
\left\{0\right\})$ such that~$\sum_{j\in
\Z}\widehat{\varphi}(2^{-j}\xi)=1$ for all~$\xi
\in\R^3\setminus\left\{0\right\}$,
$\widehat{\varphi}_j(\xi)=\widehat{\varphi}(2^{-j} \xi)$ for all
$j\in\N^*$ and for all~$\xi \in\R^3$, and
$\widehat{\varphi_0}=1-\sum_{j\in \N^*}\widehat{\varphi}_j$. It is
endowed with the norm $   \|f\|_{B^k_{p,q}}$.

For $A$ a closed operator on a Hilbert space $X$  we will  set
$R_A(z):=(A-z)^{-1}$   for any $z$ in the resolvent
set of $A$.

\subsection{Structure of the article}

The paper is organized as follows.
In Sections  \ref{Sec:setup}--\ref{sec:modulation}, we
  study of the linearization of \eqref{Eq:NLDE} at the stationary solution,
  we give some information on the spectrum and on symmetries of the linearization,  we define the notion of linear
  stability and we introduce an appropriate
coordinate system related to the spectral decomposition of the linearized operator.
In Sect. \ref{sec:Dispersive} and in the Appendix
we discuss estimates  on such  operators.
In Sect. \ref{Sec:Hamiltonian}
 we discuss we reframe the system in a hamiltonian form. In Sect. \ref{section:Darboux} we look for canonical coordinates. In Sect. \ref{sec:Canonical} we reformulate the system in these coordinates. In Sect. \ref{section:Birkhoff} we apply the method of Birkhoff normal forms. The proofs
 of the analogous parts in \cite{Cuccagna1} work almost unaltered.  Having
 chosen  an appropriate coordinate system, in Sect \ref{sec:dispersion}
 we begin to prove nonlinear dispersion, in particular estimating the
 continuous modes. We finish with the closing up of the estimates
 in Sect. \ref{sec:FGR} where we prove the Fermi Golden Rule. Specifically  we
 prove that appropriate coefficients are quadratic forms and that  for $\omega >m/3$
 they are non negative. Finally, under hypothesis  {\ref{Assumption:H12}},
 which presumably holds generically, we close up the inequalities and we conclude the
 proof of asymptotic stability, Theorem \ref{th:AsStab}. We also prove
Theorems \ref{th:orbital instability} using similar ideas. In the Appendix we
proves smoothing estimates and scattering estimates.

\section{Set up and symmetries}\label{Sec:setup}

\subsection{Set up}

Since our ambient space is $H^{k_0} (\R ^3, \C ^4)$ with $k_0\ge 4$ and so in
particular $k_0>3/2$, under {\ref{Assumption:H1}} the functional $u\to
g(u\overline{u}) \beta  u$ is locally Lipschitz and   \eqref{Eq:NLDE} is locally
well posed, see pp. 293--294 volume III  \cite{Taylor}. Consider the solution
$u(t,x)$ of \eqref{Eq:NLDE}. Then by
{\ref{Assumption:H4}} we have  $u(t,-x )=\beta u(t,x )$ and
$u(t,-x_1,-x_2,x_3)=S_3 u(t,x )$. We write
the ansatz
\begin{equation}\label{Eq:ansatz}
u(t,x) = e^{\rmi  \vartheta (t)} (\phi_{\omega (t)} (x)+
r(t,x)) .
\end{equation}
Inserting \eqref{Eq:ansatz} in    \eqref{Eq:NLDE}  we get from the definition of
$\phi_\omega$
\begin{equation} \label{Eq:syst1}
\aligned &
  \rmi r_t  =
 D_m r -\omega (t) r-
g ( { \phi} _{\omega (t)}  \overline{\phi} _{\omega (t)} )\beta r -g
^\prime (  { \phi} _{\omega (t)}  \overline{\phi} _{\omega (t)} )
(r\overline{\phi} _{\omega (t)} ) \beta \phi _{\omega (t)} \\&-
 g ^\prime (  { \phi} _{\omega (t)}  \overline{\phi} _{\omega (t)} )
({\phi} _{\omega (t)}  \overline{r} )  \beta\phi _{\omega (t)}
 + (\dot \vartheta (t)+\omega (t)) (\phi _{\omega (t)}+r) -\rmi \dot \omega
(t)
\partial _\omega \phi   _{\omega (t)}
 + n(r ),
\endaligned
\end{equation}
where $n(r)=O(r^2)$ is defined by
\begin{equation*}
\begin{aligned}
n(r ):&= g ( ({ \phi}
_{\omega (t)}+r) \overline{\phi_{\omega (t)}+r}
)\beta(\phi_{\omega(t)}+ r) -g ( { \phi} _{\omega (t)}
\overline{\phi_{\omega (t)}} )\beta\phi_{\omega(t)}\\ & -g ^\prime (  {
\phi} _{\omega (t)} \overline{\phi} _{\omega (t)} )
(r\overline{\phi} _{\omega (t)} ) \beta \phi _{\omega (t)} -
 g ^\prime (  { \phi} _{\omega (t)}  \overline{\phi} _{\omega (t)} )
({\phi} _{\omega (t)}  \overline{r} )  \beta\phi _{\omega
(t)}.\end{aligned}
\end{equation*}

We denote by $C:\C^4\to\C^4$\index{$C$, charge conjugation}   the charge
conjugation operator
$
u^c:=Cu:=\rmi \beta \alpha_2 u^\ast .$ We have
$\alpha_j C = C \alpha_j $ and $ \beta C =-C
\beta$ for all $j \in \{1,2,3\}$,  \cite[Sect. 1.4.6]{Thaller}. Since it is anti-linear, for any $u\in\C^4$, $C(u^\ast)=(Cu)^\ast$.

We  state without proof the following simple lemma.
\begin{lemma}\label{lem:C}
For any vector $v\in \C ^4$ we have $C^2v=v$.
  Moreover   we have:
   \begin{equation*} \label{Eq:C1}
\aligned & C (\rmi v)= -\rmi v^c  , \quad
v\overline{v}=-Cv\overline{Cv}, \quad     ,  \quad C (\beta v)= -\beta v^c , \quad C (D_m w)= -D_m w^c.
\endaligned
\end{equation*}
For   $u_0$ satisfying
 {\ref{Assumption:H4}} we
have  { $u^c_0(-x )=-\beta u^c_0( x )$ and
$u^c_0(-x_1,-x_2,x_3)=-S_3 u^c_0( x )$.}
\end{lemma}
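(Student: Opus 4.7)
The plan is to verify each identity by a direct computation using only (i) the definition $Cu=\rmi\beta\alpha_2 u^{\ast}$ and the anti-linearity of $C$ inherited from componentwise conjugation, (ii) the Clifford relations $\alpha_j\alpha_\ell+\alpha_\ell\alpha_j=2\delta_{j\ell}I$, $\alpha_j\beta+\beta\alpha_j=0$, $\beta^2=\alpha_j^2=I$, together with $\bar\beta=\beta$ and $\bar\alpha_2=-\alpha_2$ (so $(Av)^\ast=\bar A v^\ast$ for these matrices), and (iii) the already stated facts $\alpha_j C=C\alpha_j$, $\beta C=-C\beta$. Because $C$ acts pointwise in $x$ and each $\partial_{x_j}$ commutes with complex conjugation, $C$ also commutes with every $\partial_{x_j}$.

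First I would dispatch the easy algebraic identities on $\C^4$. For $C^2v=v$, writing $(\rmi\beta\alpha_2 v^\ast)^\ast=\overline{\rmi\beta\alpha_2}\,v=-\rmi\beta(-\alpha_2)v=\rmi\beta\alpha_2 v$, one obtains $C(Cv)=(\rmi\beta\alpha_2)(\rmi\beta\alpha_2)v=-\beta\alpha_2\beta\alpha_2 v=\beta^2\alpha_2^2 v=v$ via $\beta\alpha_2=-\alpha_2\beta$. The relation $C(\rmi v)=-\rmi v^c$ is immediate from anti-linearity, and $C(\beta v)=-\beta Cv=-\beta v^c$ is the property $\beta C=-C\beta$. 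For $C(D_m w)=-D_m w^c$, split $D_m=-\rmi\sum_j\alpha_j\partial_j+m\beta$: anti-linearity turns $-\rmi$ into $+\rmi$, $\alpha_j C=C\alpha_j$ together with $[\partial_j,C]=0$ preserves $\alpha_j\partial_j$, while $C(m\beta w)=-m\beta w^c$; summing gives $\rmi\sum_j\alpha_j\partial_j w^c-m\beta w^c=-D_m w^c$.

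The one identity deserving care is $v\overline v=-Cv\,\overline{Cv}$, since it mixes the bilinear product $u\cdot v$ with complex conjugation. I would use the transpose identity $(Au)\cdot w=u\cdot A^T w$. A short computation yields $\overline{Cv}=\beta(Cv)^\ast=\beta(-\rmi\bar\beta\bar\alpha_2 v)=-\rmi\beta(-\alpha_2)\beta v=\rmi\alpha_2 v$ after using $\beta\alpha_2\beta=-\alpha_2$. Then
\begin{equation*}
Cv\cdot\overline{Cv}=(\rmi\beta\alpha_2 v^\ast)\cdot(\rmi\alpha_2 v)=-v^\ast\cdot(\beta\alpha_2)^T\alpha_2\,v=-v^\ast\cdot\alpha_2^T\beta\alpha_2\,v.
\end{equation*}
The key small identity is $\alpha_2^T=-\alpha_2$ (because $\sigma_2^T=-\sigma_2$), which combined with $\alpha_2\beta=-\beta\alpha_2$ gives $\alpha_2^T\beta\alpha_2=-\alpha_2\beta\alpha_2=\beta\alpha_2^2=\beta$. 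Hence $Cv\cdot\overline{Cv}=-v^\ast\cdot\beta v=-v\cdot\beta v^\ast=-v\overline v$.

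For the final statement I apply $C$ to the two symmetries of $u_0$ in \ref{Assumption:H4}. From $u_0(-x)=\beta u_0(x)$ and $\beta C=-C\beta$ one gets $u_0^c(-x)=C(\beta u_0(x))=-\beta u_0^c(x)$ at once. For the $S_3$ symmetry I need $CS_3=-S_3 C$, which since $S_3$ is real and block-diagonal in the same blocks as $\beta$ (so $S_3\beta=\beta S_3$ and $S_3^\ast=S_3$) reduces to $\alpha_2 S_3=-S_3\alpha_2$, i.e.\ to the Pauli anticommutation $\sigma_2\sigma_3=-\sigma_3\sigma_2$; then $C(S_3 u_0(x))=-S_3 u_0^c(x)$, as required. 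The whole lemma is pure bookkeeping; the only step that is not completely mechanical is the $v\overline v=-Cv\,\overline{Cv}$ identity, where one must distinguish the transpose (needed for $u\cdot v$) from the adjoint (needed for $\overline v=\beta v^\ast$) and be careful about signs coming from the anti-linearity of $C$.
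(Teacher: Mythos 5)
Your proof is correct; the paper states this lemma without proof, and your direct verification (anti-linearity of $C$, the relations $\bar\beta=\beta$, $\bar\alpha_2=-\alpha_2$, $\alpha_2^T=-\alpha_2$, the anticommutation $\alpha_2\beta=-\beta\alpha_2$, and $\alpha_2 S_3=-S_3\alpha_2$ for the symmetry statement) is exactly the intended routine computation, including the one genuinely delicate point, namely keeping the transpose for the bilinear product separate from conjugation in $v\overline v=-Cv\,\overline{Cv}$. One typographical slip: in the chain computing $\overline{Cv}$ the intermediate expression $-\rmi\beta(-\alpha_2)\beta v$ has a misplaced $\beta$ (it should be $-\rmi\beta\beta(-\alpha_2)v$, and applying your identity $\beta\alpha_2\beta=-\alpha_2$ to the expression as written would flip the sign), but the stated value $\overline{Cv}=\rmi\alpha_2 v$ is correct and the remainder of the argument, in particular $\alpha_2^T\beta\alpha_2=\beta$, goes through.
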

Applying $-C$ to \eqref{Eq:syst1}, we obtain
\begin{equation*}
\label{Eq:syst1Con} \aligned &
  \rmi r_t^c  =
  D_m r^c +\omega (t) r^c-
g ( { \phi}_{ \omega (t)}   \overline{\phi  } _{ \omega (t)} )\beta
r^c +g ^\prime (  { \phi }  _{\omega (t)} \overline{\phi
 } _{\omega (t)} ) (r^c\overline{\phi ^{c}}_{ \omega (t)} )
\beta \phi _{ \omega (t)}^{c} \\& +
 g^\prime (  { \phi}  _{\omega (t)}  \overline{\phi  } _{\omega (t)} )
({\phi} _{ \omega (t) }^{c}  \overline{r^c} ) \beta \phi _{ \omega
(t)}^{c}
 - (\dot \vartheta (t)+\omega (t)) ( \phi _{ \omega (t)}^{c}+r^c) -\rmi \dot
\omega
(t)
\partial _{ \omega} \phi   _{ \omega (t)}^{c}
-C
  n(r  ).\endaligned
\end{equation*}
 We set
\begin{equation}\label{Eq:linearization}\index{$U$}\index{$R$}\index{
$\Phi_\omega$}\index{$\mathcal{H} _{\omega }$} \index{$\mathcal{H} _{\omega,0
}$} \index{$V _{\omega }$} \aligned &  U=
\begin{pmatrix} u
  \\
  u^c
 \end{pmatrix} \quad ,\quad
  R=
\begin{pmatrix} r
  \\
  r^c
 \end{pmatrix} \quad , \quad \Phi_\omega
=
\begin{pmatrix} \phi _{\omega } \\
\phi_{ \omega }^c
 \end{pmatrix} \quad , \quad N(R)
=  \begin{pmatrix} n(r) \\ -C n(r ) \end{pmatrix}\quad ,
\\&   \mathcal{H} _{\omega }=\mathcal{H} _{\omega, 0}+V_{\omega}\quad ,\quad
 \mathcal{H}_{\omega ,0}=  \begin{pmatrix}D_m - \omega&0\\
0&D_m+\omega\end{pmatrix}\quad , \\&
V_\omega= g(\phi_{\omega
}\overline{\phi}_{\omega }) \beta + g^\prime (\phi_{\omega
}\overline{\phi}_{\omega
 })
\begin{pmatrix}
-(  \beta  {\phi}_{\omega  }^* \quad )\beta\phi _{\omega  }& (\beta
(\phi_{ \omega  }
^c)^*\quad  )\beta\phi _{\omega  }\\
 -(\beta \phi_{ \omega
 }^* {\quad  })\beta \phi_{ \omega  }
^c&
 (  \beta (\phi_{ \omega  }
^c)^*\quad )\beta \phi_{ \omega  } ^c
\end{pmatrix}\quad
\endaligned \end{equation}
where the first $\beta $  in the last line is meant in the sense of
\eqref{eq:symmR} below and where   $(\phi\quad)$
stands  for
 the map $r\mapsto \phi r$. Then  we have:
\begin{equation} \label{Eq:syst2}
\rmi \dot{R}=\mathcal{H}_\omega R
+ (\dot \vartheta (t)+\omega (t))(\Sigma_3\Phi_\omega +\Sigma_3R)
 -\rmi\dot\omega\p_\omega\Phi_\omega+N(R),
\end{equation}
$$\index{$\Sigma_j$} \text{where }
\Sigma _1=\begin{pmatrix}  0 &
I_{\C^4}  \\
I_{\C^4}  & 0
 \end{pmatrix} \, ,
\Sigma _2=\begin{pmatrix}  0 &
\rmi I_{\C^4}  \\
-\rmi I_{\C^4}  & 0
 \end{pmatrix} \, ,
\Sigma _3=\begin{pmatrix}  I_{\C^4}  &
0  \\
0 & -I_{\C^4}
 \end{pmatrix} .
$$

Notice that by {\ref{Assumption:H4}} and Lemma \ref{lem:C} we have
for $\Upsilon(x)\in\{\Phi _{\omega}(x), R(t,x)\}$
\begin{align}\label{eq:symmR}
&\Upsilon ( -x )=\beta \Sigma _3 \Upsilon ( x ) \text{ where }  \beta
=\begin{pmatrix} \beta &
0\\
 0 &
\beta
\end{pmatrix}\\
\mbox{ and }\label{eq:symmR2}
&{ {\Upsilon ( -x_1,-x_2,x_3)=S_3 \Sigma _3 \Upsilon ( x ) \text{ where }
S_3
=\begin{pmatrix} S_3 &
0\\
 0 &
S_3
\end{pmatrix} }}.
\end{align}

\subsection{Symmetries}
\label{sec:symm}

We consider now the bilinear map
\begin{equation} \label{eq:Inner Product}
\left \langle \begin{pmatrix}r _1\\ r_2 \end{pmatrix} ,
\begin{pmatrix}s _1 ^*\\ s_2 ^* \end{pmatrix} \right \rangle =
\int _{\R ^3} ( r _1 \cdot  s _1^* +r _2 \cdot s _2 ^* ) dx.
\end{equation}
By $\mathcal{H}_\omega^*$ we denote the adjoint of $
\mathcal{H}_\omega $ with respect to  this inner product. We have:

\begin{lemma}
  \label{lem:SymmLin} We have
\begin{eqnarray}
\label{Eq:SymmLin1}
&\mathcal{H}_\omega^*=\Sigma_3\mathcal{H}_\omega\Sigma_3\ ,\\
&\label{Eq:SymmLin3}
\mathcal{H}_\omega=-C\Sigma_1\mathcal{H}_{\omega}C\Sigma_1
\text{  where }
C=\begin{pmatrix}
C&0\\
0& C
\end{pmatrix} \  , \\
&\label{Eq:SymmLin31} V_\omega (-x )=\beta \Sigma_3V_\omega ( x )\beta
\Sigma_3 \text{  with $ \beta$ in the sense of \eqref{eq:symmR}} \\
&\label{Eq:SymmLin311} { {V_\omega (-x_1,-x_2,x_3)=S_3 \Sigma_3V_\omega ( x )S_3
\Sigma_3 \text{  with $ S_3$ in the sense of \eqref{eq:symmR2}} }}\ .
\end{eqnarray}
\end{lemma}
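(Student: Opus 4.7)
The plan is to verify each of the four identities by decomposing $\mathcal{H}_\omega = \mathcal{H}_{\omega,0} + V_\omega$ and checking the identity on each summand separately; identities \eqref{Eq:SymmLin31}--\eqref{Eq:SymmLin311} concern only the multiplication operator $V_\omega$.

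For \eqref{Eq:SymmLin1}, I first observe that the pairing \eqref{eq:Inner Product} is the standard Hermitian $L^2$ inner product (with complex conjugation built into the second slot), so $\mathcal{H}_\omega^*$ is the usual Hermitian adjoint. Self-adjointness of $D_m$ together with the block-diagonal form of $\mathcal{H}_{\omega,0}$ gives $\mathcal{H}_{\omega,0}^*=\mathcal{H}_{\omega,0}=\Sigma_3\mathcal{H}_{\omega,0}\Sigma_3$. Writing $V_\omega=\begin{pmatrix}A&B\\C&D\end{pmatrix}$ as a $2\times 2$ block matrix of operators on $L^2(\R^3,\C^4)$, the identity $V_\omega^*=\Sigma_3 V_\omega\Sigma_3$ reduces to the three relations $A^*=A$, $D^*=D$, $B^*=-C$. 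A direct coordinate computation on the rank-one operators appearing in \eqref{Eq:linearization}, using only $\beta^*=\beta$ and the symmetry of the bilinear product $u\cdot v$, confirms all three.

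For \eqref{Eq:SymmLin3}, the operator $C\Sigma_1$ is antilinear and squares to the identity (by $C^2=I$ and $\Sigma_1^2=I$). Applied to $\mathcal{H}_{\omega,0}$ and using $CD_m=-D_m C$ from Lemma \ref{lem:C} together with the antilinearity of $C$, a short calculation on $\binom{r_1}{r_2}$ gives $C\Sigma_1\mathcal{H}_{\omega,0}C\Sigma_1=-\mathcal{H}_{\omega,0}$. For the potential, $C\Sigma_1$ swaps the two blocks while conjugating entrywise by $C$; combined with $C(\beta v)=-\beta v^c$ and $C\phi_\omega=\phi_\omega^c$ (so that $C\phi_\omega^c=\phi_\omega$), each of the four rank-one blocks of $V_\omega$ is sent to the diagonally opposite block with a global sign change, yielding $C\Sigma_1 V_\omega C\Sigma_1=-V_\omega$ and hence $\mathcal{H}_\omega=-C\Sigma_1\mathcal{H}_\omega C\Sigma_1$.

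For \eqref{Eq:SymmLin31} and \eqref{Eq:SymmLin311}, which are pointwise identities on the matrix-valued function $V_\omega(x)$, I use that \ref{Assumption:H2} gives $\phi_\omega(-x)=\beta\phi_\omega(x)$ and, via $C\beta=-\beta C$, also $\phi_\omega^c(-x)=-\beta\phi_\omega^c(x)$. The scalar $\phi_\omega\overline{\phi}_\omega$ is invariant under $x\to -x$ because two factors of $\beta$ cancel via $\beta^2=I$ and the bilinear product is symmetric. Conjugation by $\beta\Sigma_3$ (with $\beta$ in the sense of \eqref{eq:symmR}) multiplies the $(j,k)$-block of $V_\omega$ by $(-1)^{j+k}$ while conjugating each entry by $\beta$; a block-by-block check using $\beta u\cdot\beta v=u\cdot v$ shows this matches precisely the effect of $x\to -x$ on each of the four blocks. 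The proof of \eqref{Eq:SymmLin311} is identical once one verifies $CS_3=-S_3C$, which follows from $\sigma_2\sigma_3=-\sigma_3\sigma_2$ and the reality of $S_3$, giving $\phi_\omega^c(-x_1,-x_2,x_3)=-S_3\phi_\omega^c(x)$.

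The main obstacle is the sign bookkeeping for \eqref{Eq:SymmLin3}, where the antilinearity of $C$ has to be tracked through the factors of $\beta$ hidden in $\overline{\phi}_\omega=\beta\phi_\omega^*$ and in the charge conjugate $\phi_\omega^c$. All steps rely only on $\beta^2=I$, $C^2=I$, $C\beta=-\beta C$, $CD_m=-D_mC$ from Lemma \ref{lem:C}, together with the explicit symmetries of $\phi_\omega$ in assumption \ref{Assumption:H2}; no conceptually new input is required.
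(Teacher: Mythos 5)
Your proposal is correct and follows essentially the same route as the paper: split $\mathcal{H}_\omega=\mathcal{H}_{\omega,0}+V_\omega$, dispose of the free part directly, and verify the identities block by block on the rank-one structure of $V_\omega$ using $C^2=I$, $C\beta=-\beta C$, $CD_m=-D_mC$ and the parity relations $\phi_\omega(-x)=\beta\phi_\omega(x)$, $\phi^c_\omega(-x)=-\beta\phi^c_\omega(x)$ (and their $S_3$ analogues via $CS_3=-S_3C$). Your reading of $\mathcal{H}_\omega^*$ as the Hermitian adjoint, i.e.\ the adjoint relative to the pairing $(X,Y)\mapsto\langle X,Y^*\rangle$, is the one the paper itself uses (and is needed for \eqref{Eq:SymmLin1} to hold on $\mathcal{H}_{\omega,0}$), and your block conditions $A^*=A$, $D^*=D$, $B^*=-C$ are exactly the content of the paper's computation \eqref{Eq:SymmLin5}.
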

{\it Proof.}  First of all, \eqref{Eq:SymmLin1}--\eqref{Eq:SymmLin3}
hold  with $\mathcal{H}_\omega$ replaced by $\mathcal{H}_{\omega
,0}$. It remains to check them with $\mathcal{H}_\omega$ replaced
by $V_\omega$. We have  $ V_\omega^*= \Sigma_3V_\omega\Sigma_3 $ by
\begin{equation} \label{Eq:SymmLin5}\Sigma_3\begin{pmatrix}
-(  \beta  {\phi}_{\omega  }^* \quad )\beta\phi _{\omega  }& (\beta
(\phi_{ \omega  }
^c)^*\quad  )\beta\phi _{\omega  }\\
 -(\beta \phi_{ \omega
 }^* {\quad  })\beta \phi_{ \omega  }
^c&
 (  \beta (\phi_{ \omega  }
^c)^*\quad )\beta \phi_{ \omega  } ^c
\end{pmatrix} \Sigma_3 =
\begin{pmatrix}
-(  \beta  {\phi}_{\omega  }^* \quad )\beta\phi _{\omega  }&
-(\beta (\phi_{ \omega  }
^c)^*\quad  )\beta\phi _{\omega  }\\
  (\beta \phi_{ \omega
 }^* {\quad  })\beta \phi_{ \omega  }
^c&
 (  \beta (\phi_{ \omega  }
^c)^*\quad )\beta \phi_{ \omega  } ^c
\end{pmatrix}
\end{equation}
and from the fact that the matrix in rhs\eqref{Eq:SymmLin5} is the
adjoint of the matrix in  lhs\eqref{Eq:SymmLin5}.
\eqref{Eq:SymmLin3} holds  with $\mathcal{H}_\omega$ replaced by
$\mathcal{H}_{\omega ,0}$ by Lemma \ref{lem:C}. We have

\begin{equation} \label{Eq:SymmLin6}\begin{aligned}&
C \Sigma_1
\begin{pmatrix}
-(  \beta  {\phi}_{\omega  }^* \quad )\beta\phi _{\omega  }& (\beta
(\phi_{ \omega  }
^c)^*\quad  )\beta\phi _{\omega  }\\
 -(\beta \phi_{ \omega
 }^* {\quad  })\beta \phi_{ \omega  }
^c&
 (  \beta (\phi_{ \omega  }
^c)^*\quad )\beta \phi_{ \omega  } ^c
\end{pmatrix}  =\\& -C
\begin{pmatrix}
- (\beta (\phi_{ \omega  } ^c)^*\quad  )\beta\phi _{\omega  } ^c&
  (  \beta  {\phi}_{\omega  }^* \quad )\beta\phi _{\omega  }^c \\ - (\beta
(\phi_{ \omega  } ^c)^*\quad  )\beta\phi _{\omega  } & (  \beta
{\phi}_{\omega  }^* \quad )\beta\phi _{\omega  }
\end{pmatrix}  \Sigma_1 =\\& -
\begin{pmatrix}
 (\beta (\phi_{ \omega  } ^c)^*\quad  )^* \beta\phi _{\omega  }  &
  -(  \beta  {\phi}_{\omega  }^* \quad )^* \beta\phi _{\omega  }
   \\   (\beta
(\phi_{ \omega  } ^c)^*\quad  )^* \beta\phi _{\omega  }^c & -(
\beta {\phi}_{\omega  }^* \quad )^*\beta\phi _{\omega  } ^c
\end{pmatrix}  \Sigma_1 .
\end{aligned}
\end{equation}
We have  for $v\in \C ^4$ \begin{equation*}
\label{Eq:SymmLin7}\begin{aligned}& (\beta (\phi _\omega ^c)^*v )^*
 = \beta (\rmi \beta \alpha _2
{\phi}_{ \omega  }^* ) v ^*  = -\beta {\phi}_{ \omega }^* C(v ),
\\&
  (\beta \phi_{ \omega
 }^* v)^* =\beta \phi_{ \omega
 } v ^*= -\beta ( \rmi \beta \alpha _2  \phi_{ \omega
 } ) (\rmi   \beta \alpha _2 v^*)=- \beta ({\phi}_{ \omega  }^{c})^*
 C(v) .\end{aligned}\nonumber
\end{equation*}
Then
\begin{equation*}
\label{Eq:SymmLin8}\begin{aligned}& \text{rhs\eqref{Eq:SymmLin6}} =-
\begin{pmatrix}
-(  \beta  {\phi}_{\omega  }^* \quad )\beta\phi _{\omega  }& (\beta
(\phi_{ \omega  }
^c)^*\quad  )\beta\phi _{\omega  }\\
 -(\beta \phi_{ \omega
 }^* {\quad  })\beta \phi_{ \omega  }
^c&
 (  \beta (\phi_{ \omega  }
^c)^*\quad )\beta \phi_{ \omega  } ^c
\end{pmatrix} C \Sigma _{1}
.\end{aligned}
\end{equation*}
This yields \eqref{Eq:SymmLin3}. The proof of \eqref{Eq:SymmLin31}
goes as follows. Using $\phi (-x)= \beta \phi ( x)$ and $\phi ^c
(-x)= -\beta \phi ^c( x)$, where we omit the subindex $\omega$, we
have
\begin{equation}\label{Eq:SymmLin9}
 \begin{aligned}& V(-x)\beta \Sigma _{3} =
 g(\phi (x)\overline{\phi} (x)) \Sigma _{3}
+g ^\prime (  { \phi }  _{\omega (t)} \overline{\phi
 } _{\omega (t)} )
\begin{pmatrix}
-(     {\phi} ^* (x)\quad ) \phi (x) &
-(  (\phi ^c  (x))^*\quad  ) \phi (x) \\
  (  \phi ^*(x) {\quad  })\phi ^c  (x)&
 (  (\phi ^c  (x))^*\quad )\phi ^c  (x).
\end{pmatrix} \beta \Sigma _{3} \\& =
g(\phi (x)\overline{\phi} (x)) \Sigma _{3} +
g ^\prime (  { \phi }  _{\omega (t)} \overline{\phi
 } _{\omega (t)} )
\begin{pmatrix}
-(   \beta  {\phi} ^* (x)\quad ) \phi (x) &
 ( \beta (\phi ^c  (x))^*\quad  ) \phi (x) \\
  (  \beta \phi ^*(x) {\quad  })\phi ^c  (x)&
- ( \beta (\phi ^c  (x))^*\quad )\phi ^c  (x).
\end{pmatrix}.
\end{aligned}
\end{equation}
Similarly
\begin{equation*}\label{Eq:SymmLin92}
 \begin{aligned}& \beta \Sigma _{3}V( x)
 =g(\phi (x)\overline{\phi} (x)) \Sigma _{3} +g ^\prime (  { \phi }  _{\omega
(t)} \overline{\phi
 } _{\omega (t)} )
 \beta \Sigma _{3}
 \begin{pmatrix}
-(   \beta  {\phi} ^* (x)\quad )\beta \phi (x) &
 (  \beta(\phi ^c  (x))^*\quad  )\beta \phi (x) \\
  -(  \beta\phi ^*(x) {\quad  })\beta \phi ^c  (x)&
 ( \beta (\phi ^c  (x))^*\quad )\beta \phi ^c  (x).
\end{pmatrix}  \\& =
\text{second line of \eqref{Eq:SymmLin9}}.
\end{aligned}
\end{equation*}
The last two formulas yield \eqref{Eq:SymmLin31}.   Identity
\eqref{Eq:SymmLin311} is proved similarly.\qed

\begin{lemma}
 \label{lem:SymmLin1} For $\mathbf{A}$  the operator in $L^2(\R
^3,\C ^8)$ defined by $(\mathbf{A}X)(x ):=\beta \Sigma_3X(-x )$, then
$\mathbf{A}^2=Id$, $\mathbf{A}$ is selfadjoint and  $\sigma (\mathbf{A}) =\{
1,-1 \}$.
   We have $[ \mathbf{A}, \mathcal{H}_\omega ]= [ \mathbf{A}, \mathcal{H}_{
\omega 0} ] =0$.

  For $\mathbf{B}$  the operator in $L^2(\R ^3,\C ^8)$
defined by $(\mathbf{B}X)(x ):=S_3\Sigma_3X(-x_1,-x_2,x_3)$, then
$\mathbf{B}^2=Id$,
$\mathbf{B}$ is selfadjoint and  $\sigma (\mathbf{B}) =\{ 1,-1 \}$.
   We have $[ \mathbf{B}, \mathcal{H}_\omega ]= [ \mathbf{B}, \mathcal{H}_{
\omega 0} ] =0$.

Moreover, $[\textbf{A},\textbf{B}]=0$. 
 \end{lemma}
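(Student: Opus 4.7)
The proof will reduce everything to direct algebraic checks, exploiting three basic facts: (i) $\beta^2=I_{\C^4}$, $\Sigma_3^2=I_{\C^8}$, and the block matrices $\beta$, $\Sigma_3$, $S_3$ (in the senses of \eqref{eq:symmR}--\eqref{eq:symmR2}) all pairwise commute since each is block-scalar with respect to the other's block decomposition; (ii) the parity $P:X(x)\mapsto X(-x)$ and the partial parity $P':X(x)\mapsto X(-x_1,-x_2,x_3)$ are commuting unitary involutions on $L^2(\R^3,\C^8)$; (iii) the potential identities \eqref{Eq:SymmLin31}--\eqref{Eq:SymmLin311} already proved in Lemma~\ref{lem:SymmLin}.

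For $\mathbf{A}$, the plan is as follows. First, write $\mathbf{A}=M P$ with $M=\beta\Sigma_3$ acting pointwise. Since $M^2=I$, $[M,P]=0$, and $M$ is Hermitian and $P$ unitary self-adjoint, one gets $\mathbf{A}^2=\mathrm{Id}$ and $\mathbf{A}^*=\mathbf{A}$ immediately; the spectrum then follows from $\mathbf{A}^2=\mathrm{Id}$ together with the existence of non-zero $\pm1$-eigenvectors (any $X$ with $X=\pm MPX$, e.g.\ $X\pm MPX$ for suitable $X$). Next, to prove $[\mathbf{A},\mathcal{H}_{\omega,0}]=0$, split $\mathcal{H}_{\omega,0}=\mathrm{diag}(D_m,D_m)-\omega\Sigma_3$: the $\omega\Sigma_3$ term commutes with $\mathbf{A}$ because $\Sigma_3$ commutes with $\beta\Sigma_3$ and with $P$; for the $D_m$ part one checks, using the anticommutation $\alpha_j\beta=-\beta\alpha_j$ from \S\ref{subsec:NLDE}, that $D_m(\beta X(-\cdot))=\beta(D_mX)(-\cdot)$ — the sign produced by parity on the derivatives cancels the sign produced by pushing $\beta$ through each $\alpha_j$, while the $m\beta$ part is fixed. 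Finally, $[\mathbf{A},V_\omega]=0$ follows straight from \eqref{Eq:SymmLin31}: writing $(\mathbf{A}V_\omega X)(x)=\beta\Sigma_3V_\omega(-x)X(-x)$ and substituting $V_\omega(-x)=\beta\Sigma_3 V_\omega(x)\beta\Sigma_3$ collapses via $(\beta\Sigma_3)^2=I$ to $V_\omega(x)\mathbf{A}X(x)$.

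The argument for $\mathbf{B}$ is entirely parallel, replacing $P$ by $P'$ and $\beta$ by $S_3$. The one new algebraic input needed is how $S_3=\mathrm{diag}(\sigma_3,\sigma_3)$ interacts with $D_m$: from the Pauli identities $\sigma_3\sigma_j=-\sigma_j\sigma_3$ for $j=1,2$ and $\sigma_3^2=I$ one gets $S_3\alpha_j=-\alpha_j S_3$ for $j=1,2$ and $S_3\alpha_3=\alpha_3 S_3$, and clearly $S_3\beta=\beta S_3$. These signs match precisely the signs produced on $\partial_{x_j}$ by the partial parity $P'$, yielding $D_m(S_3 X(P'\cdot))(x)=S_3(D_mX)(P'x)$. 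The remaining steps — involutivity, self-adjointness, spectrum, and $[\mathbf{B},V_\omega]=0$ via \eqref{Eq:SymmLin311} — are verbatim analogues of those for $\mathbf{A}$. For $[\mathbf{A},\mathbf{B}]=0$, a direct two-line computation suffices: both $\mathbf{A}\mathbf{B}$ and $\mathbf{B}\mathbf{A}$ send $X(x)$ to $\beta S_3 X(x_1,x_2,-x_3)$, because $\Sigma_3$ is scalar on each $\C^4$-block and commutes with $\beta$ and $S_3$, and $P$ commutes with $P'$.

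The only place requiring any care is the $D_m$-intertwining steps, where one must track two sources of sign change (anticommutation of the Dirac matrices with $\beta$ or $S_3$, and the action of parity on $\partial_{x_j}$); these signs cancel exactly, which is the heart of the lemma. Everything else is a formality once the pointwise matrix identities $(\beta\Sigma_3)^2=I$ and $(S_3\Sigma_3)^2=I$ together with \eqref{Eq:SymmLin31}--\eqref{Eq:SymmLin311} are in hand.
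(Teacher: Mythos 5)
Your proposal is correct and follows the same route as the paper: elementary checks for the involution, self-adjointness and spectrum, commutation with $D_m$ and $\Sigma_3$ by direct sign-tracking, commutation with $V_\omega$ via \eqref{Eq:SymmLin31}--\eqref{Eq:SymmLin311}, and a direct computation for $[\mathbf{A},\mathbf{B}]=0$. You merely spell out the sign cancellations (parity versus anticommutation of $\beta$, resp.\ $S_3$, with the $\alpha_j$) that the paper dismisses as straightforward, and these are verified correctly.
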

\proof  The first sentence is elementary. The second follows by $[ \mathbf{A},
A ]=0$ for $A= D_m, \Sigma _3  $ (straightforward)  and $A=V_\omega $, from
\eqref{Eq:SymmLin31}. The   statements for $\textbf{B}$ are obtained similarly. $[\textbf{A},\textbf{B}]=0$
is elementary.

\subsection{Energy and  charge}
\label{sec:Energy}

We have the following    elementary result.
\begin{lemma}
  \label{lem:systU}   Let $U^T=  (u,C{u})  $. Set for $G(0)=0$ and $G'(s)=g (s)
$
 \begin{equation*}
\label{eq:energyfunctional}\index{$E$}\index{$E_K$}\index{$E_P$}\index{$Q$}
\begin{aligned}&
 E(U)=E_K(U)+E_P(U)\, , \,
E_K(U)= \int _{\R ^3}
  (D_m u)  u^* dx  \, , \,
E_P(U)= -
 \int _{\R ^3}G( u \overline{u}) dx, \\& Q(U)=  \int _{\R ^3}u
 {u}^* dx .\end{aligned}
\end{equation*}
Then $E(U) $ and  $Q(U) $  are invariants of motion for
\eqref{Eq:NLDE}  and we have
\begin{equation}\label{eq:invariants}\begin{aligned} & E(U)=
\frac{1}{2}\langle  \im \beta \alpha _2  \Sigma _3\Sigma _1D_m U,
U\rangle  -
 \int _{\R ^3}G\left ( \frac{1}{2}U \cdot \im \alpha _2\Sigma _3 \Sigma
_1U\right ) dx   \\& Q(U)=   \frac{1}{2}\langle U, \im \beta \alpha _2  \Sigma
_1 U\rangle, \end{aligned}
\end{equation}  where for
$\langle\cdot \,,\cdot \rangle$ see \eqref{eq:Inner Product}.
  $U$ satisfies system \begin{equation}\label{eq:NLSvectorial} \im
\dot U =
 \im \beta \alpha _2 \Sigma _3 \Sigma _1 \nabla E (U).
\end{equation}
\end{lemma}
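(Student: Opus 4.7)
The plan is to verify, in turn, three assertions: (i) conservation of $E$ and $Q$ along \eqref{Eq:NLDE}; (ii) the algebraic reformulation \eqref{eq:invariants}; and (iii) the Hamiltonian form \eqref{eq:NLSvectorial}.

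For (i), conservation of $Q$ follows by pairing \eqref{Eq:NLDE} with $u^{*}$ in the Hermitian sense and taking imaginary parts: the operators $D_m$ and multiplication by $g(u\bar u)\beta$ are self-adjoint on $L^2(\R^3,\C^4)$, so the right-hand side is real, while the left contributes $\im\tfrac{d}{dt}\|u\|_{L^2}^2/2$. Conservation of $E$ is analogous: differentiate $E(U(t))$ along a solution, substitute $\im u_t$ from \eqref{Eq:NLDE}, invoke the chain rule $\tfrac{d}{dt}G(u\bar u)=2\Re\bigl(u_t\cdot\beta u^{*}\bigr)\,g(u\bar u)$, and note that the kinetic and potential contributions cancel by self-adjointness.

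For (ii), set $M:=\im\beta\alpha_2$. Explicit calculation from the Pauli matrix conventions yields $M^T=M$ and $M^2=I_{\C^4}$, using $\{\beta,\alpha_2\}=0$, $\beta^2=\alpha_2^2=I$, $\alpha_2^T=-\alpha_2$, $\beta^T=\beta$; by definition $u^c=Mu^{*}$. One then verifies the pointwise identity $\tfrac12\,U\cdot\im\alpha_2\Sigma_3\Sigma_1 U=u\bar u$, which reproduces $E_P$, and similarly reduces $\tfrac12\langle U,\im\beta\alpha_2\Sigma_1 U\rangle$ to $\int u\cdot u^{*}\,dx=Q(U)$ via $Mu^c=M^2 u^{*}=u^{*}$ and $u^c\cdot Mu=(Mu^{*})\cdot(Mu)=u^{*}\cdot u$ by $M^T M=I$. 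The identity for $E_K$ uses the same manipulation combined with $D_m u^c=-(D_m u)^c$ from Lemma \ref{lem:C} and the self-adjointness of $D_m$ in $L^2$.

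For (iii), compute $\nabla E(U)$ with respect to the pairing \eqref{eq:Inner Product}: the quadratic kinetic piece contributes $\im\beta\alpha_2\Sigma_3\Sigma_1 D_m U$ after the symmetrization dictated by the bilinear pairing (helped by $[\Sigma_j,D_m]=0$), and differentiation of $E_P$ yields $-g(u\bar u)\,\im\alpha_2\Sigma_3\Sigma_1 U$. Applying $\im\beta\alpha_2\Sigma_3\Sigma_1$ and using $(\im\beta\alpha_2\Sigma_3\Sigma_1)^2=-I$—a consequence of $(\beta\alpha_2)^2=-I$ and $\Sigma_1\Sigma_3=-\Sigma_3\Sigma_1$—reconstructs the first component of \eqref{Eq:NLDE} multiplied by $\im$; the second component is then automatic from the charge-conjugation symmetry $\{D_m,C\}=0$ of Lemma \ref{lem:C}. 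The main obstacle will be this last step: verifying that $\nabla E$ is computed in a manner consistent with the non-Hermitian bilinear pairing (in particular, performing the correct symmetrization) and tracking all the commutation and anticommutation signs between $\Sigma_j$, $\beta$, $\alpha_2$, $C$ and $D_m$ to recover exactly \eqref{Eq:NLDE} rather than its formal adjoint. This bookkeeping is routine but error-prone.
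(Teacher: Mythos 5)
Your argument is correct and follows essentially the same route as the paper: the identities \eqref{eq:invariants} come from exactly the algebra you describe with $M=\im\beta\alpha_2$ (namely $M^T=M$, $M^2=I$, $u^c=Mu^*$, and the anticommutation of $D_m$ and $\beta$ with $C$), which the paper merely packages as a single general identity for symmetric operators (anti)commuting with $C$ applied to $A=D_m,\beta,I$, whereas you check the three cases directly. The conservation of $E$ and $Q$ and the Hamiltonian form \eqref{eq:NLSvectorial}, which you write out explicitly, are treated as immediate in the paper's proof, so your proposal contains nothing essentially different and no gap.
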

\begin{proof} For any symmetric
operator $A$ acting on $L^2(\R^3,\C^4)$  with the domain invariant
by $C$ and anticommuting to $C$ and any $u\in
D(A)$,
\[\begin{aligned}
u \cdot (A  u)^*&=\frac{u \cdot (A  u)^\ast+ u^\ast \cdot (A  u)}{2}
=\frac{u \cdot \im  \beta \alpha _2 C A  u+\im  \beta \alpha _2 Cu \cdot A
u}{2}\\
&= \frac{-u \cdot \im  \beta \alpha _2  A   u^c+\im  \beta \alpha _2 u^c \cdot A
u}{2}
= \frac{-u \cdot \im  \beta \alpha _2  A   u^c+u^c \im  \beta \alpha _2  \cdot A
u}{2}\\
&=\frac{ \im }{2} U \cdot \beta \alpha _2 \Sigma _3 \Sigma _1 A U
\text{, where we write $A$ for
$
A=
\begin{pmatrix}
A & 0\\
 0 & A
\end{pmatrix}.$}
\end{aligned}\]
 If $A$   commutes with $C$, then a similar calculation shows
$
\langle u, (A  u)^* \rangle = \frac{ \im }{2} \langle U , \beta \alpha _2 \Sigma
_1 A U \rangle
.$
These identities  for $A=D_m$, $A=\beta$ or $A=I$ prove  the lemma.
\end{proof}

\section{Spectrum and linear stability}
\label{sec:spectrum}

From now  on we restrict attention to    { $\mathbf{X}   =\{ \Upsilon
\in L^2 (\R^3,\C ^8): \Upsilon ( -x )\equiv \beta \Sigma _3 \Upsilon (
x ),\,\Upsilon ( -x_1,-x_2,x_3)\equiv S_3 \Sigma _3 \Upsilon ( x )\}:=\ker
(\mathbf{A} -Id)\cap \ker
(\mathbf{B} -Id)\subset L^2(\R ^3,\C ^8)$}. It is invariant by
$\mathcal{H} _{\omega ,0}$ and  $\mathcal{H} _{\omega  }$, see Lemma
\ref{lem:SymmLin1}.  We consider the spectrum
\[
\sigma(\mathcal{H}_\omega)= \left\{\lambda \in \C, \, \mathcal{H}_\omega-\lambda
Id : \mathbf{X} \cap H^1(\R^3,\C^8)\mapsto \mathbf{X} \mbox{ is not invertible}\right\}
\]
We summarize  what we know about the spectrum.
\begin{lemma}\label{lem:spectrum}
\begin{itemize}
\item[(1)] For the    essential
 spectrum we have,
 $\sigma_{\rm
ess}(  \mathcal{H} _\omega )=(-\infty,  \omega  -m]\cup
 [m- \omega  ,+
 \infty)$.

\item[(2)] For each $z\in \sigma _p (  \mathcal{H} _\omega )$ the corresponding
generalized eigenspace $N_g(  \mathcal{H}_\omega -z) $\index{$N_g$,
generalized kernel} has finite
dimension.

\item[(3)] If $z\in  \sigma   (  \mathcal{H} _\omega )$ then also
$-z\in  \sigma   (  \mathcal{H} _\omega )$.

\item[(4)] For the generalized kernel we have  $N_g(\mathcal{H} ^\ast
_\omega )\supseteq \{ \Phi _\omega , \Sigma_3\partial _\omega \Phi
_\omega \} .$

\item[(5)] $\partial _\omega \| \phi _\omega  \| _{2}^2\neq 0$
implies  that there are no $v$ such that $\mathcal{H}_\omega v=
\p_\omega\Phi_\omega$.
\item[(6)] We have $ \mathcal{H} _\omega Y =-2\omega Y$   and $ \mathcal{H} _\omega C\Sigma _1Y = 2\omega C\Sigma _1Y$   for
$
  Y:= \begin{pmatrix}
 \alpha _1\alpha _2\alpha _3 \beta\phi_\omega\\
0\end{pmatrix}.$ We have  $\Upsilon ( -x)\equiv -\beta \Sigma _3 \Upsilon ( x)$
for $\Upsilon =Y,   C\Sigma _1Y$.

\end{itemize}
\end{lemma}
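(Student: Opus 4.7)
The plan is to prove the six assertions in turn, using the block structure of $\mathcal{H}_\omega=\mathcal{H}_{\omega,0}+V_\omega$, the symmetries \eqref{Eq:SymmLin1}--\eqref{Eq:SymmLin3} of Lemma \ref{lem:SymmLin}, and the explicit spherical ansatz for $\phi_\omega$ in \ref{Assumption:H2}. For (1), the block-diagonal form gives $\sigma(\mathcal{H}_{\omega,0})=\sigma(D_m-\omega)\cup\sigma(D_m+\omega)=(-\infty,\omega-m]\cup[m-\omega,\infty)$ since $0<\omega<m$; the exponential decay of $\phi_\omega$ makes $V_\omega R_{\mathcal{H}_{\omega,0}}(z)$ compact, and Weyl's essential-spectrum theorem applied to the restrictions to $\mathbf{X}$ yields (1) together with the finite algebraic multiplicity of isolated eigenvalues required in (2). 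To check that restricting to $\mathbf{X}$ does not shrink the essential spectrum, one observes that each joint eigenspace of $\mathbf{A}$ and $\mathbf{B}$ is invariant under $\mathcal{H}_{\omega,0}$ and still contains Weyl sequences for every $\lambda$ in the free spectrum (by symmetrizing plane waves). For (3), \eqref{Eq:SymmLin1} forces $\sigma(\mathcal{H}_\omega)=\overline{\sigma(\mathcal{H}_\omega)}$, while applying the antilinear involution $C\Sigma_1$ to $\mathcal{H}_\omega u=zu$ and invoking \eqref{Eq:SymmLin3} yields $\mathcal{H}_\omega(C\Sigma_1u)=-\bar z\,C\Sigma_1u$; composing the two symmetries gives $-z\in\sigma(\mathcal{H}_\omega)$.

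Parts (4) and (5) come from the two parameters of the family of standing waves. The gauge orbit $\theta\mapsto e^{i\theta}\phi_\omega$ has tangent $i\phi_\omega$ at $\theta=0$, which lifts to $i\Sigma_3\Phi_\omega$ in the vectorial form; since this is an infinitesimal generator of a family of solutions with no $t$-evolution in $R$, the linearization annihilates it, giving $\mathcal{H}_\omega\Sigma_3\Phi_\omega=0$, hence $\mathcal{H}_\omega^*\Phi_\omega=0$ via \eqref{Eq:SymmLin1}. Differentiating the family $e^{-i(\omega+\epsilon)t}\phi_{\omega+\epsilon}$ in $\epsilon$ produces the linearized solution $R(t)=\partial_\omega\Phi_\omega-it\Sigma_3\Phi_\omega$; demanding $i\dot R=\mathcal{H}_\omega R$ forces $\mathcal{H}_\omega\partial_\omega\Phi_\omega=\Sigma_3\Phi_\omega$ and therefore $\mathcal{H}_\omega^*(\Sigma_3\partial_\omega\Phi_\omega)=\Phi_\omega$, proving (4). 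For (5), if $\mathcal{H}_\omega v=\partial_\omega\Phi_\omega$ were solvable, pairing with $\Phi_\omega\in\ker\mathcal{H}_\omega^*$ via \eqref{eq:Inner Product} would force $\langle\partial_\omega\Phi_\omega,\Phi_\omega^*\rangle=0$; but the ansatz of \ref{Assumption:H2} gives $\partial_\omega\phi_\omega(x)\cdot\phi_\omega^*(x)=\tfrac12\partial_\omega(a(\rho)^2+b(\rho)^2)$ with the analogous identity for $\phi_\omega^c$, so $\langle\partial_\omega\Phi_\omega,\Phi_\omega^*\rangle=\partial_\omega\|\phi_\omega\|_{L^2}^2=q'(\omega)\ne 0$ by \ref{Assumption:H3}, a contradiction.

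Finally, (6) reduces to a Clifford-algebra computation. The anticommutation of $\alpha_1\alpha_2\alpha_3\beta$ with every $\alpha_j$ and with $\beta$ yields $D_m(\alpha_1\alpha_2\alpha_3\beta\phi_\omega)=-(\alpha_1\alpha_2\alpha_3\beta)D_m\phi_\omega$, and combining with $D_m\phi_\omega=\omega\phi_\omega+g(\phi_\omega\overline{\phi_\omega})\beta\phi_\omega$ gives $(D_m-\omega)\alpha_1\alpha_2\alpha_3\beta\phi_\omega=-2\omega\,\alpha_1\alpha_2\alpha_3\beta\phi_\omega-g\,\alpha_1\alpha_2\alpha_3\phi_\omega$. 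One then shows that $V_\omega Y$ cancels the residual $g\alpha_1\alpha_2\alpha_3\phi_\omega$: the local multiplication piece contributes $\beta\alpha_1\alpha_2\alpha_3\beta\phi_\omega=-\alpha_1\alpha_2\alpha_3\phi_\omega$ with the right coefficient, while the rank-two correction in $V_\omega$ vanishes on $Y$ because the pointwise scalar $\phi_\omega^{*T}\alpha_1\alpha_2\alpha_3\phi_\omega$ is identically zero for the ansatz of \ref{Assumption:H2} (one computes $\alpha_1\alpha_2\alpha_3=i\begin{pmatrix}0 & I_2\\ I_2 & 0\end{pmatrix}$ and the terms cancel pairwise because of the particular form of the angular dependence). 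The identity $\mathcal{H}_\omega(C\Sigma_1Y)=2\omega\,C\Sigma_1Y$ is then immediate from \eqref{Eq:SymmLin3} combined with the antilinearity of $C\Sigma_1$, and the parity relation $\Upsilon(-x)=-\beta\Sigma_3\Upsilon(x)$ for $\Upsilon\in\{Y,C\Sigma_1Y\}$ follows from $\phi_\omega(-x)=\beta\phi_\omega(x)$ together with $\beta\alpha_1\alpha_2\alpha_3\beta=-\alpha_1\alpha_2\alpha_3$. The only delicate step is the sign bookkeeping inside (6); the rest is standard perturbation theory plus the symmetries of Lemma \ref{lem:SymmLin}.
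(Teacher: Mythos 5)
Your proposal is correct and follows the paper's own route in all essentials: (1)--(2) via relative compactness of $V_\omega$ and Weyl-type arguments on $\mathbf{X}$, (3) by combining \eqref{Eq:SymmLin1} with \eqref{Eq:SymmLin3}, (4)--(5) from gauge invariance, differentiation in $\omega$ and pairing with $\ker\mathcal{H}_\omega^*$ using \ref{Assumption:H3}, and (6) by the direct Clifford computation (including the key vanishing of $\phi_\omega^{*}\cdot\alpha_1\alpha_2\alpha_3\phi_\omega$) that the paper leaves unstated. The only discrepancy is the sign in $\mathcal{H}_\omega\partial_\omega\Phi_\omega=\pm\Sigma_3\Phi_\omega$ (the paper writes $-\Sigma_3\Phi_\omega$, your derivation gives $+$), which is immaterial here since either sign places $\partial_\omega\Phi_\omega$ in $N_g(\mathcal{H}_\omega)$ and hence, via \eqref{Eq:SymmLin1}, yields claim (4).
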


\begin{proof} We have that (1) and (2) are consequences of the above discussion.
If
$z\in \sigma_{\rm
ess}(  \mathcal{H} _\omega )$ then (3) is a consequence
of (1). If $z$ is an eigenvalue, then (3) is a consequence of
\eqref{Eq:SymmLin3}. (4)  is a consequence of   $N_g(\mathcal{H}
_\omega )\supseteq \{ \Sigma_3 \Phi _\omega ,  \partial _\omega \Phi
_\omega \}$ which can be seen as follows. By the gauge invariance of
the nonlinearity, $
G((e^{\rmi\theta}u)\overline{(e^{\rmi\theta}u)})=G(u\overline{u}), $
where $G$ is a primitive of $g$, we have
\begin{equation*}
 \mathcal{H}_\omega\begin{pmatrix}
\rmi\phi_\omega\\
C\rmi\phi_\omega
\end{pmatrix} =0
\text{ or } \mathcal{H}_\omega\Sigma_3\Phi_\omega=0.
\end{equation*}
Then differentiating  with
respect to $\omega$ \eqref{Eq:NLDE} and taking its image by $C$, we obtain $
\mathcal{H}_\omega\p_\omega\Phi_\omega=-\Sigma_3\Phi_\omega .$
(5) follows by the following argument, if we assume existence of
$v$ s.t. $\mathcal{H}_\omega v= \p_\omega\Phi_\omega$,
\begin{equation*} \label{eq:GenKer1}\begin{aligned} & 0=\langle  v,  (
\mathcal{H}_\omega ^*\Phi_\omega
)^*  \rangle =\langle  \p_\omega\Phi_\omega ,\Phi_\omega  ^*\rangle
= \langle  \p_\omega\phi_\omega ,\phi_\omega  ^*\rangle + \langle
\p_\omega \rmi \beta \alpha _2\phi_\omega ^* ,\rmi \beta \alpha
_2\phi_\omega   \rangle \\& = \langle \p_\omega\phi_\omega
,\phi_\omega  ^*\rangle + \langle \p_\omega
 \phi_\omega ^* , \phi_\omega
    \rangle =
\partial _\omega \| \phi _\omega  \| _{2}^2 \neq 0.
\end{aligned}\end{equation*}
(6) is obtained by a direct computation.
\end{proof}

\begin{remark}
From \eqref{Eq:SymmLin1}, if $z\in\sigma(\mathcal{H}_\omega)$ then
$\bar{z}\in \sigma(\mathcal{H}_\omega)$. So if $z\in \sigma
(\mathcal{H}_\omega)$ then $\{z,-z, \overline{z},-\overline{z}\}\subseteq \sigma
(\mathcal{H}_\omega )$.
\end{remark}

\begin{remark} \label{rem:embei} The observation that
 $   2\omega $ is an eigenvalue of $\mathcal{H}_\omega$ in $L^2(\R ^3 , \C^2)$
 is due to \cite{Comech}. For $3\omega >m$  the eigenvalue $   2\omega $ is
embedded in the continuous spectrum. The fact that  the  vectors  in
 Claim (6) Lemma \ref{lem:spectrum}   do not satisfy   the symmetry
\eqref{eq:symmR} and   are not in $\mathbf{X}$, shows that the existence of this
eigenvalue does not interfere with our proof. Obviously the symmetry
{\ref{Assumption:H4}} is crucial.
\end{remark}

We have   the beginning of $
\mathcal{H}_{\omega }$ invariant Jordan
block decomposition $\mathbf{X}=
N_g(\mathcal{H} _{\omega })\oplus
 N_g^\perp (\mathcal{H} ^\ast _{\omega }).
$
Linear stability means to us what follows, see \cite{Cuccagna2}.
\begin{definition}[Linear Stability]\label{def:LinStab}
A standing wave $e^{\rmi t\omega}\phi_\omega$ is
linearly stable when the following   hold:
\begin{itemize}
\item[(1)] $\sigma   ( \mathcal{H} _\omega )\subset \R$;
\item[(2)] $N_g(\mathcal{H}   )= \{ \Sigma _3\Phi _\omega ,
 \partial _\omega \Phi _\omega \} $;
\item[(3)] for any eigenvalue $z\neq 0$  of $\mathcal{H} _\omega$
we have $N_g( \mathcal{H} _\omega -z)=
      \ker ( \mathcal{H} _\omega -z);$
\item[(4)]  for any  positive eigenvalue $\lambda >0$ and for any
$\xi \in \ker ( \mathcal{H}_\omega  - \lambda  )   $, we have
$\langle\xi  ,\Sigma_3\xi ^* \rangle >0.$
\end{itemize}
\end{definition}

As a consequence of {\ref{Assumption:H5}}, the Jordan decomposition
can be continued as follows:
\begin{equation} \label{Eq:SpecDec}\index{$L^2_c$} \begin{aligned} & \mathbf{X}=
N_g(\mathcal{H}
 _{\omega }) \oplus \big (
\oplus _{j,\pm  }\ker (\mathcal{H} _{\omega }\mp \lambda _j(\omega
)) \big ) \oplus L_c^2(\mathcal{H}_{\omega })\text{ with } \mathbf{X}_c (\mathcal{H}
_\omega )=\left \{  \mathbf{X}_d(\mathcal{H}^*
_\omega )  \right \} ^{\perp}\cap \mathbf{X} ,  \\&    \text{ where for
$K=\mathcal{H} ^\ast _\omega , \mathcal{H}  _\omega$ we set }  \mathbf{X}_d(K)
:= N_g(K) \oplus \oplus
_{j,\pm  }\ker (K \mp \lambda _j(\omega
)) .   \end{aligned}
 \end{equation}
 Let $(\xi _j(\omega ,x ))_j$ be a basis of $\oplus _{j=1
}^{n}\ker (\mathcal{H}_\omega - \lambda _j(\omega ))$
  so that each
vector  is smooth in both
 variables, with $|\partial ^{\alpha }_{\omega x}
 \xi _j(\omega , x)| < c_\alpha  e^{-a_{\alpha}|x|}$ for some
$c_\alpha >0$ and  $a_\alpha >0$. This can be proved by the Combes-Thomas method  \cite{Hislop}  using {\ref{Assumption:H2}}.
We normalize $\xi _j(\omega ,x )$ so that    $ \varepsilon _j=\langle\xi _j
,\Sigma_3\xi _j^* \rangle \in \{ 1, -1  \}$ and   $  \langle\xi _j  ,\Sigma_3\xi
_i^* \rangle =0$ for $j\neq i$.   In Theorem \ref{th:AsStab} for all
j we have $ \varepsilon _j=1$ while for Theorem \ref{th:orbital instability} we
have $ \varepsilon _j=-1$ for at least one $j$.

From the calculations of this section, we have built a dual
basis. Hence, given any vector $X$, we have
\begin{equation}\label{eq:decVectorfield}\begin{aligned} &
 X =  \frac{\langle   X ,
   \left ( e^{ \im \Sigma _3\vartheta }\Sigma _3\partial _\omega \Phi
   \right )^*\rangle }
  {q'(\omega )}  e^{  \im \Sigma _3\vartheta } \Sigma _3
       \Phi   +
  \frac{\langle X ,\left ( e^{ \im \Sigma _3\vartheta }
    \Phi \right ) ^*\rangle }
  {q'(\omega )}
  e^{  \im \Sigma _3\vartheta }  \partial _\omega \Phi +
  \\&
\sum  _{j=1}^{n} \varepsilon _j \langle X,  \left ( e^{ \im \Sigma
_3\vartheta }\Sigma _3\xi _j\right )^* \rangle
   e^{  \im \Sigma _3\vartheta }   \xi _j  +
   \sum _{j=1}^{n} \varepsilon _j \langle X, \left ( e^{ \im \Sigma _3\vartheta
}\Sigma _1 \Sigma _3 C\xi _j\right )^* \rangle
     e^{  \im \Sigma _3\vartheta }
    \Sigma _1C\xi _j       +      e^{   \im \Sigma _3\vartheta }
   P_c(\mathcal{H}_\omega  )e^{   -\im \Sigma _3\vartheta } X,
 \end{aligned}
\end{equation}
with $P_c(\mathcal{H}_\omega)$\index{$P_c(\mathcal{H}_\omega)$}
   the projector onto $\mathbf{X}_c(\mathcal{H}_\omega)$ with respect to
decomposition \eqref{Eq:SpecDec}. More generally, for $X\in L^2(\R ^3, \C ^8)
=\mathbf{X} \oplus \mathbf{X}^\perp $,  see {  the simultaneous
spectral decomposition of $\mathbf{A}$ and $\mathbf{B}$} in Lemma
\ref{lem:SymmLin1},
we denote by $P_c(\mathcal{H}_\omega)X$
the vector obtained first projecting in $\mathbf{X}$ and then in   $\mathbf{X}_c(\mathcal{H}_\omega)$.
By duality, we have the following lemma.
\begin{lemma}
\label{lem:dual spec dec} Suppose that for a given $\omega \in \mathcal{O}$ the conditions of
Definition \ref{def:LinStab} are satisfied. Then

\begin{equation} \label{Eq:dualSpecDec} \begin{aligned} & \mathbf{X}= N_g(\mathcal{H}
 _{\omega }^*) \oplus \big (
\oplus _{j,\pm  }\ker (\mathcal{H} _{\omega }^*\mp \lambda _j(\omega
)) \big ) \oplus \mathbf{X}_c(\mathcal{H}_{\omega }^*)\text{ with } \mathbf{X}_c(\mathcal{H}
_\omega ^*):=\left \{ \mathbf{X}_d(\mathcal{H}_{\omega } ) \right \} ^{\perp}.   \end{aligned}
 \end{equation}
 Any 1 form $\alpha =\langle \alpha ^{\sharp}, \quad \rangle $ can be decomposed
as follows:
 \begin{equation}\label{eq:dec1form}\begin{aligned} &
\alpha ^{\sharp}  =
\frac{\langle \alpha ^{\sharp} ,
    e^{ \im \Sigma _3\vartheta }\partial _\omega \Phi \rangle }
  {q'(\omega )}
  \left ( e^{  \im \Sigma _3\vartheta }    \Phi \right ) ^* +
  \frac{\langle \alpha ^{\sharp} , e^{ \im \Sigma _3\vartheta }
  \Sigma _3 \Phi \rangle }
  {q'(\omega )}\left (
  e^{  \im \Sigma _3\vartheta }\Sigma _3 \partial _\omega \Phi \right )^*
  \\&    + \sum  _{j=1}^{n} \varepsilon _j \langle \alpha ^{\sharp} , e^{  \im
\Sigma _3\vartheta }    \xi _j \rangle
  \left ( e^{  \im \Sigma _3\vartheta }\Sigma _3   \xi _j\right ) ^* -
   \sum _{j=1}^{n} \varepsilon _j \langle \alpha ^{\sharp} , e^{  \im \Sigma
_3\vartheta }
   \Sigma _1C\xi _j \rangle  \left ( e^{  \im \Sigma _3\vartheta }
   \Sigma _3\Sigma _1C\xi _j \right ) ^*  \\&  +
   e^{-  \im \Sigma _3\vartheta }
   \left (P_c(\mathcal{H}_\omega ^* ) e^{ -\im \Sigma _3\vartheta }(\alpha
^{\sharp} )^* \right ) ^* .
 \end{aligned}
\end{equation}
\end{lemma}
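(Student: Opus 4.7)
The plan is to obtain \eqref{Eq:dualSpecDec} and \eqref{eq:dec1form} from the corresponding statements for $\mathcal{H}_\omega$ by dualizing along the identity $\mathcal{H}_\omega^* = \Sigma_3 \mathcal{H}_\omega \Sigma_3$ from \eqref{Eq:SymmLin1}, after which \eqref{eq:dec1form} is essentially the transpose of \eqref{eq:decVectorfield} under the pairing \eqref{eq:Inner Product}.

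First I would verify \eqref{Eq:dualSpecDec}. Since $\Sigma_3^2 = I$, conjugation by $\Sigma_3$ intertwines $\mathcal{H}_\omega$ and $\mathcal{H}_\omega^*$, so $\sigma(\mathcal{H}_\omega^*) = \sigma(\mathcal{H}_\omega)$ and $N_g(\mathcal{H}_\omega^* - z) = \Sigma_3 N_g(\mathcal{H}_\omega - z)$ for every $z$ in the point spectrum. Combined with Lemma \ref{lem:spectrum}(4), this supplies $\{\Phi_\omega,\Sigma_3 \partial_\omega \Phi_\omega\}$ as a basis of $N_g(\mathcal{H}_\omega^*)$, $\Sigma_3 \xi_j$ as a basis of $\ker(\mathcal{H}_\omega^* - \lambda_j)$, and $\Sigma_3 \Sigma_1 C \xi_j$ as a basis of $\ker(\mathcal{H}_\omega^* + \lambda_j)$. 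Applying $\Sigma_3$ to \eqref{Eq:SpecDec} then yields \eqref{Eq:dualSpecDec}. The identification $\mathbf{X}_c(\mathcal{H}_\omega^*) = \{\mathbf{X}_d(\mathcal{H}_\omega)\}^\perp$ is the standard bi-orthogonality: for $v \in \ker(\mathcal{H}_\omega - \lambda)$ and $w \in \ker(\mathcal{H}_\omega^* - \mu)$ with $\lambda \neq \mu$ (real by \ref{Assumption:H5}), the computation $\lambda \langle v, w^* \rangle = \langle \mathcal{H}_\omega v, w^* \rangle = \langle v, (\mathcal{H}_\omega^* w)^* \rangle = \mu \langle v, w^* \rangle$ forces the pairing to vanish, and one passes to generalized kernels by induction on the Jordan rank.

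Next I would derive \eqref{eq:dec1form}. Given $\alpha^\sharp \in \mathbf{X}$, I expand it along \eqref{Eq:dualSpecDec} twisted by $e^{\im \Sigma_3 \vartheta}$ and identify each scalar coefficient by pairing $\alpha^\sharp$ against the corresponding primal basis vector appearing in \eqref{eq:decVectorfield}. By the bi-orthogonality above only the diagonal pairings survive, and one uses the normalizations
\begin{equation*}
\langle \Phi_\omega, (\partial_\omega \Phi_\omega)^*\rangle = q'(\omega), \qquad \langle \xi_j, (\Sigma_3 \xi_j)^*\rangle = \varepsilon_j,
\end{equation*}
together with the analogous identities for the vectors $\Sigma_1 C \xi_j$ that follow from \eqref{Eq:SymmLin3}. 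The first identity comes from differentiating $q(\omega) = \|\phi_\omega\|_{L^2}^2$ in $\omega$ and using Lemma \ref{lem:C} to check that the $\phi^c$-contribution equals the $\phi$-contribution; the second is the chosen normalization of $\xi_j$ from \S \ref{sec:spectrum}. Reading off the surviving coefficients reproduces the four discrete terms of \eqref{eq:dec1form}, while the remainder, by definition of the continuous subspace in \eqref{Eq:dualSpecDec} and after conjugating $P_c(\mathcal{H}_\omega^*)$ through the twist, has the stated form $e^{-\im \Sigma_3\vartheta}(P_c(\mathcal{H}_\omega^*) e^{-\im \Sigma_3\vartheta}(\alpha^\sharp)^*)^*$.

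The main obstacle is careful bookkeeping of the involution $(\cdot)^*$ inside the bilinear pairing \eqref{eq:Inner Product}, especially the sign conventions arising when $C$ acts on starred vectors and when $\Sigma_3$ is commuted through $e^{\im \Sigma_3 \vartheta}$. Once these signs are tracked the argument is routine linear algebra, since \eqref{eq:dec1form} is the adjoint counterpart of \eqref{eq:decVectorfield}.
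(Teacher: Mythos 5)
Your proposal is correct and follows essentially the same route the paper intends: the paper offers no written proof beyond ``by duality'' from \eqref{eq:decVectorfield}, and your argument — transporting \eqref{Eq:SpecDec} by the conjugation $\mathcal{H}_\omega^*=\Sigma_3\mathcal{H}_\omega\Sigma_3$, then identifying the coefficients of a 1-form through bi-orthogonality and the normalizations $\langle \partial_\omega\Phi,\Phi^*\rangle=q'(\omega)$, $\langle\xi_j,\Sigma_3\xi_j^*\rangle=\varepsilon_j$ (with the sign flip $\langle\Sigma_1C\xi_j,(\Sigma_3\Sigma_1C\xi_j)^*\rangle=-\varepsilon_j$ accounting for the minus in \eqref{eq:dec1form}) — is exactly that duality argument made explicit.
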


\section{Modulation and coordinates}
\label{sec:modulation}

\subsection{Modulation}

Consider the $U$ in \eqref{Eq:linearization}. Then, in the notation of \eqref{Eq:linearization},
\eqref{Eq:ansatz} can be written as
\begin{equation}\label{Eq:ansatzU} U = e^{\rmi \Sigma _3 \vartheta  }
(\Phi _{\omega}+ R).
\end{equation}
Consider the following two functions
\begin{equation} \mathcal{F}(U,\omega , \vartheta ):=\langle
e^{-{\rm i} \Sigma _3\vartheta }U-\Phi _\omega  , \Phi _\omega ^*\rangle
\, , \quad \mathcal{G}(U,\omega , \vartheta ):=\langle e^{-{\rm i}
\Sigma _3\vartheta }U  ,\Sigma _3\partial _\omega  \Phi
_\omega ^*\rangle . \nonumber
\end{equation}
Notice that $R \in N_g^\perp (\mathcal{H} ^\ast _{\omega  }) $ if
and only if $\mathcal{F}(U,\omega , \vartheta )=
\mathcal{G}(U,\omega , \vartheta )=0$. By {\ref{Assumption:H2}} the
map $\omega \in \mathcal{O}\to \phi _ {\omega } \in H^1(\R ^3) $
is $C^\infty$. Then  $\mathcal{F}$ and $\mathcal{G}$ are $C^\infty$
functions with partial derivatives
\begin{equation} \label{Eq:derAnsatz}
\begin{aligned} & \mathcal{F}_{\vartheta}(U,\omega , \vartheta ) =-{\rm i}
\langle
\Sigma _3e^{-{\rm i} \Sigma _3\vartheta }U, \Phi _\omega ^* \rangle
 \, , \\&
\mathcal{F}_{\omega}(U,\omega , \vartheta ) = -2 q'(\omega)+\langle
e^{-{\rm i}\Sigma _3\vartheta }U, \partial _\omega \Phi _\omega ^*
\rangle \ ,
 \\&  \mathcal{F}_{U}(U,\omega , \vartheta )=e^{-{\rm i} \Sigma _3\vartheta
} \Phi _\omega  ^*  \,  , \quad   \mathcal{G}_{U}(U,\omega , \vartheta
)=e^{-{\rm i} \Sigma _3\vartheta } \Sigma _3\partial _\omega \Phi
_\omega  ^* \, ,\\& \mathcal{G}_{\vartheta} (U,\omega , \vartheta )=-{\rm i}
\langle
 e^{-{\rm i} \Sigma _3\vartheta }U,\partial _\omega  \Phi _\omega   ^* \rangle
 \,  , \quad   \mathcal{G}_{\omega} (U,\omega , \vartheta )=\langle e^{-{\rm i} \Sigma
_3\vartheta }U ,\Sigma _3\partial _\omega ^2  \Phi _\omega ^* \rangle
  .
\end{aligned}
\end{equation}
We have $\mathcal{F}(e^{ {\rm i} \Sigma _3\vartheta } \Phi _\omega
,\omega , \vartheta ) =\mathcal{G}(e^{ {\rm i} \Sigma _3\vartheta }
\Phi _\omega ,\omega , \vartheta ) =0$. For $U=e^{ {\rm i} \Sigma
_3\vartheta } \Phi _\omega $ in \eqref{Eq:derAnsatz} we get

\begin{equation*} \label{Eq:derAnsatz1}
\begin{aligned} & \mathcal{F}_{\vartheta}(e^{ {\rm i} \Sigma
_3\vartheta } \Phi _\omega ,\omega , \vartheta ) =0 \quad , \quad
\mathcal{F}_{\omega}(U,\omega , \vartheta ) = -  q'(\omega)  \quad ,
   \\& \mathcal{G}_{\vartheta} (e^{ {\rm i} \Sigma
_3\vartheta } \Phi _\omega ,\omega , \vartheta )=-{\rm i} q'(\omega)
  \quad , \quad \mathcal{G}_{\omega} (e^{ {\rm i} \Sigma
_3\vartheta } \Phi _\omega ,\omega , \vartheta )=0 \quad
  .
\end{aligned}
\end{equation*}
Then by the implicit function theorem and {\ref{Assumption:H3}}
there is a unique choice of functions $\theta =\theta (U)$, $\omega
=\omega (U)$ which are $C^\infty$ and yield to the following lemma.

\begin{lemma}[Modulation] \label{lem:modulation} For any $\omega _1\in
\mathcal{O}$ there exist $\varepsilon >0 $ and $C>0$ such that for
any $ u\in H^1 (\R ^3)$ with $\| u-e^{\rmi  \vartheta _1}\phi
_{\omega _1}\| <\epsilon <\varepsilon $, there exists a unique
choice of $(\vartheta , \omega , r)$ such that  $|\omega  -\omega
_1|+|\vartheta-\vartheta _1|<C\epsilon $ for a fixed $C$,  $R \in
N_g^\perp (\mathcal{H} ^\ast _{\omega  }) $ and \eqref{Eq:ansatzU} hold.
\end{lemma}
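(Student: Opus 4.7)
The plan is to apply the implicit function theorem to the system $\mathcal{F}(U,\omega,\vartheta)=0$, $\mathcal{G}(U,\omega,\vartheta)=0$, treating $U$ as a parameter and $(\omega,\vartheta)$ as the unknowns. Under the linear stability hypothesis \ref{Assumption:H5} together with Lemma \ref{lem:spectrum}(4)--(5), the generalized kernel $N_g(\mathcal{H}^*_\omega)$ is spanned by $\Phi_\omega$ and $\Sigma_3\partial_\omega\Phi_\omega$, so the orthogonality condition $R\in N_g^\perp(\mathcal{H}^*_\omega)$ with $R = e^{-\rmi\Sigma_3\vartheta}U - \Phi_\omega$ is equivalent to $\mathcal{F}=\mathcal{G}=0$. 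Thus the problem of extracting modulation parameters reduces to inverting a two-parameter system.

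First I would observe that the base point $(U_0,\omega_1,\vartheta_1)$ with $U_0 = e^{\rmi\vartheta_1\Sigma_3}\Phi_{\omega_1}$ trivially satisfies $\mathcal{F}(U_0,\omega_1,\vartheta_1)=\mathcal{G}(U_0,\omega_1,\vartheta_1)=0$. Next, inserting $U=e^{\rmi\Sigma_3\vartheta}\Phi_\omega$ into \eqref{Eq:derAnsatz} and using $q(\omega)=\|\phi_\omega\|_{L^2}^2$, the Jacobian of $(\mathcal{F},\mathcal{G})$ in $(\omega,\vartheta)$ at the base point reduces to the diagonal form
\[
\begin{pmatrix} \mathcal{F}_\omega & \mathcal{F}_\vartheta \\ \mathcal{G}_\omega & \mathcal{G}_\vartheta \end{pmatrix}=\begin{pmatrix} -q'(\omega_1) & 0 \\ 0 & -\rmi q'(\omega_1)\end{pmatrix},
\]
whose determinant is $\rmi\, q'(\omega_1)^2$, nonzero by \ref{Assumption:H3}. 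By \ref{Assumption:H2}, the map $\omega\mapsto\Phi_\omega$ is $C^\infty$ into $H^{k,\tau}$ for every $(k,\tau)$, hence $\mathcal{F},\mathcal{G}$ are $C^\infty$ functions of $(U,\omega,\vartheta)$ on $H^1(\R^3,\C^4)\times\mathcal{O}\times\R$. The implicit function theorem therefore produces unique $C^\infty$ functions $\omega(U),\vartheta(U)$ on an $H^1$-neighborhood of $U_0$, together with the Lipschitz bound $|\omega(U)-\omega_1|+|\vartheta(U)-\vartheta_1|\leq C\|U-U_0\|_{H^1}$.

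Setting $R:=e^{-\rmi\Sigma_3\vartheta(U)}U-\Phi_{\omega(U)}$ then delivers \eqref{Eq:ansatzU}, and $R\in N_g^\perp(\mathcal{H}^*_{\omega(U)})$ holds by construction since $\mathcal{F}=\mathcal{G}=0$ encodes precisely the orthogonality against the two generators of $N_g(\mathcal{H}^*_{\omega(U)})$. Uniqueness of $(\omega,\vartheta,r)$ in the specified neighborhood is inherited from the uniqueness clause of the IFT. The transition from the bound on $\|U-U_0\|_{H^1}$ back to the $H^1$ closeness of $u$ to $e^{\rmi\vartheta_1}\phi_{\omega_1}$ is immediate, since $U$ is determined by $u$ (through the charge-conjugate second component) with comparable norms.

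The one subtlety I expect to verify carefully is the interplay between the complex-bilinear pairing and the real modulation parameters: $\mathcal{F}$ and $\mathcal{G}$ are a priori $\C$-valued while $(\omega,\vartheta)\in\R^2$. However, when $U$ respects the charge-conjugation structure $U=(u,u^c)^T$ (which is preserved by the flow thanks to \ref{Assumption:H4}), the anti-linearity of $C$ forces $\mathcal{F}$ to take values in $\R$ and $\mathcal{G}$ in $\rmi\R$ to leading order, which is exactly compatible with the diagonal Jacobian computed above; two real equations in two real unknowns then match up, and the IFT applies without further issue. This reconciliation is really the only conceptual point, while everything else is formal once \eqref{Eq:derAnsatz} and the nondegeneracy from \ref{Assumption:H3} are in hand.
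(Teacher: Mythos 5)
Your proposal is correct and follows essentially the same route as the paper: the paper likewise characterizes $R\in N_g^\perp(\mathcal{H}^*_\omega)$ by $\mathcal{F}=\mathcal{G}=0$, computes the partial derivatives \eqref{Eq:derAnsatz}, evaluates them at $U=e^{\rmi\Sigma_3\vartheta}\Phi_\omega$ to get the diagonal Jacobian with entries $-q'(\omega)$ and $-\rmi q'(\omega)$, and concludes by the implicit function theorem together with \ref{Assumption:H3}. Your final remark is even slightly conservative: under the charge-conjugation structure $U=(u,u^c)^T$ one checks that $\mathcal{F}$ is exactly real and $\mathcal{G}$ exactly purely imaginary (not merely to leading order), so the reduction to two real equations in $(\omega,\vartheta)$ is clean.
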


Consider   the two $C^\infty$
 functions $\vartheta,\omega : U\in B_{H^1}(e^{i\Sigma
 \vartheta_0}\Phi_{\omega_1},\varepsilon)\to \R$. Inserting  \eqref{Eq:ansatzU}
in  \eqref{Eq:derAnsatz} we get
  \begin{equation}
\begin{aligned} & \mathcal{F}_{\vartheta} = -\im \langle \Sigma _3R, \Phi
_\omega ^*
\rangle   \, ; \quad   \mathcal{F}_{\omega} =  - q'(\omega)+\langle
R,
\partial _\omega \Phi _\omega  ^*  \rangle
 \, ; \\&   \mathcal{F}_{U}=e^{-\im \Sigma _3\vartheta
} \Phi _\omega  ^*   \, ; \quad   \mathcal{G}_{U}=e^{-\im
\Sigma _3\vartheta } \Sigma _3\partial _\omega \Phi _\omega  ^* \,
;\\& \mathcal{G}_{\vartheta} =-\im  ( q'(\omega)+   \langle  R,
\partial _\omega \Phi _\omega
 ^* \rangle   )  \, ; \quad
  \mathcal{G}_{\omega} =
  \langle R ,
  \Sigma _3\partial _\omega ^2  \Phi _\omega ^* \rangle \, .
\end{aligned}\nonumber
\end{equation}
 Then, if we set
\begin{equation}\label{eq:matrixA} \mathcal{A}:=\begin{pmatrix} -
q'(\omega)+\langle  R, \partial _\omega \Phi _\omega  ^*  \rangle &
-\im \langle \Sigma _3R, \Phi _\omega  ^* \rangle \\ \langle R ,\Sigma
_3\partial _\omega ^2  \Phi _\omega ^* \rangle  & -\im  ( q'(\omega)+
\langle  R, \partial _\omega \Phi _\omega  ^* \rangle   ) \end{pmatrix}
\end{equation}
we have the following equality
\begin{equation}\label{eq:ApplmatrixA}
\mathcal{A}
\begin{pmatrix} \nabla \omega  \\ \nabla \vartheta \end{pmatrix}
=\begin{pmatrix} -e^{-\im \Sigma _3\vartheta } \Phi _\omega  ^*  \\
-e^{-\im \Sigma _3\vartheta } \Sigma _3\partial _\omega\Phi _\omega ^*
\end{pmatrix} ,
\end{equation}
where   given a vector field $X$ and a scalar valued function $F$, we have $XF=\langle \nabla F  ,X\rangle =dF(X),$ with $dF$ the exterior differential and $\nabla F$ the gradient.

By the above discussion we obtain  the following lemma.
\begin{lemma}  \label{lem:grad omega theta} We have the following formulas:
\begin{equation*}\label{eq:GradModulation}
\begin{aligned} &
\nabla \omega =\frac{(q'(\omega)+ \langle  R, \partial _\omega \Phi
_\omega  ^* \rangle   ) \left ( e^{  \im \Sigma _3\vartheta }    \Phi \right )
^*
-\langle \Sigma _3R, \Phi _\omega  ^* \rangle \left (
  e^{  \im \Sigma _3\vartheta }\Sigma _3 \partial _\omega \Phi \right
)^*}{(q'(\omega))^2 -\langle  R,
\partial _\omega \Phi _\omega  ^* \rangle ^2 + \langle \Sigma _3R, \Phi
_\omega  ^* \rangle \langle R ,\Sigma _3\partial _\omega ^2  \Phi
_\omega ^* \rangle }
\\ & \nabla \vartheta =\frac{\langle R ,\Sigma _3\partial _\omega ^2  \Phi
_\omega ^* \rangle  \left ( e^{  \im \Sigma _3\vartheta }    \Phi \right ) ^*
+(q'(\omega)- \langle  R, \partial _\omega \Phi _\omega ^*  \rangle
)\left (
  e^{  \im \Sigma _3\vartheta }\Sigma _3 \partial _\omega \Phi \right )^*} {\im
\left [ q'(\omega))^2 -\langle R,
\partial _\omega \Phi _\omega  ^* \rangle ^2 + \langle \Sigma _3R, \Phi
_\omega ^*  \rangle \langle R ,\Sigma _3\partial _\omega ^2  \Phi
_\omega ^* \rangle\right ]} \, .
\end{aligned}
\end{equation*}
\end{lemma}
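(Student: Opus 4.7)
The plan is simply to invert the $2\times 2$ system \eqref{eq:ApplmatrixA} by Cramer's rule and read off the two gradients.

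First I would check that the matrix $\mathcal{A}$ in \eqref{eq:matrixA} is invertible in a small neighbourhood of the standing wave. A direct expansion of the determinant gives
\begin{equation*}
\det \mathcal{A}=\im\left[(q'(\omega))^{2}-\langle R,\partial_{\omega}\Phi_{\omega}^{*}\rangle^{2}+\langle\Sigma_{3}R,\Phi_{\omega}^{*}\rangle\langle R,\Sigma_{3}\partial_{\omega}^{2}\Phi_{\omega}^{*}\rangle\right],
\end{equation*}
which is exactly $\im$ times the denominator appearing in the claimed formulas. By Assumption \ref{Assumption:H3} we have $q'(\omega)\neq0$, and Lemma \ref{lem:modulation} guarantees that $R$ is as small as we wish provided $\epsilon$ is small enough; hence the bracket above is bounded below by $(q'(\omega))^{2}/2$, say, so $\mathcal{A}$ is invertible on the set where the modulation is defined.

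Next I would apply Cramer's rule to \eqref{eq:ApplmatrixA}. For $\nabla\omega$ this replaces the first column of $\mathcal{A}$ with the right-hand side of \eqref{eq:ApplmatrixA}, yielding
\begin{equation*}
\nabla\omega=\frac{1}{\det\mathcal{A}}\Bigl[(-e^{-\im\Sigma_{3}\vartheta}\Phi_{\omega}^{*})\bigl(-\im(q'(\omega)+\langle R,\partial_{\omega}\Phi_{\omega}^{*}\rangle)\bigr)-(-\im\langle\Sigma_{3}R,\Phi_{\omega}^{*}\rangle)(-e^{-\im\Sigma_{3}\vartheta}\Sigma_{3}\partial_{\omega}\Phi_{\omega}^{*})\Bigr].
\end{equation*}
Cancelling the factor $\im$ in numerator and denominator and rewriting $e^{-\im\Sigma_{3}\vartheta}\Phi_{\omega}^{*}=(e^{\im\Sigma_{3}\vartheta}\Phi)^{*}$ (since $\Sigma_{3}$ is real) gives the first formula. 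Doing the analogous calculation for $\nabla\vartheta$, where now the second column of $\mathcal{A}$ is replaced, yields the second formula, the remaining $\im$ surviving in the denominator.

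There is no real obstacle here since the content is purely algebraic; the only non-automatic issue is the nondegeneracy of $\mathcal{A}$, which is handled by Assumption \ref{Assumption:H3} together with the smallness of $R$ from Lemma \ref{lem:modulation}. Everything else is a two-line Cramer computation.
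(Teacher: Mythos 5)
Your proposal is correct and coincides with the paper's (essentially unstated) argument: Lemma \ref{lem:grad omega theta} is obtained precisely by inverting the $2\times 2$ system \eqref{eq:ApplmatrixA}, and your determinant computation, the cancellation of the factor $\im$ for $\nabla\omega$, and the identification $e^{-\im\Sigma_3\vartheta}\Phi^*=(e^{\im\Sigma_3\vartheta}\Phi)^*$ all check out. The invertibility remark via {\ref{Assumption:H3}} and the smallness of $R$ from Lemma \ref{lem:modulation} is exactly the right justification, so nothing is missing.
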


\subsection{Coordinates}

 For $\omega\in\mathcal{O}$  we consider decomposition \eqref{Eq:SpecDec}.
    By
$P_c(\mathcal{H}_{\omega})$ (resp.
$P_d(\mathcal{H}_{\omega})$), or simply by $P_c( {\omega})$\index{$P_c(\omega)$}
(resp.
$P_d( {\omega})$), we denote the projection on
$\mathbf{X} _c(\mathcal{H}_\omega)$  (resp. $\mathbf{X} _d(\mathcal{H}_\omega)$). The space
$\mathbf{X} _c(\mathcal{H}_{\omega})$ ``depends continuously'' on $\omega$, as
$P_c(\omega)=1-P_d(\omega)$ depends smoothly on $\omega$.

  By Lemma \ref{lem:modulation} we specify the
ansatz \eqref{Eq:ansatzU} imposing    $\omega \in \mathcal{O}$, $\vartheta \in \R$ and $R\in N^{\perp}_g
(\mathcal{H}_\omega ^*)$.
Fix $\omega _0$, where $q(\omega _0)=\| u_0\| ^2_{L^2}$.
For $\omega$ close to $\omega_0$ the map
$P_c(\mathcal{H}_{\omega})$ is an isomorphism from
$\mathbf{X} _c(\mathcal{H}_{\omega _0})$ to $\mathbf{X} _c(\mathcal{H}_{\omega })$.
In particular  we write ($\overline{z
}_j$ is the complex conjugate  of the scalar $z_j$)
\begin{align}
  \label{eq:decomp2}
& N_g^\perp(\mathcal{H}_{\omega }^*)\backepsilon R  =\sum
_{j=1}^{n}z_j \xi _j(\omega )+ \sum _{j=1}^{n}\overline{z
}_j\Sigma_1C\xi _j (\omega ) +P_c(\mathcal{H}_{\omega  } )f \,
,\quad f \in \mathbf{X} _c(\mathcal{H}_{\omega _0}).
\end{align}
Setting $z\cdot \xi =\sum _{j=1}^{n}z_j \xi _j$ and
$\overline{z}\cdot \Sigma_1C\xi   =\sum _{j=1}^{n}\overline{z}_j
\Sigma_1C\xi _j $, we write
\begin{equation}\label{eq:coordinate}U= e^{\im \Sigma _3\vartheta}
 \left( \Phi
_\omega +z \cdot \xi  (\omega )+  \overline{z }\cdot \Sigma_1C\xi
  (\omega )+P_c(\mathcal{H}_{\omega}) f\right )    \end{equation}
 $\omega \in \mathcal{O}$  close to $\omega_0$, $(z,f)\in \mathbb{ C}^n \times
\mathbf{X} _c(\mathcal{H}_{\omega _0})$ close to 0, are our coordinates.
In the sequel, we set
\begin{equation*} \label{eq:partialR}\index{$\partial _\omega R$} \partial
_\omega R:=
\sum _{j=1}^{n}z_j \partial _\omega \xi _j(\omega )+ \sum
_{j=1}^{n}\overline{z }_j\Sigma_1C\partial _\omega \xi _j (\omega
)+\partial _\omega P_c(\mathcal{H}_{\omega } )f.
\end{equation*}
Then  we have the vector fields
\begin{equation}\label{eq:vectorfields}\index{$\frac \partial {\partial
{\omega}}$} \index{$\frac \partial {\partial
{\vartheta}}$}\index{$\frac \partial {\partial
{z_j}}$} \index{$\frac \partial {\partial
{\bar z_j}}$}\begin{aligned} &
\frac \partial {\partial  {\omega}}    =
   e^{ \im \Sigma _3\vartheta } \partial _\omega ( \Phi +R)
\, ,\, \frac \partial {\partial  {\vartheta}}  =\im
   e^{ \im \Sigma _3\vartheta } \Sigma _3 ( \Phi +R)
,\\& \frac \partial {\partial  {z_j}}   =
   e^{ \im \Sigma _3\vartheta }  \xi _j   \, ,\,
   \frac \partial {\partial  {\overline{z}_j}}    =
   e^{ \im \Sigma _3\vartheta }\Sigma _1 C\xi _j  .\end{aligned}
\end{equation}
 In particular, given a scalar function $F$,   we have
\begin{equation*}\label{eq:derivativeZ} \begin{aligned} &
\partial _{\omega}F =\langle \nabla F  ,
   e^{ \im \Sigma _3\vartheta } \partial _\omega ( \Phi +R) \rangle
\, ,\, \partial _{\vartheta }F  =\im \langle \nabla F  ,
   e^{ \im \Sigma _3\vartheta } \Sigma _3 ( \Phi +R) \rangle
,\\&
\partial _{z_j}F =
\langle \nabla F  ,
   e^{ \im \Sigma _3\vartheta }  \xi _j \rangle \, ,\,
   \partial _{\overline{z}_j}F = \langle \nabla F  ,
   e^{ \im \Sigma _3\vartheta }\Sigma _1 C\xi _j\rangle .\end{aligned}
\end{equation*}

\begin{lemma} \label{lem:gradient zf} We have the following
formulas:

\begin{equation*} \label{ZOmegaTheta} \begin{aligned} &
 \varepsilon _j\nabla z_j =  - \langle \Sigma _3 \xi _j^*, \partial _\omega R
\rangle
  \nabla \omega - \im \langle \Sigma _3 \xi _j^*, \Sigma _3 R \rangle
  \nabla \vartheta +   e^{-\im \Sigma _3\vartheta }
  \Sigma
_3\xi _j^*\\   & \varepsilon _j \nabla \overline{z}_j =  - \langle \Sigma
_1\Sigma
_3 (C\xi _j )^*,
\partial _\omega R \rangle \nabla \omega-
   \im \langle \Sigma _1\Sigma _3 (C\xi _j )^*, \Sigma _3 R \rangle
   \nabla \vartheta  +    e^{-\im \Sigma _3\vartheta }
  \Sigma _1\Sigma _3 (C\xi _j )^*\\   &
   f'(U)= (P_c(\omega )P_c(\omega _0))^{-1} P_c(\omega )\left [
 -   \partial _\omega R \, d\omega -\im
  \Sigma _3 R  \, d\vartheta +  e^{-\im
\Sigma _3\vartheta }\uno \right ] ,\end{aligned}
\end{equation*} with $(P_c(\omega
)P_c(\omega _0))^{-1}: \mathbf{X} _c(\mathcal{H}_{\omega
 })\to \mathbf{X} _c(\mathcal{H}_{\omega _0})$ the inverse of
$P_c(\omega )P_c(\omega _0):\mathbf{X} _c(\mathcal{H}_{\omega _0})\to
\mathbf{X} _c(\mathcal{H}_{\omega })$ and $ \varepsilon _j=
\langle\xi_j,\Sigma_3\xi_j\rangle$.
\end{lemma}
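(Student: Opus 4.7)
The plan is to verify that the three formulas in the lemma give the 1--forms dual to the coordinate vector fields listed in \eqref{eq:vectorfields} together with the ``$f$-direction'' fields $\eta\mapsto e^{\im\Sigma_3\vartheta}P_c(\omega)\eta$ for $\eta\in\mathbf{X}_c(\mathcal{H}_{\omega_0})$. Since Lemma \ref{lem:grad omega theta} already gives $\nabla\omega$ and $\nabla\vartheta$, it remains only to check the Kronecker relations for the candidate $\nabla z_j$, $\nabla\bar z_j$ and $f'(U)$: each should annihilate the four coordinate directions it should, and reproduce the identity on its own direction.

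The principal part of $\nabla z_j$ is $\varepsilon_j^{-1}e^{-\im\Sigma_3\vartheta}\Sigma_3\xi_j^*$, chosen because $\Sigma_3\xi_j\in\ker(\mathcal{H}_\omega^*-\lambda_j)$ via \eqref{Eq:SymmLin1} is the natural dual basis vector in the decomposition \eqref{eq:dec1form}. Biorthogonality of eigenspaces of $\mathcal{H}_\omega$ and $\mathcal{H}_\omega^*$ for distinct eigenvalues gives the pairings $\langle\Sigma_3\xi_j^*,\xi_k\rangle=\varepsilon_j\delta_{jk}$, $\langle\Sigma_3\xi_j^*,\Sigma_1 C\xi_k\rangle=0$, and $\langle\Sigma_3\xi_j^*,P_c(\omega)\eta\rangle=0$; the two pairings with $\Phi$ and $\partial_\omega\Phi$ also vanish via the identities $\mathcal{H}_\omega\Sigma_3\Phi=0$, $\mathcal{H}_\omega\partial_\omega\Phi=-\Sigma_3\Phi$ and $\lambda_j\neq 0$ (a two-line calculation using $\mathcal{H}_\omega^*=\Sigma_3\mathcal{H}_\omega\Sigma_3$). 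Consequently the principal term alone is already correctly dual in the $z_k$, $\bar z_k$ and $f$ directions, but produces the unwanted contributions $\varepsilon_j^{-1}\langle\Sigma_3\xi_j^*,\partial_\omega R\rangle$ and $\im\,\varepsilon_j^{-1}\langle\Sigma_3\xi_j^*,\Sigma_3 R\rangle$ in the $\omega$ and $\vartheta$ directions because $\partial/\partial\omega$ and $\partial/\partial\vartheta$ each contain an $R$-piece. These are cancelled exactly by the correction terms $-\varepsilon_j^{-1}\langle\Sigma_3\xi_j^*,\partial_\omega R\rangle\nabla\omega$ and $-\im\,\varepsilon_j^{-1}\langle\Sigma_3\xi_j^*,\Sigma_3 R\rangle\nabla\vartheta$, yielding the formula in the lemma. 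The statement for $\nabla\bar z_j$ is obtained by the same procedure with $\Sigma_1\Sigma_3 C\xi_j^*\in\ker(\mathcal{H}_\omega^*+\lambda_j)$ in place of $\Sigma_3\xi_j^*$.

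For the formula for $f'(U)$, the idea is that the $\mathbf{X}_c(\mathcal{H}_{\omega_0})$-component of a tangent vector $dU$ is extracted by undoing the phase, projecting onto $\mathbf{X}_c(\mathcal{H}_\omega)$ with $P_c(\omega)$, and then pulling back via the isomorphism $P_c(\omega)P_c(\omega_0):\mathbf{X}_c(\mathcal{H}_{\omega_0})\to\mathbf{X}_c(\mathcal{H}_\omega)$, which is well defined for $\omega$ close to $\omega_0$. The correction $-\partial_\omega R\,d\omega-\im\Sigma_3 R\,d\vartheta$ subtracts off the $R$-pieces arising in the $\omega$- and $\vartheta$-directions; the $\Phi$-pieces are killed automatically by $P_c(\omega)$, and the $\xi_j$ and $\Sigma_1 C\xi_j$ directions give zero for the same reason. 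A direct substitution from \eqref{eq:vectorfields} into the five duality relations then verifies the formula, e.g.\ for $dU=e^{\im\Sigma_3\vartheta}P_c(\omega)\eta$ one gets $(P_c(\omega)P_c(\omega_0))^{-1}P_c(\omega)\eta=\eta$. I do not expect a genuine obstacle: the whole calculation is a systematic unwinding of the biorthogonality of the spectral decomposition \eqref{Eq:SpecDec}--\eqref{Eq:dualSpecDec}, the only slightly delicate input being the two non-obvious vanishings $\langle\Sigma_3\xi_j^*,\Sigma_3\Phi\rangle=\langle\Sigma_3\xi_j^*,\partial_\omega\Phi\rangle=0$ mentioned above.
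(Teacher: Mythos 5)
Your argument is correct, but it proceeds by verification rather than by the paper's derivation. The paper starts from the same Kronecker relations \eqref{eq:indentitiesGradZ}, deduces $P_c(\mathcal{H}^*_\omega)e^{\im\Sigma_3\vartheta}\nabla z_j=0$, expands $\nabla z_j$ over the dual frame \eqref{eq:dec1form} with two undetermined coefficients $a,b$ along $\Phi^*$ and $\Sigma_3\partial_\omega\Phi^*$, rewrites these through \eqref{eq:ApplmatrixA} as $-(a,b)\,\mathcal{A}\,(\nabla\omega,\nabla\vartheta)^T$, and then fixes $(a,b)$ by pairing with $\partial/\partial\omega$ and $\partial/\partial\vartheta$, solving the small linear system $\mathcal{A}^*(a,b)^T=\varepsilon_j(\langle\Sigma_3\xi_j^*,\partial_\omega R\rangle,\im\langle\Sigma_3\xi_j^*,\Sigma_3R\rangle)^T$. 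You instead take the stated formula as an ansatz and check the duality relations directly: the principal term $e^{-\im\Sigma_3\vartheta}\Sigma_3\xi_j^*$ handles the $z_k$, $\overline{z}_k$ and $f$ directions by spectral orthogonality, and the two correction terms cancel the spurious $R$-contributions in the $\omega$ and $\vartheta$ directions because $d\omega$, $d\vartheta$ are dual to $\partial/\partial\omega$, $\partial/\partial\vartheta$ and annihilate the other coordinate fields; the same scheme disposes of the $\nabla\overline{z}_j$ and $f'(U)$ formulas. This bypasses the matrix $\mathcal{A}$ of \eqref{eq:matrixA} and identity \eqref{eq:ApplmatrixA} altogether, at the price of not showing how the formula is found, whereas the paper's constructive route is the one carried over verbatim from Lemmas 4.1--4.2 of \cite{Cuccagna1}. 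Two minor remarks: the relations $\langle\Sigma_3\xi_j^*,\xi_k\rangle=\varepsilon_j\delta_{jk}$ are not pure biorthogonality when an eigenvalue is repeated --- they are the normalization of the $\xi_j$ fixed in Section \ref{sec:spectrum}, which is what you should invoke --- and your check is conclusive only because the fields \eqref{eq:vectorfields} together with the $f$-directions span the tangent space and the pairing \eqref{eq:Inner Product} is nondegenerate; both facts are part of the paper's setup, so there is no gap.
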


\begin{proof} The proof is similar to the proof of
\cite[Lemmas 4.1--4.2 ]{Cuccagna1}.
 Let us see for example the proof of the first formula.
Equalities   $ \frac{\partial z_j}{\partial z_\ell }  =\delta _{j\ell}$,
$ \frac{\partial z_j}{\partial \overline{z}_\ell }  = \frac{\partial
z_j}{\partial \omega }= \frac{\partial z_j}{\partial \vartheta } =0$
 and $\nabla _f z_j=0$  are equivalent to
\begin{equation}\label{eq:indentitiesGradZ} \begin{aligned} & \langle \nabla
z_j,
e^{ \im \Sigma _3\vartheta } \xi _\ell \rangle =\delta _{j\ell},
\langle \nabla z_j, e^{ \im \Sigma _3\vartheta } \Sigma _1C\xi _\ell
\rangle \equiv 0 =\langle \nabla z_j, e^{ \im \Sigma _3\vartheta }
\Sigma _3(\Phi +R) \rangle \\& \langle \nabla z_j, e^{ \im \Sigma
_3\vartheta } \partial _\omega (\Phi +R) \rangle =0\equiv \langle
\nabla z_j, e^{ \im \Sigma _3\vartheta } P _c( \omega ) P _c( \omega
_0)g \rangle \, \forall g\in \mathbf{X} _c(\mathcal{H}_{\omega _0}).
\end{aligned}
\end{equation}
Notice that the last identity implies $ P _c( \mathcal{H}_{\omega
_0}^{*} ) P _c( \mathcal{H}_{\omega  }^{*} )e^{ \im \Sigma
_3\vartheta }\nabla z_j=0$ which in turn implies $   P _c(
\mathcal{H}_{\omega  }^{*} )e^{ \im \Sigma _3\vartheta }\nabla
z_j=0$. Then, applying \eqref{eq:decVectorfield} and using the product
row column, we get for some pair of numbers $(a,b)$
\begin{equation} \begin{aligned} & \nabla z_j=a
e^{- \im \Sigma _3\vartheta }  \Phi ^*
+b e^{- \im \Sigma _3\vartheta }\Sigma _3\partial _\omega \Phi ^*+\varepsilon _j
e^{-\im \Sigma _3\vartheta } \Sigma _3\xi _j^*
\\& =  (a,b)  \begin{pmatrix} e^{- \im \Sigma _3\vartheta }  \Phi ^*
\\ e^{- \im \Sigma _3\vartheta }\Sigma _3\partial _\omega \Phi ^* \end{pmatrix}
+  \varepsilon _j
e^{-\im \Sigma _3\vartheta } \Sigma _3\xi _j^* =-(a,b)  \mathcal{A}
\begin{pmatrix} \nabla \omega
\\ \nabla \vartheta \end{pmatrix} +  \varepsilon _j
e^{-\im \Sigma _3\vartheta } \Sigma _3\xi _j^*,\end{aligned}\nonumber
\end{equation}
where in the last line we used \eqref{eq:ApplmatrixA}. Equating the two extreme
sides and  applying to the formula $\langle \cdot , \frac{\partial}{\partial
\omega}\rangle $ and $\langle \cdot , \frac{\partial}{\partial \vartheta}\rangle
$, by $\langle \nabla z_j , \frac{\partial}{\partial \omega}\rangle =\langle
\nabla z_j , \frac{\partial}{\partial \vartheta}\rangle =\langle \nabla
\vartheta , \frac{\partial}{\partial \omega}\rangle =\langle \nabla \omega ,
\frac{\partial}{\partial \vartheta}\rangle =0 $, by
$ \langle \nabla \vartheta , \frac{\partial}{\partial \vartheta}\rangle =\langle
\nabla \omega , \frac{\partial}{\partial \omega}\rangle =1 $ and by
  \eqref{eq:vectorfields} and \eqref{eq:indentitiesGradZ},  we get
\begin{equation}  \mathcal{A}^* \begin{pmatrix} a  \\ b \end{pmatrix}
=\varepsilon _j
\begin{pmatrix} \langle \Sigma _3 \xi _j^*, \partial _\omega R \rangle
  \\ \im \langle \Sigma _3 \xi _j^*, \Sigma _3 R \rangle \end{pmatrix}.\nonumber
\end{equation}
This implies
\begin{equation} \begin{aligned} &
\nabla z_j =  -\varepsilon _j(\langle \Sigma _3 \xi _j^*, \partial _\omega R
\rangle
  , \im \langle \Sigma _3 \xi _j^*, \Sigma _3 R \rangle )\begin{pmatrix} \nabla
\omega
\\ \nabla \vartheta \end{pmatrix}+  \varepsilon _j
e^{-\im \Sigma _3\vartheta } \Sigma _3\xi _j^*.\end{aligned}\nonumber
\end{equation}
\end{proof}

\section{Smoothing and dispersive estimates}
\label{sec:Dispersive}
We collect   the statements on linear theory needed
later to prove the nonlinear estimates.

\begin{lemma}
  \label{lem:absflat} The following facts are true.
 \begin{itemize}
\item[(i)] For any $\tau \geq 1 $   there exists $C$
independent of $\omega$ s.t.

 \begin{eqnarray}& \label{eq:absfree1}
    \|  R _{D_m}(z)  \psi  \| _{L^{2,-\tau }}
     \leq C \|\psi \| _{L^{2, \tau }}
     \text{ for all $z\not \in \R$}\\& \label{eq:absfree2}
      \|  R _{\mathcal{H} _{\omega ,0}}(z)
      \psi  \| _{L^{2,-\tau }} \leq C \|\psi \| _{L^{2, \tau }}
      \text{ for all $z\not \in \R$.}\end{eqnarray}
      \item[(ii)]
  For any   $\tau >1$   the following limits
   \begin{equation} \label{eq:absfree5}\index{$R_{D_m}^+$}
   R_{D_m}^+
    (\lambda )=\lim _{\varepsilon \searrow 0} R_{D_m}
    (\lambda \pm \rmi \varepsilon  )
    \text{ and } R_{\mathcal{H}_{\omega ,0}}^+
    (\lambda )= \lim _{\varepsilon \searrow 0}
R_{\mathcal{H}_{\omega ,0}}
    (\lambda \pm \rmi \varepsilon  )
   \end{equation}
   exist  in $B(H^{1, \tau   }_x, L^{2, -\tau   }_x)$ and the
   convergence is uniform for $\lambda $ in compact sets.
   \end{itemize}
\end{lemma}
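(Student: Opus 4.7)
The plan is to factor the Dirac resolvent through the scalar Laplacian resolvent using $D_m^2=-\Delta+m^2$, and then invoke Agmon's classical limiting absorption principle. From $(D_m-z)(D_m+z)=D_m^2-z^2=-\Delta+m^2-z^2$, and the observation that $z\notin\R$ forces $z^2\notin[m^2,\infty)=\sigma(-\Delta+m^2)$ (since $\Im(z^2)=2(\Re z)(\Im z)$ vanishes only on the imaginary axis, where $z^2\le 0$), I obtain the operator identity
\begin{equation}\label{eq:DMfact}
R_{D_m}(z)=(D_m+z)\,R_{-\Delta+m^2}(z^2)=R_{-\Delta+m^2}(z^2)\,(D_m+z),\qquad z\notin\R,
\end{equation}
valid on $H^1(\R^3,\C^4)$. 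This reduces everything to known bounds for the shifted scalar Laplacian.

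Agmon's LAP for $-\Delta+m^2$ provides, for $\tau>1/2$: first, uniform boundedness $\|R_{-\Delta+m^2}(w)\|_{L^{2,\tau}\to H^{2,-\tau}}\le C$ for $w$ in bounded subsets of $\C\setminus[m^2,\infty)$, together with the high-frequency resolvent decay $\|R_{-\Delta+m^2}(w)\|_{L^{2,\tau}\to L^{2,-\tau}}=O(|w|^{-1/2})$; second, the existence of the boundary values $R^{\pm}_{-\Delta+m^2}(\mu):=\lim_{\varepsilon\searrow 0}R_{-\Delta+m^2}(\mu\pm \rmi\varepsilon)$ in $B(L^{2,\tau},L^{2,-\tau})$ for each $\mu\in(m^2,\infty)$, uniformly on compact sets. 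Interpolating the two bounds above yields $\|R_{-\Delta+m^2}(w)\|_{L^{2,\tau}\to H^{1,-\tau}}=O(1)$ for all admissible $w$. Combining this with $\|(D_m+z)f\|_{L^{2,-\tau}}\le C\|f\|_{H^{1,-\tau}}+|z|\|f\|_{L^{2,-\tau}}$ in \eqref{eq:DMfact}, the $|z|$ growth coming from $D_m+z$ is exactly compensated by the $|z|^{-1}$ decay from the high-frequency resolvent estimate, which yields \eqref{eq:absfree1} uniformly for $z\notin\R$. For part (ii) at the level of $D_m$, I fix $\lambda\in\R$, $|\lambda|>m$, and $\psi\in H^{1,\tau}$: with $z=\lambda\pm\rmi\varepsilon$ and the right-hand form of \eqref{eq:DMfact}, $(D_m+z)\psi$ converges in $L^{2,\tau}$ to $(D_m+\lambda)\psi$, while $R_{-\Delta+m^2}(z^2)\to R^{\pm}_{-\Delta+m^2}(\lambda^2)$ in $B(L^{2,\tau},L^{2,-\tau})$, with the sign fixed by $\Im(z^2)=\pm 2\lambda\varepsilon$; hence $R_{D_m}(z)\psi$ converges in $L^{2,-\tau}$, the limit is bounded on $H^{1,\tau}$, and the convergence is uniform on compact subsets of $\{|\lambda|>m\}$.

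Passage to $\mathcal{H}_{\omega,0}$ is immediate from its block-diagonal structure: the identity
\[
R_{\mathcal{H}_{\omega,0}}(z)=\begin{pmatrix}R_{D_m}(z+\omega)&0\\ 0&R_{D_m}(z-\omega)\end{pmatrix}
\]
reduces both \eqref{eq:absfree2} and the existence of $R^{\pm}_{\mathcal{H}_{\omega,0}}(\lambda)$ to applying the $D_m$-estimates block by block with shifted arguments $z\pm\omega$. Independence of the constant from $\omega\in\mathcal{O}$ is automatic, because the underlying constants for $R_{D_m}$ and for the Laplacian's LAP are translation invariant in the spectral parameter. The one technical point worth flagging is the high-frequency bookkeeping in the proof of (i), where the naive use of \eqref{eq:DMfact} introduces a factor $|z|$ that must be recovered from the smoothing decay in Agmon's estimate; everything else is a transcription of well-known facts for the shifted scalar Laplacian. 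The spectral thresholds $\pm m$ for $D_m$, and $\pm(m\mp\omega)$ for $\mathcal{H}_{\omega,0}$, are excluded in both parts of the statement, so no endpoint analysis is required.
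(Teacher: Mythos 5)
Your factorization $R_{D_m}(z)=(D_m+z)R_{-\Delta+m^2}(z^2)$ is a genuinely different route from the paper's: the paper does not prove the lemma directly, it only observes that \eqref{eq:absfree1} implies \eqref{eq:absfree2} via the block-diagonal structure and then quotes \cite[Theorem 2.1]{IftimoviciMantoiu} for (i) and \cite[Theorem 1.6]{GeorgescuMantoiu} for (ii). Your reduction to Agmon-type bounds for the shifted Laplacian is essentially the device the paper itself uses later, in the proof of Lemma \ref{lem:smooth11}, where $R_{\mathcal{H}_{\omega,0}}^{\pm}$ is factored through $R_0^{\pm}(\lambda)\mathcal{A}(\lambda,\nabla)$ and the bounds $\|\lambda R^+_{-\Delta}(\lambda^2)\|\le C$, $\|\nabla R^+_{-\Delta}(\lambda^2)\|\le C$ are taken from \cite{Agmon}. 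So the strategy is viable, but as written it has a real gap.

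The gap is exactly where the lemma is delicate: part (i) asserts a constant uniform over \emph{all} $z\notin\R$, hence in particular for $z=\pm m+\rmi\varepsilon$ with $\varepsilon$ small, i.e. for $w=z^2$ approaching the threshold $m^2$ of $-\Delta+m^2$. Neither of your two quoted ingredients covers this regime: the $O(|w|^{-1/2})$ decay is a high-frequency statement, and ``uniform boundedness on bounded subsets of $\C\setminus[m^2,\infty)$'' is either trivial (if the subset keeps a positive distance from the spectrum) or is precisely the threshold estimate that needs proof; the standard Agmon LAP with $\tau>1/2$ is uniform only on compact subsets of $(m^2,\infty)$, and its constant is not controlled as $\Re w\downarrow m^2$ (for $1/2<\tau<1$ it genuinely blows up, as a scaling test with functions spread over $|x|\sim R$ shows). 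Your closing claim that the thresholds ``are excluded in both parts of the statement, so no endpoint analysis is required'' is therefore false for (i): uniformity over all non-real $z$ is a threshold-inclusive statement — this is why the paper cites a reference on the limiting absorption principle \emph{at critical values} for the Dirac operator, and it is also why (i) is stated for $\tau\ge 1$ rather than $\tau>1/2$; your argument never uses $\tau\ge 1$, which is a symptom of the missing step. The gap is reparable for the free operators: near $w=m^2$ one can use the explicit kernels (cf.\ Lemma \ref{lem:flat R}), whose limits $|x-y|^{-1}$ and $|x-y|^{-2}$ are bounded from $L^{2,\tau}$ to $L^{2,-\tau}$ for $\tau\ge1$ (at the borderline $\tau=1$ via a Stein--Weiss inequality), but that analysis must be carried out rather than dismissed. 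A second, smaller slip: the uniform $L^{2,\tau}\to H^{1,-\tau}$ bound cannot be obtained by interpolating an $H^{2,-\tau}$ bound that you only have on bounded $w$-sets against the global $L^{2,-\tau}$ decay; you should either quote Agmon's derivative estimate $\|\nabla R_{-\Delta+m^2}(w)\|_{B(L^{2,\tau},L^{2,-\tau})}\le C$ directly (as the paper does in Lemma \ref{lem:smooth11}), or first derive the high-energy $H^{2,-\tau}$ bound of size $O(|w|^{1/2})$ from the resolvent identity and then interpolate.
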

\begin{proof} Estimate \eqref{eq:absfree1} implies \eqref{eq:absfree2}.
Then (i) is the content of \cite[Theorem 2.1]{IftimoviciMantoiu}
while (ii) is contained in \cite[Theorem 1.6]{GeorgescuMantoiu}.
\end{proof}

\begin{lemma}
\label{lem:flat R} We have $R  _{\mathcal{H}_{\omega ,0}
}(x,y,\lambda )=  R  _{\mathcal{H}_{\omega ,0}
}(x-y,\lambda )=\begin{pmatrix} R  _{D_m }(x-y,\lambda +\omega ) & 0 \\
0 & R  _{D_m }(x-y,\lambda -\omega )
\end{pmatrix}  $ for $\lambda \not \in \sigma
(\mathcal{H}_{\omega ,0})$    with
\begin{equation} \label{eq:flat R1} \begin{aligned}&
 R  _{D_m }(x ,\Lambda  )= \begin{pmatrix}
 (\Lambda +m) I_2 &   \rmi \sqrt{m^2-\Lambda ^2}\sigma \cdot \widehat{x} \\
 \rmi \sqrt{m^2-\Lambda ^2}\sigma \cdot \widehat{x} &   (\Lambda
-m) I_2
\end{pmatrix} \frac{e^{-\sqrt{m^2-\Lambda ^2} |x|}}{4\pi |x| } +
\rmi   \frac{\alpha  \cdot \widehat{x}} {4\pi |x|
^2}e^{-\sqrt{m^2-\Lambda ^2} |x|}
\end{aligned}
\end{equation}
where  $\widehat{x}=x/|x|$ and  where for $\zeta =e^{\rmi
\vartheta} r$ with $r\ge 0$ and $\vartheta \in (-\pi , \pi )$ we
set $\sqrt{\zeta} =e^{\rmi \vartheta/2} \sqrt{r}$.
\end{lemma}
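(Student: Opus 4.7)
The statement has two essentially independent parts: the block-diagonal decomposition of $R_{\mathcal{H}_{\omega,0}}$ and the explicit formula \eqref{eq:flat R1} for the free Dirac resolvent kernel. I would handle them in that order.

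The block decomposition is immediate from the definition of $\mathcal{H}_{\omega,0}$ in \eqref{Eq:linearization}: since $\mathcal{H}_{\omega,0}=\operatorname{diag}(D_m-\omega,\,D_m+\omega)$ acts on $L^2(\mathbb{R}^3,\mathbb{C}^4)\oplus L^2(\mathbb{R}^3,\mathbb{C}^4)$ block-diagonally, its resolvent is the block-diagonal operator with entries $R_{D_m-\omega}(\lambda)=R_{D_m}(\lambda+\omega)$ and $R_{D_m+\omega}(\lambda)=R_{D_m}(\lambda-\omega)$. Convolution structure follows from translation invariance of $D_m$ and the associated Fourier multiplier calculus. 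Thus all the work is concentrated in deriving \eqref{eq:flat R1}.

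For the free Dirac resolvent I would reduce to the Helmholtz/Yukawa kernel via the identity $D_m^2=-\Delta+m^2$. Explicitly, for $\Lambda$ in the resolvent set of $D_m$ one has
\begin{equation*}
R_{D_m}(\Lambda)=(D_m+\Lambda)\bigl(D_m^2-\Lambda^2\bigr)^{-1}=(D_m+\Lambda)\bigl(-\Delta+(m^2-\Lambda^2)\bigr)^{-1}.
\end{equation*}
The kernel of $(-\Delta+\mu^2)^{-1}$ is the Yukawa potential $G_\mu(x)=\frac{e^{-\mu|x|}}{4\pi|x|}$ provided $\operatorname{Re}\mu>0$. The branch convention for $\sqrt{\zeta}$ stated in the lemma (cut along the negative real axis of $\zeta=m^2-\Lambda^2$) makes $\operatorname{Re}\sqrt{m^2-\Lambda^2}>0$ precisely when $\Lambda\notin(-\infty,-m]\cup[m,\infty)=\sigma(D_m)$, so $G_\mu$ is integrable and $(-\Delta+\mu^2)^{-1}$ is given by convolution with $G_\mu$.

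It then remains to apply the first-order differential operator $D_m+\Lambda=-\mathrm{i}\alpha\cdot\nabla+m\beta+\Lambda$ to $G_\mu$. Using spherical symmetry,
\begin{equation*}
\nabla G_\mu(x)=-\widehat{x}\,\Bigl(\frac{\mu\, e^{-\mu|x|}}{4\pi|x|}+\frac{e^{-\mu|x|}}{4\pi|x|^2}\Bigr),
\end{equation*}
so
\begin{equation*}
-\mathrm{i}\alpha\cdot\nabla G_\mu(x)=\mathrm{i}\mu\,(\alpha\cdot\widehat{x})\,\frac{e^{-\mu|x|}}{4\pi|x|}+\mathrm{i}\,(\alpha\cdot\widehat{x})\,\frac{e^{-\mu|x|}}{4\pi|x|^2}.
\end{equation*}
Writing $\beta$ and $\alpha\cdot\widehat{x}$ in their $2\times 2$ block form (diagonal and off-diagonal, respectively) and regrouping with $\Lambda+m\beta$ yields the matrix displayed in \eqref{eq:flat R1}, with the singular term $\mathrm{i}(\alpha\cdot\widehat{x})e^{-\mu|x|}/(4\pi|x|^2)$ appearing separately.

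\textbf{Main obstacle.} Nothing here is deep: the only point requiring care is the bookkeeping around the branch of $\sqrt{m^2-\Lambda^2}$, i.e.\ verifying that the stated convention is the unique one making $G_\mu$ a legitimate convolution kernel of $(-\Delta+m^2-\Lambda^2)^{-1}$ on the resolvent set, and that this is consistent with the limiting absorption resolvents $R^{\pm}_{D_m}(\lambda)$ of Lemma \ref{lem:absflat}. Once the correct branch is fixed, the rest is routine algebra with the Dirac matrices.
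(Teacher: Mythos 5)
Your proposal is correct, but it proceeds differently from the paper, which disposes of this lemma with a one-line citation: the explicit kernel \eqref{eq:flat R1} is taken directly from Thaller's book (identity (1.263), Section 1.E), and the block-diagonal reduction to $R_{D_m}(\lambda\pm\omega)$ is regarded as immediate from the definition of $\mathcal{H}_{\omega,0}$ in \eqref{Eq:linearization}. What you do instead is rederive the cited identity from scratch via $R_{D_m}(\Lambda)=(D_m+\Lambda)\bigl(-\Delta+(m^2-\Lambda^2)\bigr)^{-1}$ and the Yukawa kernel, which is in substance the standard computation underlying Thaller's formula; your algebra checks out (the $1/|x|$ term carries the matrix $\operatorname{diag}((\Lambda+m)I_2,(\Lambda-m)I_2)+\rmi\sqrt{m^2-\Lambda^2}\,\alpha\cdot\widehat{x}$ and the $1/|x|^2$ term is $\rmi\,\alpha\cdot\widehat{x}$, as in \eqref{eq:flat R1}), and your branch discussion is the right one: with the principal square root, $\operatorname{Re}\sqrt{m^2-\Lambda^2}>0$ exactly when $\Lambda\notin(-\infty,-m]\cup[m,\infty)$, since $m^2-\Lambda^2\in(-\infty,0]$ forces $\Lambda$ real with $|\Lambda|\ge m$. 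The self-contained route buys transparency about the branch convention and its consistency with the limiting resolvents $R^{\pm}_{D_m}$ used in Remark \ref{rem:sq root}; the citation route is of course shorter and is all the paper needs. The only step you leave implicit is that applying the first-order operator $D_m+\Lambda$ to the convolution kernel $G_\mu$ produces no distributional surprises (the first derivative of $e^{-\mu|x|}/(4\pi|x|)$ is still locally integrable in $\mathbb{R}^3$, so no delta terms appear); that is routine but worth a sentence in a written-out proof.
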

\begin{proof} This is \cite[Identity  (1.263) section 1.E]{Thaller}.
\end{proof}

\begin{remark}
\label{rem:sq root}   $ R  _{D_m }^{+}(x ,\Lambda  )$ for $\Lambda
>m  $   (resp. $\Lambda <-m  $) is obtained substituting
$\sqrt{m^2-\Lambda
^2}$  in \eqref{eq:flat R1}  with  $\displaystyle -\rmi \sqrt{
\Lambda ^2 -m^2}= \lim _{\varepsilon \searrow 0}\sqrt{m^2-(\Lambda
+\rmi \varepsilon )^2} $ (resp. $\displaystyle  \rmi \sqrt{ \Lambda
^2 -m^2}= \lim _{\varepsilon \searrow 0}\sqrt{m^2-(\Lambda +\rmi
\varepsilon )^2} $).
\end{remark}

\begin{theorem}
\label{Thm:Smoothness flat} For any $\tau > 1 $ and $k\in \R$
$\exists $ $C$ s.t.
\begin{equation*}\label{eq:Smoothness flat} \begin{aligned} &%
 \|  e^{-\rmi t D_m}\psi \|_{L_t^ 2(\R,H^{k, -\tau})} \leq
C \|\psi \|_{H^{k }},
 \\&
  \| \int_{\R}e^{\rmi t D_m}  F(t)\;dt \|_{H^k} \leq
C \|F \|_{L_t^2(\R,H^{k,  \tau})},
 \\&
 \|\int_{t'<t}  e^{-\rmi(t-t')
D_m}   F(t')\;dt'  \|_{L_t^2(\R,H^{k, -\tau})} \leq C \| F
\|_{L_t^2(\R,H^{k,  \tau})}.
  \end{aligned}
\end{equation*}
The same estimates with the same constants hold when we replace
$D_m$ with $\mathcal{H}_{\omega ,0} $.
\end{theorem}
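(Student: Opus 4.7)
The plan is to reduce all three estimates to a single application of Kato's smoothing theorem, with Lemma \ref{lem:absflat} as the only nontrivial input, and then to transfer from $D_m$ to $\mathcal{H}_{\omega,0}$ via the block-diagonal structure of Lemma \ref{lem:flat R}.

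First I would eliminate the parameter $k$. Since $\langle\nabla\rangle^k$ is a scalar Fourier multiplier, it commutes with both $D_m$ and $\mathcal{H}_{\omega,0}$; setting $\phi:=\langle\nabla\rangle^k\psi$ gives $\|e^{-\rmi tD_m}\psi\|_{H^{k,-\tau}}=\|e^{-\rmi tD_m}\phi\|_{L^{2,-\tau}}$, and analogously for the other two inequalities, so it suffices to prove the case $k=0$. For $k=0$, let $B:=\langle x\rangle^{-\tau}$ be the bounded self-adjoint multiplication operator on $L^2$. The substitution $\psi=\langle x\rangle^{-\tau}\phi$ yields the identification $\|BR_{D_m}(z)B\|_{L^2\to L^2}=\|R_{D_m}(z)\|_{L^{2,\tau}\to L^{2,-\tau}}$, so Lemma \ref{lem:absflat}(i) with $\tau\ge 1$ reads $\sup_{z\notin\R}\|BR_{D_m}(z)B\|_{L^2\to L^2}<\infty$, which is precisely Kato's criterion for $B$ to be $D_m$-smooth.

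The Kato--Yajima machinery then delivers simultaneously the direct estimate $\|Be^{-\rmi tD_m}\phi\|_{L^2_{t,x}}\lesssim\|\phi\|_{L^2}$, its Hilbert-space adjoint $\|\int_\R e^{\rmi tD_m}BG\,dt\|_{L^2}\lesssim\|G\|_{L^2_{t,x}}$, and the retarded $TT^*$ estimate $\|B\int_{t'<t}e^{-\rmi(t-t')D_m}BG(t')\,dt'\|_{L^2_{t,x}}\lesssim\|G\|_{L^2_{t,x}}$. Under the substitution $F:=BG$, i.e.\ $G=\langle x\rangle^\tau F$, one has $\|G\|_{L^2_{t,x}}=\|F\|_{L^2_tL^{2,\tau}_x}$ and $\|B\,\cdot\,\|_{L^2_{t,x}}=\|\cdot\|_{L^2_tL^{2,-\tau}_x}$, which converts the three inequalities into the statements of the theorem. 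To pass to $\mathcal{H}_{\omega,0}$, Lemma \ref{lem:flat R} gives the block form $\mathrm{diag}(D_m-\omega,D_m+\omega)$, whose propagator is $\mathrm{diag}(e^{\rmi\omega t},e^{-\rmi\omega t})e^{-\rmi tD_m}$; the scalar modulation $e^{\pm\rmi\omega t}$ has modulus one and preserves every $L^2_t$ norm, so the same constants carry over verbatim.

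The only delicate point is the retarded estimate (3): a naive Plancherel-in-time argument would require a uniform-in-$\sigma$ boundary resolvent bound on $R_{D_m}(\sigma\pm\rmi 0)$, whereas Lemma \ref{lem:absflat}(ii) only furnishes such bounds on compact spectral intervals. Kato's framework sidesteps this, since the retarded kernel can be realized as a strong limit of $BR_{D_m}(\,\cdot\,+\rmi\epsilon)B$ in the time-Fourier variable whose $L^2_{t,x}$ operator norm is controlled purely by the half-plane supremum $\sup_{z\notin\R}\|BR_{D_m}(z)B\|_{L^2\to L^2}$ from Lemma \ref{lem:absflat}(i). No boundary trace is required, which is why we can close the argument using only part (i) of the lemma.
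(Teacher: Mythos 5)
Your argument is correct and is exactly the route the paper takes: the paper's proof simply cites \cite[Theorem 1.1]{Boussaid2} and notes the estimates follow from Lemma \ref{lem:absflat} via the Kato smoothness machinery of \cite[Section XIII.7]{ReedSimon4}, which is precisely the uniform-resolvent-to-smoothing deduction you carry out (including the $\epsilon$-regularized Plancherel-in-time treatment of the retarded term and the block-diagonal transfer to $\mathcal{H}_{\omega,0}$). You have merely written out the details the paper leaves implicit, so there is nothing to correct.
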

\begin{proof}
This is \cite[Theorem 1.1]{Boussaid2} in the free case. But can be easily
deduced from Lemma \ref{lem:absflat} using tools in \cite[Section
XIII.7]{ReedSimon4}.
\end{proof}
The   following theorem is a special case of Theorem 1.1 \cite{Boussaid}.
\begin{theorem}
\label{Thm:wdec} For any $\tau > 5/2 $ and $k\in \R$
$\exists $ $C$ s.t.
$
 \|  e^{-\rmi t D_m}\psi \|_{ H^{k, -\tau} (\R ^3)} \leq
C  \langle t \rangle ^{-\frac{3}{2}}   \|\psi \|_{H^{k,\tau }}
  .$
The same estimates with the same constants hold when we replace
$D_m$ with $\mathcal{H}_{\omega ,0} $.
\end{theorem}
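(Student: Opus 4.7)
\smallskip

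\noindent\emph{Proof proposal.} The plan is to reduce both statements to known sharp dispersive estimates for the Klein--Gordon propagator $e^{\pm\rmi t\sqrt{-\Delta+m^2}}$ in three dimensions, and then quote Boussaid's earlier result.

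\smallskip

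First, since $\mathcal{H}_{\omega,0}$ is block diagonal with blocks $D_m\mp\omega$, the factor $e^{\pm\rmi\omega t}$ is a unimodular scalar and the estimate for $e^{-\rmi t\mathcal{H}_{\omega,0}}$ follows at once from the corresponding estimate for $e^{-\rmi tD_m}$, with constants independent of $\omega\in\mathcal{O}$. Thus I would only have to establish the bound for $D_m$. Next, using $D_m^2=-\Delta+m^2$ and setting $\langle\nabla\rangle_m:=\sqrt{-\Delta+m^2}$, functional calculus gives
$$
e^{-\rmi tD_m}=\cos(t\langle\nabla\rangle_m)\,I_{\C^4}-\rmi\,\sin(t\langle\nabla\rangle_m)\,\langle\nabla\rangle_m^{-1}D_m.
$$
The operator $\langle\nabla\rangle_m^{-1}D_m$ is a bounded Fourier multiplier (its matrix symbol is uniformly bounded on $\R^3$) and commutes with $\langle\nabla\rangle^k$; the weights $\langle x\rangle^{\pm\tau}$ can be pushed through at the cost of commutators which are lower-order in $\langle\nabla\rangle$ and are absorbed by the $H^{k,\tau}$ norms. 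In this way the theorem is reduced to
$$
\|e^{\pm\rmi t\langle\nabla\rangle_m}\psi\|_{H^{k,-\tau}}\le C\,\langle t\rangle^{-3/2}\,\|\psi\|_{H^{k,\tau}}\quad\text{for }\tau>5/2.
$$

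\smallskip

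For this Klein--Gordon estimate I would localize in frequency by a smooth cutoff near $|\xi|=1$. On the high-frequency piece the phase $\xi\cdot(x-y)/t+\sqrt{|\xi|^2+m^2}$ has non-degenerate stationary points for $|x-y|<t$ and no stationary points otherwise, so a standard stationary-phase argument gives pointwise kernel bounds of order $\langle t\rangle^{-3/2}\langle x-y\rangle^{N}$ times an integrable correction; sandwiching between the weights $\langle x\rangle^{-\tau}$ and $\langle y\rangle^{-\tau}$ and applying Schur's test yields the weighted $L^2$ bound. On the low-frequency piece, near $\xi=0$ one has $\sqrt{|\xi|^2+m^2}=m+|\xi|^2/(2m)+O(|\xi|^4)$, so the propagator behaves modulo a harmless perturbation like a phase-shifted Schr\"odinger group $e^{-\rmi tm}e^{\rmi t\Delta/(2m)}$, whose kernel in dimension three decays pointwise like $t^{-3/2}$; integrating against the weight $\langle x\rangle^{-\tau}$ using Cauchy--Schwarz requires exactly $\tau>d/2+1=5/2$.

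\smallskip

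The main obstacle is precisely this sharp threshold $\tau>5/2$, which comes from the Schr\"odinger-like low-frequency regime and not from the high-frequency Dirac structure; the full argument requires a careful Littlewood--Paley/paraproduct decomposition and was carried out in \cite[Theorem 1.1]{Boussaid}. Once the two reductions above (block-diagonal and Klein--Gordon) are in place, the stated bound follows as a direct specialization of that theorem, uniformly for $\omega$ in compact subsets of $\mathcal{O}$.
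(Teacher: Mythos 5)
Your proposal is correct and takes essentially the same route as the paper, which proves this theorem simply by noting it is a special case of Theorem 1.1 of \cite{Boussaid}; the block-diagonal reduction of $\mathcal{H}_{\omega,0}$ to the blocks $D_m\mp\omega$, giving $e^{-\rmi t\mathcal{H}_{\omega,0}}=\mathrm{diag}(e^{\rmi\omega t}e^{-\rmi tD_m},e^{-\rmi\omega t}e^{-\rmi tD_m})$, is exactly why the same constants work. Your extra stationary-phase sketch is not load-bearing (and the heuristic attributing the threshold $\tau>5/2$ to a Cauchy--Schwarz step in the Schr\"odinger-like low-frequency regime is not quite the true origin, which is the amount of weighted regularity of the resolvent boundary values at the thresholds $\pm m$), but since the quantitative content is deferred to the cited theorem, nothing is lost.
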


\begin{theorem}
\label{Thm:Strichartz flat} For any $2\leq p,q \leq \infty$,
$\theta\in[0,1]$, with
$(1-\frac{2}{q})(1\pm\frac{\theta}{2})=\frac{2}{p}$ and
$(p,\theta)\neq(2,0)$, and for any reals $k$, $k'$ with $k'-k\geq
\alpha(q)$, where $\alpha(q)=(1+\frac{\theta}{2})(1-\frac{2}{q})$,
there exists a positive constant $C$ such that
\begin{equation*} \label{eq:Strichartz flat} \begin{aligned} &
\left\|e^{-\rmi t
D_m}\psi\right\|_{L_t^{p}(\R,B^k_{q,2}(\R^3,\C^4))} \leq
 C\left\|\psi\right\|_{H^{k'}(\R^3,\C^4)},
\\&
  \left\|\int e^{\rmi t D_m} F(t)\,dt\right\|_{H^k} \leq
 C\left\|F\right\|_{L_t^{p'}(\R,B^{k'}_{q',2}(\R^3,\C^4))},
\\&
  \left\| \int_{t'<t} e^{-\rmi(t-t')D_m} F(t')\,dt'
\right\|_{L_t^{p}(\R,B^{ k}_{q,2}(\R^3,\C^4))} \leq  C
 \left\|F\right\|_{L_t^{a'}(\R,B^{h}
 _{b',2}(\R^3,\C^4))},
\end{aligned}
\end{equation*}
for any  $(a, b)$ chosen like $(p,q)$, and
$h-k\geq\alpha(q)+\alpha(b)$. Exactly the same estimates hold with
$D_m$ replaced by $\mathcal{H}_{\omega ,0}$.
\end{theorem}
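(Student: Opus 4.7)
The plan is to reduce the Dirac propagator to the half Klein--Gordon propagator and then invoke the existing Strichartz theory for Klein--Gordon on $\R^3$.

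First, using the spectral calculus for the self-adjoint operator $D_m$ on $L^2(\R^3,\C^4)$, one writes $D_m = \langle\nabla\rangle_m\,(P_+ - P_-)$ where $\langle\nabla\rangle_m := \sqrt{-\Delta+m^2}$ and $P_\pm = \tfrac{1}{2}\bigl(I \pm D_m\langle\nabla\rangle_m^{-1}\bigr)$ are the spectral projectors onto the positive and negative energy subspaces. Consequently
\begin{equation*}
e^{-\rmi tD_m}= e^{-\rmi t\langle\nabla\rangle_m}P_+ + e^{\rmi t\langle\nabla\rangle_m}P_-.
\end{equation*}
The multipliers $P_\pm$ have symbols built from $\xi\mapsto \alpha\cdot\xi/\langle\xi\rangle_m$ and are classical zeroth-order pseudodifferential operators; in particular they are bounded on every $H^k(\R^3,\C^4)$ and on every Besov space $B^k_{q,2}(\R^3,\C^4)$ for $1<q<\infty$, and by complex interpolation and a Littlewood--Paley argument they extend to the endpoint cases relevant here. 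It therefore suffices to prove each of the three estimates with $e^{\mp \rmi tD_m}$ replaced by $e^{\mp \rmi t\langle\nabla\rangle_m}$.

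Second, for the half Klein--Gordon propagator $e^{-\rmi t\langle\nabla\rangle_m}$ the stated family of mixed-norm bounds is classical. The admissibility relation $(1-\tfrac{2}{q})(1\pm\tfrac{\theta}{2})=\tfrac{2}{p}$ with loss $\alpha(q)=(1+\tfrac{\theta}{2})(1-\tfrac{2}{q})$ interpolates between the $\theta=0$ Schr\"odinger-type estimate (valid at low frequency, where $\langle\nabla\rangle_m\sim m+\tfrac{-\Delta}{2m}$) and the $\theta=1$ wave-type estimate (valid at high frequency, where $\langle\nabla\rangle_m\sim\sqrt{-\Delta}$); the proof proceeds by dyadic Littlewood--Paley decomposition, application of stationary/non-stationary phase on each dyadic block to obtain the dispersive decay of the kernel, and finally the abstract Keel--Tao machinery applied to each block together with Minkowski to sum in $\ell^2$-Besov form. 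This yields the homogeneous estimate; the dual $TT^*$ estimate and the retarded bi-Strichartz estimate then follow directly from Keel--Tao duality (the exclusion $(p,\theta)\neq(2,0)$ is precisely the Keel--Tao endpoint restriction that forces admissibility away from the forbidden vertex).

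Third, for $\mathcal{H}_{\omega,0}=\begin{pmatrix}D_m-\omega & 0\\ 0 & D_m+\omega\end{pmatrix}$ the block structure and the identity $e^{-\rmi t(D_m\pm\omega)}=e^{\mp \rmi t\omega}e^{-\rmi tD_m}$ give
\begin{equation*}
e^{-\rmi t\mathcal{H}_{\omega,0}}=\begin{pmatrix} e^{\rmi t\omega}e^{-\rmi tD_m} & 0 \\ 0 & e^{-\rmi t\omega}e^{-\rmi tD_m}\end{pmatrix},
\end{equation*}
so $|e^{\pm \rmi t\omega}|=1$ preserves every mixed norm in time and space, and every estimate for $D_m$ transfers componentwise to $\mathcal{H}_{\omega,0}$ with the same constants, which in particular are independent of $\omega$.

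The only non-routine point is the second step, namely the Klein--Gordon Strichartz family in the precise Besov formulation with sharp loss $\alpha(q)$ and the bi-parameter retarded estimate; this is where one relies on the existing literature (e.g.\ Brenner, Ginibre--Velo, Machihara--Nakamura--Nakanishi--Ozawa, D'Ancona--Foschi--Selberg) rather than redoing the dyadic analysis here. Everything else, namely the reduction via $P_\pm$ and the transfer to $\mathcal{H}_{\omega,0}$, is purely algebraic.
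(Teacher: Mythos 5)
Your proposal is correct and follows essentially the same route as the paper: the paper proves this theorem by citing \cite{Boussaid2} for the free Dirac estimates (whose underlying argument is precisely the reduction to the half Klein--Gordon propagator, with \cite{Brenner} supplying the Klein--Gordon Strichartz family), and then notes that the $\mathcal{H}_{\omega,0}$ case is an immediate consequence of the $D_m$ case via the block-diagonal structure, exactly as in your third step. Your write-up simply makes explicit, via the projectors $P_\pm$, the reduction that the paper delegates to the cited reference.
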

\begin{proof}
For    $D_m$ see \cite{Boussaid2}, see also \cite{Brenner} for the Klein-Gordon
case. For $ \mathcal{H}_{\omega ,0}$
the statement is an immediate consequence of the case
$D_m$.\end{proof}

\begin{lemma}
\label{lem:surrogate}  Consider pairs $(p,q)$ as in Theorem
\ref{Thm:Strichartz flat} with $p>2$, $k\in \R$ arbitrary and
$k'-k\ge \alpha (q)$. Then for any $\tau >1$ there is a constant
$C_0=C_0(\tau , k,p,q)$ such that
\begin{equation*}
\label{eq:surrogate} \left\|  \int _{0} ^t e^{\im D_m(t'-t)} F(t')
dt'\right \| _{L^p_tB^{k} _{q,2}} \le C_0    \|
 F\|_{L_t^2H ^{   k', \tau } }.
\end{equation*}
  The same estimates hold with $D_m$ replaced by $\mathcal{H}
_{\omega ,0}$
\end{lemma}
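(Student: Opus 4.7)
The plan is to combine the dual smoothing estimate in Theorem \ref{Thm:Smoothness flat} with the homogeneous Strichartz estimate in Theorem \ref{Thm:Strichartz flat}, and then use the Christ--Kiselev lemma to pass from the full time integral to the retarded one. This is the standard ``smoothing in, Strichartz out'' composition.

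First, I would treat the untruncated operator
\[
TF(t):=\int_{\R} e^{\im D_m(t'-t)} F(t')\,dt'=e^{-\im t D_m}\,\psi,\qquad \psi:=\int_{\R} e^{\im t' D_m} F(t')\,dt'.
\]
The second inequality of Theorem \ref{Thm:Smoothness flat} (applied at regularity $k'$ with weight $\tau>1$) gives $\|\psi\|_{H^{k'}}\le C\|F\|_{L^2_tH^{k',\tau}}$, while the first Strichartz bound in Theorem \ref{Thm:Strichartz flat} yields $\|e^{-\im t D_m}\psi\|_{L^p_tB^k_{q,2}}\le C\|\psi\|_{H^{k'}}$ precisely under the assumption $k'-k\ge\alpha(q)$ and the admissibility of $(p,q)$. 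Composing these two inequalities one obtains the desired estimate for the full-line integral $T$.

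Second, I would pass from $T$ to the retarded operator $T_{\mathrm{ret}}F(t):=\int_0^t e^{\im D_m(t'-t)}F(t')\,dt'$ by invoking the Christ--Kiselev lemma. Since the target exponent $p$ is strictly greater than the source exponent $2$ (this is exactly why the hypothesis $p>2$ is imposed), the lemma applies to the bilinear form associated with $T$ taking values in $L^p_tB^k_{q,2}$ and promotes the bound for $T$ to a bound for $T_{\mathrm{ret}}$ with a possibly larger constant $C_0$. This completes the case of $D_m$.

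The case of $\mathcal{H}_{\omega,0}$ is identical: both Theorem \ref{Thm:Smoothness flat} and Theorem \ref{Thm:Strichartz flat} are stated to hold verbatim with $D_m$ replaced by $\mathcal{H}_{\omega,0}$ (and with the same constants, in the case of smoothing), so the same two-step composition and Christ--Kiselev argument go through unchanged. The only slightly delicate point in the whole proof is checking the applicability of Christ--Kiselev, but this reduces to the strict inequality $p>2$ included in the hypotheses, so no genuine obstacle arises.
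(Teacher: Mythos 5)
Your proposal is correct and follows essentially the same route as the paper: bound the untruncated operator by composing the homogeneous Strichartz estimate (Theorem \ref{Thm:Strichartz flat}) with the dual smoothing estimate (Theorem \ref{Thm:Smoothness flat}), then invoke the Christ--Kiselev lemma, valid since $p>2$, to pass to the retarded integral, with the $\mathcal{H}_{\omega,0}$ case identical. The only cosmetic differences (integrating over $\R$ instead of $[0,\infty)$, and $k'-k\ge\alpha(q)$ instead of equality) are immaterial.
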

 \begin{proof}  For $F(t,x
)\in C^\infty _0(  \mathbb{R}\times  \mathbb{R}^3)$ set
\begin{eqnarray}T F(t)=\int _0^{+\infty}
e^{\im (t'-t)D_m}  F(t') dt' \, , \quad f= \int _0^{+\infty} e^{\im
t' D_m}  F(t') dt' .\nonumber
\end{eqnarray} Theorem \ref{Thm:Strichartz flat} implies
$\left\|  T F\right \| _{L^p_tB^{k} _{q,2}} \le \| f \| _{H^{k'}}$
for $k'-k=\alpha (q)$. By Theorem \ref{Thm:Smoothness flat}  we have  $\| f \|
_{H^{k'}} \le C  \|
 F\|_{L_t^2H ^{   k', \tau } }.$ Since $p>2$,
by a well known lemma due to Christ and Kiselev \cite{ChristKiselev}, see
 Lemma 3.1 \cite{SmithSogge},
  the statement of Lemma \ref{lem:surrogate} follows.
\end{proof}

\begin{lemma}
\label{lem:w-est}
Let $\tau _1 >1$,
$\mathcal{K}$   a compact subset of $\mathcal{O}$ and  $I$   a
compact subset of $\sigma _e(\mathcal{H}_{\omega } ) \backslash \{
\pm (m\pm \omega )  \}$.   Assume   {\ref{Assumption:H1}}  and {\ref{Assumption:H6}}--{\ref{Assumption:H8}}.
Then there exists a $C>0$, such that
\begin{equation*}\label{eq:w-est}
\|  e^{- \rmi  t \mathcal{H}_{\omega, 0}
}R_{\mathcal{H}_\omega}^{+}(\lambda ) P_c(\omega) \psi  _{0}\|_{L^{2,
-\tau _1}(\R^3)} \le C\langle  t \rangle ^{-\frac 32} \| \psi _{0}
\|_{L^{2,  \tau _1+1}(\R^3)}
\end{equation*}
for every $t\ge 0$, $\lambda \in I$, $\omega\in\mathcal{K}$ and
$\psi  _0\in \mathcal{S}(\R^3;\C^2)$.
\end{lemma}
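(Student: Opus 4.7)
The plan is a two-step reduction: first reduce to a free-operator estimate via the second resolvent identity together with the limiting absorption principle (LAP), and then prove the free estimate using an explicit time-integral representation combined with integration by parts and Theorem~\ref{Thm:wdec}.

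Starting from $R_{\mathcal{H}_\omega}^+(\lambda) = R_{\mathcal{H}_{\omega,0}}^+(\lambda)\bigl(I+V_\omega R_{\mathcal{H}_\omega}^+(\lambda)\bigr)$, composing with $e^{-it\mathcal{H}_{\omega,0}}$ and $P_c(\omega)$ yields
\[
e^{-it\mathcal{H}_{\omega,0}} R_{\mathcal{H}_\omega}^+(\lambda) P_c(\omega)\psi_0
= e^{-it\mathcal{H}_{\omega,0}} R_{\mathcal{H}_{\omega,0}}^+(\lambda)\bigl[ P_c(\omega)\psi_0 + V_\omega R_{\mathcal{H}_\omega}^+(\lambda) P_c(\omega)\psi_0 \bigr].
\]
Under \ref{Assumption:H6}--\ref{Assumption:H8}, Fredholm theory combined with Lemma~\ref{lem:absflat} shows that $R_{\mathcal{H}_\omega}^+(\lambda) P_c(\omega): L^{2,\tau_1+1}\to L^{2,-\tau_1-1}$ is bounded uniformly for $\lambda\in I$ and $\omega\in\mathcal{K}$. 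The exponential decay of $V_\omega$ (inherited from \ref{Assumption:H2}) then gives $V_\omega R_{\mathcal{H}_\omega}^+(\lambda)P_c(\omega)\psi_0\in L^{2,\tau_1+1}$ with norm controlled by $\|\psi_0\|_{L^{2,\tau_1+1}}$. This reduces the lemma to the free analogue
\[
\|e^{-it\mathcal{H}_{\omega,0}} R_{\mathcal{H}_{\omega,0}}^+(\lambda)\phi\|_{L^{2,-\tau_1}} \le C\langle t\rangle^{-3/2}\|\phi\|_{L^{2,\tau_1+1}},\qquad\phi\in L^{2,\tau_1+1}.
\]

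The block-diagonal structure of $\mathcal{H}_{\omega,0}$ reduces the free estimate to two copies of $D_m$ with shifted spectral parameter. Starting from $R_{D_m}^+(\lambda)\phi = i\int_0^{+\infty} e^{is\lambda}e^{-isD_m}\phi\,ds$ and making the change of variables $u=t+s$ gives the clean representation
\[
e^{-itD_m}R_{D_m}^+(\lambda)\phi = i\,e^{-it\lambda}\!\int_t^{+\infty}\!e^{iu\lambda}\,e^{-iuD_m}\phi\,du.
\]
Theorem~\ref{Thm:wdec} applied pointwise in $u$ gives $\|e^{-iuD_m}\phi\|_{L^{2,-\tau_1}}\lesssim\langle u\rangle^{-3/2}\|\phi\|_{L^{2,\tau_1+1}}$; to upgrade the triangle-inequality bound on the $u$-integral from $\langle t\rangle^{-1/2}$ to the sharp $\langle t\rangle^{-3/2}$, one integrates by parts in $u$ against the oscillation $e^{iu\lambda}$. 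Each IBP produces a factor $\lambda^{-1}$ (bounded by compactness of $I$), a boundary term $\propto e^{-itD_m}(D_m^{j}\phi)$ that already has $\langle t\rangle^{-3/2}$ decay by Theorem~\ref{Thm:wdec}, and a remainder of the same shape with $\phi$ replaced by $D_m\phi$. Iterating and handling the derivative loss by a frequency decomposition of $\phi$ combined with the smoothing bound of Theorem~\ref{Thm:Smoothness flat} closes the estimate.

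The main obstacle is precisely this sharp extraction of $\langle t\rangle^{-3/2}$ decay from the oscillatory integral, since the naive bound only gives $\langle t\rangle^{-1/2}$; the IBP argument must be combined with the smoothing estimates of Theorem~\ref{Thm:Smoothness flat} and a frequency localization to control the derivative terms uniformly. Uniformity in $\lambda\in I$ comes from the compactness hypothesis placing $I$ at positive distance from the thresholds $\pm(m\pm\omega)$ (where the phase $\sqrt{m^2-\lambda^2}$ from Lemma~\ref{lem:flat R} would otherwise degenerate), while uniformity in $\omega\in\mathcal{K}$ of the Step~1 reduction uses \ref{Assumption:H7}--\ref{Assumption:H8} to guarantee invertibility of $I+R_{\mathcal{H}_{\omega,0}}^+(\lambda)V_\omega$ on $L^{2,-\tau}$ throughout the parameter range.
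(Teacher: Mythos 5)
Your Step~1 is essentially the paper's argument: the second resolvent identity (note your sign: $R_{\mathcal{H}_\omega}=R_{\mathcal{H}_{\omega,0}}(I-V_\omega R_{\mathcal{H}_\omega})$, not $+$), the uniform boundedness of $R^{+}_{\mathcal{H}_\omega}(\lambda)P_c(\omega)$ between weighted spaces for $\lambda\in I$, $\omega\in\mathcal{K}$ (the paper's \eqref{eq:resolv}, proved in the Appendix by exactly the Fredholm-type mechanism you invoke), and the exponential decay of $V_\omega$ to push the perturbative term back into a weighted space. Where you diverge is the free estimate $\|e^{-it\mathcal{H}_{\omega,0}}R^{+}_{\mathcal{H}_{\omega,0}}(\lambda)\phi\|_{L^{2,-\tau_1}}\lesssim\langle t\rangle^{-3/2}\|\phi\|_{L^{2,\tau_1+1}}$: the paper simply quotes this from \cite[Theorem 2]{BerthierGeorgescu}, while you try to prove it from scratch, and that is where your proposal has a genuine gap.

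The time-integral representation and the subsequent integration by parts do not produce any gain. Writing $e^{iu\lambda}e^{-iuD_m}=e^{iu(\lambda-D_m)}$, an IBP against $e^{iu\lambda}$ leaves, besides the harmless boundary term, the remainder $\lambda^{-1}\int_t^{\infty}e^{iu\lambda}e^{-iuD_m}D_m\phi\,du$, which has exactly the same form and the same size as the original integral: for $\lambda\in I\subset\sigma_e(\mathcal{H}_\omega)$ at least one of the shifted parameters $\lambda\pm\omega$ lies inside $\sigma(D_m)=(-\infty,-m]\cup[m,\infty)$, so the phase $u(\lambda\pm\omega-\sqrt{|\xi|^2+m^2}\,)$ is stationary on a whole frequency sphere, and on a neighbourhood of that shell $\lambda^{-1}D_m$ has operator norm $\approx 1$; iterating the IBP therefore gives neither smallness nor extra decay, and the naive $\langle t\rangle^{-1/2}$ bound is all you recover. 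The proposed rescue via ``frequency decomposition plus Theorem~\ref{Thm:Smoothness flat}'' cannot close this either: the smoothing estimate is an $L^2$-in-time bound and gives no pointwise $\langle t\rangle^{-3/2}$ decay of the contribution from the resonant shell. To prove the free estimate one needs a genuinely different mechanism -- e.g.\ stationary-phase/spectral-density analysis based on the explicit resolvent kernel of Lemma~\ref{lem:flat R} (integrating by parts in the spectral variable, with the weights absorbing the derivatives, away from the thresholds $\pm(m\pm\omega)$) -- or one cites the weighted local-decay result for the Dirac operator as the paper does. As written, the crucial $\langle t\rangle^{-3/2}$ decay is asserted rather than proved.
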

 \begin{proof} We expand $ R_{\mathcal{H}_\omega}^{+}(\lambda )=
R_{\mathcal{H}_{\omega, 0}}^{+}(\lambda )   -  R_{\mathcal{H}_{\omega
0}}^{+}(\lambda ) V_\omega R_{\mathcal{H}_\omega}^{+}(\lambda )$. We have from
 \cite[Theorem 2 ]{BerthierGeorgescu}
 \begin{equation*}\label{eq:w-estflat}
\|  e^{- \rmi  t \mathcal{H}_{\omega, 0}
}R_{\mathcal{H}_{\omega, 0}}^{+}(\lambda )   \psi  _{0}\|_{L^{2,
-\tau _1}(\R^3)} \le C\langle  t \rangle ^{-\frac 32} \|
R_{\mathcal{H}_{\omega, 0}}^{+}(\lambda )   \psi _{0}
\|_{L^{2,  \tau _1}(\R^3)} \leq  C_1\langle  t \rangle ^{-\frac 32} \|
\psi _{0}
\|_{L^{2,  \tau _1+1}(\R^3)},
\end{equation*}
with $C_1$ locally bounded in $\lambda$ and
$\tau_1$.  Hence, by exponential
decay of
$\phi_\omega$ and by \eqref{eq:resolv} below,
\begin{equation*} \begin{aligned} & \|  e^{-
\rmi t \mathcal{H}_{\omega, 0}} R_{\mathcal{H}_{\omega, 0}}^{+}(\lambda )
V_\omega
R_{\mathcal{H}_\omega}^{+}(\lambda )  P_c(\omega)   \psi  _{0}\|_{L^{2,
\tau _1} }\\
& \le  C_1\langle  t \rangle ^{-\frac 32}
\left\|V_\omega\right\|_{B(L^{2,-\tau_1},L^{2,\tau_1+1})}
\left\|R_{\mathcal{H}_\omega}^{+}(\lambda )  P_c(\omega)
\right\|_{B(L^{2,\tau_1},L^{2,-\tau_1})}\|  \psi
_{0}
\|_{L^{2,  \tau_1} } \le C ' \langle  t \rangle ^{-\frac 32}.
\end{aligned} \nonumber
\end{equation*}
\end{proof}

\begin{lemma}  \label{lem:smooth11}
Assume  the  hypotheses   of Lemma \ref{lem:w-est}.
 Then for any $\tau >1$, for any $k\in \Z$ with $k\ge 0$,  for a constant  $ C_2=C _2(\tau , \omega ,k)$ semicontinuous in $\omega$, for any $T>0$ and  for any   $\forall$ $g(t,x)\in {S}(\R^4)$, we have

\begin{equation*}\label{eq:smooth111}
 \left\|  \int_0^t e^{-\im (t-s)\mathcal{H}_{\omega
}}P_c(\mathcal{H} _\omega )g(s,\cdot)ds\right\|_{L_{  t}^2 ([0,T],H_x^{k, -\tau})} \le
C\|  g\|_{L_{  t}^2 ([0,T],H_x^{k,   \tau})}.
\end{equation*}

\end{lemma}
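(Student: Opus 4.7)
The plan is to convert the retarded Duhamel estimate into a uniform limiting absorption principle (LAP) bound for $\mathcal{H}_\omega$ on $\mathbf{X}_c$ via Plancherel in time. Extending $g$ by zero, the convolution kernel $K(t):=\chi_{[0,\infty)}(t) e^{-\rmi t \mathcal{H}_\omega}P_c(\omega)$ has time-Fourier transform $\widehat K(\lambda) = -\rmi R^+_{\mathcal{H}_\omega}(\lambda) P_c(\omega)$ (computed as $\lim_{\e\downarrow 0}\int_0^\infty e^{\rmi(\lambda +\rmi\e)t}e^{-\rmi t\mathcal{H}_\omega}\,dt$, which is legitimate since $\sigma(\mathcal{H}_\omega|_{\mathbf{X}_c})\subset\R$ by \ref{Assumption:H5}). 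Therefore $u=K*g$ satisfies $\widehat u(\lambda)=-\rmi R^+_{\mathcal{H}_\omega}(\lambda)P_c(\omega)\widehat g(\lambda)$, and Plancherel in $t$ reduces Lemma \ref{lem:smooth11} to the uniform LAP bound
\[
M(\omega) := \sup_{\lambda\in\R} \|R^+_{\mathcal{H}_\omega}(\lambda)P_c(\omega)\|_{B(H^{k,\tau}_x,\, H^{k,-\tau}_x)} < \infty,
\]
with $M(\omega)$ locally bounded (indeed semicontinuous) in $\omega\in\mathcal{O}$.

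To establish $M(\omega)<\infty$, I would use the standard resolvent identity
\[
R^+_{\mathcal{H}_\omega}(\lambda) = \bigl[1+R^+_{\mathcal{H}_{\omega,0}}(\lambda)V_\omega\bigr]^{-1}R^+_{\mathcal{H}_{\omega,0}}(\lambda).
\]
Lemma \ref{lem:absflat} gives $\|R^+_{\mathcal{H}_{\omega,0}}(\lambda)\|_{L^{2,\tau}\to L^{2,-\tau}}\leq C$ uniformly in $\lambda$, and by the exponential decay of $\phi_\omega$ in \ref{Assumption:H2}, $V_\omega$ maps $L^{2,-\tau}$ boundedly into $L^{2,\tau'}$ for any $\tau'$; composing, $R^+_{\mathcal{H}_{\omega,0}}(\lambda)V_\omega$ is compact on $L^{2,-\tau}$. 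By Fredholm theory it suffices to exclude nontrivial solutions to $(1+R^+_{\mathcal{H}_{\omega,0}}(\lambda)V_\omega)u=0$ for every $\lambda\in\R$. Within $\mathbf{X}$, \ref{Assumption:H6} puts all eigenvalues of $\mathcal{H}_\omega$ in $(-(m-\omega),m-\omega)$ (and these are projected out by $P_c(\omega)$); \ref{Assumption:H7} excludes the thresholds $\pm(m\pm\omega)$; and at $|\lambda|>m-\omega$, \ref{Assumption:H8} asserts every putative resonance is an eigenvalue, which \ref{Assumption:H6} forbids. For $|\lambda|\to\infty$, the kernel bound in Lemma \ref{lem:flat R} together with the sign rule in Remark \ref{rem:sq root} yields $\|R^+_{\mathcal{H}_{\omega,0}}(\lambda)V_\omega\|_{L^{2,-\tau}\to L^{2,-\tau}}\to 0$, so the Neumann series converges. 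Combining with the continuity of $\lambda\mapsto R^+_{\mathcal{H}_{\omega,0}}(\lambda)$ in $B(H^{1,\tau},L^{2,-\tau})$ (Lemma \ref{lem:absflat}(ii)) on bounded intervals, the inverse is uniformly bounded in $\lambda$. Joint continuity in $(\lambda,\omega)$ on compacta gives semicontinuity of $M(\omega)$.

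Passing from $k=0$ to general $k\in\Z_{\geq 0}$ is routine: since $\mathcal{H}_{\omega,0}$ has $\langle\nabla\rangle$-functional calculus and $V_\omega$ is a smooth, rapidly decaying matrix multiplier, commuting $\langle\nabla\rangle^k$ through the identity above produces commutators of lower order whose contributions are absorbed by the $k=0$ estimate. The main obstacle is the uniform LAP bound on the essential spectrum at $|\lambda|>m-\omega$: the strong indefiniteness of the Dirac energy (Remark \ref{rem:enindef}) creates the superposition of the continuous spectra of the two diagonal blocks $D_m\pm\omega$ of $\mathcal{H}_{\omega,0}$, which is precisely why ruling out resonances there cannot be accomplished by a direct argument and must be imposed as \ref{Assumption:H8} (cf.\ Remark \ref{rem:resonances}). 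Once this hypothesis is granted, the Fredholm--LAP--Plancherel machinery closes the estimate.
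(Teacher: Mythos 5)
Your overall architecture — Plancherel in time to reduce the Duhamel estimate to a uniform bound on $\sup_\lambda\|R^+_{\mathcal{H}_\omega}(\lambda)P_c(\omega)\|_{B(L^{2,\tau},L^{2,-\tau})}$, then the resolvent identity $R^+_{\mathcal{H}_\omega}=(1+R^+_{\mathcal{H}_{\omega,0}}V_\omega)^{-1}R^+_{\mathcal{H}_{\omega,0}}$, compactness of $R^+_{\mathcal{H}_{\omega,0}}(\lambda)V_\omega$, and exclusion of exceptional $\lambda$ via \ref{Assumption:H6}--\ref{Assumption:H8} on compact spectral intervals — is exactly the route the paper takes (the proof of Lemma \ref{lem:smooth11} combined with Lemmas \ref{lem:smooth1}--\ref{lem:smooth2} in the Appendix), and those parts, as well as the commutator remark for $k>0$, are fine.

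There is, however, a genuine gap in your high-energy step. You claim that Lemma \ref{lem:flat R} and Remark \ref{rem:sq root} give $\|R^+_{\mathcal{H}_{\omega,0}}(\lambda)V_\omega\|_{L^{2,-\tau}\to L^{2,-\tau}}\to 0$ as $|\lambda|\to\infty$, so that a Neumann series closes the bound there. This decay is false for the Dirac operator: since $D_m$ is first order, $R_{D_m}(\Lambda)=(D_m+\Lambda)R_{-\Delta+m^2}(\Lambda^2)$, and the factor $\Lambda\pm m$ visible in the kernel \eqref{eq:flat R1} exactly cancels the Agmon decay $O(|\Lambda|^{-1})$ of the weighted resolvent of $-\Delta+m^2$. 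Hence $\|A R^+_{\mathcal{H}_{\omega,0}}(\lambda)B^*\|$ is uniformly bounded but does \emph{not} tend to zero at high energies, and the single-term Born smallness you invoke is unavailable (this is precisely the high-energy pathology shared with magnetic Schr\"odinger operators). The paper instead proves \eqref{eq:smooth21} by adapting the Erdo\u{g}an--Goldberg--Schlag argument: it splits the free resolvent kernel into a far-from-diagonal piece, whose weighted norm is made small uniformly in $\lambda$ by taking the cutoff scale $M_0$ large (\eqref{eq:smooth251}--\eqref{eq:smooth253}), and a near-diagonal piece for which a sufficiently high power $(A\chi_0 R^{0+}_{\mathcal{H}_{\omega,0}}(\lambda)B^*)^{\ell_0}$ becomes small for large $\lambda$ thanks to the oscillatory structure \eqref{eq:smooth27}--\eqref{eq:smooth28}; the bound \eqref{eq:smooth30} then yields invertibility of $1+AR^+_{\mathcal{H}_{\omega,0}}B^*$ uniformly for $|\lambda|\ge\mu_1$. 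Without this (or an equivalent substitute, e.g.\ smallness of a high power of the Born term), your uniform-in-$\lambda$ bound on the inverse, and hence the lemma, is not established in the high-energy regime.
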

\proof   It is not restrictive to focus only on $T=\infty$ and $k=0$. By  Plancherel  inequality      we have

\begin{equation*} \label{eq:smooth112}\begin{aligned} &
\| \int _{0}^t e^{-\im (t-s)\mathcal{H}_{\omega }}P_c(\mathcal{H}_\omega )g(s,\cdot)ds\|_{
L_{t }^2L_{ x}^{2,-\tau }} \le    \| R_{\mathcal{H}_\omega }^+(\lambda
)P_c (\mathcal{H}_\omega )
  \widehat{ \chi }_{[0,+\infty )}\ast _\lambda
   \widehat{ g}(\lambda,x)\|_{L_{\lambda }^2L_{ x}^{2,-\tau } }   \\& \le
   \left\| \,
\|   R_{\mathcal{H}_\omega }^+ (\lambda )P_c  (\mathcal{H}_\omega ) \| _{B(
L^{2,\tau}_x, L^{2,- \tau}_x)} \|
   \widehat{ \chi }_{[0,+\infty )}
   \ast _{\lambda } \widehat{g} (\lambda,x) \|_{L_{ x}^{2, \tau }}\, \right\|_{L^2_\lambda}
\\& \le
  \|  R_{\mathcal{H}_\omega }^+ (\lambda
)P_c(\mathcal{H}_\omega )   \| _{L^\infty _\lambda ( \mathbb{R} ,B( L^{2,\tau}_x,
L^{2,-\tau}_x))}\| g\|_{L_{t }^2L_{ x}^{2, \tau } }   .
\end{aligned}
\end{equation*}
We are done if we can prove
\begin{equation} \label{eq:resolv} \begin{aligned} & \|  R_{\mathcal{H}_\omega }^+ (\lambda
)P_c(\mathcal{H}_\omega )   \| _{L^\infty _\lambda ( \mathbb{R} ,B( L^{2,\tau}_x,
L^{2,-\tau}_x))}\le C_2.
\end{aligned}
\end{equation}
By \eqref{eq:smooth11} and   Lemma \ref{lem:smooth1}  we have
\begin{equation} \label{eq:smooth113} \begin{aligned} &\|  R_{\mathcal{H}_\omega }^+ (\lambda
)P_c(\mathcal{H}_\omega )   \| _{ B( L^{2,\tau}_x,
L^{2,-\tau}_x) }\le  \| ( 1+ AR_{\mathcal{H}_{\omega ,0}} ^{+}(\lambda  )B^*   )^{-1}\| _{ B( \mathbf{X},
\mathbf{X}) } \|
  R_{\mathcal{H}_{\omega ,0}}^+ (\lambda
)\| _{ B( L^{2 ,\tau }_x,
L^{2,-\tau}_x) }.
\end{aligned}
\end{equation}
To prove \eqref{eq:smooth113} it
is enough to consider $\lambda \in (\R \backslash
    [-m+\omega +\delta _0,m-\omega -\delta _0])$ as in \eqref{eq:smooth2}.
    Then we can exploit  inequality \eqref{eq:smooth2} to bound uniformly in $\lambda$ the
    first factor in the rhs of \eqref{eq:smooth113}. The proof that $\|
  R_{\mathcal{H}_{\omega ,0}}^+ (\lambda
)\| _{ B( L^{2,\tau}_x,
L^{2,-\tau}_x) }\le C$ for a fixed $C$ is a consequence of
$\| \lambda
  R_{-\Delta}^+ (\lambda ^2
)\| _{ B( L^{2,\tau}_x,
L^{2,-\tau}_x) }\le C'$ and $\|  \nabla
  R_{-\Delta}^+ (\lambda ^2
)\| _{ B( L^{2,\tau}_x,
L^{2,-\tau}_x) }\le C'$ by \eqref{eq:smooth23}--\eqref{eq:smooth24}.  The last two inequalities are proved in \cite{Agmon}

\qed

\section{Hamiltonian structure}\label{Sec:Hamiltonian}

The discussion in   Sections \ref{Sec:Hamiltonian}--\ref{section:Darboux}  is almost the same of \cite{Cuccagna1}, rewritten in the context of the Dirac systems.

\subsection{Symplectic structure}
\label{section:symplectic}

We recall that in view of Theorem \ref{th:orbital instability} we
 set $ \varepsilon _j=\langle\xi _j  ,\Sigma_3\xi _j^* \rangle $ where $
\varepsilon _j\in \{ 1, -1  \}$. Notice that in Theorem \ref{th:AsStab} and
 in \cite{Cuccagna1},   we have $ \varepsilon _j\equiv 1$.
Our ambient space is    $  \mathbf{X}$.
We focus only on the subspace formed by the  points which satisfy $\Sigma
_1U=C{U}$. In view of \eqref{eq:NLSvectorial}, the natural symplectic structure
  is $
  \Omega (X,Y):=\langle X,  \im \beta \alpha _2 \Sigma _1\Sigma _3Y \rangle .
$

The Hamiltonian vector field $X_G$ of a scalar function $G$   is
defined by the equation $\Omega (X_G,Y)=-\im \langle \nabla G ,  Y \rangle$ for
any vector   $Y$ and
is $X_G=   \beta  \alpha _2 \Sigma _3\Sigma _1 \nabla G  $.

We call Poisson bracket of a pair  of scalar valued  functions $F$
and $G$  the  scalar valued   function
\begin{equation}\label{eq:PoissonBracket}\index{$\{ F,G \}$}
  \{ F,G \} = \langle \nabla F , X_ G   \rangle
= \im \Omega ( X_F,   X_G )=\im \Omega ( \nabla F,   \nabla G ).
\end{equation}
This can be extended to vector valued function using $1$-forms or equivalently
defining the extension the following way.
\begin{definition}\label{def:PoissonFunct}
Given a  function  $\mathcal{G}(U)$
 with
values in $\mathbf{X} _c(\mathcal{H} _{\omega _0}) $, a symplectic form
$\Omega$ and a scalar function $F(U)$, we define
$  \{ \mathcal{G}, F\}  =
\mathcal{G}'(U)X_F(U)$,
with  $X_F$ the Hamiltonian vector field associated to $F$. We set
  $ \{ F,\mathcal{G} \} :=-\{ \mathcal{G}, F\}.$
\end{definition}

\begin{lemma}  \label{lem:InvarianceQ}
Let $Q$ be the function defined by
 \eqref{eq:invariants}and let  $X_Q$ its Hamiltonian vectorfield of $Q$.
Then $ X_Q=-\frac{\partial}{\partial \vartheta} .
$
We have the following formulas :
\begin{equation}\label{Qomega}\begin{aligned} &
& \{ Q,\omega  \} =0  \, , \quad \{ Q,\vartheta  \} =1 \, , \quad
\{ Q,z_j  \} =\{ Q,\overline{z}_j  \}=0 \, , \quad \{ Q,f \}=0 .
\end{aligned}
\end{equation}
\end{lemma}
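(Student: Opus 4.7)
The plan is to identify $X_Q$ as $-\partial/\partial\vartheta$ and then read off the brackets from this identification together with the coordinate structure of \eqref{eq:coordinate}.

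First I compute $\nabla Q$ using the representation $Q(U) = \tfrac{1}{2}\langle U, \im\beta\alpha_2\Sigma_1 U\rangle$ from Lemma~\ref{lem:systU}. The operator $\im\beta\alpha_2\Sigma_1$ is symmetric with respect to the bilinear pairing \eqref{eq:Inner Product}: from $\sigma_2^T = -\sigma_2$ one gets $\alpha_2^T = -\alpha_2$, which combined with $\beta\alpha_2 + \alpha_2\beta = 0$ yields $(\beta\alpha_2)^T = \beta\alpha_2$, and $\Sigma_1^T = \Sigma_1$ is immediate. Hence $\nabla Q = \im\beta\alpha_2\Sigma_1 U$. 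Plugging this into $X_G = \beta\alpha_2\Sigma_3\Sigma_1\nabla G$, and using that $\beta\alpha_2$ commutes with the block matrices $\Sigma_j$ together with $(\beta\alpha_2)^2 = -I_{\mathbb{C}^4}$ and $\Sigma_1^2 = I$, gives
\[
X_Q \;=\; \beta\alpha_2\Sigma_3\Sigma_1(\im\beta\alpha_2\Sigma_1 U) \;=\; \im(\beta\alpha_2)^2\Sigma_3 U \;=\; -\im\Sigma_3 U.
\]
Comparing with \eqref{eq:vectorfields}, which reads $\partial/\partial\vartheta = \im e^{\im\Sigma_3\vartheta}\Sigma_3(\Phi_\omega+R) = \im\Sigma_3 U$, yields $X_Q = -\partial/\partial\vartheta$.

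The bracket identities then follow by antisymmetry of $\{\cdot,\cdot\}$, which is immediate from $\{F,G\} = \im\Omega(X_F, X_G)$ together with the antisymmetry of $\Omega$ (a consequence of $(\beta\alpha_2\Sigma_1\Sigma_3)^T = -\beta\alpha_2\Sigma_1\Sigma_3$, via $(\beta\alpha_2)^T = \beta\alpha_2$ and $\Sigma_1\Sigma_3 = -\Sigma_3\Sigma_1$). For any scalar $F$ we get $\{Q,F\} = -\{F,Q\} = -\langle\nabla F, X_Q\rangle = -X_Q(F) = \partial F/\partial\vartheta$. Applied to the coordinate functions defined in \eqref{eq:coordinate}, where $\omega$, $z_j$, $\bar z_j$ are coordinates independent of $\vartheta$, this immediately gives $\{Q,\omega\} = 0$, $\{Q,z_j\} = \{Q,\bar z_j\} = 0$, and $\{Q,\vartheta\} = 1$. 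For the vector-valued coordinate $f$, Definition~\ref{def:PoissonFunct} yields $\{Q,f\} = -f'(U)X_Q(U) = f'(U)\,\partial_\vartheta U = \partial f/\partial\vartheta = 0$, since $f$ is independent of $\vartheta$ in the coordinate system. The computation is essentially algebraic; the only care required is keeping sign conventions straight for the bilinear pairing, the symplectic form, and the Poisson bracket, but none of these poses a substantive obstacle.
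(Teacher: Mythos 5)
Your proposal is correct and follows essentially the same route as the paper: you compute $\nabla Q=\im\beta\alpha_2\Sigma_1 U$, obtain $X_Q=\beta\alpha_2\Sigma_3\Sigma_1\,\im\beta\alpha_2\Sigma_1U=-\im\Sigma_3U=-\frac{\partial}{\partial\vartheta}$ via \eqref{eq:vectorfields}, and then read off \eqref{Qomega} from the $\vartheta$-independence of the remaining coordinates. The only difference is that you spell out details the paper leaves implicit (the symmetry of $\im\beta\alpha_2\Sigma_1$ for the gradient, and the antisymmetry argument for the brackets), which is fine.
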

\proof
\eqref{Qomega} follows from
$ X_Q=-\frac{\partial}{\partial \vartheta} .
$   The latter follows from \eqref{eq:vectorfields}:
\begin{equation*} \begin{aligned} &
& X_Q  = \beta \alpha _2\Sigma _3 \Sigma _1 \nabla Q =\beta \alpha _2
\Sigma _3 \Sigma _1 \im  \beta \alpha _2  \Sigma _1 U = -\im \Sigma _3 U = -\frac{\partial}{\partial \vartheta} .
\end{aligned}
\end{equation*}
\qed

\subsection{Hamiltonian reformulation of the system}
\label{section:Hamiltonian reformulation}

For any scalar function $F$, the time derivative of $F(U(t))$ is $\langle
\nabla F (U), \dot{U} \rangle$ and thus if $U$ satisfies
\eqref{eq:NLSvectorial} it is $\{F,E\}$. A similar identity holds for vector
valued function and thus as in \cite{Cuccagna1} we write our system as
\begin{equation} \label{eq:SystPoiss} \begin{aligned} &
  \dot \omega  = \{ \omega , E \}   \, , \quad    \dot f= \{f, E
\}  \, , \quad   \dot z_j  = \{ z_j , E \} \, , \quad    \dot
\vartheta = \{ \vartheta , E  \}. \end{aligned}
\end{equation}
For $u_0$ the initial datum in
\eqref{Eq:NLDE}, we introduce a new Hamiltonian for which the stationary
solution $\Phi_{\omega_0}$, with $q(\omega_0)=\| u_0\| _{L^2_x}^{2}$, is a
critical point :
\begin{equation}    \label{eq:K} \index{$K$, Hamiltonian}  \begin{aligned} &
K(U)=E(U)+\omega (U)   Q(U)-\omega (U)\| u_0\| _{L^2_x}^{2}.
\end{aligned}
\end{equation}
By Lemma \ref{lem:InvarianceQ} and since $Q(U)$ is an invariant of the motion,
see Lemma \ref{lem:systU}, the solution of the initial value
problem in \eqref{Eq:NLDE} solves also
\begin{equation} \label{eq:SystPoissK} \begin{aligned} &
  \dot \omega  = \{ \omega , K \}   \, , \quad    \dot f= \{f, K
\} \, , \quad  \dot z_j  = \{ z_j , K \} \, , \quad   \dot
\vartheta  -\omega  = \{ \vartheta , K  \}. \end{aligned}
\end{equation}
By $ \frac{\partial}{\partial \vartheta} K  =0$ and  \eqref{Qomega}
the right hand
sides in the equations \eqref{eq:SystPoissK} do not depend on
$\vartheta$. Hence, if we look at the new system
\begin{equation} \label{eq:SystK} \begin{aligned} &
  \dot \omega  = \{ \omega , K \}   \, , \quad    \dot f= \{f, K
\}  \, , \quad    \dot z_j  = \{ z_j , K \} \, , \quad    \dot
\vartheta  = \{ \vartheta , K  \}, \end{aligned}
\end{equation}
the evolution of the crucial variables $(\omega , z, f)$ in
\eqref{eq:SystPoiss} and \eqref{eq:SystK} is the same. Therefore,
to prove Theorem \ref{th:AsStab} it is sufficient to consider
system \eqref{eq:SystK}.
\section{Application of the   Darboux Theorem}
\label{section:Darboux}

We  will show that a resonance  phenomenon is responsible for energy
leaking from discrete to continuous spectrum. This   will be seen in  appropriate coordinates  system, obtained by means of  Birkhoff normal forms.  Since  the coordinates \eqref{eq:coordinate}   are not canonical   for the symplectic
form $\Omega $, it is natural to  apply Darboux theorem, moving to a different set of coordinates.  It is   key  that   our nonlinear Dirac equation   remain semilinear. Hence we follow the argument of \cite[Section
7]{Cuccagna1},  which takes care of this, and
to which we refer for more details.

\medskip

\paragraph{\it Strategy of the proof} For
$q=q(\omega )=\left\|\phi_\omega\right\|^2_{L^2}$, we introduce the
$2$-form
\begin{equation} \label{eq:Omega0}\index{$\Omega _0$} \Omega _0=\im d\vartheta
\wedge
dq +  \varepsilon _j dz_j\wedge d\overline{z}_j+\langle f' (U )\cdot ,\im \beta
\alpha _2 \Sigma _3 \Sigma _1 f' (U )\cdot \rangle ,
\end{equation}
summing on repeated indexes, with $f (U)$ the function in Lemma
\ref{lem:gradient zf},  $f '(U)$
its Frech\'et derivative and the last term in \eqref{eq:Omega0} acting on pairs
$(X,Y)$ like $\langle f' (U )X  , \im \beta \alpha
_2 \Sigma _3 \Sigma _1f' (U )Y\rangle $.

The proof of the Darboux Theorem goes as follows. First consider
\begin{equation} \label{eq:Omegat} \Omega _\tau  =(1-\tau )\Omega _0+\tau
\Omega =\Omega _0 +\tau\widetilde{\Omega} \text{ with
$\widetilde{\Omega}
 :=\Omega -\Omega _0$.}  \end{equation}
In Lemma \ref{lem:OmegaOmega0}, we
check that $\Omega _0(U) =\Omega  (U)$ at $U=e^{\im \Sigma
_3\vartheta}
  \Phi _{\omega_{0}}$.  Then  $\Omega_\tau$ is non degenerate  near $ e^{\im \Sigma
_3\vartheta}
  \Phi _{\omega_{0}}$.
One considers a 1-- form $\gamma (\tau,U)$ such that
 $ d \gamma (\tau,U) = \widetilde{\Omega} $ with $\gamma (U) =0$
 at $U=e^{\im
\Sigma _3\vartheta}
  \Phi _{\omega_{0}}$ (external differentiation will
always be on the $U$
variable only) and the   vector field
   $\mathcal{Y}^\tau $ such that
 $i_{\mathcal{Y}^\tau }\Omega _\tau =-  \gamma $. The
 flow $\mathfrak{F}_\tau $ generated by $\mathcal{Y}^\tau $,   close
 the points $e^{\im \Sigma _3\vartheta}\Phi _{\omega_{0}}$ is defined up to
time 1, and is such that $ \mathfrak{F}_1^*\Omega =\Omega _0$  by
\begin{equation} \label{eq:dartheorem}
\begin{aligned} &\frac{d}{d\tau }
\left ( \mathfrak{F}_\tau ^*\Omega _\tau \right )
  =\mathfrak{F}_\tau ^*\left ( L_{\mathcal{Y}^\tau } \Omega _\tau \right )
  +  \mathfrak{F}_\tau ^*\frac{d}{d\tau}\Omega _\tau  =\\&
 = \mathfrak{F}_\tau^*d\left (  i_{\mathcal{Y}^\tau } \Omega _\tau \right )  +
\mathfrak{F}_\tau ^*  \widetilde{\Omega }  =
  \mathfrak{F}_\tau ^*\left ( -  d \gamma +  \widetilde{\Omega}\right ) =0.
\end{aligned}
\end{equation}
This procedure can be carried out abstractly. But here we need to be careful, choosing  $\gamma$ appropriately, because we want the   new Hamiltonian $\widetilde{K}= K\circ \mathfrak{F}_1$ to be $\vartheta $ invariant
and  yield a semilinear Dirac equation.

In the sequel of this section  all the work is finalized to the correct choice
if
$\gamma$. In Lemma \ref{lem:1forms} we compute explicitly a
differential form $\alpha$ and we make the preliminary choice
$\gamma = \alpha$. This is not yet the right choice. By the
computations in Lemma \ref{lem:linearAlgebra}  we find the obstruction to the
fact that
$\widetilde{K}$ is of the desired type. Lemmas
\ref{lem:HamThetaOmega}--\ref{lem:flow Htheta} are necessary to find
an appropriate solution   $F$ of a differential equation in Lemma
\ref{lem:correction alpha}. Then $\gamma = \alpha + \im  dF$ is the
right choice of $\gamma$. In Lemma \ref{lem:flow1} we collect a
number of useful estimates for $\mathfrak{F}_1$.
  Lemma
\ref{lem:flow2}   contains information necessary for the reformulation of our
system
\eqref{eq:SystK1}--\eqref{eq:SystK2}.

\bigskip

\paragraph{\it Preliminary remarks}

Note that for $U$ in a sufficiently small neigborhood of $\Phi_\omega$, that is
$R$ small, from \eqref{eq:decVectorfield} the vector fields defined in
\eqref{eq:vectorfields} can be completed into a basis of $T _U L^2$ (tangent
space at $U$).
For any vector $Y\in T _U L^2$, we have
\begin{equation*} \label{eq:Y}  \begin{aligned} &
Y=Y _{\vartheta}\frac{\partial}{\partial \vartheta}+Y
_{\omega}\frac{\partial}{\partial \omega} +\sum Y
_{j}\frac{\partial}{\partial z_j}+\sum Y
_{\overline{j}}\frac{\partial}{\partial \overline{z}_j} +e^{\im
\Sigma _3 \vartheta} P_c (\omega )Y _{f }
\end{aligned}\end{equation*}
and defining the dual basis we set
\begin{equation*} \label{eq:Y1}  \begin{aligned} & Y_{\vartheta}
=d\vartheta (Y)\, , \quad  Y_{\omega} =d\omega (Y)\, , \quad Y_{j}
=dz_j (Y)\\& \quad Y_{\overline{j}} =d\overline{z}_j (Y) \, , \quad
\quad Y_{f } =f' (U)  Y .
\end{aligned}\end{equation*}
So similarly, a  differential 1-form $\gamma $ decomposes as
\begin{equation*} \label{eq:gamma}  \begin{aligned} &
\gamma=\gamma ^{\vartheta}d \vartheta +\gamma ^{\omega}d \omega
+\sum \gamma ^{j}d z_j +\sum \gamma ^{\overline{j}}d\overline{z}_j
+\langle \gamma ^{f }, f'\cdot  \rangle  ,
\end{aligned}\end{equation*}
where $\langle \gamma ^{f }, f'\cdot  \rangle $ acts on a vector
$Y$ as $\langle \gamma ^{f }, f'Y \rangle $, with here $\gamma ^f
\in L^2_c(\mathcal{H}_{\omega _0}^*)$; $\gamma ^{\vartheta}$,
$\gamma ^{\omega}$, $\gamma ^{j}$ and  $\gamma ^{\overline{j}}$ are
in $\C$.

Notice that we are reversing the standard notation on
super and subscripts for forms and vector fields.

In the sequel, given a differential $1$-form $\gamma $ and a point $U$, we will
denote by $\gamma _U$ the value of $\gamma $ at $U$.

Given a function $\chi $, denote its hamiltonian vector field with
respect to $\Omega_\tau $ by $X^\tau _\chi$ :  $i _{X^\tau _\chi}\Omega_\tau
=-\im \,
d\chi$. By \eqref{eq:Omega0}  we have $ X_{q(\omega )}^{0}=
-\frac{\partial}{\partial \vartheta}.
$

\paragraph{\it The proof}
We have the following preliminary observation ensuring that $\Omega_\tau$ is a
non degenerate $2$-form in a neighborhood of $e^{\im \Sigma _3\vartheta}
\Phi_{\omega_{0}}$.
\begin{lemma}
  \label{lem:OmegaOmega0} At $U=e^{\im \Sigma _3\vartheta}
  \Phi _{\omega_{0}}$, for any $\vartheta $, we have $\Omega _0(U)=\Omega
(U)$.
\end{lemma}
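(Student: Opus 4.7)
The plan is to verify $\Omega_0(U_0)=\Omega(U_0)$ at the base point $U_0=e^{\im\Sigma_3\vartheta}\Phi_{\omega_0}$ by computing both $2$-forms on all pairs of a convenient basis of $T_{U_0}\mathbf{X}$. At this point $R=0$, so $z_j=\bar z_j=0$ and $f=0$; the basis provided by \eqref{eq:vectorfields} specializes to the ``canonical'' directions $V_\vartheta=\im e^{\im\Sigma_3\vartheta}\Sigma_3\Phi_{\omega_0}$, $V_\omega=e^{\im\Sigma_3\vartheta}\partial_\omega\Phi_{\omega_0}$, $V_j=e^{\im\Sigma_3\vartheta}\xi_j(\omega_0)$, $V_{\bar j}=e^{\im\Sigma_3\vartheta}\Sigma_1 C\xi_j(\omega_0)$, together with the $f$-directions $V_f(g)=e^{\im\Sigma_3\vartheta}g$ for $g\in\mathbf{X}_c(\mathcal{H}_{\omega_0})$. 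Moreover, by the explicit formula in Lemma~\ref{lem:gradient zf}, at $U_0$ the derivative reduces to $f'(U_0)Y=P_c(\omega_0)(e^{-\im\Sigma_3\vartheta}Y)$, so $f'(U_0)V_f(g)=g$; this makes the last term of \eqref{eq:Omega0} especially transparent on the $f$-directions.

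Step one is to read off $\Omega_0$ on this basis directly from \eqref{eq:Omega0}. All cross-pairings between the three blocks $\{V_\vartheta,V_\omega\}$, $\{V_j,V_{\bar k}\}$, and the $V_f$'s vanish because the corresponding differentials $d\vartheta,dq,dz_j,d\bar z_k$ kill the ``other'' vectors; the only non-zero entries are $\Omega_0(V_\vartheta,V_\omega)=\im q'(\omega_0)$, $\Omega_0(V_j,V_{\bar k})=\varepsilon_j\delta_{jk}$, and $\Omega_0(V_f(g_1),V_f(g_2))=\langle g_1,\im\beta\alpha_2\Sigma_3\Sigma_1 g_2\rangle$.

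Step two is to compute $\Omega(V,W)=\langle V,\im\beta\alpha_2\Sigma_1\Sigma_3 W\rangle$ on the same pairs. The key algebraic lemma is that $\Sigma_1$ anticommutes with $\Sigma_3$, hence $\Sigma_1 e^{\im\Sigma_3\vartheta}=e^{-\im\Sigma_3\vartheta}\Sigma_1$, while $\beta,\alpha_2,\Sigma_3$ commute with $e^{\im\Sigma_3\vartheta}$; since the pairing $\langle\cdot,\cdot\rangle$ is bilinear and the two blocks of $e^{\im\Sigma_3\vartheta}$ multiply to $1$, every $\Omega(V_A,V_B)$ collapses to a $\vartheta$-independent expression. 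The vanishing of the mixed entries follows from the spectral decomposition \eqref{Eq:SpecDec}--\eqref{Eq:dualSpecDec}: for instance $\Omega(V_\omega,V_j)$ reduces to $\langle\partial_\omega\Phi,\,\beta\alpha_2\Sigma_1\Sigma_3\xi_j\rangle$, which after using the (anti)symmetry of $\beta\alpha_2\Sigma_i\Sigma_k$ with respect to the bilinear pairing is read as the pairing of $\xi_j$ against an element of $N_g(\mathcal{H}_{\omega_0}^*)$; similarly the mixed pairings with $V_f(g)$ give pairings of $g\in\mathbf{X}_c(\mathcal{H}_{\omega_0})$ against elements of $\mathbf{X}_d(\mathcal{H}_{\omega_0}^*)$, and hence are zero.

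The non-zero entries of $\Omega$ are then matched to those of $\Omega_0$. For $\Omega(V_\vartheta,V_\omega)$, one differentiates $Q(\Phi_\omega)=q(\omega)$ from Lemma~\ref{lem:systU} and uses that $\im\beta\alpha_2\Sigma_1$ is symmetric with respect to $\langle\cdot,\cdot\rangle$ to obtain $q'(\omega_0)=\langle\partial_\omega\Phi_{\omega_0},\im\beta\alpha_2\Sigma_1\Phi_{\omega_0}\rangle$, and rearranging yields the required $\im q'(\omega_0)$. For $\Omega(V_j,V_{\bar k})$ the identity $\Sigma_1^2=I$ and the relation $\im\beta\alpha_2 C v=v^*$ collapse the expression to $\langle\xi_j,\Sigma_3\xi_k^*\rangle$, which is $\varepsilon_j\delta_{jk}$ by the normalization after \eqref{Eq:SpecDec}. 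On two $f$-directions the phase elimination immediately gives the same expression as in Step one. The main obstacle is purely the bookkeeping of signs and phases stemming from the interaction of $\Sigma_1,\Sigma_3,\beta,\alpha_2,C$ with $e^{\im\Sigma_3\vartheta}$; no further spectral input is needed, and once every pair matches, antisymmetry of both $2$-forms yields $\Omega(U_0)=\Omega_0(U_0)$.
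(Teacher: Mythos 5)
Your proposal is correct and takes essentially the same route as the paper: both express the two forms in the modulation coordinates and compare them using the dual basis \eqref{eq:decVectorfield}, the biorthogonality of discrete and continuous spectral components, the normalization $\langle \xi_j,\Sigma_3\xi_j^*\rangle=\varepsilon_j$ and the identity $\langle \partial_\omega\Phi,\Phi^*\rangle=q'$; the paper merely carries out the expansion for general $R$ (formula \eqref{eq:OmegaComponent2}, whose byproduct $a_1$ in \eqref{eq:a1} is reused later) before setting $R=0$, $\omega=\omega_0$, whereas you evaluate pointwise on the basis of tangent vectors. The only thing to tighten is to fix one convention for $\Omega$ (with $\Sigma_3\Sigma_1$, as in \eqref{eq:Omega0} and in the paper's own proof, rather than $\Sigma_1\Sigma_3$ as written in \S\ref{section:symplectic}) so that all block entries match without a stray sign --- a sign wobble your write-up inherits from the paper itself.
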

\proof See also \cite[ Lemma 7.1]{Cuccagna1}.
 Using \eqref{eq:decVectorfield} we get, summing on repeated
indexes,
\begin{equation*}\label{eq:OmegaComponent1}
\begin{aligned} & \Omega (X,Y)=\langle
 X ,\im \beta
\alpha _2 \Sigma _3 \Sigma _1Y  \rangle =
   \\&
\frac{1}{q'}  \langle \cdot   ,  e^{ - \im \Sigma _3\vartheta
}\Sigma _3\partial _\omega \Phi ^*\rangle \wedge
   \langle  \cdot   ,e^{ - \im \Sigma _3\vartheta }
     \Phi ^*\rangle (X,Y) +
    \\&   +   \varepsilon _j
 \langle  \cdot   ,   e^{ - \im \Sigma _3\vartheta }\Sigma _3  \xi _j^*
 \rangle\wedge  \langle \cdot  , e^{ - \im \Sigma _3\vartheta }\Sigma _1
 \Sigma _3 (C\xi _j)^*
 \rangle (X,Y)  \\& +
   \langle  P_c(\mathcal{H}_\omega  )
   e^{  -\im \Sigma _3\vartheta } X ,
   \im \beta
\alpha _2 \Sigma _3 \Sigma _1
   P_c(\mathcal{H}_\omega  ) e^{ - \im \Sigma _3\vartheta }
   Y  \rangle
 . \end{aligned}
\end{equation*}
Set
\begin{equation}\label{eq:a1}
\begin{aligned} & a_1:= -\im q'+\frac{\det \mathcal{A}}{q'} +
 \langle P_{N^\perp _g(\mathcal{H}^{*}_{\omega})}\im  \Sigma _1   R,
\im \beta \alpha _2 \Sigma _3 \Sigma _1     \partial _\omega
R\rangle .
\end{aligned}
\end{equation} Then   $a_1$ is smooth in the
 arguments
 $\omega \in \mathcal{O}$, $z\in \C ^n$ and $f\in H^{-K',-S'} $ (see
\eqref{weighted} for the definition) for any pair
 $(K', S')$  with, for  $( z,f)$ near 0,
 \begin{equation}\label{eq:bounda1}
 \begin{aligned} & |a_1| \le C (K',S') (|z|+\| f\| _{H^{-K',-S'} } )^2
 \end{aligned}
 \end{equation}
 by \eqref{eq:matrixA}. Furthermore $a_1$ is imaginary valued.
 By Lemmas  \ref{lem:grad omega theta} and \ref{lem:gradient
 zf}, summing on repeated indexes  we get
\begin{equation*}\label{eq:OmegaComponent2}
\begin{aligned} & \Omega  =   ( \im q'+a_1) d\vartheta \wedge d\omega
+  \varepsilon _j dz_j\wedge d\overline{z}_j \\&
+     dz_j\wedge \left (
\langle \Sigma _1 \Sigma _3 (C\xi _j)^*  , \partial _\omega R
\rangle \, d\omega + \im \langle \Sigma _1 \Sigma _3 (C\xi _j)^*
,\Sigma _3  R \rangle \, d\vartheta \right ) \\& -
d\overline{z}_j\wedge \left ( \langle \Sigma _3 \xi _j ^* ,
\partial _\omega R \rangle \, d\omega + \im \langle   \Sigma _3 \xi _j ^*
,\Sigma _3  R \rangle \, d\vartheta \right ) +\\& +\langle
P_c(\omega ) P_c(\omega _0)f' \cdot  , \im \beta \alpha _2 \Sigma _3
\Sigma _1 P_c(\omega ) P_c(\omega _0) f'
 \cdot  \rangle +\\& +
 \langle P_c(\omega ) P_c(\omega _0)f'  \cdot  , \im \beta \alpha _2 \Sigma _3
\Sigma _1  P_c(\omega  )
 \partial _\omega R \rangle \wedge  d\omega + \\&
+  \im \langle P_c(\omega ) P_c (\omega _0)f'  \cdot  , \im \beta
\alpha _2 \Sigma _3 \Sigma _1  P_c(\omega )
 \Sigma _3  R \rangle \wedge  d\vartheta
 . \end{aligned}
\end{equation*}
At points $U=e^{\im \Sigma _3 \vartheta} \Phi _\omega$, that is for
$R=0$,  we have
\begin{equation}\label{eq:OmegaComponent3}
\Omega  =\im d\vartheta \wedge dq + \varepsilon _j   dz_j\wedge
d\overline{z}_j+\langle P_c(\omega ) P_c(\omega _0)f' \cdot   , \im
\beta \alpha _2 \Sigma _3 \Sigma _1  P_c(\omega ) P_c(\omega _0) f'
 \cdot  \rangle.
\end{equation}
which at $\omega =\omega _0$ gives $\Omega =\Omega _0$.
 \qed

Since $\Omega_\tau=\Omega_0 +\tau (\Omega -\Omega _0)$ with $\tau \in [0,1]$ and
$\Omega =\Omega_0$ at $e^{\im \Sigma _3\vartheta}\Phi_{\omega_{0}}$,
and since $ \Omega_0$
is a
non degenerate $2$-form, $\Omega_\tau$ is also
non degenerate   in a neighborhood of $e^{\im \Sigma _3\vartheta}
\Phi_{\omega_{0}}$. Thus the map $X\mapsto i_X\Omega_\tau$ from vector fields
to $1$-forms  is bijective at any point in the neighborhood of $e^{\im \Sigma
_3\vartheta}\Phi_{\omega_{0}}$. Notice that Lemma \ref{lem:OmegaOmega0}   is claimed at $\omega _0$ and not at different standing waves, and that   the $e^{\im \Sigma
_3\vartheta}\Phi_{\omega_{0}}$  are the only   stationary solutions preserved by our changes of coordinates.

The next lemma suggests as  candidate for the 1 form $\gamma$
the choice $\gamma
=   \alpha$, for $\alpha$ see below. This is not yet the final choice of $\gamma$.

\begin{lemma}
  \label{lem:1forms} Consider the forms, summing on repeated indexes,
\begin{equation*} \label{eq:1forms}\begin{aligned} &
\varpi (U)Y:=\frac{1}{2}\langle \im \beta \alpha _2 \Sigma _3
\Sigma _1U , Y\rangle \\& \varpi _0(U):=-\im qd\vartheta - \varepsilon
_j\frac{\overline{z}_j dz_j - {z}_j d\overline{z}_j}{2}
+\frac{1}{2}\langle f (U),\im \beta \alpha _2 \Sigma _3 \Sigma _1 f
'(U)\quad \rangle.
\end{aligned}
\end{equation*}
Then $ d\varpi _0=\Omega
_0$, $d\varpi =\Omega   . $ Set
\begin{equation} \label{eq:alpha1} \alpha (U):=\varpi (U)-\varpi _0(U)
+d\psi (U)\text{ where } \psi (U):=\frac{1}{2}\langle \Sigma _3\Phi ^*
  , R\rangle .
\end{equation}
We have  $\alpha  = \alpha ^{\vartheta} d\vartheta +\alpha
^{\omega}
   d\omega
 + \langle \alpha ^f,f'\rangle$ with
\begin{equation} \label{eq:alpha2} \begin{aligned}
\alpha ^{\vartheta} +\frac{\im}{2}\| f\| _2^2   =&
 -\frac{\im}{2}     \| z\cdot \xi +\overline{z}\cdot
 \Sigma _1C\xi \| _2^2-\im \Re \langle z\cdot \xi +\overline{z}\cdot
 \Sigma _1C\xi ,  (P_c(\omega ) f)^*\rangle
\\& -  \im   \Re \langle (P_c(\omega )
 -P_c(\omega _0 )) f,  (P_c(\omega )
 f) ^* \rangle ,
\\    \alpha ^{\omega}  =&   -\frac{1}{2}
 \langle  R^*, \Sigma _3\partial _\omega
R\rangle  ,  \\    \alpha ^{f} = &  \frac{1}{2}\im \beta \alpha _2  \Sigma_1
\Sigma _3P_c(\mathcal{H}_{\omega
_0})\left ( P_c(\mathcal{H}_{\omega
} ) -P_c(\mathcal{H}_{\omega
_0})\right )f.
\end{aligned}
\end{equation}

\end{lemma}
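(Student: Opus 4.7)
The proof is a direct computation in three stages.

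First I would verify the two identities $d\varpi_0=\Omega_0$ and $d\varpi=\Omega$. The formula for $\varpi_0$ is already essentially in canonical Liouville form, so exterior differentiation can be done term by term: $d(-\im q\, d\vartheta)=\im\, d\vartheta\wedge dq$ matches the first summand of $\Omega_0$; $d(-\varepsilon_j(\bar z_j dz_j-z_j d\bar z_j)/2)=\varepsilon_j\, dz_j\wedge d\bar z_j$; and for the last summand, observe that $f'(U)$ is precisely the Fr\'echet derivative of $U\mapsto f(U)$, so exterior differentiation of the primitive produces the bilinear form $\langle f'(U)\,\cdot\,,\im\beta\alpha_2\Sigma_3\Sigma_1 f'(U)\,\cdot\,\rangle$ once we record that $\im\beta\alpha_2\Sigma_3\Sigma_1$ is antisymmetric with respect to the pairing \eqref{eq:Inner Product} (this is exactly the nondegeneracy/antisymmetry used to define $\Omega$). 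The identity $d\varpi=\Omega$ is the standard Liouville computation: $\varpi$ is a 1-form linear in $U$ with constant antisymmetric coefficient matrix, so $d\varpi=\tfrac12\langle \im\beta\alpha_2\Sigma_3\Sigma_1\, dU,dU\rangle=\Omega$.

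Next I would compute the components of $\alpha=\varpi-\varpi_0+d\psi$ by pairing with the basis \eqref{eq:vectorfields}. Using the ansatz $U=e^{\im\Sigma_3\vartheta}(\Phi_\omega+R)$ and evaluating $\varpi(U)$ on $\partial_\vartheta$, $\partial_\omega$, $\partial_{z_j}$, $\partial_{\bar z_j}$ and the $f$-direction, one obtains, after moving the phase $e^{\im\Sigma_3\vartheta}$ across the pairing (which is compatible since $\Sigma_3$ commutes with $e^{\im\Sigma_3\vartheta}$ and $\beta,\alpha_2$ act on the $\C^4$ factor only), expressions quadratic in $\Phi+R$. The role of $d\psi$, with $\psi=\tfrac12\langle\Sigma_3\Phi^*,R\rangle$, is precisely to kill the $dz_j,d\bar z_j$ components that otherwise appear in $\varpi(U)-\varpi_0$: expanding $d\psi=\tfrac12 \partial_\omega\langle\Sigma_3\Phi^*,R\rangle d\omega+\tfrac12\langle\Sigma_3\Phi^*,dR\rangle$ and using the decomposition \eqref{eq:decomp2} of $R$ along $\xi_j$, $\Sigma_1C\xi_j$, and $P_c(\omega)f$, together with the duality relations $\langle\Sigma_3\xi_j,\Phi^*\rangle=0$ etc.\ from the generalized eigenspace structure (Lemma \ref{lem:spectrum}), one verifies cancellation of the discrete-mode components and convergence to the form advertised in \eqref{eq:alpha2}.

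Finally I would extract each coefficient. For $\alpha^\vartheta$: the term $\varpi(U)\partial_\vartheta$ contributes $\tfrac{\im}{2}\langle U,\Sigma_3U\rangle$-type quantities; the $-\im q\, d\vartheta$ part of $\varpi_0$ removes the $\Phi$--$\Phi$ piece; the $d\psi$ contribution supplies the cross terms between $\Phi$ and $R$; what remains is quadratic in $R$, and splitting $R$ via \eqref{eq:decomp2} together with $\|R\|_2^2=\|z\cdot\xi+\bar z\cdot\Sigma_1C\xi\|_2^2+2\Re\langle z\cdot\xi+\bar z\cdot\Sigma_1C\xi,(P_c(\omega)f)^*\rangle+\|P_c(\omega)f\|_2^2$, one isolates the $-\tfrac{\im}{2}\|f\|_2^2$ piece (writing $\|P_c(\omega)f\|_2^2-\|f\|_2^2$ as the $(P_c(\omega)-P_c(\omega_0))f$ correction since $P_c(\omega_0)f=f$). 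For $\alpha^\omega$: pairing with $\partial_\omega$ and using $\mathcal A$ from \eqref{eq:matrixA} together with Lemma \ref{lem:grad omega theta} yields the stated $-\tfrac12\langle R^*,\Sigma_3\partial_\omega R\rangle$. For $\alpha^f$: the only piece surviving on $P_c(\omega)P_c(\omega_0)$-type directions comes from the mismatch between the spectral projections at $\omega$ and $\omega_0$, giving the claimed $\tfrac12\im\beta\alpha_2\Sigma_1\Sigma_3P_c(\mathcal H_{\omega_0})(P_c(\mathcal H_\omega)-P_c(\mathcal H_{\omega_0}))f$. The main bookkeeping obstacle is tracking the $e^{\im\Sigma_3\vartheta}$ phase through the bilinear pairing \eqref{eq:Inner Product} and verifying that the $d\psi$ correction is exactly right to eliminate the $dz_j,d\bar z_j$ components; everything else reduces to the algebraic identities of Section \ref{sec:spectrum} and Lemmas \ref{lem:grad omega theta}--\ref{lem:gradient zf}.
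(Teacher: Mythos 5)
Your plan is correct and is essentially the paper's own argument: a direct computation of $\varpi$, $\varpi_0$ and $d\psi$ using the ansatz \eqref{Eq:ansatzU}, the decomposition \eqref{eq:decomp2}, the modulation orthogonality $R\in N_g^\perp(\mathcal H^*_\omega)$, the biorthogonality of the discrete modes, and the identity $\langle\Sigma_3\Phi^*,\partial_\omega\Phi\rangle=0$, with $d\psi$ removing the $\Phi$-contributions that would otherwise leave $dz_j,d\overline{z}_j$ components and the projector mismatch producing $\alpha^f$. The only difference is organizational: you evaluate $\alpha$ directly on the coordinate vector fields \eqref{eq:vectorfields}, whereas the paper expands the three one-forms in the dual basis of Lemma \ref{lem:dual spec dec} via Lemmas \ref{lem:grad omega theta}--\ref{lem:gradient zf} and tracks the cancellations (e.g.\ \eqref{eq:cancel}); both routes rest on the same identities and yield \eqref{eq:alpha2}.
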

\proof Here the proof is almost the same of \cite[ Lemma 7.2 ]{Cuccagna1}.
We focus on \eqref{eq:alpha2},  the only nontrivial
statement. We will sum over repeated indexes.
 We have
\begin{equation} \label{eq:beta} \begin{aligned} &
\varpi =\frac{1}{2}\langle e^{-\im \Sigma _3\vartheta}\im \beta
\alpha _2 \Sigma _1 \Sigma _3\Phi  , \cdot  \rangle  +
\frac{1}{2}\langle e^{-\im \Sigma _3\vartheta}\im \beta \alpha _2
\Sigma _1 \Sigma _3P_c(\omega )f, \cdot \rangle  \\& + \frac{1}{2} z_j \langle
e^{-\im \Sigma _3\vartheta}\im \beta \alpha _2 \Sigma
_1 \Sigma _3\xi _j, \cdot
 \rangle -\frac{1}{2}\overline{z}_j \langle e^{-\im
\Sigma _3\vartheta} \im \beta \alpha _2   \Sigma _3C\xi _j, \cdot
\rangle   .
\end{aligned}
\end{equation}
By Lemma \ref{lem:dual spec dec} and summing on repeated indexes
    we obtain
\begin{equation} \label{eq:betatilde0} \begin{aligned} &
\frac{1}{2}  \langle  e^{-\im \Sigma _3\vartheta}\im \beta \alpha _2
\Sigma _1 \Sigma _3\Phi  , \cdot  \rangle =
\frac{\langle \frac{1}{2}   \im \beta \alpha _2
\Sigma _1 \Sigma _3\Phi   ,
     \partial _\omega \Phi  \rangle }
  {q'(\omega )}\langle
   e^{  -\im \Sigma _3\vartheta }    \Phi ^*, \cdot  \rangle \\&   +
  \frac{\langle \frac{1}{2}   \im \beta \alpha _2
\Sigma _1 \Sigma _3\Phi  ,
  \Sigma _3 \Phi  \rangle }
  {q'(\omega )}\langle
  e^{ -\im \Sigma _3\vartheta }\Sigma _3 \partial _\omega \Phi ^*, \cdot
\rangle
    +  \varepsilon _j   \langle \frac{1}{2}      \im \beta \alpha _2
\Sigma _1 \Sigma _3\Phi  ,     \xi _j  \rangle \langle
    e^{ - \im \Sigma _3\vartheta }\Sigma _3   \xi _j ^*, \cdot  \rangle \\&-
   \varepsilon _j  \langle \frac{1}{2}   \im \beta \alpha _2
\Sigma _1 \Sigma _3\Phi  ,
   \Sigma _1 C\xi _j  \rangle   \langle  e^{  -\im \Sigma _3\vartheta }
   \Sigma _3\Sigma _1(C\xi _j)^*  , \cdot  \rangle   +\langle
   e^{-  \im \Sigma _3\vartheta }
   (P_c(\mathcal{H}_\omega ^* )   \frac{1}{2}   \im \beta \alpha _2
\Sigma _1 \Sigma _3\Phi ^*)^* , \cdot  \rangle .
 \end{aligned}
\end{equation}
By $\im \beta \alpha _2
  \Sigma _1\Phi  = \im \beta \alpha _2
  C\Phi  =(\im \beta \alpha _2)^2
  \Phi ^*= \Phi ^* $ we have
\begin{equation}   \label{eq:betatilde1} \begin{aligned} & \langle     \im \beta
\alpha _2
\Sigma _3 \Sigma _1\Phi  ,
     \partial _\omega \Phi   \rangle =  \langle
 \phi  ^*,
      \partial _\omega \phi  \rangle - \langle
 \phi  ,
      \partial _\omega \phi ^* \rangle =0,
\end{aligned}
\end{equation}
by $ \displaystyle \langle
 \phi  ,
      \partial _\omega \phi ^* \rangle = \int _{\R^3}\left (   a\partial _\omega
a +b \partial _\omega b  \right ) dx=\langle
 \phi ^* ,
      \partial _\omega \phi  \rangle  ,
$ see {\ref{Assumption:H2}}. Then
\begin{equation}   \label{eq:betatilde2} \begin{aligned} &  \frac{1}{2}  \langle
 e^{-\im \Sigma _3\vartheta}\im \beta \alpha _2
\Sigma _1 \Sigma _3\Phi  , \cdot  \rangle =\\& -
  \frac{q }
  {q' }\langle
  e^{ -\im \Sigma _3\vartheta }\Sigma _3 \partial _\omega \Phi ^*, \cdot
\rangle
  +  \varepsilon _j  \langle \frac{1}{2} \im \beta \alpha _2
\Sigma _1 \Sigma _3\Phi  ,     \xi _j  \rangle \langle
    e^{ - \im \Sigma _3\vartheta }\Sigma _3   \xi _j ^*, \cdot  \rangle \\&-
  \varepsilon _j   \langle \frac{1}{2}   \im \beta \alpha _2
\Sigma _1 \Sigma _3\Phi  ,
   \Sigma _1 C\xi _j  \rangle   \langle  e^{  -\im \Sigma _3\vartheta }
   \Sigma _3\Sigma _1(C\xi _j)^*  , \cdot  \rangle    +\langle
   e^{-  \im \Sigma _3\vartheta }
   (P_c(\mathcal{H}_\omega ^* )   \frac{1}{2}   \im \beta \alpha _2
\Sigma _1 \Sigma _3\Phi ^*)^* , \cdot  \rangle .
\end{aligned}
\end{equation}
with by \eqref{eq:ApplmatrixA}
\begin{equation} \label{eq:bettild}-
  \frac{q }
  {q' }\langle
  e^{ -\im \Sigma _3\vartheta }\Sigma _3 \partial _\omega \Phi ^*, \cdot
\rangle
=\frac{q}{q'}\langle R, \Sigma _3 \partial _\omega ^2 \Phi ^* \rangle
\, d \omega -\im \, \frac{q}{q'}   \, (q'   +\langle R,   \partial
_\omega \Phi ^*\rangle ) \, d \vartheta .
\end{equation}
  Applying Lemma  \ref{lem:gradient zf}, we get (by $\im \beta \alpha _2\Sigma
_1 f=f^* $ which follows from $\Sigma _1U=CU$)
\begin{equation} \label{eq:beta0} \begin{aligned} &
\varpi _0=-\im q\, d\vartheta - \varepsilon _j   \frac{\overline{z}_j \, d z_j -
{z}_j \, d\overline{z}_j  }{2}+\frac{1}{2}\langle f (U),\im \beta \alpha _2
\Sigma
_3\Sigma _1f '(U)\cdot  \rangle
\\& =\im \left ( -q +\frac{1}{2}\| R\| _{L^2}^2 \right
) \, d \vartheta    + \frac{1}{2}\langle \Sigma _3R^*, \partial _\omega R\rangle
\, d \omega +
 +
  \frac{1}{2}\langle \im \beta \alpha _2\Sigma _1\Sigma _3  \left ( 1 -
P_c(\omega
 _0)P_c(\omega )  \right )   f,  f'\, \rangle + \\& + \frac{1}{2}    z_j
\langle e^{-\im \Sigma _3\vartheta}\Sigma _1\Sigma _3(C\xi _j)^*, \cdot
 \rangle -\frac{1}{2}\overline{z}_j \langle e^{-\im
\Sigma _3\vartheta} \Sigma _3\xi _j^*,    \cdot  \rangle   +
+
\frac{1}{2}\langle e^{-\im \Sigma _3\vartheta}\im \beta \alpha _2 \Sigma
_1\Sigma
_3P_c(\omega )f,\cdot  \rangle    .
\end{aligned}
\end{equation}
By \eqref{eq:coordinate} we have
    \begin{equation} \label{eq:dPsi0} \begin{aligned} & d\psi =
\frac{1}{2}\langle \Sigma _3\Phi ^*, \partial _\omega R \rangle
d\omega +\frac{1}{2}  \langle \Sigma _3\Phi ^*, \xi _j \rangle
  dz_j + \frac{1}{2}  \langle \Sigma _3\Phi ^*, \Sigma _1C\xi _j \rangle
d\overline{z}_j  +\frac{1}{2}\langle \Sigma _3\Phi ^*,
P_c(\omega ) f' \cdot  \rangle   .
\end{aligned}
\end{equation}
Applying to \eqref{eq:dPsi0}   Lemma
 \ref{lem:gradient zf}
and the identities \eqref{eq:cancel0} below, we get $d\psi =$
\begin{equation} \label{eq:dPsi} \begin{aligned}d\psi  &   = \frac{1}{2}
\langle
\Sigma
_3\Phi ^* ,   \xi _j \rangle
\langle e^{-\im \Sigma
_3\vartheta} \Sigma _3\xi _j^*, \cdot  \rangle +
\frac{1}{2}  \langle \Sigma
_3\Phi ^* , \Sigma _1C \xi _j \rangle
\langle e^{-\im \Sigma
_3\vartheta}\Sigma _1 \Sigma _3(C\xi _j)^*, \cdot  \rangle
\\& + \frac{1}{2}\langle e^{-\im \Sigma _3\vartheta}\left ( P_c
(\mathcal{H}_\omega ^{*})\Sigma _3 \Phi \right  ) ^*, \cdot  \rangle
\\& +\frac{q}{q'}  \langle \Sigma _3\partial _\omega\Phi  ^*, \partial
_\omega R \rangle d\omega    \\& -\frac{\im }{2} \left \langle
 \underbrace{\langle \Sigma _3 \Phi ^*, \xi _j \rangle \Sigma _3 \xi ^*_j  +
  \langle \Sigma _3 \Phi ^*, \Sigma _1C\xi _j
  \rangle \Sigma _1\Sigma _3 (C\xi _j)^*+
  (P_c(\mathcal{H}_{\omega}^{*})\Sigma _3 \Phi )^*}_{P _{N_g^\perp
(\mathcal{H}_{\omega} )}\Sigma _3 \Phi ^*}
, \Sigma _3 R \right \rangle d\vartheta.
\end{aligned}
\end{equation}
To get the third line of \eqref{eq:dPsi} we have used:

\begin{equation*} \label{eq:cancel0} \begin{aligned} & \frac{1}{2}
 \langle \Sigma _3 \Phi ^*, \partial
_\omega R \rangle - \frac{1}{2}    \langle \Sigma _3 \Phi ^*, \xi
_j \rangle  \langle \Sigma _3\xi _j ^* , \partial _\omega R \rangle
-\\& \frac{1}{2}    \langle \Sigma _3 \Phi ^* , \Sigma _1C\xi _j
\rangle \langle \Sigma _1\Sigma _3(C\xi _j)^* , \partial _\omega R
\rangle - \frac{1}{2}      \langle  \left (
P_c(\mathcal{H}^*_\omega)\Sigma _3\Phi \right ) ^*,  \partial
_\omega R \rangle   =\frac{1}{2} \langle \Sigma _3 \Phi ^*,
\partial _\omega R \rangle ;\\& - \frac{1}{2}\left [ \langle \Sigma
_3 \Phi ^*,
\partial _\omega R \rangle  -\frac{1}{q'} \langle \Sigma _3 \Phi ^* ,
\Sigma _3 \Phi  \rangle  \langle \Sigma _3\partial _\omega\Phi ^* ,
\partial _\omega R \rangle   \right ] =\frac{2q}{2q'}\langle \Sigma
_3\partial _\omega\Phi ^*, \partial _\omega R \rangle .
\end{aligned}
\end{equation*}
Let us consider the sum \eqref{eq:alpha1}. There are various cancelations.
 The first and second   (resp.
the first term of the third) line of \eqref{eq:dPsi} cancel  with
the second and third lines of \eqref{eq:betatilde2} (resp. the
first term of the rhs of
  \eqref{eq:bettild}). The last three terms in
rhs\eqref{eq:beta} cancel with the last two lines of
\eqref{eq:beta0}. The $-\im qd\vartheta $ term in the rhs of
\eqref{eq:beta0}) cancels with the $-\im qd\vartheta $ term in
\eqref{eq:bettild}. Adding the   fourth line of \eqref{eq:dPsi}
with the last term of rhs\eqref{eq:bettild} we get the product of
$\im $ times the following quantities:

\begin{equation} \label{eq:cancel} \begin{aligned} &
 -\frac{1}{2}   \langle
 P_{N^\perp _g(\mathcal{H} _\omega  )} \Sigma _3 \Phi ^*,\Sigma _3 R\rangle -
\frac{q}{q'} \langle R,   \partial _\omega \Phi
^*\rangle =- \frac{1}{2} \langle   \Phi ^* ,  R\rangle \\&
+\frac{1}{2} \langle  P_{N  _g(\mathcal{H} _\omega ^* )} \Sigma _3
\Phi ^*,\Sigma _3 R\rangle - \frac{q}{q'} \langle R,
\partial _\omega \Phi ^*\rangle
\\& = - \frac{1}{2} \langle   \Phi ^*, R \rangle +
 \frac{1}{2q'} \langle \Phi ^*, \Sigma _3R
   \rangle  \langle  \partial _\omega \Phi, \Sigma _3\Phi  ^*
 \rangle   \\& +
 \frac{1}{2q'} \langle  \Sigma _3 \partial _\omega \Phi ^*, \Sigma _3R
  \rangle  \langle  \Sigma _3\Phi ^*,
\Sigma _3 \Phi \rangle - \frac{q}{q'} \langle R,   \partial _\omega
\Phi ^*\rangle =0,
\end{aligned}
\end{equation}
where for the second equality we have used
\begin{equation} P_{N  _g(\mathcal{H} _\omega ^* )}=\frac{1}{q'}
   \Phi  ^* \langle    \partial _\omega \Phi  , \cdot
\rangle +\frac{1}{q'}\Sigma _3 \partial _\omega \Phi  ^* \langle
\Sigma _3 \Phi
 , \cdot  \rangle .\nonumber
\end{equation}
The last equality in \eqref{eq:cancel} can be seen as follows. The
two terms in the third line in \eqref{eq:cancel}  are both equal to
0. Indeed,  $\langle  \Sigma _3\Phi ^*,
\partial _\omega \Phi \rangle =0$ by \eqref{eq:betatilde1} and,
by $ R\in N^{\perp}_g (\mathcal{H}_\omega ^*)$
  and $\Phi ^*\in   N _g (\mathcal{H}_\omega ^*) $,
  $ \langle R,     \Phi ^*\rangle =0$. The two terms in
the fourth line in \eqref{eq:cancel}  cancel each other. Then we
get formulas for $\alpha ^{\omega} $ and $\alpha ^{f}$. We get
$\alpha ^{\vartheta} $   also by $\| P_c(\omega )f\| _2^2=\|  f\|
_2^2 +2\Re \langle (P_c(\omega )
 -P_c(\omega _0 )) f,  (P_c(\omega )
 f) ^* \rangle .$ \qed

\begin{lemma}
  \label{lem:linearAlgebra} We have, summing over repeated indexes (also on  $j$
 and $\overline{j}$):
\begin{equation} \label{eq:linAlg0}\begin{aligned} &
i _{Y  }\Omega _0=\im q'  Y_\vartheta d\omega  -\im q'  Y_\omega
d\vartheta + \varepsilon _j ( Y_j d\overline{z}_{j}-Y_{\overline{j}}dz_j)+
\langle \im \beta \alpha _2\Sigma _1 \Sigma _3 Y_f,f' \cdot  \rangle .
\end{aligned}\end{equation}
For the $a_1$ in \eqref{eq:a1}, and for $  \Gamma  =
i_Y\widetilde{\Omega}$, we have
\begin{equation} \label{eq:linAlg1}  \begin{aligned}     \Gamma
_\omega =& a_1Y_\vartheta +Y_j \langle \Sigma _1
  \Sigma _3 (C\xi _j)^*, \partial _\omega R \rangle
 - Y_{\overline{j}} \langle
  \Sigma _3 \xi _j^*, \partial _\omega R \rangle
   +\langle   Y_f
  , \im \beta \alpha _2\Sigma _3 \Sigma _1  P_c
  \partial _\omega R \rangle ;\\
-  \Gamma _\vartheta   =&
  a_1Y_\omega -\im Y_j \langle  \Sigma _1
  \Sigma _3 (C\xi _j)^*, \Sigma _3 R \rangle
 + \im Y_{\overline{j}} \langle
  \Sigma _3 \xi _j^*, \Sigma _3 R \rangle  -\im \langle   Y_f
  , \im \beta
  \alpha _2\Sigma _3 \Sigma _1   P_c  \Sigma _3 R   \rangle ;
\\    -  \Gamma
_j =& \langle  \Sigma _1
  \Sigma _3 (C\xi _j)^*, \partial _\omega R \rangle
Y_\omega  + \im \langle  \Sigma _1
  \Sigma _3 (C\xi _j)^*, \Sigma _3  R
\rangle Y_ \vartheta
   ; \\      \Gamma_{\overline{j}} =&
\langle   \Sigma _3 \xi _j^*,
\partial _\omega R \rangle Y_\omega  + \im \langle   \Sigma
_3 \xi _j^*, \Sigma _3  R \rangle Y_ \vartheta
  ;
\\    \im \beta\alpha _2 \Sigma _3 \Sigma _1\Gamma _{f}= &
 ( P_c(\omega _0)  P_c(\omega  )-1 ) Y_{f}   +  Y_\omega
 P_c(\omega  _0 )P_c(\omega  )  \partial _\omega R
     +\im \, Y_{\vartheta}  P_c(\omega _0 )
 P_c(\omega  )  \Sigma _3 R
.
\end{aligned}\end{equation}
In particular, for $  \gamma  = i_{Y^{\tau }} {\Omega}_\tau =i_{Y^{\tau }}
{\Omega}_0+\tau \, i_{Y^{\tau }} \widetilde{{\Omega} } $ we have

\begin{equation} \label{eq:linAlg2}  \begin{aligned}    \gamma
_\omega =&   (\im q'+\tau a_1)Y_\vartheta^\tau  +\tau Y_j^\tau  \langle \Sigma
_1
  \Sigma _3 (C\xi _j)^*, \partial _\omega R \rangle
 - \tau Y_{\overline{j}}^\tau  \langle
  \Sigma _3 \xi _j^*, \partial _\omega R \rangle   \\&
+\tau \langle   Y_f^\tau
  , \im \beta \alpha _2\Sigma _3 \Sigma _1  P_c
  \partial _\omega R \rangle  ;\\
  -  \gamma _\vartheta   =&  (\im q'+\tau a_1)Y_\omega^\tau  -\tau \im Y_j^\tau
\langle  \Sigma _1
  \Sigma _3 (C\xi _j)^*, \Sigma _3 R \rangle
 + \tau \im Y_{\overline{j}}^\tau  \langle
  \Sigma _3 \xi _j^*, \Sigma _3 R \rangle  \\&
- \im \tau \langle   Y_f^\tau
  , \im \beta
  \alpha _2\Sigma _3 \Sigma _1   P_c  \Sigma _3 R   \rangle  ;\\
    -  \gamma
_j =&  \varepsilon _j  ({Y}^\tau  ) _{\overline{j}}  +\tau \langle  \Sigma _1
  \Sigma _3 (C\xi _j)^*, \partial _\omega R \rangle
Y_\omega^\tau   + \im \tau \langle  \Sigma _1
  \Sigma _3 (C\xi _j)^*, \Sigma _3  R
\rangle Y_\vartheta^\tau   ; \\
\gamma_{\overline{j}} =& \varepsilon _j
({Y}^\tau  )_{
{j}}+\tau \langle
\Sigma _3 \xi _j^*,
\partial _\omega R \rangle Y_\omega^\tau   + \im \tau \langle   \Sigma
_3 \xi _j^*, \Sigma _3  R \rangle Y_ \vartheta
  ;\\
     \im \beta\alpha _2 \Sigma _3 \Sigma _1\gamma _{f}= &
({Y}^\tau  )_{f}+\tau ( P_c(\omega _0)  P_c(\omega  )-1 ) Y_f^\tau    \\+&
\tau
Y_\omega^\tau
 P_c(\omega  _0 )P_c(\omega  )  \partial _\omega R
     +\im \tau \, Y_{\vartheta}^\tau  P_c(\omega _0 )
 P_c(\omega  )  \Sigma _3 R
    \,
     .
\end{aligned}\end{equation}
\end{lemma}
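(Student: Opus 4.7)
The proof is essentially a bookkeeping exercise using the explicit formulas for $\Omega$ and $\Omega_0$ already derived in Lemma \ref{lem:OmegaOmega0}, so I would organize it in three steps corresponding to the three sets of formulas \eqref{eq:linAlg0}, \eqref{eq:linAlg1}, \eqref{eq:linAlg2}.

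Step 1: I would first establish \eqref{eq:linAlg0}. Since
$$\Omega_0 = \im\, d\vartheta\wedge dq + \varepsilon_j\, dz_j\wedge d\overline{z}_j + \langle f'(U)\cdot,\, \im\beta\alpha_2\Sigma_3\Sigma_1 f'(U)\cdot\rangle$$
and $dq = q'(\omega)\, d\omega$, the contraction $i_Y\Omega_0$ decomposes additively over the three summands. For each wedge $\alpha\wedge\beta$ one uses $i_Y(\alpha\wedge\beta) = (\alpha(Y))\beta - (\beta(Y))\alpha$; this produces the first four terms of \eqref{eq:linAlg0}. For the last summand (which is itself a pull-back of a bilinear form through $f'$), contraction gives $\langle \im\beta\alpha_2\Sigma_1\Sigma_3 Y_f, f'\cdot\rangle$ after using the antisymmetry built into $\im\beta\alpha_2\Sigma_1\Sigma_3$.

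Step 2: For \eqref{eq:linAlg1}, I would take $\Omega$ in the explicit form computed in the proof of Lemma \ref{lem:OmegaOmega0} and subtract $\Omega_0$. The resulting $\widetilde{\Omega}$ collects exactly the pieces that vanish at $R=0$, namely
\begin{itemize}
\item the correction $a_1\, d\vartheta\wedge d\omega$;
\item the cross terms $dz_j\wedge(\langle\Sigma_1\Sigma_3(C\xi_j)^*,\partial_\omega R\rangle d\omega + \im\langle\Sigma_1\Sigma_3(C\xi_j)^*,\Sigma_3 R\rangle d\vartheta)$, and the analogous ones with $d\overline{z}_j$ and $\Sigma_3\xi_j^*$;
\item the two wedges coupling $\langle P_c(\omega)P_c(\omega_0)f'\cdot,\im\beta\alpha_2\Sigma_3\Sigma_1 P_c(\omega)\partial_\omega R\rangle\wedge d\omega$ and its $d\vartheta$ analogue;
\item the modification of the $f'$-quadratic piece from the factor $P_c(\omega)P_c(\omega_0)$ replacing the identity.
\end{itemize}
Then contracting term-by-term with $Y$, again via $i_Y(\alpha\wedge\beta) = \alpha(Y)\beta - \beta(Y)\alpha$, produces the five components of $\Gamma$ in \eqref{eq:linAlg1}. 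The component $\im\beta\alpha_2\Sigma_3\Sigma_1\Gamma_f$ is read off from the $f'$-sector: the term $(P_c(\omega_0)P_c(\omega)-1)Y_f$ comes from the quadratic correction, while the two remaining terms come from contracting the $f'$--$\partial_\omega R$ and $f'$--$\Sigma_3 R$ wedges.

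Step 3: Formula \eqref{eq:linAlg2} follows immediately from \eqref{eq:Omegat} by linearity of contraction: $\gamma = i_{Y^\tau}\Omega_\tau = i_{Y^\tau}\Omega_0 + \tau\, i_{Y^\tau}\widetilde{\Omega}$. I would simply add the output of Step 1 (applied to $Y^\tau$) to $\tau$ times the output of Step 2 (applied to $Y^\tau$), reading off each component. The terms $\varepsilon_j (Y^\tau)_{\overline{j}}$ in $-\gamma_j$ and $\varepsilon_j (Y^\tau)_j$ in $\gamma_{\overline{j}}$ come from $i_{Y^\tau}\Omega_0$; the rest from $\tau\, i_{Y^\tau}\widetilde{\Omega}$.

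The only delicate point is signs and ordering: one has to be careful that $dz_j\wedge d\overline{z}_j$ contracted with $Y$ gives $Y_j d\overline{z}_j - Y_{\overline{j}} dz_j$ (hence $\varepsilon_j$ appears with the convention of \eqref{eq:Omega0}), and that the anti-selfadjointness of $\im\beta\alpha_2\Sigma_1\Sigma_3$ with respect to $\langle\cdot,\cdot\rangle$ is what allows the $f'$-quadratic term to be contracted symmetrically. This is the only real obstacle; with this bookkeeping pinned down, everything is a direct substitution from the explicit formula for $\Omega$ derived via Lemmas \ref{lem:grad omega theta} and \ref{lem:gradient zf}.
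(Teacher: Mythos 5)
Your proposal is correct and follows essentially the same route as the paper: the paper disposes of \eqref{eq:linAlg0} as straightforward, obtains \eqref{eq:linAlg1} by the same elementary contraction of $\widetilde{\Omega}=\Omega-\Omega_0$ using the explicit expression of $\Omega$ from the proof of Lemma \ref{lem:OmegaOmega0}, and gets \eqref{eq:linAlg2} by linearity, exactly as in your Steps 1--3. Your write-up merely makes explicit the bookkeeping (the rule $i_Y(\alpha\wedge\beta)=\alpha(Y)\beta-\beta(Y)\alpha$ and the transposition of $\im\beta\alpha_2\Sigma_1\Sigma_3$ in the $f'$-sector) that the paper leaves to the reader, citing the analogous Lemma 7.3 of \cite{Cuccagna1}.
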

\proof Identity \eqref{eq:linAlg0} is straightforward.
Identity \eqref{eq:linAlg2}
follows immediately from \eqref{eq:linAlg0}--\eqref{eq:linAlg1}.
Finally, \eqref{eq:linAlg1} is elementary linear algebra, and
basically the same of \cite[ Lemma 7.3]{Cuccagna1}. \qed

\begin{remark}
Choosing $\gamma =  \alpha$
in Lemma \ref{lem:linearAlgebra}  with   $\mathcal{F}_\tau$    the flow of $Y^\tau $,
then   $  ({Y}^\tau )_\vartheta \not \equiv 0$   is an obstruction to
the fact that     $K\circ \mathcal{F}_1$  is  a $\vartheta$
invariant Hamiltonian   yielding a semilinear Dirac equation.
So we want     $
({Y}^\tau )_\vartheta =0 $ or $d\vartheta(Y^\tau)=\im \Omega_\tau(X^\tau_\vartheta,Y^\tau) =0 $,  with $X^\tau_\vartheta$ the
Hamiltonian  fields of $\vartheta$ . To this effect we add a correction to
$\alpha$ and  define $Y^\tau$  from $\alpha + \im dF$ where
$(\alpha+\im dF)(X_\vartheta^\tau)=0$.
\end{remark}

\begin{lemma}
  \label{lem:HamThetaOmega} Consider the vector field
   $X^\tau _\vartheta $ (resp. $X^\tau _\omega $) defined by
   $i_{X^\tau _\vartheta }\Omega_\tau =-\im d\vartheta$
   (resp. $i_{X^\tau _\omega }\Omega_\tau =-\im d\omega$). Then we have  (here
$P_c=P_c(\mathcal{H}_\omega )$
   and $P_c^0=P_c(\mathcal{H}_{\omega _0 })$):
\begin{equation}\label{hamiltonians1}\begin{aligned} X^\tau _\vartheta
=& (X^\tau _\vartheta ) _\omega \big [ \frac{\partial}{\partial\omega}
- \tau
 \langle
\Sigma _3 \xi _j^* , \partial _\omega R\rangle
\frac{\partial}{\partial z_j} - \tau
 \langle \Sigma _1
\Sigma _3(C \xi _j)^* , \partial _\omega R\rangle
\frac{\partial}{\partial \overline{z}_j} \\& -\tau P_c^0(1 +\tau
P_c-\tau P_c^0
 )^{-1} P_c^0 P_c\partial _\omega R \big ] ,
   ,
\end{aligned}
\end{equation}
where $(X^\tau _\vartheta ) _\omega$ is real valued and given by (for the $a_1$ in \eqref{eq:a1})
\begin{equation}\label{hamiltonians2}\begin{aligned}&
 (X^\tau _\vartheta ) _\omega =\frac{ \im}{ \im q'+\tau a_1+ \tau a_2}=-
 (X^\tau _\omega ) _\vartheta
\end{aligned}
\end{equation}

   \begin{equation}\label{a2}\begin{aligned} & a_2:=  \im \tau
   \langle
\Sigma _3 \xi _j^* , \partial _\omega  R\rangle \langle \Sigma _1\Sigma _3(C\xi
_j)^* ,
  \Sigma _3R\rangle -\im \tau\langle \Sigma _1 \Sigma _3(C \xi _j)^* ,
 \partial _\omega  R\rangle \langle    \xi _j^* ,   R\rangle +\\&
+\im \tau\langle  P_c^0(1 +\tau P_c-\tau P_c^0
 )^{-1} P_c^0 P_c\partial _\omega R,
 \im \beta \alpha _2\Sigma _3\Sigma _1P_c \Sigma _3 R\rangle .
\end{aligned}
\end{equation}

\end{lemma}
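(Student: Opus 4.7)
The plan is to read off $X^\tau_\vartheta$ and $X^\tau_\omega$ directly from the block formula \eqref{eq:linAlg2} of Lemma~\ref{lem:linearAlgebra}. Setting $\gamma = -\im d\vartheta$ forces $\gamma_\vartheta = -\im$ with all other components of $\gamma$ equal to zero. First I would make the ansatz $(X^\tau_\vartheta)_\vartheta = 0$. This is natural because at $R = 0$ one has $\Omega_\tau = \Omega_0$ by Lemma~\ref{lem:OmegaOmega0}, and for $\Omega_0$ the Hamiltonian of $\vartheta$ is a pure $\partial/\partial\omega$-field; we expect this qualitative feature to persist.

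Under this ansatz, the $\gamma_j$ and $\gamma_{\overline j}$ equations of \eqref{eq:linAlg2} invert at once (using $\varepsilon_j^2 = 1$) and yield
\[
  (X^\tau_\vartheta)_j = -\tau\,\varepsilon_j\,\langle \Sigma_3\xi_j^*,\partial_\omega R\rangle\,(X^\tau_\vartheta)_\omega ,\qquad
  (X^\tau_\vartheta)_{\overline j} = -\tau\,\varepsilon_j\,\langle \Sigma_1\Sigma_3(C\xi_j)^*,\partial_\omega R\rangle\,(X^\tau_\vartheta)_\omega,
\]
reproducing the discrete coefficients in \eqref{hamiltonians1}. The $\gamma_f = 0$ equation reads
\[
  \bigl(1 + \tau(P_c(\omega_0)P_c(\omega) - 1)\bigr)(X^\tau_\vartheta)_f = -\tau\,(X^\tau_\vartheta)_\omega\,P_c(\omega_0)P_c(\omega)\,\partial_\omega R .
\]
The operator on the left is a small perturbation of the identity for $\omega$ near $\omega_0$, uniformly in $\tau \in [0,1]$, by smooth dependence of $P_c$ on $\omega$; inverting it by Neumann series and re-expressing the inverse restricted to $\mathbf{X}_c(\mathcal{H}_{\omega_0})$ in the form $P_c^0(1 + \tau P_c - \tau P_c^0)^{-1}$ yields the last term in the bracket of \eqref{hamiltonians1}.

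The real-valued coefficient $(X^\tau_\vartheta)_\omega$ is then pinned down by the $\gamma_\vartheta = -\im$ equation in \eqref{eq:linAlg2}. Plugging the formulas just obtained for $(X^\tau_\vartheta)_j$, $(X^\tau_\vartheta)_{\overline j}$, $(X^\tau_\vartheta)_f$ into its right-hand side collects as
\[
  (\im q' + \tau a_1 + \tau a_2)\,(X^\tau_\vartheta)_\omega = \im ,
\]
where the three $\tau$-bilinear contributions produced by back-substitution match term by term the three summands defining $a_2$ in \eqref{a2}: the first two from the $Y^\tau_j, Y^\tau_{\overline j}$ substitutions (the $\varepsilon_j^2 = 1$ absorbs the sign), and the third from the $Y^\tau_f$ substitution via the inverse above. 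This gives \eqref{hamiltonians2}. Consistency with the remaining $\gamma_\omega = 0$ equation is automatic, since $\Omega_\tau$ is nondegenerate near $e^{\im\Sigma_3\vartheta}\Phi_{\omega_0}$ by Lemma~\ref{lem:OmegaOmega0}. The computation for $X^\tau_\omega$ is entirely symmetric: setting $\gamma = -\im d\omega$ and using the dual ansatz $(X^\tau_\omega)_\omega = 0$, one eliminates the discrete and $f$ components from the $\gamma_j = \gamma_{\overline j} = \gamma_f = 0$ equations and then uses the $\gamma_\omega = -\im$ equation; the same denominator $\im q' + \tau a_1 + \tau a_2$ reappears, giving $(X^\tau_\omega)_\vartheta = -(X^\tau_\vartheta)_\omega$.

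The main technical obstacle is the uniform control of the inverse of $1 + \tau(P_c(\omega_0)P_c(\omega) - 1)$ on $\mathbf{X}_c(\mathcal{H}_{\omega_0})$: one needs invertibility, as well as a suitable norm bound, uniformly for $\tau \in [0,1]$ and $\omega$ in a neighborhood of $\omega_0$. This follows from the smooth dependence of $P_c(\omega)$ on $\omega$ and the observation that $\|P_c(\omega) - P_c(\omega_0)\| \to 0$ as $\omega \to \omega_0$, so that the perturbation $\tau(P_c^0 P_c - 1)$ restricted to $\mathbf{X}_c(\mathcal{H}_{\omega_0})$ vanishes at $\omega = \omega_0$ and a standard Neumann series argument applies. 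Everything else in the proof is algebraic bookkeeping inside \eqref{eq:linAlg2}.
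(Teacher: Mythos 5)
Your computational core coincides with the paper's: both read the components of $X^\tau_\vartheta$ off the system \eqref{eq:linAlg2}, eliminate the discrete and $f$ components, and identify the denominator $\im q'+\tau a_1+\tau a_2$. The genuine gap is in how you justify the ansatz $(X^\tau_\vartheta)_\vartheta=0$ and dispose of the fifth equation. Calling the ansatz ``natural'' because it holds for $\Omega_0$ is only a heuristic, and the claim that consistency with the $\gamma_\omega=0$ equation is ``automatic, since $\Omega_\tau$ is nondegenerate'' is not valid reasoning: nondegeneracy gives existence and uniqueness of the solution of the full five-component system, it does not imply that a field constructed to satisfy only four of the five equations satisfies the remaining one. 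Concretely, substituting your expressions for $(X^\tau_\vartheta)_j$, $(X^\tau_\vartheta)_{\overline j}$, $(X^\tau_\vartheta)_f$ into the $\gamma_\omega$-line of \eqref{eq:linAlg2}, the discrete contributions cancel pairwise but there survives the term $-\tau^2 (X^\tau_\vartheta)_\omega\,\langle P_c^0(1+\tau P_c-\tau P_c^0)^{-1}P_c^0P_c\partial_\omega R,\ \im\beta\alpha_2\Sigma_3\Sigma_1 P_c\partial_\omega R\rangle$, whose vanishing is exactly what needs to be accounted for and does not follow from nondegeneracy. The missing one-line idea is the antisymmetry of $\Omega_\tau$: from $i_{X^\tau_\vartheta}\Omega_\tau=-\im\,d\vartheta$ evaluated on $X^\tau_\vartheta$ itself, $(X^\tau_\vartheta)_\vartheta=d\vartheta(X^\tau_\vartheta)=\im\,\Omega_\tau(X^\tau_\vartheta,X^\tau_\vartheta)=0$. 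Hence the true (unique) solution lies in your ansatz class a priori; within that class your four equations determine it (the $\vartheta$-equation fixes $(X^\tau_\vartheta)_\omega$ since $q'\neq 0$ and $R$ is small), and the $\gamma_\omega$-equation is then satisfied because the true solution satisfies all five equations -- which, as a byproduct, is what shows the surviving term above is zero.

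Two further points. First, the equality $(X^\tau_\vartheta)_\omega=-(X^\tau_\omega)_\vartheta$ in \eqref{hamiltonians2} does not come for free from an ``entirely symmetric'' computation: running your scheme for $\gamma=-\im\,d\omega$ with the dual ansatz produces a denominator whose last summand is $-\im\tau^2\langle P_c^0(1+\tau P_c-\tau P_c^0)^{-1}P_c^0P_c\Sigma_3 R,\ \im\beta\alpha_2\Sigma_3\Sigma_1 P_c\partial_\omega R\rangle$ rather than the last summand of $\tau a_2$ in \eqref{a2}, and identifying the two requires a symplectic self-adjointness property of the projections that you have not established. The clean route is again antisymmetry: $(X^\tau_\omega)_\vartheta=\im\,\Omega_\tau(X^\tau_\vartheta,X^\tau_\omega)$ while $(X^\tau_\vartheta)_\omega=\im\,\Omega_\tau(X^\tau_\omega,X^\tau_\vartheta)$, so the two are opposite with no computation. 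Second, the statement asserts that $(X^\tau_\vartheta)_\omega$ is real valued; you state this but never prove it. The paper obtains it by checking from the definitions that $a_1$ in \eqref{eq:a1} and $a_2$ in \eqref{a2} are purely imaginary, so that $\im q'+\tau a_1+\tau a_2$ is $\im$ times a real number; some remark of this kind is needed to complete the proof of the lemma as stated.
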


\proof The proof is almost the same of  \cite[Lemma 7.5]{Cuccagna1}. By
\eqref{eq:linAlg2} for $\gamma =-\im \,d\vartheta$, $X^\tau _\vartheta $
satisfies
\begin{equation}\label{HamTheta1}\begin{aligned}
& (X^\tau _\vartheta ) _\vartheta =0; \\ &  \im  = (\im q' + \tau a_1)
(X^\tau _\vartheta ) _\omega -\im \tau \langle \Sigma _1 \Sigma _3(C \xi _j)^*
, \Sigma _3 R\rangle (X^\tau _\vartheta ) _j+\\& +  \im \tau \langle \Sigma
_3 \xi _j ^*, \Sigma _3 R\rangle (X^\tau _\vartheta ) _{\overline{j}}-\im
\tau \langle (X^\tau _\vartheta ) _{f},    \im \beta \alpha _2\Sigma _3\Sigma
_1P_c
\Sigma _3 R\rangle ;
\\& (X^\tau _\vartheta ) _{f} =\tau (1-P_c^0 P_c )(X^\tau _\vartheta ) _{f}  -
\tau (X^\tau _\vartheta ) _\omega P_c^0P_c
\partial _\omega R ;\\&
   (X^\tau _\vartheta ) _{\overline{j}} = -\tau (X^\tau _\vartheta ) _\omega
 \langle \Sigma _1
\Sigma _3 (C\xi _j)^* , \partial _\omega R\rangle ; \,
  (X^\tau _\vartheta ) _{ {j}}=- \tau (X^\tau _\vartheta ) _\omega
 \langle
\Sigma _3 \xi _j^* , \partial _\omega R\rangle .
\end{aligned}
\end{equation}
This yields \eqref{hamiltonians1} for $X^\tau _\vartheta $ and the
first equality in \eqref{hamiltonians2}.  The fact that $(X^\tau _\vartheta ) _\omega$ is real valued follows from \eqref{hamiltonians2} and the fact that $a_1$ and  $a_2$
are  imaginary valued,  which can be checked by the definitions.   \qed

The following lemma is an immediate consequence of the formulas in Lemma
\ref{lem:HamThetaOmega} and of \eqref{eq:bounda1}.
\begin{lemma}
  \label{lem:HamBounds}
For any $ (K', S' ,K , S)$ we have
\begin{equation}\label{HamTheta2}\begin{aligned} &
|1-   (X^\tau _\vartheta ) _\omega  \, q'|\lesssim \| R \| _{H^{-K',-S'}}^2
\\& |(X^\tau _\vartheta ) _j| +|(X^\tau _\vartheta )
_{\overline{j}}| + \| (X^\tau _\vartheta ) _{f}  \| _{H^{ K , S
}}\lesssim \| R \| _{H^{-K',-S'}}  \end{aligned} \end{equation}
\end{lemma}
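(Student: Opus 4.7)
The claim is advertised as an immediate consequence of the formulas in Lemma~\ref{lem:HamThetaOmega} together with \eqref{eq:bounda1}, so the plan is essentially bookkeeping, divided into three steps corresponding to the three types of estimate. Throughout, note that by the decomposition \eqref{eq:decomp2}, together with the exponential decay of $\xi_j$, the coefficients $|z_j|$ and $\|f\|_{H^{-K',-S'}}$ are bounded by $\|R\|_{H^{-K',-S'}}$ (possibly with a constant depending on $K',S'$), so every estimate below ultimately reduces to controlling such scalars.

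For the first bound I would start from the explicit formula \eqref{hamiltonians2} and rewrite
\[
1-(X^\tau_\vartheta)_\omega\,q'=\frac{\tau(a_1+a_2)}{\im q'+\tau(a_1+a_2)}.
\]
By {\ref{Assumption:H3}}, $|q'(\omega)|$ is bounded away from $0$ on compact subsets of $\mathcal{O}$, so once $|a_1|+|a_2|$ is small the denominator is bounded below and it suffices to prove $|a_1|+|a_2|\lesssim\|R\|_{H^{-K',-S'}}^2$. The bound on $a_1$ is exactly \eqref{eq:bounda1}. Each of the three summands defining $a_2$ in \eqref{a2} is a product of two factors that are each linear in $R$ (or $\partial_\omega R$); by Cauchy--Schwarz against the exponentially decaying test vectors $\xi_j$, $\Sigma_1 C\xi_j$, together with \eqref{eq:partialR} (which expands $\partial_\omega R$ as a sum of scalars $z_k,\bar z_k$ against Schwartz functions, plus $\partial_\omega P_c(\omega) f$), one reads off a bound of the form $\|R\|_{H^{-K',-S'}}^2$.

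For the discrete components, the formulas \eqref{HamTheta1} yield
\[
(X^\tau_\vartheta)_j=-\tau (X^\tau_\vartheta)_\omega\,\langle\Sigma_3\xi_j^*,\partial_\omega R\rangle,\qquad (X^\tau_\vartheta)_{\overline{j}}=-\tau (X^\tau_\vartheta)_\omega\,\langle\Sigma_1\Sigma_3(C\xi_j)^*,\partial_\omega R\rangle.
\]
Step one already shows $(X^\tau_\vartheta)_\omega=O(1)$, so the problem reduces to estimating the scalar pairings. Expanding $\partial_\omega R$ as above and pairing each piece against the Schwartz test-function $\Sigma_3\xi_j^*$ (or its twisted analogue) produces a scalar of size $|z_k|$ or $\|f\|_{H^{-K',-S'}}$, hence bounded by $\|R\|_{H^{-K',-S'}}$.

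The third bound, on $\|(X^\tau_\vartheta)_f\|_{H^{K,S}}$, is the only point that looks nontrivial and where I expect the main obstacle to lie, since it is genuinely a smoothing estimate: $H^{-K',-S'}$ control of $R$ must be converted into $H^{K,S}$ control of $(X^\tau_\vartheta)_f$ with arbitrary $K,S$. Starting from the formula in \eqref{hamiltonians1}, one observes that $P_c^0=1-P_d^0$ is a bounded correction to the identity on every $H^{K,S}$ because $P_d^0$ is finite rank with Schwartz-class range (and analogously for $P_c$); for $\omega$ close to $\omega_0$ the difference $P_c-P_c^0$ is small in $B(H^{K,S})$ since $P_d(\omega)-P_d(\omega_0)$ is finite rank with smoothly varying Schwartz range, so a Neumann series shows $(1+\tau(P_c-P_c^0))^{-1}$ is bounded on $H^{K,S}$ uniformly in $\tau\in[0,1]$. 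It remains to prove $\|P_c^0 P_c\,\partial_\omega R\|_{H^{K,S}}\lesssim\|R\|_{H^{-K',-S'}}$. Again by \eqref{eq:partialR}, $\partial_\omega R$ is a sum of $z_k\partial_\omega\xi_k$ and $\bar z_k\Sigma_1 C\partial_\omega\xi_k$ (Schwartz functions times scalars controlled by $\|R\|_{H^{-K',-S'}}$) plus $\partial_\omega P_c(\omega)f=-\partial_\omega P_d(\omega)f$; the last term has range in a finite-dimensional subspace of Schwartz-class functions, and its size is controlled by $\|f\|_{H^{-K',-S'}}\lesssim\|R\|_{H^{-K',-S'}}$ because $\partial_\omega P_d$ is a finite-rank operator whose dual vectors are likewise Schwartz (by the Combes--Thomas argument invoked in Section~\ref{sec:spectrum} for the eigenfunctions, transferred to their $\omega$-derivatives). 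Assembling these three steps gives both inequalities of \eqref{HamTheta2}.
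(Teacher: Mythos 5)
Your proposal is correct and follows the paper's intended route: the paper dismisses this lemma as an immediate consequence of the formulas \eqref{hamiltonians1}--\eqref{a2} of Lemma \ref{lem:HamThetaOmega} together with \eqref{eq:bounda1}, and your argument is exactly that bookkeeping carried out in detail (quadratic bounds on $a_1,a_2$, the pairings of $\partial_\omega R$ against exponentially decaying vectors, and the finite-rank smoothing structure of $P_d(\omega)$ and $\partial_\omega P_d(\omega)$ for the $H^{K,S}$ bound on the $f$-component). Nothing further is needed.
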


\begin{definition}\index{
$H_c^{K,S}$}\index{$\widetilde{\Ph}^{K,S}$}\index{$\Ph^{K,S}$}
Set $H_c^{K,S}(\omega )=P_c(\omega )H ^{K,S}$ and denote
\begin{equation}\label{eq:PhaseSpace} \widetilde{\Ph}^{K,S}=\mathbb{C }^n\times
H_c^{K,S}(\omega
_0)\, , \quad \Ph^{K,S}=\mathbb{R}^2 \times \widetilde{\Ph}^{K,S}
\end{equation}
with elements $(\vartheta , \omega , z, f)\in \Ph^{K,S}$ and $(  z,
f)\in \widetilde{\Ph}^{K,S}$.
\end{definition}

\begin{lemma}
  \label{lem:flow Htheta} We consider  $\forall$ $\tau\in[0,1]$
the hamiltonian field $X^\tau _\vartheta $ and
   the flow
\begin{equation*}\label{FlowTheta1}
\frac{d}{ds}\Phi _s(\tau,U)=X^\tau _\vartheta  (\Phi _s(\tau,U))\, , \, \Phi
_0(\tau,U)=U.\end{equation*} \begin{itemize}
\item[(1)] For any $(K', S')$ there is a $s_0>0$
and a neighborhood $\mathcal{U}$ of $\mathbb{R}\times \{  (\omega
_0,0,0)\}$ in $\Ph^{-K',-S'}$ such that the map $ (s,\tau,U)\to \Phi
_s(\tau,U)$ is smooth

\begin{equation*}\label{FlowTheta2} (-s_0,s_0) \times
 [0,1]\times \left (\mathcal{U}\cap \{ \omega =\omega _0\}
 \right )\to   \Ph^ {-K',-S'} .
\end{equation*}

\item[(2)] $\mathcal{U}$ can be chosen so that for any $\tau\in[0,1]$
there  is  another neighborhood $\mathcal{V}_\tau$ of
$\mathbb{R}\times \{ (\omega _0,0,0)\}$ in $\Ph^{-K',-S'}$ s.t.
the above map establishes a diffeomorphism

\begin{equation}\label{FlowTheta5} (-s_0,s_0) \times
 \left ( \mathcal{U}\cap \{ \omega =\omega _0\}
 \right )\to   \mathcal{V}_t .
\end{equation}

 \item[(3)]
  $ f(\Phi _s(\tau,U))
-f (U )=G (t, s , z, f )  $ is a smooth map for all $(K,S)$

\begin{equation}(-s_0,s_0) \times
 [0,1]\times \left ( \mathcal{U}\cap \{ \omega =\omega _0\}
 \right )\to   H^{K,S} \nonumber
\end{equation}
with $ \| G (t, s , z, f ) \| _{H^{K,S}} \le C |s| (|z| +\| f\|
_{H^{-K',-S'}} ).$

\end{itemize}
\end{lemma}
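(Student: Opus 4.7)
All three assertions will follow from standard ODE theory with parameters, once one exploits the structural features of $X^\tau_\vartheta$ provided by Lemma \ref{lem:HamBounds} and the smoothing character of its $f$--component.

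\emph{Step 1 (smoothness of the flow).} Reading off the components of $X^\tau_\vartheta$ from \eqref{hamiltonians1}--\eqref{hamiltonians2}, the ODE $\dot U = X^\tau_\vartheta(U)$ splits as $\dot\vartheta=0$ together with
\begin{equation*}
\dot\omega=(X^\tau_\vartheta)_\omega,\qquad \dot z_j=-\tau(X^\tau_\vartheta)_\omega\langle\Sigma_3\xi_j^*,\partial_\omega R\rangle,\qquad \dot f=(X^\tau_\vartheta)_f.
\end{equation*}
For $U$ in a sufficiently small neighborhood $\mathcal{U}$ of $\R\times\{(\omega_0,0,0)\}$ in $\Ph^{-K',-S'}$, Lemma \ref{lem:HamBounds} shows that the coefficients are uniformly bounded in $\tau\in[0,1]$; smoothness of $(\tau,U)\mapsto X^\tau_\vartheta(U)$ follows from the smooth $\omega$--dependence of $\Phi_\omega$ and $\xi_j(\omega)$ granted by \ref{Assumption:H2}, and from the smooth $\omega$--dependence of the spectral projection $P_c(\mathcal{H}_\omega)$ on $\Ph^{-K',-S'}$ via the Riesz integral along a fixed contour separating $\sigma_d(\mathcal{H}_\omega)$ from $\sigma_{\rm ess}(\mathcal{H}_\omega)$. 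The standard Cauchy--Lipschitz theorem with parameters yields a uniform lifetime $s_0>0$ and smoothness of $(s,\tau,U)\mapsto\Phi_s(\tau,U)$ in $\Ph^{-K',-S'}$.

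\emph{Step 2 (diffeomorphism).} Since $\dot\vartheta=0$ the $\vartheta$--coordinate is preserved by the flow, and by Lemma \ref{lem:HamBounds} $(X^\tau_\vartheta)_\omega = (q'(\omega))^{-1} + O(\|R\|_{H^{-K',-S'}}^2)$ stays bounded away from $0$ on $\mathcal{U}$, so the $\omega$--coordinate of $\Phi_s(\tau,U)$ is strictly monotone in $s$. The differential at $s=0$ of the map \eqref{FlowTheta5} therefore sends $\partial_s$ to a vector with non zero $\partial/\partial\omega$--component, while the differentials in the directions of $\mathcal{U}\cap\{\omega=\omega_0\}$ span the complementary hyperplane $\{d\omega=0\}$. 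The inverse function theorem in $\Ph^{-K',-S'}$ then produces, for each $\tau\in[0,1]$, an open image $\mathcal{V}_\tau$ and a smooth inverse.

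\emph{Step 3 (smoothing estimate).} Integration of the $f$--equation gives
\begin{equation*}
G(\tau,s,z,f)=f(\Phi_s(\tau,U))-f(U)=\int_0^s(X^\tau_\vartheta)_f(\Phi_{s'}(\tau,U))\,ds',
\end{equation*}
so it suffices to bound $(X^\tau_\vartheta)_f$ in $H^{K,S}$ for arbitrary $(K,S)$. From \eqref{hamiltonians1},
\begin{equation*}
(X^\tau_\vartheta)_f = -\tau(X^\tau_\vartheta)_\omega\,P_c^0\bigl(1+\tau(P_c-P_c^0)\bigr)^{-1}P_c^0P_c\,\partial_\omega R,
\end{equation*}
and $\partial_\omega R=\sum z_j\partial_\omega\xi_j+\sum\overline{z}_j\Sigma_1C\partial_\omega\xi_j+\partial_\omega P_c(\mathcal{H}_\omega)f$. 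The first two summands lie in every $H^{K,S}$ with norm $\lesssim|z|$ thanks to the Combes--Thomas exponential decay of the $\xi_j(\omega)$ under \ref{Assumption:H2}. The operator $\partial_\omega P_c(\mathcal{H}_\omega)=-\partial_\omega P_d(\mathcal{H}_\omega)$ has finite rank with exponentially localized smooth range, so it maps $H^{-K',-S'}$ into $H^{K,S}$ with norm uniform in $\omega$ near $\omega_0$. Finally, shrinking $\mathcal{U}$ guarantees $\|\tau(P_c-P_c^0)\|_{B(H^{K,S})}<1$ uniformly, hence $(1+\tau(P_c-P_c^0))^{-1}$ is bounded on $H^{K,S}$. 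Combining these yields $\|(X^\tau_\vartheta)_f\|_{H^{K,S}}\lesssim|z|+\|f\|_{H^{-K',-S'}}$, which integrated on $[0,s]$ gives the claimed bound on $G$.

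The main obstacle is the smoothing claim underlying Step 3: it rests on the fact that $P_d(\mathcal{H}_\omega)$ and its $\omega$--derivative map arbitrarily rough weighted Sobolev spaces into arbitrarily smooth ones. This in turn uses the finite multiplicity of the discrete spectrum of $\mathcal{H}_\omega$ on $\mathbf{X}$ (granted by \ref{Assumption:H6} and Lemma \ref{lem:spectrum}) together with Combes--Thomas exponential decay of eigenfunctions; once this operator--theoretic input is in hand, everything else is elementary ODE and implicit function theorem.
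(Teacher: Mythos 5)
Your argument is correct and is essentially the proof the paper has in mind: the paper simply defers to \cite[Lemma 7.7]{Cuccagna1}, which is exactly this Cauchy--Lipschitz-with-parameters argument exploiting that $(X^\tau_\vartheta)_\vartheta=0$, that the $\omega$- and $z_j$-components are bounded scalars, and that the $f$-component is smoothing; in fact your Step 3 bound $\|(X^\tau_\vartheta)_f\|_{H^{K,S}}\lesssim |z|+\|f\|_{H^{-K',-S'}}$ is already recorded as the second estimate of Lemma \ref{lem:HamBounds}, so it can be cited rather than re-derived. The only step you leave implicit is the passage from the bound on $(X^\tau_\vartheta)_f$ evaluated along the flow to the stated bound in terms of the initial coordinates, i.e. $|z(\Phi_{s'}(\tau,U))|+\|f(\Phi_{s'}(\tau,U))\|_{H^{-K',-S'}}\lesssim |z|+\|f\|_{H^{-K',-S'}}$ for $|s'|<s_0$, which follows by a Gronwall argument using that the $z$- and $f$-components of $X^\tau_\vartheta$ vanish when $R=0$ (so $\{z=0,\,f=0\}$ is invariant under the flow).
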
\proof The proof is exactly the same of Lemma 7.7 \cite{Cuccagna1}. We only remark, that the field $X^\tau _\vartheta$, the flow $\Phi _s(\tau,U)$ and the
function $F(\tau ,U)$ in Lemma \ref{lem:correction alpha} are defined intrinsically, and so are periodic in $\vartheta$. This is because $X^\tau _\vartheta$ satisfies
these properties, since $i_{X^\tau _\vartheta }\Omega_\tau =-\im d\vartheta$ with both $\Omega_\tau$ and $d\vartheta$ intrinsically defined and  periodic in $\vartheta$.
   \qed
\begin{lemma}
  \label{lem:correction alpha} We consider a scalar function
  $F(\tau ,U)$ defined as follows:
  \begin{equation*}\label{defCorrect}F(\tau ,\Phi _s(\tau,U))= \im \,
\int _0^s\alpha   _{\Phi _{s'}(t,U)}\left ( X^\tau _\vartheta  (\Phi
_{s'}(t,U))\right ) ds'\, ,  \text{ where $\omega (U)=\omega _0$ .}
\end{equation*}
We have $F
  \in C^{ \infty} ( [0,1]\times \mathcal{U}, \mathbb{R})$ for
   a neighborhood $\mathcal{U}$ of $\mathbb{R}\times \{  (\omega
_0,0,0)\}$ in $\Ph^{-K',-S'}$. We have
\begin{equation} \label{estCorrect}
|F (t,U)| \le C (K',S')  |\omega -\omega _0|\, \left (  |z|+ \| f
\| _{H^{-K',-S'}}\right )^2
   \end{equation}
We have (exterior differentiation only in $U$)
\begin{equation}\label{Vectorfield21}
(\alpha +\im \, d F )(X^\tau _\vartheta ) =0.
\end{equation}

\end{lemma}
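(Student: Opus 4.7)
The plan is to define $F$ by integrating $\im\,\alpha(X^\tau_\vartheta)$ along the trajectories of $X^\tau_\vartheta$ emanating from the slice $\{\omega=\omega_0\}$, legitimated by the diffeomorphism property established in Lemma \ref{lem:flow Htheta}. The required identity \eqref{Vectorfield21} then falls out of the chain rule, and the quantitative bound \eqref{estCorrect} follows from direct estimates on the integrand combined with the parameterization $|s|\sim|\omega-\omega_0|$ along the flow.

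First, I would establish smoothness and reality. For each $\tau\in[0,1]$, part (2) of Lemma \ref{lem:flow Htheta} gives a neighborhood $\mathcal{V}_\tau$ in which every point $U$ is uniquely of the form $\Phi_s(\tau,U_0)$ with $U_0\in\{\omega=\omega_0\}\cap\mathcal{U}$, the parameters $(s,U_0)$ depending smoothly on $U$. The defining formula can thus be rewritten as
\[F(\tau,U)=\im\int_0^{s(\tau,U)}\alpha_{\Phi_{s'}(\tau,U_0(\tau,U))}\bigl(X^\tau_\vartheta\bigl(\Phi_{s'}(\tau,U_0(\tau,U))\bigr)\bigr)\,ds',\]
which is smooth in $(\tau,U)$ as a composition of smooth maps (the flow, $\alpha$ from Lemma \ref{lem:1forms}, and $X^\tau_\vartheta$ from Lemma \ref{lem:HamThetaOmega}). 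Reality of $F$ follows by inspection of the explicit formulas for $\alpha$ in Lemma \ref{lem:1forms} together with $(X^\tau_\vartheta)_\vartheta=0$ and the constraint $\Sigma_1U=CU$, which force the surviving pairings $\alpha^\omega(X^\tau_\vartheta)_\omega$ and $\langle\alpha^f,(X^\tau_\vartheta)_f\rangle$ to be purely imaginary.

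Second, the identity \eqref{Vectorfield21} is immediate from differentiating the defining relation in $s$. For $U=\Phi_s(\tau,U_0)\in\mathcal{V}_\tau$, the integral formula gives $\frac{d}{ds}F(\tau,\Phi_s(\tau,U_0))=\im\,\alpha_U(X^\tau_\vartheta(U))$, while the chain rule gives the same quantity as $(dF)_U(X^\tau_\vartheta(U))$. Since every $U\in\mathcal{V}_\tau$ arises this way, one concludes $(dF)(X^\tau_\vartheta)=\im\,\alpha(X^\tau_\vartheta)$ throughout $\mathcal{V}_\tau$, which is \eqref{Vectorfield21}.

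Third, for the bound \eqref{estCorrect}, combine Lemma \ref{lem:HamBounds} with the structure of $\alpha$ in Lemma \ref{lem:1forms}. Since $(X^\tau_\vartheta)_\vartheta=0$, only two terms contribute to $\alpha(X^\tau_\vartheta)$: the pairing $\alpha^\omega(X^\tau_\vartheta)_\omega$ is bounded by $C(|z|+\|f\|_{H^{-K',-S'}})^2$ because $\alpha^\omega=-\tfrac12\langle R^*,\Sigma_3\partial_\omega R\rangle$ is quadratic in $R$ and $|(X^\tau_\vartheta)_\omega|\lesssim 1$; the pairing $\langle\alpha^f,(X^\tau_\vartheta)_f\rangle$ is bounded by $\|\alpha^f\|\cdot\|(X^\tau_\vartheta)_f\|\lesssim|\omega-\omega_0|\|f\|\cdot\|R\|$ using smoothness of $\omega\mapsto P_c(\omega)$ and Lemma \ref{lem:HamBounds}, which is again $O((|z|+\|f\|)^2)$ in the relevant neighborhood. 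Along the trajectory $\dot\omega=(X^\tau_\vartheta)_\omega=q'(\omega)^{-1}+O(\|R\|^2)$, so $|s(\tau,U)|\lesssim|\omega(U)-\omega_0|$; a Gr\"onwall argument using $(X^\tau_\vartheta)_{j,\bar j,f}=O(\|R\|)$ ensures $|z(s')|+\|f(s')\|_{H^{-K',-S'}}$ remains comparable to its value at $U$ throughout $[0,s(\tau,U)]$. Integrating the pointwise bound then yields \eqref{estCorrect}. The main obstacle is exactly this bookkeeping step: collecting the factor $|\omega-\omega_0|$ from the length of the integration interval while simultaneously retaining the quadratic factor $(|z|+\|f\|)^2$ uniformly along the short trajectory; the identity \eqref{Vectorfield21} itself is automatic from the construction.
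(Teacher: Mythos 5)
Your proposal is correct and takes essentially the same elementary route as the paper (which simply defers to Lemma 7.8 of \cite{Cuccagna1}): define $F$ along the flow of $X^\tau _\vartheta$ issuing from the slice $\{\omega =\omega _0\}$ using Lemma \ref{lem:flow Htheta}, get \eqref{Vectorfield21} from the chain rule along the flow, and get \eqref{estCorrect} by combining $|s|\lesssim |\omega -\omega _0|$ (since $(X^\tau _\vartheta )_\omega \approx 1/q'$) with the quadratic bounds on $\alpha ^{\omega}$, $\alpha ^{f}$ and Lemma \ref{lem:HamBounds}. The only point you leave to ``inspection'' --- that $F$ is real, i.e.\ that $\alpha ^{\omega}(X^\tau _\vartheta )_\omega$ and $\langle \alpha ^{f},(X^\tau _\vartheta )_f\rangle$ are purely imaginary --- is a routine verification of the same type the paper performs for $a_1$ and $a_2$, so there is no genuine gap.
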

\proof    The proof is elementary and is exactly the same of Lemma 7.8 \cite{Cuccagna1}.  \qed

We now have the desired correction for $\alpha$ and below we introduce the
vector field whose   flow yields the wanted change
of coordinates.

\begin{lemma}
  \label{lem:vectorfield}  Denote by $\mathcal{X}^\tau $    the vector field
which solves $
 i_{\mathcal{X}^\tau } \Omega _\tau =-\alpha - \im \, d  F(\tau ) .
  $
Then the following properties hold.

 \begin{itemize}
\item[(1)]  There is a neighborhood $\mathcal{U}$ of
$ \mathbb{{R}}\times \{ (\omega _0, 0,0)  \} $ in $\Ph ^{1,0}$ such
that $ \mathcal{X}^\tau  ( U)
  \in C^{ \infty} ( [0,1]\times \mathcal{U}, \Ph ^{1,0})$.

  \item[(2)]  We have
$
 (\mathcal{X}^\tau )_\vartheta  \equiv 0.
   $

   \item[(3)] For constants $C(K,S,K',S')$

    \begin{equation}\label{Vectorfield3}\begin{aligned} &
\left | (\mathcal{X}^\tau )_\omega + \frac{\|f\| _2^2 }{2q'(\omega )}
\right | \lesssim (|z|+\| f \| _{H^{-K',-S'}})^2;  \\&
|(\mathcal{X}^\tau   )_
 {j} |  +|(\mathcal{X}^\tau   )_
 {\overline{j }} | +\|
  (\mathcal{X}^\tau   )_{f} \| _{H^{ K , S }}
  \lesssim  (|z|+\| f \| _{H^{-K',-S'}}) \times  \\&
  \times
(|\omega -\omega _0| +|z|+\| f \| _{H^{-K',-S'}} +\|f\| _{L^2}^2).
   \end{aligned}\end{equation}

\item[(4)]    We have $ L _{\mathcal{X}^\tau }
\frac{\partial}{\partial \vartheta} :=\left [ \mathcal{X}^\tau ,
\frac{\partial}{\partial \vartheta} \right ] =0.$

 \item[(5)] We have  $(\mathcal{X}^\tau   )_
 {\overline{j }} = \overline{(\mathcal{X}^\tau   )}_
 { {j }}$,   $(\mathcal{X}^\tau   )_
 {f} =C \Sigma _1(\mathcal{X}^\tau   )_
 {f} $.  $(\mathcal{X}^\tau   )_
 {\omega}$ is real valued.

\end{itemize}
\end{lemma}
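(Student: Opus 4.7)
The plan is to solve the defining identity $i_{\mathcal{X}^\tau}\Omega_\tau = -\alpha - \im\,dF$ componentwise via Lemma \ref{lem:linearAlgebra} applied with $\gamma = \alpha + \im\,dF$, and then to read off the five properties from the resulting size estimates. Write $\alpha + \im\,dF = \gamma_\vartheta\,d\vartheta + \gamma_\omega\,d\omega + \gamma_j\,dz_j + \gamma_{\bar j}\,d\bar z_j + \langle\gamma_f,f'\cdot\rangle$. From \eqref{eq:alpha2} and \eqref{estCorrect} one reads off, for $(\omega-\omega_0,z,f)$ small, that $\gamma_\vartheta = -\tfrac{\im}{2}\|f\|_2^2 + r_\vartheta$ with $r_\vartheta$ of higher order, while each of $\gamma_\omega,\gamma_j,\gamma_{\bar j},\gamma_f$ is $O\bigl((|z|+\|f\|_{H^{-K',-S'}})(|\omega-\omega_0|+|z|+\|f\|_{H^{-K',-S'}})\bigr)$. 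Then \eqref{eq:linAlg2} becomes a linear system for the five components of $\mathcal{X}^\tau$ which is diagonally dominant with leading block $\mathrm{diag}(\im q',\im q',\varepsilon_j,\varepsilon_j,\im\beta\alpha_2\Sigma_3\Sigma_1)$, the off-diagonal entries being $O(\|R\|_{H^{-K',-S'}})$ via \eqref{eq:bounda1}. A Neumann-series inversion produces smooth formulas for all components on a neighborhood $\mathcal{U}\subset[0,1]\times\Ph^{1,0}$, yielding claim (1).

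Claim (2), the key structural gain from the correction $\im\,dF$, is immediate: by \eqref{Vectorfield21}, $(\alpha+\im\,dF)(X^\tau_\vartheta)\equiv 0$, whence, using $i_{X^\tau_\vartheta}\Omega_\tau = -\im\,d\vartheta$, $i_{\mathcal{X}^\tau}\Omega_\tau = -\alpha-\im\,dF$ and antisymmetry of $\Omega_\tau$,
\[
-\im\,(\mathcal{X}^\tau)_\vartheta = -\im\,d\vartheta(\mathcal{X}^\tau) = \Omega_\tau(X^\tau_\vartheta,\mathcal{X}^\tau) = -\Omega_\tau(\mathcal{X}^\tau,X^\tau_\vartheta) = (\alpha + \im\,dF)(X^\tau_\vartheta) = 0.
\]
Substituting $(\mathcal{X}^\tau)_\vartheta = 0$ into the third through sixth lines of \eqref{eq:linAlg2}, $(\mathcal{X}^\tau)_j,(\mathcal{X}^\tau)_{\bar j},(\mathcal{X}^\tau)_f$ are expressed as $\gamma_j,\gamma_{\bar j},\gamma_f$ plus small multiples of $(\mathcal{X}^\tau)_\omega$; plugging back into the $d\vartheta$-equation one isolates $(\im q'+\tau a_1)(\mathcal{X}^\tau)_\omega = -\gamma_\vartheta + O(\|R\|^2_{H^{-K',-S'}})(\mathcal{X}^\tau)_\omega$, giving to leading order $(\mathcal{X}^\tau)_\omega = -\|f\|_2^2/(2q'(\omega)) + O((|z|+\|f\|_{H^{-K',-S'}})^2)$, which is the first estimate in \eqref{Vectorfield3}. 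Reinserting this value of $(\mathcal{X}^\tau)_\omega$ in the expressions for the remaining components, and using that each of $\gamma_j,\gamma_{\bar j},\gamma_f$ carries an extra factor $(|z|+\|f\|_{H^{-K',-S'}})$, the factorized bound in \eqref{Vectorfield3} follows, proving (3).

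Claims (4) and (5) then follow by uniqueness of $\mathcal{X}^\tau$: the symplectic form $\Omega_\tau$, the 1-form $\alpha$, and the scalar function $F$ are all defined intrinsically from $U$, hence invariant under the $U(1)$-action generated by $\partial/\partial\vartheta$ (as already exploited for $X^\tau_\vartheta$ in Lemma \ref{lem:flow Htheta}) and equivariant under the anti-linear involution $\Sigma_1 C$ encoding the real slice $\Sigma_1 U = CU$. The vector field determined by $i_{\mathcal{X}^\tau}\Omega_\tau = -\alpha-\im\,dF$ inherits both symmetries, which translate precisely into $[\mathcal{X}^\tau,\partial/\partial\vartheta]=0$ for (4) and into the conjugation/reality relations of (5). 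The main obstacle is the bookkeeping in (3): one must track the $|\omega-\omega_0|$ enhancements in $\alpha^f$ (through $P_c(\omega)-P_c(\omega_0)$) and in the components of $dF$ (through \eqref{estCorrect}), and combine them with the Neumann inversion above so as to produce exactly the factor $|\omega-\omega_0|+|z|+\|f\|_{H^{-K',-S'}}+\|f\|_{L^2}^2$ in \eqref{Vectorfield3}. This is the same computation performed in \cite[Lemma 7.9]{Cuccagna1}.
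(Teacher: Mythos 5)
Your proposal is correct and follows essentially the same route as the paper: claim (2) is the paper's exact computation from \eqref{Vectorfield21}; claim (3) comes from the componentwise linear system of Lemma \ref{lem:linearAlgebra} (the paper packages the $\omega$-component instead as a contraction with $X^\tau_\omega$, formula \eqref{Vectorfield5}, and then uses \eqref{Vectorfield7}, but this is the same linear algebra); and claim (4) is the same Lie-derivative/invariance argument. Two small remarks. First, Lemma \ref{lem:linearAlgebra} is stated for $\gamma=i_{Y^\tau}\Omega_\tau$, so with $i_{\mathcal{X}^\tau}\Omega_\tau=-\alpha-\im\,dF$ you must take $\gamma=-(\alpha+\im\,dF)$, not $\alpha+\im\,dF$: as written, your $d\vartheta$-row would yield $(\mathcal{X}^\tau)_\omega\approx +\|f\|_2^2/(2q')$, the opposite sign of the (correct) leading-order formula you then assert, so the sign convention needs fixing for internal consistency. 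Second, for claim (5) the paper verifies the conjugation and reality relations directly from \eqref{Vectorfield5} and \eqref{Vectorfield7}, using that $(X^\tau_\omega)_\vartheta$ and $F$ are real valued together with the explicit form \eqref{eq:alpha2} of $\alpha$; your appeal to equivariance of $\Omega_\tau$, $\alpha$ and $F$ under the involution $\Sigma_1 C$ is a legitimate alternative, but that equivariance itself requires essentially the same explicit check, so it should be stated as such rather than taken for granted.
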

\proof    The proof  is almost the same of \cite[ Lemma 7.9 ]{Cuccagna1}. Claim
(1)
follows from the regularity properties of $\alpha$,
$F$ and $\Omega_\tau $ and from equations \eqref{Vectorfield5} and
\eqref{Vectorfield7} below. \eqref{Vectorfield21} implies (2) by
\begin{equation}  \im (\mathcal{X}^\tau )_\vartheta =
\im d\vartheta (\mathcal{X}^\tau )=- i _{X^\tau _\vartheta}\Omega_\tau
(\mathcal{X}^\tau  )= i _{\mathcal{X}^\tau }\Omega_\tau  (X^\tau _\vartheta
)=-(\alpha +\im \, dF )(X^\tau _\vartheta ) =0. \nonumber
\end{equation}
 We have $\im (\mathcal{X}^\tau )_\omega =\im d\omega
  (\mathcal{X}^\tau )=- i _{X^\tau _\omega}\Omega_\tau
(\mathcal{X}^\tau  )$, so
\begin{equation}\label{Vectorfield5} \begin{aligned}
 & \im (\mathcal{X}^\tau )_\omega = i _{\mathcal{X}^\tau }\Omega
_\tau (X^\tau _\omega )= -(X^\tau _\omega )_{\vartheta} \big [\alpha ^\vartheta
  +\tau
\partial _j F \,  \langle
 \xi _j ^*,   R\rangle  -\tau
\partial _{\overline{j}}F   \langle
\Sigma _1 (C\xi _j)^* ,   R\rangle \\ & +\tau \langle \nabla _{f}F + \im
\alpha ^{f} , P_c^0(1 +\tau  P_c-\tau P_c^0
 )^{-1} P_c^0
P_c\Sigma _3 R \rangle \big ].\end{aligned}
\end{equation}
Then by \eqref{eq:alpha2}, \eqref{hamiltonians2} and
\eqref{a2}, we get  the first inequality in \eqref{Vectorfield3}:
\begin{equation}\label{Vectorfield51} \begin{aligned}
&\left | (\mathcal{X}^\tau )_\omega +\frac{\| f\| _2^2  }{2q'(\omega )}
    \right |
  \le C  \left ( |z| +\|  f\| _{H^{-K',-S'}}  \right ) ^2.\end{aligned}
\end{equation}
By \eqref{eq:linAlg2} we have the following equations
\begin{equation}\label{Vectorfield7} \begin{aligned}
      \im \, \partial _j F  & =\varepsilon _j (\mathcal{X}^\tau   )_
 {\overline{j }} +
 \tau \langle \Sigma _1 \Sigma _3(C \xi _j)^*, \partial _\omega R \rangle
(\mathcal{X}^\tau )_\omega \, \\  - \im \,
\partial _{\overline{j }}F   & =\varepsilon _j (\mathcal{X}^\tau   )_{j} +
 \tau \langle  \Sigma _3 \xi _j^*, \partial _\omega R \rangle
(\mathcal{X}^\tau )_\omega \, \\ \im \beta \alpha _2  \Sigma _3\Sigma _1(\alpha
^{f}+\im \,
\nabla _{f} F ) & =-(\mathcal{X}^\tau   )_{f} -
 \tau   (P_c ^0  P_c-1)(\mathcal{X}^\tau   )_{f}
   - \tau (\mathcal{X}^\tau )_\omega  P_c^0P_c   \partial _\omega
 R   .\end{aligned}
\end{equation}
Formulas \eqref{Vectorfield7} imply

\begin{equation}\label{Vectorfield8} \begin{aligned}
  &     |(\mathcal{X}^\tau _\omega )_
 {\overline{j }} |\le    |\partial _{j} F | +C
 \left ( |z| +\|  f\| _{H^{-K',-S'}}  \right )
|(\mathcal{X}^\tau )_\omega |\\ & |(\mathcal{X}^\tau _\omega )_
 {j} |\le    |\partial _{\overline{j}}
F | +C\left ( |z| +\|  f\| _{H^{-K',-S'}}  \right )
|(\mathcal{X}^\tau )_\omega | \\& \|
  (\mathcal{X}^\tau _\omega )_{f} \| _{H^{ K , S }} \le
\|
  \alpha ^{f} \| _{H^{ K , S }}+ \|
 \nabla
_{f} F  \| _{H^{ K , S }} +C\left ( |z| +\|  f\| _{H^{-K',-S'}}
\right ) |(\mathcal{X}^\tau )_\omega |   \end{aligned}\nonumber
\end{equation}
which with \eqref{Vectorfield51}, \eqref{eq:alpha2} and Lemma
\eqref{estCorrect} imply \eqref{Vectorfield3}. Claim (4)
 follows by $L_{\frac{\partial}{\partial \vartheta}} \left (\alpha + \im
d F \right ) =0  $ and by the product rule for the Lie
derivative,
\begin{equation}L_{\frac{\partial}{\partial \vartheta}} \left (
i_{\mathcal{X}^\tau } \Omega_\tau \right ) = i_{[\frac{\partial}{\partial
\vartheta},\mathcal{X}^\tau ]} \Omega_\tau +i_{\mathcal{X}^\tau }
L_{\frac{\partial}{\partial \vartheta}}\Omega_\tau
=i_{[\frac{\partial}{\partial \vartheta},\mathcal{X}^\tau ]} \Omega_\tau  .
\nonumber
\end{equation}
It is elementary   to check that  \eqref{Vectorfield5}  and \eqref{Vectorfield7} imply Claim (6), when we use the fact that  $(X^\tau _\omega )_{\vartheta}$ is real valued, we consider \eqref{eq:alpha2}, the fact that $F$ is real valued.

\qed

The following lemma gathers some properties of the change of
coordinates.
\begin{lemma}
  \label{lem:flow1}    Consider the vectorfield $\mathcal{X}^\tau $
in Lemma \ref{lem:correction alpha} and denote by
$\mathcal{F}_\tau(U)$ the corresponding flow. Then the flow
$\mathcal{F}_\tau(U)$ for $U$ near $ e^{\im \Sigma _3 \vartheta}\Phi
_{\omega _0}$ is defined for all $\tau\in [0,1]$. We have $\vartheta
\circ \mathcal{F}_1=\vartheta $. We have
\begin{equation}\label{flow5}\begin{aligned} &
q\left ( \omega   (\mathcal{F}_1 (U))\right ) = q\left (\omega (U)
\right ) -\frac{\| f \|_2^2 }{2}  + \mathcal{E}_{\omega} (U)
\\&
z_j (\mathcal{F}_1 (U))=z_j  (U)+ \mathcal{E}_{j }
(U)\\& f (\mathcal{F}_1 (U))= f (U)   + \mathcal{E}_{f}  (U)
\end{aligned}
\end{equation}

with
\begin{eqnarray}
  \label{flow6}  & |\mathcal{E}_{
\omega  }(U)| \lesssim   (|\omega -\omega _0| +|z|+\| f \|
_{H^{-K',-S'}} )^2,
\\& \label{flow7}
|\mathcal{E}_{  j }(U)| +\| \mathcal{E}_f(U)\| _{H^{ K , S
}}\lesssim (|\omega -\omega _0| +|z|+\| f \| _{H^{-K',-S'}}+\| f \|
^2_{L^{2}} ) \\& \times (|\omega -\omega _0| +|z|+\| f \|
_{H^{-K',-S'}} ) .\nonumber
\end{eqnarray}
For each $\zeta =\omega , z_j , f$ we have
$\mathcal{E}_\zeta(U)
=\mathcal{E}_\zeta ( \| f\| _{L^2}^2, \omega , z, f)
$
with, for a neighborhood $\mathcal{U}^{-K',-S'}$ of
$ \{ (\omega _0,0,0)\}$ in $\Ph^{-K',-S'}\cap \{\vartheta=0\}$ and for
some  fixed $a_0>0$
\begin{equation}\label{flow8}\begin{aligned} & \mathcal{E}_\zeta
( \varrho , \omega , z, f)\in C^\infty ( (-a_0,a_0)\times
\mathcal{U}^{-K',-S'}, \mathbb{C}) \text{ for  $\zeta =\omega , z_j  $}
\end{aligned}
\end{equation}
\begin{equation}\label{flow9}\begin{aligned} & \mathcal{E}_f
( \varrho , \omega , z, f)\in C^\infty ( (-a_0,a_0)\times
\mathcal{U}^{-K',-S'}, H ^{ K , S } \cap \mathbf{X}).
\end{aligned}
\end{equation}

\end{lemma}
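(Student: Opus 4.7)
The plan is to integrate the vector field $\mathcal{X}^\tau$ of Lemma \ref{lem:vectorfield} from $\tau=0$ to $\tau=1$, reading off each assertion from one of the component bounds in Lemma \ref{lem:vectorfield}(1)--(5). Preservation of $\vartheta$ is immediate: by Lemma \ref{lem:vectorfield}(2), $(\mathcal{X}^\tau)_\vartheta\equiv 0$, so $\vartheta$ is constant along $\mathcal{F}_\tau$ and in particular $\vartheta\circ \mathcal{F}_1=\vartheta$. For existence up to $\tau=1$, I would fix a pair $(K,S)$ with $K\ge K'$, $S\ge S'$ and set
\[
   \rho(\tau):=|\omega(\mathcal{F}_\tau U)-\omega_0|+|z(\mathcal{F}_\tau U)|+\|f(\mathcal{F}_\tau U)\|_{H^{-K',-S'}}.
\]
Lemma \ref{lem:vectorfield}(1)--(3) gives $\mathcal{X}^\tau\in C^\infty([0,1]\times\mathcal{U},\Ph^{1,0})$ with $\|\mathcal{X}^\tau\|_{\Ph^{1,0}}\lesssim \rho(\tau)+\rho(\tau)^2$. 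Standard Picard--Lindel\"of together with a Gronwall-type differential inequality $\rho'(\tau)\lesssim \rho(\tau)+\rho(\tau)^2$ therefore yields local existence, and provided the initial $\rho(0)$ is small enough, $\rho$ stays bounded on $[0,1]$, so $\mathcal{F}_\tau$ extends up to $\tau=1$.

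For the estimates \eqref{flow5}--\eqref{flow7}, I would start from the identities $\frac{d}{d\tau}\zeta\circ \mathcal{F}_\tau=(\mathcal{X}^\tau)_\zeta\circ \mathcal{F}_\tau$ for $\zeta=\omega,z_j,f$. For $\omega$, Lemma \ref{lem:vectorfield}(3) gives $(\mathcal{X}^\tau)_\omega=-\|f\|_2^2/(2q'(\omega))+O((|z|+\|f\|_{H^{-K',-S'}})^2)$, hence
\[
  \omega(\mathcal{F}_1 U)-\omega(U)=-\int_0^1\!\frac{\|f(\mathcal{F}_\tau U)\|_2^2}{2q'(\omega(\mathcal{F}_\tau U))}\,d\tau+O\bigl((|\omega-\omega_0|+|z|+\|f\|_{H^{-K',-S'}})^2\bigr).
\]
Integrating the $f$-estimate shows $\|f(\mathcal{F}_\tau U)\|_2^2-\|f(U)\|_2^2=O(\|f\|(\|f\|+|z|+|\omega-\omega_0|))$, and then Taylor-expanding $q$ around $\omega(U)$ gives \eqref{flow5} with the remainder $\mathcal{E}_\omega$ obeying \eqref{flow6}. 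The bounds for $z_j(\mathcal{F}_1 U)-z_j(U)$ and $f(\mathcal{F}_1 U)-f(U)$ follow in the same way, since the corresponding component bounds in Lemma \ref{lem:vectorfield}(3) already carry the extra factor $(|\omega-\omega_0|+|z|+\|f\|_{H^{-K',-S'}}+\|f\|_{L^2}^2)$, which after integration yields precisely \eqref{flow7}. The reality of $\mathcal{E}_\omega$ and the symmetry $\mathcal{E}_{\overline{j}}=\overline{\mathcal{E}_j}$, $\mathcal{E}_f=C\Sigma_1\mathcal{E}_f$ come from Lemma \ref{lem:vectorfield}(5).

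The delicate part is the smooth-dependence statement \eqref{flow8}--\eqref{flow9}, that $\mathcal{E}_\zeta$ factors as a smooth function of an independent parameter $\varrho$ substituting $\|f\|_{L^2}^2$. The plan is to enlarge the phase space and introduce an auxiliary vector field $\widetilde{\mathcal{X}}^\tau(\varrho,U)$ on $(-a_0,a_0)\times\Ph^{K,S}$ obtained from $\mathcal{X}^\tau$ by replacing every explicit occurrence of $\|f\|_{L^2}^2$ (the only place where $\|f\|_{L^2}^2$ enters at leading order is in $(\mathcal{X}^\tau)_\omega$; higher-order dependencies produce contributions already absorbed in the error) by the independent scalar $\varrho$. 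The resulting field is still $C^\infty$ in $(\varrho,U)$ with the same smallness bounds, so it integrates to a flow $\widetilde{\mathcal{F}}_\tau(\varrho,U)\in C^\infty((-a_0,a_0)\times\mathcal{U}^{-K',-S'},\Ph^{K,S})$. By construction $\widetilde{\mathcal{F}}_\tau(\|f(U)\|_{L^2}^2,U)=\mathcal{F}_\tau(U)$, and the components of $\widetilde{\mathcal{F}}_1(\varrho,U)-U$ provide exactly the representations $\mathcal{E}_\zeta(\varrho,\omega,z,f)$ required in \eqref{flow8}--\eqref{flow9}; the $L^2$-compatibility of $\mathcal{E}_f$ with the symplectic symmetries follows again from Lemma \ref{lem:vectorfield}(5). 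The principal technical obstacle is verifying that the substitution $\|f\|_{L^2}^2\leadsto\varrho$ can indeed be done cleanly throughout the formulas for $\mathcal{X}^\tau$ in \eqref{Vectorfield5}--\eqref{Vectorfield7}, which requires a careful bookkeeping showing that all $\|f\|_{L^2}^2$-dependence arising elsewhere is of order $(|z|+\|f\|_{H^{-K',-S'}})^3$ or better, so that it is absorbed in the error without affecting the leading-order smooth structure.
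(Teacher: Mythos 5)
Your overall route --- integrating $\mathcal{X}^\tau$, reading \eqref{flow5}--\eqref{flow7} off the component bounds \eqref{Vectorfield3}, and passing to an enlarged phase space with an auxiliary scalar $\varrho$ for the smoothness statement --- is the same as the paper's. The first half (preservation of $\vartheta$, existence up to $\tau =1$, and the size estimates) is fine, provided you estimate the variation of $\| f\|_2^2$ along the flow through the duality pairing $\frac{d}{d\tau}\| f\| _2^2 = 2\Re \langle (\mathcal{X}^\tau)_f , \im \beta \alpha _2 \Sigma _1 f \rangle$, bounded by $\| (\mathcal{X}^\tau)_f\| _{H^{K,S}} \| f \| _{H^{-K',-S'}}$; this is what keeps the remainder quadratic in $|\omega -\omega _0|+|z|+\| f\| _{H^{-K',-S'}}$ as \eqref{flow6} requires (a bound of the vaguer form $\| f\|_{L^2}\times(\dots)$ would not obviously suffice).

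The genuine gap is in your treatment of \eqref{flow8}--\eqref{flow9}. As you set it up, $\varrho$ is a frozen parameter: $\widetilde{\mathcal{X}}^\tau (\varrho ,U)$ has no $\varrho$-component, so along its flow $\varrho$ keeps its initial value while $\| f\| _2^2$ evolves. Hence the claimed identity $\widetilde{\mathcal{F}}_\tau (\| f(U)\| _{L^2}^2, U)=\mathcal{F}_\tau (U)$ is false: the true field depends on the current $\| f(\mathcal{F}_\tau U)\| _2^2$, not on its initial value. Moreover the discrepancy cannot be ``absorbed in the error'': the content of \eqref{flow8}--\eqref{flow9} is structural, not quantitative. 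Since $f\mapsto \| f\| _{L^2}^2$ is not even continuous on $H^{-K',-S'}$, any residual dependence of $\mathcal{E}_\zeta$ on $\| f\| _2^2$ that is not channelled through the independent variable $\varrho$ destroys the claimed $C^\infty$ regularity on $(-a_0,a_0)\times \mathcal{U}^{-K',-S'}$, no matter how small it is in size. The fix, and this is exactly what the paper does (following Lemma 7.10 of \cite{Cuccagna1} and Lemma 4.3 of \cite{BambusiCuccagna}), is to give $\varrho$ its own dynamics, $Y^\tau _\varrho = 2\langle (Y^\tau)_f , \im \beta \alpha _2 \Sigma _1 f\rangle$, mirroring $\frac{d}{d\tau}\| f\| _2^2$, and to rewrite the $\omega$-component as in \eqref{Vectorfield52}, using \eqref{eq:alpha2} so that no explicit $\| f\| _2^2$ survives (only $\varrho$ and weak-topology pairings). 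Then $\{ \varrho =\| f\| _2^2\}$ is an invariant manifold of the extended flow $\mathcal{G}_\tau (\varrho ,U)$, which therefore restricts exactly to $\mathcal{F}_\tau$; the extended field is smooth in $(\varrho ,\omega ,z,f)$ in the $H^{-K',-S'}$ topology (its $\varrho$-equation involves only the pairing of $(Y^\tau)_f\in H^{K,S}$ against $f$), and \eqref{flow8}--\eqref{flow9}, together with $C\Sigma _1 \mathcal{E}_f=\mathcal{E}_f$, follow by reading off the components of $\mathcal{G}_1(\varrho ,U)-U$ and then setting $\varrho =\| f\| _2^2$.
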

\proof The argument is the same of Lemma 7.10 \cite{Cuccagna1}, but we review it for the sake of the reader.
We add a new variable $\varrho$. We define a new field by
\begin{equation}\label{Vectorfield52} \begin{aligned}
 & \im (Y^\tau )_\omega =   -(X^\tau _\omega )_{\vartheta} \big [
 \alpha ^\vartheta +\im \frac{\| f\|_2^2-\rho }{2}
 +\tau
\partial _j F \,  \langle
 \xi _j ^*,   R\rangle  -\tau
\partial _{\overline{j}}F   \langle
\Sigma _1 (C\xi _j)^* ,   R\rangle \\ & +\tau \langle \nabla _{f}F + \im
\alpha ^{f} , P_c^0(1 +\tau  P_c-\tau P_c^0
 )^{-1} P_c^0
P_c\Sigma _3 R \rangle
  \big ], \end{aligned}
\end{equation}
which implies that $(Y^\tau )_\omega$ is real valued,
by \begin{equation*}\label{Vectorfield71} \begin{aligned}
      \im \, \partial _j F  & =\varepsilon _j (Y^\tau  )_
 {\overline{j }}+
 \tau \langle \Sigma _1 \Sigma _3(C \xi _j)^*, \partial _\omega R \rangle
(Y^\tau )_\omega \, \\  - \im \,
\partial _{\overline{j }}F   & =\varepsilon _j (Y^\tau  )_{j} +
 \tau \langle  \Sigma _3 \xi _j^*, \partial _\omega R \rangle
(Y^\tau )_\omega \, \\   \im \beta \alpha _2  \Sigma _3\Sigma _1(\alpha ^{f}+ \im \, \nabla
_{f} F ) & =-(Y^\tau  )_{f} -
 \tau   (P_c ^0  P_c-1)(Y^\tau  )_{f}
   - \tau (Y^\tau )_\omega  P_c^0P_c   \partial _\omega
 R  \ ,\end{aligned}
\end{equation*}
where we see $(Y^\tau  )_
 {\overline{j }} = \overline{(Y^\tau  )}_{j} $,  $C\Sigma _1(Y^\tau  )_{f}=(Y^\tau  )_{f}$ and $(Y^\tau  )_{f} \in X_c (\mathcal{H}_{\omega _0})$,
and by $Y^\tau _\rho =2\langle (Y^\tau )_f, \im \beta \alpha _2 \Sigma _1f\rangle $. Then   $Y^\tau =Y^\tau (
\omega , \rho , z, f)$ defines a new flow $\mathcal{G}_\tau (\rho , U)
$, which reduces to $\mathcal{F}_\tau ( U) $ in the invariant manifold
defined by $\rho = \| f\| _2^2.$ Notice that by $\rho (t)= \rho (0)+\int _0^t
Y_{\rho}^{s}ds$ it is easy to conclude $\rho  (\mathcal{G}_1 (\rho ,U) )= \rho ( U) +O(\text{rhs\eqref{flow6}})$.   Using \eqref{eq:alpha2} , \eqref{hamiltonians2}, \eqref{HamTheta2}  and
\eqref{Vectorfield52} it is then easy to get
\begin{equation} \begin{aligned} &q(\omega (t))= q(\omega (0))+\int _0^t q'(\omega (s))Y_{\omega}^{s}ds =q(\omega (0))-\int _0^t \frac{\rho(  s ) }{2} ds +O(\text{rhs\eqref{flow6}}). \end{aligned}\nonumber
\end{equation}
By standard arguments, see for example the proof of Lemma 4.3 \cite{BambusiCuccagna},  we get
\begin{equation*}\label{flow51}\begin{aligned} &
q\left ( \omega   (\mathcal{G}_1 (\rho ,U))\right ) = q\left (
\omega (U)\right )   -\frac{\rho }{ 2}  + \mathcal{E}_{\omega}
(\rho ,U)
\\& z_\ell (\mathcal{G}_1 (\rho ,U))=z_\ell  (U)+ \mathcal{E}_{\ell
} (\rho ,U)\\& f (\mathcal{G}_1 (\rho ,U))= f (U)   +
\mathcal{E}_{f} (\rho ,U)
\end{aligned}
\end{equation*}
with $\mathcal{E}_{\zeta }  (\rho ,U)$ satisfying \eqref{flow8} for
$\zeta =\omega  , z_\ell$, \eqref{flow9} for $\zeta =f$  and such that $C\Sigma _1\mathcal{E}_{f }  (\rho ,U) = \mathcal{E}_{f }  (\rho ,U)$. We have
$\mathcal{E}_{\zeta }  (   U)=\mathcal{E}_{\zeta }  (\| f\| _2 ,U)$
  satisfying \eqref{flow6} for
$\zeta =\omega   $ and \eqref{flow7} for $\zeta =z_\ell ,f$.
\qed

Eventually we have the desired Darboux type result:
\begin{lemma}{\bf (Darboux Theorem)}
  \label{lem:flow2}
Consider  the flow $\mathcal{F}_\tau$ of Lemma \ref{lem:flow1}. Then
we have $ \mathcal{F}_\tau^*\Omega_\tau =\Omega
_0 .$
  We have $
Q\circ \mathcal{F}_1=q.
$
If $\chi$ is a function with $\partial _\vartheta \chi \equiv 0$,
then $\partial _\vartheta (\chi \circ \mathcal{F}_t) \equiv 0$.
\end{lemma}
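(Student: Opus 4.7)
\medskip\noindent\textbf{Proof plan.} The first identity is the standard Moser trick. I would compute
\[
\frac{d}{d\tau}\bigl(\mathcal{F}_\tau^{*}\Omega_\tau\bigr)=\mathcal{F}_\tau^{*}\Bigl(L_{\mathcal{X}^\tau}\Omega_\tau+\tfrac{d}{d\tau}\Omega_\tau\Bigr)
\]
and apply Cartan's magic formula. Since $\Omega$ and $\Omega_{0}$ are both exact by Lemma~\ref{lem:1forms}, $\Omega_\tau$ is closed, so $L_{\mathcal{X}^\tau}\Omega_\tau=d(i_{\mathcal{X}^\tau}\Omega_\tau)=-d\alpha-\im\,d^{2}F=-d\alpha$. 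By the very definition of $\alpha$ in \eqref{eq:alpha1} and $d\varpi=\Omega$, $d\varpi_{0}=\Omega_{0}$, we get $d\alpha=\Omega-\Omega_{0}=\widetilde{\Omega}$. Combined with $\tfrac{d}{d\tau}\Omega_\tau=\widetilde{\Omega}$ this gives $\tfrac{d}{d\tau}(\mathcal{F}_\tau^{*}\Omega_\tau)=0$, and the initial condition $\mathcal{F}_{0}=\mathrm{Id}$ yields $\mathcal{F}_\tau^{*}\Omega_\tau=\Omega_{0}$.

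\medskip\noindent For the third identity, observe that by Claim~(4) of Lemma~\ref{lem:vectorfield} one has $[\mathcal{X}^\tau,\partial_\vartheta]=0$; hence the flow $\mathcal{F}_\tau$ commutes with translation in $\vartheta$. Therefore pulling back by $\mathcal{F}_\tau$ carries $\vartheta$-invariant functions to $\vartheta$-invariant functions. In particular, since $\vartheta$ itself is $\vartheta$-equivariant and $(\mathcal{X}^\tau)_\vartheta\equiv 0$ by Claim~(2) of Lemma~\ref{lem:vectorfield}, one also gets $\vartheta\circ\mathcal{F}_{1}=\vartheta$.

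\medskip\noindent The second identity $Q\circ\mathcal{F}_{1}=q$ is where the real work is, and it is the main obstacle. My approach would be to determine both sides as solutions of the same Hamiltonian ODE with respect to $\Omega_{0}$, and then pin down the constant of integration. First, by Lemma~\ref{lem:InvarianceQ}, $X_{Q}=-\partial_\vartheta$ with respect to $\Omega$. Since $\mathcal{F}_{1}^{*}\Omega=\mathcal{F}_{1}^{*}\Omega_{1}=\Omega_{0}$ by the first part, the chain rule for Hamiltonian vector fields gives
\[
X^{\Omega_{0}}_{Q\circ\mathcal{F}_{1}}=\mathcal{F}_{1}^{*}\,X^{\Omega}_{Q}=-\mathcal{F}_{1}^{*}\partial_\vartheta=-\partial_\vartheta,
\]
where the last equality uses $[\mathcal{X}^\tau,\partial_\vartheta]=0$. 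On the other hand, a direct computation from the explicit form \eqref{eq:Omega0} of $\Omega_{0}$ shows $i_{-\partial_\vartheta}\Omega_{0}=-\im\,dq$, so $X^{\Omega_{0}}_{q}=-\partial_\vartheta$ as well. Consequently $Q\circ\mathcal{F}_{1}-q$ is locally constant. To identify the constant I would evaluate at any stationary point $U_{\omega,\vartheta}:=e^{\im\Sigma_{3}\vartheta}\Phi_{\omega}$: at such $U_{\omega,\vartheta}$ one has $z=0$, $f=0$, and the estimates \eqref{Vectorfield3} of Lemma~\ref{lem:vectorfield} together with $(\mathcal{X}^\tau)_\vartheta\equiv 0$ force $\mathcal{X}^\tau(U_{\omega,\vartheta})=0$ for every $\tau\in[0,1]$. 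Hence $U_{\omega,\vartheta}$ is a fixed point of $\mathcal{F}_{1}$, and $Q(\mathcal{F}_{1}U_{\omega,\vartheta})=Q(U_{\omega,\vartheta})=\|\phi_\omega\|_{L^{2}}^{2}=q(\omega)$, which fixes the constant to $0$ and completes the identification. The only subtle point is the verification that $\mathcal{X}^\tau$ genuinely vanishes at the stationary locus (so that $\mathcal{F}_{1}$ has these as fixed points); this is immediate from Lemma~\ref{lem:vectorfield} because each bound in \eqref{Vectorfield3} contains a factor $(|z|+\|f\|_{H^{-K',-S'}})$ that vanishes there.
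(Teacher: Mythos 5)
Your proposal is correct and follows essentially the paper's own route: the Moser-trick computation is exactly the one displayed in \eqref{eq:dartheorem}, and the identification $Q\circ\mathcal{F}_1=q$ by comparing Hamiltonian vector fields ($X_Q=-\partial_\vartheta$ for $\Omega$ from Lemma \ref{lem:InvarianceQ}, $X^0_q=-\partial_\vartheta$ for $\Omega_0$) together with an evaluation at the standing wave is the argument of \cite[Lemma 7.11]{Cuccagna1}, to which the paper defers. The only caveat is in fixing the constant: it is safest to evaluate at $U=e^{\im\Sigma_3\vartheta}\Phi_{\omega_0}$, where $\mathcal{F}_1(\Phi_{\omega_0})=\Phi_{\omega_0}$ is exactly what the paper uses in the proof of Lemma \ref{lem:ExpH}, rather than asserting that every $e^{\im\Sigma_3\vartheta}\Phi_{\omega}$ is a fixed point, since the paper explicitly remarks after Lemma \ref{lem:OmegaOmega0} that only the $\omega_0$ standing waves are preserved by the change of coordinates (one point suffices anyway, because $d(Q\circ\mathcal{F}_1-q)=0$ on a connected neighborhood).
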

\proof  The   proof   is the
same of Lemma 7.11 \cite{Cuccagna1}.
\qed


\section{Reformulation of \eqref{eq:SystK} in the new coordinates}
\label{section:reformulation}
 We set $
H:=K\circ \mathcal{F}_1.$
In the new coordinates \eqref{eq:SystK}  becomes
\begin{equation} \label{eq:SystK1} \begin{aligned} &
q' \dot \omega  =  \frac{\partial H}{\partial \vartheta}\equiv 0 \,
, \quad    q'  \dot \vartheta  =
 -\frac{\partial H}{\partial \omega} \ ,
  \end{aligned}
\end{equation}

\begin{equation} \label{eq:SystK2} \begin{aligned} &
 \im \dot z_j  =  \varepsilon _j  \frac{\partial H}{\partial   \overline{z}_j  }
\, , \quad
    \im \dot f=  \im \beta \alpha _2 \Sigma _3    \Sigma _1 \nabla _f  H.
\end{aligned}
\end{equation}
Recall that we are solving the initial value problem
\eqref{Eq:NLDE} and that we have chosen $\omega _0$ with $q(\omega
_0)=\| u_0\| _{L^2_x}^{2}.$  Correspondingly it is enough to focus
on \eqref{eq:SystK2} with $\omega =\omega _0$.
Consider the notation
of Theorem \ref{th:AsStab}. Let us focus for the moment on the case
 $\varepsilon _j \equiv 1 $ in
   system
\eqref{eq:SystK2}. Then  we prove :
\begin{theorem}\label{theorem-1.2}
Assume {\ref{Assumption:H1}}--{\ref{Assumption:H12}}.  Then  for
any integer $k_0>3$ there exist $\epsilon _0 >0$ and $C>0$ such that for
$ |z(0)|+\| f (0) \| _{H^{k_0} }\le \epsilon <\epsilon _0  $  the
corresponding  solution of \eqref{eq:SystK2} is globally defined
and there are    $f_\pm  \in H^{k_0}$ with $\| f_\pm\| _{H^{k_0}
}\le C \epsilon $   such that
\begin{equation}\label{scattering1}\lim _{t\to \pm \infty } \|e^{\rmi
 {\vartheta}(t)\Sigma_3} f(t) -
  e^{-\im t D_m}  f_\pm \| _{H^{k_0} }=0
\end{equation}
and $\lim _{t\to   \infty } z(t)=0,
$ for   $\vartheta (t)$   the exponent in \eqref{Eq:ansatzU}.
Fix $p_0 >2$ and $\tau _0>1$.
Let $\frac{1}{p}=\frac{1}{2}-\frac{1}{q}$ and $\alpha (q) = \frac{3}{p}$. Then,
 we can choose  $\epsilon _0$ small enough such that
$ f(t,x)=A(t,x)+\widetilde{f}(t,x)$
with
$$\forall n \in \N, \,C_n(t):=\sup_{x\in\R^3 }\langle x \rangle
^{n}|  A(t,x)|\to 0 \mbox{ as } t\to \infty $$
and for some fixed $C$
\begin{equation}\label{Strichartz1}
 \|  \widetilde{f} \| _{L^p_t( [0,\infty ),B^{ k_0-\frac{3}{p}} _{q,2})\cap
L^2_t( [0,\infty ),H^{ k_0 ,-\tau _0}_x)\cap L^2_t( [0,\infty ),L^{
\infty}_x)}\le
  C \epsilon.
\end{equation}
There exist $\omega _+$ such that $|\omega_+-\omega _0|=O(\|f_+\|_2^2)$
such that
$  \lim _{t \to +\infty }
\omega (t) = \omega _+.$
\end{theorem}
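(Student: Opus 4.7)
My plan is to follow the scheme of \cite{Cuccagna1} very closely, since the Hamiltonian and coordinate setup has been arranged precisely so that the analogous arguments go through. The proof proceeds by (i) reducing $H=K\circ\mathcal{F}_1$ to a Birkhoff normal form up to order $2N_1+3$, (ii) setting up a bootstrap on the continuous modes via smoothing/Strichartz, (iii) extracting a dissipation law for the discrete modes from the Fermi Golden Rule, and (iv) passing to the limit $t\to+\infty$ to obtain scattering.

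First I would apply the Birkhoff normal forms procedure developed in Section \ref{section:Birkhoff} (and its predecessor in \cite{Cuccagna1}) to $H$. Using \ref{Assumption:H10}--\ref{Assumption:H11}, all non-resonant monomials in $(z,\bar z)$ of degree $\le 2N_1+3$ can be removed by successive canonical transformations, so that the new Hamiltonian decomposes as $H_2(\omega_0,z,\bar z,f)+Z(\omega_0,z,\bar z)+R$, where $H_2$ is quadratic in $f$ built from $\mathcal{H}_{\omega_0}$, $Z$ collects only resonant monomials $z^\mu\bar z^\nu$ with $(\mu-\nu)\cdot\lambda(\omega_0)\in\sigma_e(\mathcal{H}_{\omega_0})\setminus(-\!(m-\omega_0),m-\omega_0)$ (these are precisely the terms with $|\mu+\nu|\le 2N_1+3$ whose frequencies are injected into the continuous spectrum), and $R$ is a higher-order remainder that is semilinear thanks to the Darboux step. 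This produces discrete-mode equations of the form
\begin{equation*}
\im\dot z_j=\lambda_j(\omega_0)z_j+\partial_{\bar z_j}Z+\langle\mathbf{G}_j,f\rangle+\mathcal{R}_j,
\end{equation*}
and a continuous-mode equation
\begin{equation*}
\im\dot f=\mathcal{H}_{\omega_0}f+\sum_\mu z^{\mu_+}\bar z^{\mu_-}\mathbf{F}_\mu+\mathcal{R}_f,
\end{equation*}
with $\mathbf{F}_\mu,\mathbf{G}_j$ exponentially decaying functions.

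Next I would run the continuous-mode bootstrap. Taking $\epsilon$ small, I would postulate the a priori bounds
\begin{equation*}
\|f\|_{L^p_tB^{k_0-3/p}_{q,2}}+\|f\|_{L^2_tH^{k_0,-\tau_0}_x}\le C_1\epsilon,\qquad \|z_j\|_{L^{2N_j+2}_t}\le C_1\epsilon,
\end{equation*}
and improve each one. For $f$ I would apply Duhamel with the propagator $e^{-\im t\mathcal{H}_{\omega_0}}$, using Lemma \ref{lem:surrogate} and Lemma \ref{lem:smooth11} (together with Theorem \ref{Thm:Strichartz flat}) to handle the free evolution of the initial datum and the continuous forcing, while the discrete-frequency source terms $z^{\mu_+}\bar z^{\mu_-}\mathbf{F}_\mu$ are controlled via the endpoint estimates of the same lemmas against the exponential decay of $\mathbf{F}_\mu$. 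The semilinear remainder $\mathcal{R}_f$ gives a cubic-in-norms term that is absorbed for small $\epsilon$.

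The decisive step is the discrete-mode dissipation. Substituting the $f$-Duhamel expansion into the $z$ equations produces, after differentiating $\sum_j|z_j|^{2N_j+2}/(2N_j+2)$ along the flow, a dissipation identity of the shape
\begin{equation*}
\frac{d}{dt}\sum_j\frac{|z_j(t)|^{2N_j+2}}{2N_j+2}=-\sum_\mu\Gamma_\mu(z^{\mu_+}\bar z^{\mu_-})\cdot\overline{(z^{\mu_+}\bar z^{\mu_-})}+(\text{integrable remainder}),
\end{equation*}
where $\Gamma_\mu$ are the quadratic forms built in Lemma \ref{lemma:FGR81}. Here hypothesis $3\omega>m$ is used, as in Remark \ref{rem:unnatural}, to ensure the frequencies $\mu\cdot\lambda$ sit in a portion of $\sigma_e(\mathcal{H}_{\omega_0})$ free of superposition of continuous components, so that these forms are non-negative; \ref{Assumption:H12} then upgrades them to strict positivity. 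Integrating in time yields $z_j\in L^{2N_j+2}(\R_+)$ with norm $O(\epsilon)$, which closes the bootstrap and forces $z(t)\to 0$ as $t\to+\infty$ (since $\dot z_j$ is also bounded). This is the only genuinely hard step; the rest is a careful but routine adaptation of \cite{Cuccagna1}.

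Finally, once $z\in L^{2N_j+2}\cap L^\infty$ and $f$ satisfies the Strichartz/smoothing bounds, the equation for $f$ reads $\im\dot f=\mathcal{H}_{\omega_0}f+h(t)$ with $h\in L^1_tH^{k_0}+L^2_tH^{k_0,\tau_0}$, so $e^{\im t\mathcal{H}_{\omega_0}}f(t)$ is Cauchy in $H^{k_0}$, giving $f_+$. The splitting $f=A+\widetilde f$ is obtained by isolating the Duhamel contributions driven by $z^{\mu_+}\bar z^{\mu_-}\mathbf{F}_\mu$ (which, being integrated against vectors with exponential spatial decay and with $|z|\to 0$, yield $A(t,x)$ that is pointwise-weighted vanishing as $t\to\infty$), with the remainder $\widetilde f$ absorbing the free-plus-semilinear part and inheriting the Strichartz bound \eqref{Strichartz1} via Lemma \ref{lem:surrogate}. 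Scattering back to $e^{-\im tD_m}h_+$ (rather than $e^{-\im t\mathcal{H}_{\omega_0}}$) is obtained by the usual wave-operator comparison, using that $V_{\omega_0}$ is exponentially decaying and acts on a dispersing state. For $\omega(t)$, the modulation system \eqref{eq:SystK1} together with $\partial_\vartheta H\equiv 0$ gives $\dot\omega=0$ in the Darboux coordinates, and the change of variables \eqref{flow5} together with $\|f\|_2\to\|f_+\|_2$ and the established decay/scattering provides a genuine limit $\omega_+$ satisfying $|\omega_+-\omega_0|=O(\|f_+\|_2^2)$.
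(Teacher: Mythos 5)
Your overall architecture (Birkhoff normal form, Strichartz/smoothing bootstrap for $f$, FGR dissipation for $z$, then scattering and the $\omega_+$ limit via \eqref{flow5}) is the paper's, and your device of substituting the Duhamel expansion of $f$ into the $z$-equations is essentially the paper's step of isolating $Y$ and working with $g=f+Y$; the identification $A=-Y$, $\widetilde f=g$ and the $\omega_+$ argument are also correct. The genuine gap is in your ``decisive step''. The functional $\sum_j|z_j|^{2N_j+2}/(2N_j+2)$ does not produce the Fermi golden rule structure: the FGR contribution to $\dot z_j$ consists of monomials $\nu_j z^\alpha\bar z^\nu/\bar z_j$ with $\lambda^0\cdot\alpha=\lambda^0\cdot\nu>m-\omega_0$, so pairing it with $|z_j|^{2N_j}\bar z_j$ yields terms of degree $2N_j+|\alpha|+|\nu|\ge 4N_j+2$, while the squares $|z^\mu|^2$ on your right-hand side have degree as low as $2N_j+2$; the claimed identity is dimensionally inconsistent already for $n=1$ (and the FGR form is not diagonal in the monomials anyway). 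The sign information of Lemma \ref{lemma:FGR81} and \ref{Assumption:H12}, i.e. \eqref{eq:FGR}, concerns the quadratic form evaluated on $\mathbf H_r=\sum_{\lambda^0\cdot\alpha=r}\zeta^\alpha H^0_{\alpha 0}$, and it becomes usable only after (a) the preliminary non-canonical change of variables \eqref{equation:FGR2} which removes the non-resonant interaction terms from the discrete equations, and (b) differentiating the \emph{weighted quadratic} functional $\sum_j\lambda_j^0|\zeta_j|^2$: the weights $\lambda_j^0$ are exactly what makes the resonant $Z_0$-terms cancel and reorganizes the remaining terms into $\sum_r r\,\Im\langle R^+_{\mathcal H}(r)\mathbf H_r,\Sigma_3\mathbf H_r^*\rangle$, as in \eqref{eq:FGR5}--\eqref{eq:FGR8}. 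With your weights neither the cancellation nor the sign-definiteness is available. Relatedly, the bootstrap norms $\|z_j\|_{L^{2N_j+2}_t}$ are too weak in the multi-mode case: e.g. for two modes with $N_1$ large and $N_2=1$ the mixed monomial $z_1z_2$ can satisfy $\lambda\cdot(1,1)>m-\omega_0$, and its $L^2_t$ norm on $[0,\infty)$ is not controlled by $\|z_1\|_{L^{2N_1+2}_t\cap L^\infty_t}$ and $\|z_2\|_{L^{4}_t}$. The quantities that must be bootstrapped, and that the FGR actually delivers, are $\|z^\mu\|_{L^2_t}$ for all $\mu$ with $\lambda\cdot\mu>m-\omega_0$ and $\lambda\cdot\mu-\lambda_k<m-\omega_0$ whenever $\mu_k\neq 0$, together with $\|z_j\|_{W^{1,\infty}_t}$, as in \eqref{4.4}--\eqref{4.4bis} and \eqref{eq:FGR10}.

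A second, smaller gap concerns \eqref{scattering1}. Your Cauchy argument gives convergence of $e^{\im t\mathcal H_{\omega_0}}$ applied to $f$ up to the accumulated phase $\int_0^t\varphi$ with $\varphi=2\partial_{\|f\|_2^2}H$, and the wave operators of Theorem \ref{cor:wave} pass from $\mathcal H_{\omega_0}$ to $\mathcal H_{\omega_0,0}=\mathrm{diag}(D_m-\omega_0,D_m+\omega_0)$. But the statement compares with $e^{-\im tD_m}$ conjugated by $e^{\im\vartheta(t)\Sigma_3}$, where $\vartheta$ is the modulation phase of the original ansatz \eqref{Eq:ansatzU}; one must still show that $-t\omega_0+\int_0^t\varphi$ equals $\vartheta(t)-\vartheta(0)+o(1)$, after also accounting for the phases $\Gamma_0^{(j)}$ introduced by the transformations \eqref{lie.11.b}. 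This identification is not automatic; it is the content of Lemma \ref{lem:asymptotically f}, proved there by comparing the two forms of the $f$-equation and showing the discrepancy vanishes. Finally, you should carry $\|f\|_{L^2_tL^\infty_x}$ in the bootstrap (as in Theorem \ref{proposition:mainbounds} and Lemma \ref{lem:conditional4.22}) to absorb the quadratic-in-$f$ part of the remainder estimated in Lemma \ref{lem:bound remainder}.
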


\paragraph{ {\it Proof that Theorem \ref{theorem-1.2} implies Theorem
\ref{th:AsStab}}}. If we denote $(\omega , z', f')$ the initial
coordinates, and $(\omega _0 , z , f )$ the coordinates in
\eqref{eq:SystK2}, we have from Lemma \ref{lem:flow1} :
$$|z'-z|=O(|z|+ \| f \| _{L
_x ^{2,-2}} ) \mbox{ and }  \|f'-f\|_{H^{K,S}}=O(|z|+ \| f \| _{L_x ^{2,-2}}
)$$
for any $(K,S)\in(\R^+)^2$. The two
error terms
$O$ converge to 0 as $t\to \infty$. Hence the asymptotic behavior of
$(  z', f')$ and of $(  z , f )$ is the same. We also have, from Lemma
\ref{lem:flow1}, $ q\left
( \omega   (t) \right ) = q\left (\omega _0 \right ) -\frac{\| f(t)
\|_2^2 }{2}    + O( |z(t)|+\| f(t) \| _{L^{2,-2 }_x} ) $  which
implies, say at $+\infty$
\begin{equation}   \begin{aligned}
 &   \lim _{t \to +\infty }
q\left ( \omega   (t) \right ) = \lim _{t \to +\infty  }
  \left ( q\left (\omega _0 \right ) -\frac{\|  e^{-\im t   \mathcal{H} _{\omega
_0,0}} f_+ \|_2^2 }{2}\right ) = q\left (\omega _0 \right )
-\frac{\|   f_+ \|_2^2 }{2}=q(\omega _+)
\end{aligned} \nonumber
\end{equation}
for    $\omega _+$   the unique element near $\omega _0$
for which the last inequality holds. So $ \lim _{t \to +\infty }
\omega (t) = \omega _+.$

\qed

In the case $\varepsilon _j\in \{1, -1\}$ with $\varepsilon _j\not
\equiv 1$, using the same argument of Theorem \ref{theorem-1.2}, we
prove that solutions which remain close to the standing wave,
actually have remainder which scatters. We state this in terms of
the system \eqref{eq:SystK2} and the coordinates after Darboux, but
of course it can be stated also in terms of the original
coordinates, as in Theorems \ref{th:AsStab} and \ref{th:orbital
instability}.

\begin{theorem}\label{theorem-1.3}
  Assume
{\ref{Assumption:H1}}--{\ref{Assumption:H4}},
{\ref{Assumption:H5'}} and
{\ref{Assumption:H6}}--{\ref{Assumption:H12}}. Then there exist
$\epsilon _0 >0$ with the following property.  Suppose that
  $(z(t), f(t))$ is a solution of \eqref{eq:SystK2} such that
   $ |z(t)| + \|
f(0)\| _{H^{k_0} }\le \epsilon  < \epsilon _0  $ for all $t\ge 0$.
Suppose furthermore that  there exists a fixed $C>0$ such that  $   \| f(t)\| _{H^{k_0} }\le C
\epsilon $ for all $t\ge 0$. Then  there exist  $f_+  \in H^{k_0}$
  such that \eqref{scattering1} holds (case $+$)
and we have $\lim _{t\to   +\infty } z(t)=0.$
Furthermore, we can write $f(t,x)=A(t,x)+\widetilde{f}(t,x)$ as in Theorem
\ref{theorem-1.2} in such a way that the same conclusions of Theorem
\ref{theorem-1.2} regarding $A(t,x)$ and $ \widetilde{f} $ hold.
\end{theorem}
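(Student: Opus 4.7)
The plan is to adapt the proof of Theorem \ref{theorem-1.2} developed in Sections \ref{section:Darboux}--\ref{sec:FGR}. The essential observation is that in Theorem \ref{theorem-1.2} the a priori orbital stability bound $|z(t)| + \|f(t)\|_{H^{k_0}} \leq C\epsilon$ is proved using the coercivity of an energy-type functional that relies on $\varepsilon_j \equiv 1$, whereas in Theorem \ref{theorem-1.3} the same bound is postulated as a hypothesis. So the task reduces to verifying that the remaining dispersive and Fermi Golden Rule parts of the argument go through under \ref{Assumption:H5'}.

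First, I would apply the Darboux change of coordinates of Section \ref{section:Darboux} and the Birkhoff normal form procedure of Section \ref{section:Birkhoff} to the Hamiltonian $H$. The Darboux construction uses only the spectral decomposition \eqref{Eq:SpecDec} and the non-degeneracy of the symplectic pairing on the eigenspaces, both granted by \ref{Assumption:H5'}; the Birkhoff procedure uses the non-resonance conditions \ref{Assumption:H10}--\ref{Assumption:H11}. Neither step requires any sign condition on $\varepsilon_j$, so after these reductions the system for $(z,f)$ takes the same normal form as in the proof of Theorem \ref{theorem-1.2}.

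Second, I would combine the Strichartz, smoothing and weighted dispersive estimates of Section \ref{sec:Dispersive} with the bootstrap argument of Section \ref{sec:dispersion}. Here the a priori hypothesis $\|f(t)\|_{H^{k_0}} \leq C\epsilon$ replaces the conservation-law derivation of the same bound, so the dispersive estimates for $f$, including the splitting $f = A + \widetilde f$ and the Strichartz bound \eqref{Strichartz1}, follow by the Duhamel formula and an iteration identical to that of Theorem \ref{theorem-1.2}. In particular the continuous-mode analysis is entirely insensitive to the signs $\varepsilon_j$.

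Third, I would invoke the Fermi Golden Rule analysis from Section \ref{sec:FGR}. The key identity realizes the dissipative coefficients in the $z$-equations as quadratic forms coming from the imaginary part of a limiting resolvent applied to certain $H^{k_0}$-valued data; by the spectral condition $3\omega > m$ these forms are non-negative, and by \ref{Assumption:H12} strictly positive. This positivity is a spectral-measure statement and is independent of the sign of $\varepsilon_j$. The main obstacle I foresee is to identify the correct Lyapunov-type combination of the $|z_j|^2$ (presumably the weighted sum $\sum_j \varepsilon_j |z_j|^2$ arising from the symplectic normalization in \eqref{eq:decomp2}--\eqref{eq:coordinate}) whose time derivative is sign-definite modulo remainders that can be absorbed by the dispersive estimates. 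Once this algebraic adjustment is made, integrating the resulting differential inequality on $[0,\infty)$ against the orbital stability hypothesis yields $L^1_t$ integrability of $|z(t)|^{2N_j+2}$ and hence $z(t)\to 0$. The decomposition $f = A + \widetilde f$, the scattering \eqref{scattering1} and the convergence $\omega(t) \to \omega_+$ then follow exactly as in the reduction from Theorem \ref{theorem-1.2} to Theorem \ref{th:AsStab}.
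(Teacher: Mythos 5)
Your proposal is correct and follows essentially the same route as the paper: the Darboux/Birkhoff reductions and the dispersive bootstrap of Sect.~\ref{sec:dispersion} are applied unchanged, and the only modification occurs in Sect.~\ref{sec:FGR}. There the paper, exactly as you anticipate, works with the signed functional $\sum_{j}\varepsilon_j\lambda_j^0|\zeta_j|^2$, whose time derivative equals a remainder minus the same non-negative (by Lemma~\ref{lemma:FGR81} and \ref{Assumption:H12} positive) quadratic form, and uses the hypothesis $|z(t)|\le\epsilon$ for all $t\ge 0$ to control the now sign-indefinite boundary terms when integrating in time.
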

\begin{remark}
\label{rem:stab im as stab} Theorem \ref{theorem-1.3} is analogous
to an observation in \cite{MartelMerle} regarding the fact that solutions remaining for all times close to a standing wave, stable or unstable,  converge to it.
Among other references see also
\cite{Beceanu,NakanishiSchlag}.
\end{remark}

Finally,
 Theorem
\ref{th:orbital instability}, that is orbital instability, is a
consequence of the following theorem.

\begin{theorem}\label{theorem-1.4}  Assume
{\ref{Assumption:H1}}--{\ref{Assumption:H4}},
{\ref{Assumption:H5'}} and
{\ref{Assumption:H6}}--{\ref{Assumption:H12}}. Then
there is a $ \epsilon _1>0$ such that for any  $\delta >0$ there is
a solution $(z(t), f(t))$
  of \eqref{eq:SystK2} such that
   $ |z(0)| + \|
f(0)\| _{H^{k_0} }\le \delta   $ but there exists $t\ge 0$ such that  $ |z(t)|
\geq \epsilon _1 $.
\end{theorem}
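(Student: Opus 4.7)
The plan is to argue by contradiction, following the scheme of \cite{Cuccagna2}: orbital stability is shown to imply a form of asymptotic stability (Theorem \ref{theorem-1.3}) that is then shown to be incompatible with \ref{Assumption:H5'}. Suppose the conclusion of Theorem \ref{theorem-1.4} fails. Then for every $\epsilon_1>0$ there exists $\delta(\epsilon_1)>0$ such that every solution of \eqref{eq:SystK2} with $|z(0)|+\|f(0)\|_{H^{k_0}}\le\delta(\epsilon_1)$ satisfies $|z(t)|<\epsilon_1$ for all $t\ge 0$. Combining this $|z|$-smallness with conservation of the charge $Q$, conservation of the energy $E$, the modulation identities of Lemma \ref{lem:modulation}, and the quantitative Darboux change of variables of Lemmas \ref{lem:flow1}--\ref{lem:flow2}, one upgrades the control to $|z(t)|+\|f(t)\|_{H^{k_0}}\le C\epsilon$ uniformly in $t\ge 0$, with $\epsilon=|z(0)|+\|f(0)\|_{H^{k_0}}$. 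This is precisely the hypothesis of Theorem \ref{theorem-1.3}, whose conclusion forces $z(t)\to 0$ and scattering of $f$ in $H^{k_0}$ for every such solution.

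The contradiction then comes from a well-chosen initial datum. Fix $j_0$ with $\varepsilon_{j_0}=-1$, granted by \ref{Assumption:H5'}; take $\omega(0)=\omega_0$, $f(0)=0$, $z_{j_0}(0)=\eta\in(0,\delta(\epsilon_1)/2)$ and $z_k(0)=0$ for $k\ne j_0$. In the Birkhoff coordinates of Sect.\ \ref{section:Birkhoff}, the Fermi Golden Rule computation of Sect.\ \ref{sec:FGR} (in particular Lemma \ref{lemma:FGR81}) produces for the discrete amplitudes a monotonicity identity of the schematic form
\begin{equation*}
\frac{d}{dt}\bigl(\varepsilon_{j_0}|z_{j_0}(t)|^2\bigr)=-\Gamma_{j_0}\,|z_{j_0}(t)|^{2(N_{j_0}+1)}+\mathcal{R}(t),
\end{equation*}
where $\Gamma_{j_0}$ is the quadratic form isolated in Lemma \ref{lemma:FGR81}, non-negative because $3\omega>m$ and strictly positive by \ref{Assumption:H12}, and where $\mathcal{R}(t)$ is integrable in time by the Strichartz, smoothing, and pointwise decay estimates of Sect.\ \ref{sec:Dispersive} combined with the $L^p_tB^{\alpha}_{q,2}\cap L^2_tH^{k_0,-\tau_0}$ control on $f$ furnished by Theorem \ref{theorem-1.3}. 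Substituting $\varepsilon_{j_0}=-1$ reverses the sign, so that $|z_{j_0}(t)|^2$ is monotone non-decreasing up to an integrable error. Choosing $\eta$ small enough that $\int_0^\infty|\mathcal{R}(t)|\,dt$ is dominated by a fraction of $\Gamma_{j_0}\eta^{2(N_{j_0}+1)}T$ on suitably long intervals, I conclude $\liminf_{t\to\infty}|z_{j_0}(t)|\ge \eta/2$, contradicting $z(t)\to 0$.

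The main obstacle is verifying that the Fermi Golden Rule analysis of Sect.\ \ref{sec:FGR}, carried out under the convention $\varepsilon_j\equiv 1$, transfers faithfully to the case $\varepsilon_{j_0}=-1$. The non-negativity of $\Gamma_{j_0}$ itself is insensitive to signs, because it arises from squaring an inner product of the resonant nonlinear term against a generalized eigenfunction at energy $\mu\cdot\lambda$ with $|\mu\cdot\lambda|>m-\omega$; what is sign-sensitive is the Hamiltonian structure of the discrete mode equations \eqref{eq:SystK2} and of the dual basis in Lemma \ref{lem:gradient zf}, both of which carry explicit $\varepsilon_j$ factors. Carefully tracking these factors through the successive canonical transformations of Sects.\ \ref{section:Darboux}--\ref{section:Birkhoff} and through the FGR monotonicity is the delicate bookkeeping step; once executed, the sign reversal is automatic and delivers the contradiction. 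A secondary technical concern is the preliminary upgrade from $|z|$-smallness to $H^{k_0}$-smallness of $f$ under the orbital stability hypothesis, which I expect to follow routinely from conservation of $Q$ and $E$ combined with the smoothness estimates of Lemma \ref{lem:flow1}.
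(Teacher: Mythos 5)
Your overall architecture is the same as the paper's: argue by contradiction, invoke Theorem \ref{theorem-1.3}, launch a solution with $f(0)=0$ and $z(0)$ concentrated on a mode with $\varepsilon_{j_0}=-1$, and let the sign $\varepsilon_{j_0}=-1$ in the Fermi Golden Rule identity (the paper's \eqref{eq:FGR51}, integrated to \eqref{eq:4310}) contradict the decay of $z$. But your preliminary step contains a genuine gap: you propose to upgrade the contradiction hypothesis $|z(t)|<\epsilon_1$ to a uniform bound $\|f(t)\|_{H^{k_0}}\le C\epsilon$ ``from conservation of $Q$ and $E$'', and call this routine. It is not available here: the Dirac energy is strongly indefinite (this is the central obstruction the whole paper is organized around), so conservation of $E$ and $Q$ yields no coercive control of any Sobolev norm of $f$ beyond the $L^2$ information in $Q$; the Weinstein/Grillakis--Shatah--Strauss style upgrade you are implicitly invoking is precisely what fails for NLDE, and it would in any case not reach $H^{k_0}$ with $k_0\ge 4$. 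The paper never performs such an upgrade: the $\|f(t)\|_{H^{k_0}}$ hypothesis of Theorem \ref{theorem-1.3} is secured only for the particular solution with $f(0)=0$, by running the dispersive machinery (Lemma \ref{lem:conditional4.21} with $\psi(0)=0$, together with Lemma \ref{lemma:bound g}), so that $f$ and $g$ are controlled by the resonant monomials $\mathcal{Y}^2=\sum\|z^\alpha\|_{L^2_t}$ plus the remainders of Lemma \ref{lem:bound remainder}, all small because $|z(t)|$ stays small by the contradiction hypothesis.

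A second, quantitative weak point: for the final contradiction it is not enough that your error $\mathcal{R}(t)$ be integrable in time; its integral must be $o(1)\,\delta^2$, i.e.\ small compared with the drop $\lambda_{j_0}^0\delta^2$ of the indefinite functional $\sum_j\varepsilon_j\lambda_j^0|\zeta_j|^2$, because the generic bound \eqref{eq:FGR7} already scales like $\epsilon^2\sim\delta^2$ with a constant that is not small; so ``choosing $\eta$ small'' does not by itself make the error relatively small. The paper obtains the needed improvement by exploiting $f(0)=0$ once more: $\sum_j\|\mathcal{D}_j\overline{\zeta}_j\|_{L^1}\lesssim \mathcal{Y}\,\|g\|_{L^2_tH^{-4,-s}_x}+\epsilon\,\mathcal{Y}^2+\|R_1\|_{L^1_tH^{k_0}_x}+\|R_2\|_{L^2_tH^{k_0,\tau_0}_x}$ with the last two terms $\lesssim o(1)\delta$, fed into \eqref{eq:4310}, and then $t\to\infty$ with $z(t)\to 0$ from Theorem \ref{theorem-1.3} to remove the sign-indefinite terms $-\varepsilon_j\lambda_j^0|\zeta_j(t)|^2$, leaving $0\le \mathcal{Y}^2\lesssim-\lambda_{j_0}^0\delta^2+o(1)\delta^2<0$. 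Your schematic single-mode ODE and the conclusion $\liminf_t|z_{j_0}(t)|\ge\eta/2$ capture the right mechanism (and your remark that the coercivity of Lemma \ref{lemma:FGR81} is sign-insensitive while the $\varepsilon_j$ enter through \eqref{eq:SystK2} is correct), but without these two repairs --- replace the conservation-law upgrade by the $f(0)=0$ dispersive bootstrap, and prove $\int|\mathcal{R}|=o(\delta^2)$ rather than merely $<\infty$ --- the argument does not close.
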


\subsection{Taylor expansions}
\label{subsec:Taylor expansions} We recall that  $\varepsilon
_j=\langle\xi_j, \Sigma_3\xi_j\rangle\in \{1, -1\}$ is the
signature of the eigenvalues of $\mathcal{H}_\omega$. We set
$d(\omega) :=E(\Phi_\omega)+\omega Q(\Phi_\omega).
$
We recall that $\omega_0$ is the unique element such that
$q(\omega_0)=\|u_0\|_2^2$ and $G$ is the primitive of the
non-linearity $g$ vanishing at $0$.

\begin{lemma}
\label{lem:K} The following statements  hold.
\begin{equation}  \label{eq:ExpK} \begin{aligned} & K =d(\omega )-\omega
\| u_0\| _2^2+K_2+K_P  \text{ with } \\& K_2:=\sum _j\varepsilon _j\lambda _j
(\omega ) |z_j|^2+ \frac{1}{2} \langle \im \beta \alpha _2\Sigma _1
\Sigma _3 \mathcal{H}_{\omega  } f,
  f\rangle \text{ and }\\&
K_P = \langle G_6 (\omega , f(x)),1\rangle+\sum _{|\mu +\nu |= 3}
\langle k_{\mu \nu }(\omega ,z ), 1\rangle  z^\mu \overline{z}^\nu
+\sum _{|\mu +\nu |= 2} z^\mu \overline{z}^\nu \langle  K_{\mu \nu
}(\omega ,z ),\im \beta \alpha _2\Sigma _3 \Sigma _1 P_c(\omega
)f\rangle   \\&   + \sum _{d=2}^4 \langle G_{d } ( \omega , z  ),
(P_c(\omega )f)^{\otimes d} \rangle + \int
_{\mathbb{R}^3} \langle G_5(x,\omega, z , f(x) ),  f^{\otimes
5}(x) \rangle dx,
\end{aligned}
\end{equation}
where  for a small neighborhood $\mathcal{U}$ of $(\omega _0,0)$ in
$\mathcal{O}\times \mathbb{C}^n$, we have what follows.
\begin{enumerate}
\item $ G_6 (x, \omega , f)=G \left ( \frac12(P_c(\omega )f(x))\cdot i
\alpha_2\Sigma_3\Sigma_1 (P_c(\omega )f(x))
\right )$,

\item $k_{\mu \nu
}( \cdot , \omega ,z ) \in C^\infty ( \mathcal{U},
H^{K,S}_x(\mathbb{R}^3,\mathbb{C}^8) $,
\item $K_{\mu \nu
}( \cdot , \omega ,z ) \in C^\infty ( \mathcal{U},
H^{K,S}_x(\mathbb{R}^3,\mathbb{C}^8)\cap \mathbf{X}) $,
\item $G_{d
}( \cdot , \omega ,z ) \in C^\infty ( \mathcal{U},
H^{K,S}_x(\mathbb{R}^3, B   (
 (\mathbb{C}^8)^{\otimes d},\mathbb{C} ))) $, for $2\le d \le 4$
and $G_{2}( \cdot , \omega ,0 ) \equiv 0 $.

\item Let $^t\eta = (\zeta , C \zeta) $ for
$ \zeta \in \C^4$. Then for
  $G_5(\cdot ,\omega  , z , \eta  )$   we have
\begin{equation} \label{5power2}\begin{aligned} &\forall l\in \N \cup \{0\},\,
  \| \nabla _{ \omega ,z,\overline{z} ,\zeta,C \zeta   }
^lG_5( \omega ,z,\eta  ) \| _{H^{K,S}_x (\mathbb{R}^3,   B   (
 (\mathbb{C}^8)^{\otimes 5},\mathbb{C} )} \le C_l .
 \end{aligned}\nonumber \end{equation}
\item We have $k_{\mu \nu }= {k}_{\nu \mu   }^*$, $K_{\mu \nu }
=-C\Sigma_1  {K}_{\nu \mu   } $.
\end{enumerate}
\end{lemma}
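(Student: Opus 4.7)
The plan is to substitute the coordinate decomposition \eqref{eq:coordinate} into the definition \eqref{eq:K} of $K = E + \omega Q - \omega \|u_0\|_2^2$ and Taylor expand in the ``small" variables $(z,\overline z,f)$. Since $E$ and $Q$ are manifestly invariant under the gauge action $U \mapsto e^{\im \Sigma_3\vartheta}U$ (the nonlinearity $G(u\overline u)$ is gauge invariant and so is $Q$), we may drop the factor $e^{\im \Sigma_3 \vartheta}$ and treat $K$ as a function of $(\omega,R)$ with $R=z\cdot\xi(\omega)+\overline z\cdot\Sigma_1 C\xi(\omega)+P_c(\omega)f$. In this way the $\vartheta$-independence is automatic.

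For the low-order terms I would use the Lyapunov property: $\Phi_\omega$ is a critical point of $E(U)+\omega Q(U)$ (this is just the stationary equation from \ref{Assumption:H2}), so the expansion in $R$ begins with the constant $d(\omega)=E(\Phi_\omega)+\omega Q(\Phi_\omega)$, the linear term vanishes, and the quadratic term is $\tfrac12\langle \mathrm{Hess}_R(E+\omega Q)|_{R=0} R,R\rangle$. By the very derivation of the linearized system \eqref{eq:SystPoiss}--\eqref{eq:NLSvectorial} in Sect. \ref{Sec:Hamiltonian} this Hessian is $\im\beta\alpha_2\Sigma_1\Sigma_3\mathcal{H}_\omega$. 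Splitting $R$ along the $\mathcal{H}_\omega$-invariant decomposition \eqref{Eq:SpecDec}, using that the subspaces $\ker(\mathcal{H}_\omega\mp\lambda_j)$ and $\mathbf{X}_c(\mathcal{H}_\omega)$ are pairwise orthogonal in the bilinear form $\langle\cdot,\Sigma_3(\cdot)^*\rangle$, and using $\mathcal{H}_\omega\xi_j=\lambda_j\xi_j$ together with $\mathcal{H}_\omega(\Sigma_1 C\xi_j)=-\lambda_j\Sigma_1 C\xi_j$ (from \eqref{Eq:SymmLin3}) and the normalization $\langle\xi_j,\Sigma_3\xi_j^*\rangle=\varepsilon_j$, one obtains $\sum_j\varepsilon_j\lambda_j(\omega)|z_j|^2$ from the discrete block. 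The continuous block contributes $\tfrac12\langle\im\beta\alpha_2\Sigma_1\Sigma_3\mathcal{H}_\omega f,f\rangle$, where the discrepancy between $f$ and $P_c(\omega)f$ (recall $f\in\mathbf{X}_c(\mathcal{H}_{\omega_0})$) is absorbed into the $G_2(\omega,z)$ piece of $K_P$ (note $G_2(\omega,0)\equiv 0$ is consistent with this absorption only when combined with the fact that the cross terms between discrete and continuous parts, which do not exactly vanish because of the $\omega\neq\omega_0$ mismatch, are also order $z\otimes f$ and swept into $K_{\mu\nu}$).

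For the higher-order part, $E_K$ and $Q$ are exactly quadratic so only $E_P=-\int G(u\overline u)\,dx$ contributes. Writing $u\overline u=\tfrac12 U\cdot\im\alpha_2\Sigma_3\Sigma_1 U$ and using the coordinate expression for $U$, one expands $G$ by Taylor's formula in the scalar variable $u\overline u$ around its value at $\Phi_\omega$. Collecting terms by their degree in the $f$-variable yields: the pure-$z$ cubic (and higher) remainder becomes $\sum_{|\mu+\nu|=3}\langle k_{\mu\nu}(\omega,z),1\rangle z^\mu\overline z^\nu$ with the higher orders absorbed into the smooth coefficients $k_{\mu\nu}$; the order-two-in-$z$, order-one-in-$f$ terms become $\sum_{|\mu+\nu|=2}z^\mu\overline z^\nu\langle K_{\mu\nu}(\omega,z),\im\beta\alpha_2\Sigma_3\Sigma_1 P_c(\omega)f\rangle$; the polynomial-in-$f$ pieces of degree $2,3,4$ become $\langle G_d(\omega,z),(P_c(\omega)f)^{\otimes d}\rangle$; the pure $f$-dependent sixth and higher powers reproduce $G_6(\omega,f)$ and the integral remainder with integrand $\langle G_5(x,\omega,z,f(x)),f^{\otimes 5}(x)\rangle$. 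The required smoothness and the weighted estimates $H^{K,S}_x$ on the coefficients follow from \ref{Assumption:H1} (smoothness of $g$), the smooth dependence of $\phi_\omega$ on $\omega$ and its exponential decay in \ref{Assumption:H2} (which provides the $\langle x\rangle^S$ weights for the $k_{\mu\nu}$, $K_{\mu\nu}$, $G_d$ through the factors of $\phi_\omega$ or its derivatives that remain in each coefficient after extracting the $f$-powers).

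The symmetry statements in item (6) will be the most delicate step. The identity $k_{\mu\nu}=\overline{k_{\nu\mu}}$ (i.e.\ $k_{\nu\mu}^*$) comes from the reality of $K$ together with $\overline{z^\mu\overline z^\nu}=z^\nu\overline z^\mu$. The identity $K_{\mu\nu}=-C\Sigma_1 K_{\nu\mu}$ encodes the constraint $\Sigma_1 U=CU$ (equivalently $C\Sigma_1 f=f$): the pairing $\langle\cdot,\im\beta\alpha_2\Sigma_3\Sigma_1\cdot\rangle$ must see $f$ only through that constraint, and applying the same reality argument combined with the intertwining $C(\im\beta\alpha_2)=-(\im\beta\alpha_2)C$ together with $C\Sigma_1=\Sigma_1 C$ yields the minus sign and the $C\Sigma_1$ conjugation. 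The main obstacle I anticipate is precisely this careful bookkeeping of the $C\Sigma_1$-symmetries for the $K_{\mu\nu}$ and for $G_5$ (whose argument is written as $\,{}^t\eta=(\zeta,C\zeta)$ exactly to keep track of this constraint); the rest of the proof is a routine but lengthy multivariable Taylor expansion once the gauge invariance has been used to eliminate $\vartheta$.
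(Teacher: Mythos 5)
Your proposal is essentially the paper's own argument: substitute \eqref{eq:decomp2}, use gauge invariance to drop $\vartheta$, use that $\Phi_\omega$ is a critical point of $E+\omega Q$ to kill the linear term, identify the Hessian with the bilinear form $\frac12\langle \im\beta\alpha_2\Sigma_1\Sigma_3\mathcal{H}_\omega\,\cdot\,,\,\cdot\,\rangle$ to produce $K_2$, dump all higher order terms into $K_P$, and read off the symmetries of item (6) from the reality of $K$ and the constraint $C\Sigma_1 f=f$; the required smoothness and weights of the coefficients come from \ref{Assumption:H1}--\ref{Assumption:H2} exactly as you say.

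Two points of bookkeeping need correction, though neither changes the strategy. First, the discrete--continuous cross terms in the quadratic form vanish \emph{exactly}: in \eqref{eq:decomp2} both $\xi_j(\omega)$ and $P_c(\omega)f$ are taken at the same running $\omega$, and the form $\langle\,\cdot\,,\im\beta\alpha_2\Sigma_1\Sigma_3\,\cdot\,\rangle$ is block diagonal for the spectral decomposition of $\mathcal{H}_\omega$ (this is what makes \eqref{eq:OmegaComponent3} clean). So there is no ``$\omega\neq\omega_0$ mismatch'' in those terms and nothing of type $z\otimes f$ to sweep into $K_{\mu\nu}$ --- indeed $K_P$ has no slot for terms linear in $z$ and linear in $f$, consistent with their exact cancellation. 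Second, the genuine discrepancy created by writing $K_2$ with $f$ instead of $P_c(\omega)f$ is the $z$-independent term $-\frac12\langle\im\beta\alpha_2\Sigma_1\Sigma_3\mathcal{H}_\omega P_d(\omega)f,P_d(\omega)f\rangle$, with $P_d(\omega)f=(P_d(\omega)-P_d(\omega_0))f=O(|\omega-\omega_0|)$; your suggestion to absorb it into $G_2$ clashes with $G_{2}(\cdot,\omega,0)\equiv 0$, so as written that step does not typecheck (the paper's own proof is equally silent here, and in the sequel only $\omega=\omega_0$ is used, where the discrepancy vanishes, but you should either keep $P_c(\omega)f$ in $K_2$ or account for this term explicitly). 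Finally, you do not address the claim in item (3) that $K_{\mu\nu}\in\mathbf{X}$; the paper obtains it from the $\mathbf{A}$-- and $\mathbf{B}$--symmetry of the pairing in Lemma \ref{lem:SymmLin1} together with $f\in\mathbf{X}$, and this one-line argument should be added.
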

\proof

Consider $U=  e^{\im \Sigma _3\vartheta }  (\Phi_\omega + R)$ as in
\eqref{Eq:ansatzU} . Decompose $R$ as in \eqref{eq:decomp2}. Set
$U=\varphi(\omega,z) +  P_c(\omega )f$.
Let $K_p(U)=\int h(U(x))\,dx$, see Lemma \ref{lem:systU}, then after first a
Taylor integral expansion around $f$ at first order and a Taylor integral
expansion around $\phi$ at fourth order, we have
\begin{equation*}\label{ExpEP0}\begin{aligned}
    h (U)&=h \left ( P_c(\omega )f   \right )
+\int _0^1    dh ( t\varphi +  P_c(\omega )f)\varphi\,dt\\
&=h \left (  P_c(\omega )f\right)+\int _0^1 \sum _{i\le 4}  \frac{1}{i!}
 d^{i+1}h( t\varphi ) (P_c(\omega)f)^i \varphi  \,dt  +\\&
+ 5\int _{[0,1]^2}  (1-s)^4\frac{1}{5!}
 d^{6}h (t\varphi +sP_c(\omega)f)  (P_c(\omega)f)^4 \varphi \,dtds
\end{aligned}\end{equation*}
Since $\Phi_\omega$ is a critical point of $K$ as it is in the kernel of
$\mathcal{H}_\omega \Sigma_3$, so in the Taylor expansion of $K$ around
$\Phi_\omega$ there is no first order term. The second derivative of $K$ is the bilinear form
$
 \frac{1}{2} \langle \im \beta \alpha _2\Sigma _1
\Sigma _3 \mathcal{H}_{\omega  } \cdot,
  \cdot\rangle
$.
This gives $K_2$.

The term $K_P$ contains all terms of order higher than $2$ in $f$ and $z$.
Thus coincides with the term of order higher than $2$ in $f$ and $z$ in the
above expansion after integration in $x$.

The Hamiltonian $K$ is a real quantity and considering its conjugate will
exchange $\bar{z}$ and $z$ and lead by a straightforward calculation to the
last assertion.  The fact that $K_{\mu \nu} (\omega , z)\in
\mathbf{X}$ follows
from { $ \langle  \im \beta \alpha _2\Sigma _1
\Sigma _3    \mathbf{A} X, Y\rangle  =\langle  \im \beta \alpha _2\Sigma _1
\Sigma _3    X,  \mathbf{A}Y\rangle  $, $ \langle  \im \beta \alpha _2\Sigma _1
\Sigma _3    \mathbf{B} X, Y\rangle  =\langle  \im \beta \alpha _2\Sigma _1
\Sigma _3    X,  \mathbf{B}Y\rangle  $ and $f\in \mathbf{X}$, see Lemma
\ref{lem:SymmLin1}}.\qed

The following lemma is a reformulation with some rearrangements of the above one
in the canonical coordinates provided by Lemma \ref{lem:flow1}. We set $\delta
_j$ be for $j\in \{ 1,...n \}$ the multi index
$\delta _j=( \delta _{1j}, ..., \delta _{nj}).$ Let $\lambda
_j^0=\lambda _j(\omega _0)$ and $\lambda ^0 = (\lambda _1^0,
\cdots, \lambda _n^0)$.

\begin{lemma}
  \label{lem:ExpH} Let $H:=K\circ \mathcal{F} _1$. Then, around $
 e^{i\Sigma _3 \vartheta}  \Phi _{\omega _0}$
 we have the
expansion
\begin{equation}  \label{eq:ExpH1} H =d(\omega _0)
-\omega _0\| u_0\| _2^2+ \psi (\|f\| _2^2) +H_2 ^{(1)}+\resto
^{(1)}, \text{ where}
\end{equation}

\begin{equation}  \label{eq:ExpH2} H_2 ^{(1)}=
\sum _{\substack{ |\mu +\nu |=2\\
\lambda ^0\cdot (\mu -\nu )=0}}
k_{\mu \nu}^{(1)}( \| f\| _2^2 )  z^\mu
\overline{z}^\nu + \frac{1}{2} \langle   \im \beta \alpha _2\Sigma _1 \Sigma _3
\mathcal{H}_{\omega _0} f,   f\rangle .
\end{equation}
and $\resto ^{(1)}=\widetilde{\resto ^{(1)}} +
 \widetilde{\resto ^{(2)}}  $, with
\begin{equation}  \label{eq:ExpH2resto} \begin{aligned} &
 \widetilde{\resto ^{(1)}}=\sum _{\substack{ |\mu +\nu |=2\\
\lambda ^0\cdot (\mu -\nu )\neq 0  }}k_{\mu \nu}^{(1)}(\| f\|
_2^2 )z^\mu \overline{z}^\nu +\sum _{|\mu +\nu |  = 1} z^\mu
\overline{z}^\nu \langle  H_{\nu \mu }(\| f\|
_2^2 ),\im \beta \alpha _2\Sigma _3 \Sigma _1 f\rangle ,\\&
\widetilde{\resto
^{(2)}}= \int _{\mathbb{R}^3}G (\frac12(P_c(\omega_0 )f(x))\cdot i
\alpha_2\Sigma_3\Sigma_1 (P_c(\omega_0 )f(x)))\,dx + \sum _{|\mu +\nu |
= 3 } z^\mu \overline{z}^\nu \int _{\mathbb{R}^3}k_{\mu \nu}
(x,z,f,f(x),\| f\| _2^2 ) dx\\&  + \sum _{|\mu +\nu | =2  }z^\mu
\overline{z}^\nu \int _{\mathbb{R}^3} \left [ \im \beta \alpha _2\Sigma _1
\Sigma
_3H_{\nu \mu } (x,z,f,f(x), \| f\| _2^2 )\right ]^T f(x)  dx  \\&
+\sum _{j=2}^5 \resto ^{(1)} _j   +
    \widehat{\resto}
    ^{(1)} _2(z,f,  \| f\| _2^2 )  \\& \text{and }
\resto ^{(1)} _j    =\int _{\mathbb{R}^3} F_j(x,z ,f, f(x),\| f\|
_2^2) f^{\otimes j}(x) dx
\end{aligned}
\end{equation}
and where the following holds.

\begin{enumerate}

\item We have  $\psi (s)$ is smooth with $\psi (0)=\psi ' (0)=0$.

\item At $\| f\| _2=0$ we have:
\begin{equation}  \label{eq:ExpHcoeff1} \begin{aligned} &
k_{\mu \nu}^{(1)}( 0 ) =0 \text{ for $|\mu +\nu | = 2$  with $(\mu
, \nu )\neq (\delta _j, \delta _j)$ for all $j$;} \\& k_{\delta _j
\delta _j }^{(1)}( 0 ) =\varepsilon _j\lambda _j (\omega _0)  ,
\text{ where $\delta _j=( \delta _{1j}, ..., \delta _{mj})$ and here
we are not summing in $j$;}
\\& H_{\nu \mu }(  0 ) =0 \text{ for $|\mu +\nu | = 1$. }
\end{aligned}
\end{equation}
These $k_{\mu \nu}^{(1)}( \varrho )$ and $H_{\nu \mu }( x,\varrho
)$ are smooth in all variables with $H_{\nu \mu }( \cdot ,\cdot )
\in C^\infty ( \mathbb{R}_\varrho, H^{K,S} _x(\mathbb{R}^3,\mathbb{C}^8) \cap \mathbf{X})$
for all $(K,S)$.

\item We have for all indexes
\begin{equation}  \label{eq:ExpHcoeff2} \begin{aligned} &k_{\mu \nu}^{(1)} =
 (k_{\mu \nu}^{(1)})^*\, , \quad k_{\mu \nu}
= k_{\mu \nu}^*\, , \quad  H_{\nu \mu } =-C\Sigma _1 {H}_{\mu\nu
 } .
\end{aligned}
\end{equation}
\item   We have  $F_2(x,0 ,0, 0,0)=0 $.

\item For all $(K,S, K',S')$ positives there is a neighborhood $
\mathcal{U}^{-K',-S'}$ of $  \{ (  0, 0) \}$ in
$\widetilde{\Ph}^{-K',-S'}$, see \eqref{eq:PhaseSpace}, such that
\begin{enumerate}
\item
for $^t\eta =
(\zeta , C \zeta) $ where
$ \zeta \in \C^4$.
we have,  for
 $k_{\mu \nu} (x, z,f,\eta , \varrho )$ with
 $(z,f,\zeta , \varrho )\in \mathcal{U}^{-K',-S'}\times \C^4 \times \R$
 \begin{equation}\label{eq:coeff a2} \forall l\in\N^6,\quad\,\| \nabla _{
z,\overline{z},\zeta,C \zeta,f,\varrho} ^l k_{\mu \nu} \|
_{ H^{K,S}_x(\mathbb{R}^3,\mathbb{C})} \le C_l;
\end{equation}
\item  for
 $H_{\nu \mu } (x , z,f,g ,
\varrho ) $,
\begin{equation}\label{eq:coeff G2}
\forall l\in\N^6,\quad\,\| \nabla _{ z,\overline{z},\zeta,C \zeta,f,\varrho} ^l
H_{\nu \mu } \| _{ H^{K,S}_x(\mathbb{R}^3,\mathbb{C}^2)} \le C_l;
\end{equation}

\item for $F_j (x ,z,f,g,\varrho )$,
  \begin{equation} \begin{aligned} &
\forall l\in\N^6,\quad\,\| \nabla _{ z,\overline{z},\zeta,C\zeta,f,\varrho} ^l
F_j \| _{ H^{K,S}_x(\mathbb{R}^3,B   (
 (\mathbb{C}^2)^{\otimes j},\mathbb{C} ))} \le C_l;
 \end{aligned}\nonumber \end{equation}

\item   we have $\widehat{\resto} ^{(1)}
 _2(z,f,  \varrho ) \in C^\infty ( \mathcal{U}^{-K',-S'}\times
  \R , \R)
   $ with  \begin{equation} \begin{aligned} & |\widehat{\resto} ^{(1)}
 _2(z,f,  \varrho ) | \le C (|z|+|\varrho |+ \| f \| _{ H^{-K',-S'}})
\| f \| _{ H^{-K',-S'}}^2.
 \end{aligned}\nonumber \end{equation}
\end{enumerate}
\end{enumerate}

\end{lemma}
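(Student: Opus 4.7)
My plan is to compute $H=K\circ\mathcal{F}_1$ by starting from the decomposition of $K$ given in Lemma \ref{lem:K} and substituting the change of variables from Lemma \ref{lem:flow1}, which expresses $\omega\circ\mathcal{F}_1$, $z_j\circ\mathcal{F}_1$ and $f\circ\mathcal{F}_1$ in terms of the new coordinates plus the corrections $\mathcal{E}_\omega$, $\mathcal{E}_j$, $\mathcal{E}_f$. The key algebraic observation is that the only quadratic-in-$(z,f)$ contributions to $H$ come from $K_2$ evaluated at $\omega=\omega_0$; the contribution of $K_P$ and the corrections produced by the $\mathcal{E}_\bullet$'s are cubic or higher and will be sorted into the compartments of $\resto^{(1)}$ described in \eqref{eq:ExpH2resto}.

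For the ``phase'' piece $d(\omega)-\omega\|u_0\|_2^2$, I would exploit the identity $d'(\omega_0)=q(\omega_0)=\|u_0\|_2^2$, which forces the Taylor expansion of $d-\omega\|u_0\|_2^2$ around $\omega_0$ to begin at second order in $\tilde\omega-\omega_0$, where $\tilde\omega:=\omega\circ\mathcal{F}_1$. Setting $\tilde\omega_0(s):=q^{-1}(\|u_0\|_2^2-s/2)$ I would define
\[
\psi(s):=\bigl(d(\tilde\omega_0(s))-\tilde\omega_0(s)\|u_0\|_2^2\bigr)-\bigl(d(\omega_0)-\omega_0\|u_0\|_2^2\bigr),
\]
so that $\psi(0)=0$ and $\psi'(0)=(d'(\omega_0)-\|u_0\|_2^2)\,\tilde\omega_0'(0)=0$. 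Using the relation $q(\tilde\omega)=\|u_0\|_2^2-\|f\|_2^2/2+\mathcal{E}_\omega$ from Lemma \ref{lem:flow1}, I write $\tilde\omega=\tilde\omega_0(\|f\|_2^2)+O(\mathcal{E}_\omega)$ and insert this into the second-order Taylor expansion; the $\psi(\|f\|_2^2)$ term is isolated, and the residual, which is of the form $-\tfrac12\|f\|_2^2\cdot O(\mathcal{E}_\omega)+O(\mathcal{E}_\omega^2)$, fits into $\widetilde{\resto^{(2)}}$ thanks to the bound $|\mathcal{E}_\omega|\lesssim(|z|+\|f\|_{H^{-K',-S'}})^2$.

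For $K_2\circ\mathcal{F}_1$, the substitution $\omega\mapsto\tilde\omega$, $z_j\mapsto z_j+\mathcal{E}_j$, $f\mapsto f+\mathcal{E}_f$ yields at leading order $\sum_j\varepsilon_j\lambda_j(\omega_0)|z_j|^2+\tfrac12\langle\im\beta\alpha_2\Sigma_1\Sigma_3\mathcal{H}_{\omega_0}f,f\rangle$, producing the first term of $H_2^{(1)}$ at $\|f\|_2^2=0$ and the second term of $H_2^{(1)}$. The dependence on $\|f\|_2^2$ of the coefficients $k_{\delta_j\delta_j}^{(1)}$ arises from $\lambda_j(\tilde\omega)-\lambda_j(\omega_0)=O(\tilde\omega-\omega_0)$; cross quadratic terms $z_j\bar z_k$ with $j\neq k$ have vanishing coefficient at $\|f\|_2^2=0$ because the basis $\{\xi_j\}$ diagonalizes the $\Sigma_3$-pairing. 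The remaining contributions from the $\mathcal{E}$'s and from $K_P\circ\mathcal{F}_1$ are cubic or higher; following the block structure of $K_P$ in Lemma \ref{lem:K} and applying the chain rule, each piece lands in the corresponding compartment of $\widetilde{\resto^{(2)}}$ (the cubic $z^\mu\bar z^\nu$ terms, the mixed $z^\mu\bar z^\nu\langle H_{\nu\mu},\cdot\rangle$ terms, and the $\resto_j^{(1)}$'s). The linear-in-$z$ coefficients $H_{\nu\mu}(0)=0$ follow because such mixed $z$-$f$ quadratic terms are produced only by the $\mathcal{E}$-corrections or by derivatives of $\lambda_j$, $\mathcal{H}_\omega$, both of which are $O(\|f\|_2^2)$.

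After this computation, the splitting of quadratic $z$-monomials into resonant ($\lambda^0\cdot(\mu-\nu)=0$) and non-resonant ones produces $H_2^{(1)}$ and the first line of $\widetilde{\resto^{(1)}}$ respectively. The reality and conjugation identities \eqref{eq:ExpHcoeff2} follow from reality of $K$ and from the corresponding identities in Lemma \ref{lem:K}; they are preserved by $\mathcal{F}_1$ since the generating field $\mathcal{X}^\tau$ respects the $C\Sigma_1$-involution (Lemma \ref{lem:vectorfield}(5)). The smoothness claims and the uniform bounds \eqref{eq:coeff a2}--\eqref{eq:coeff G2} descend from the smoothness of $\mathcal{F}_1$ (Lemma \ref{lem:flow1}, items \eqref{flow8}--\eqref{flow9}) combined with the smoothness and decay properties listed in Lemma \ref{lem:K}. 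The main obstacle is purely organizational: sorting each term produced by the chain rule into the correct slot of \eqref{eq:ExpH2resto} and tracking the dependence on the extra parameter $\varrho=\|f\|_2^2$ uniformly in weighted norms; the identification of the correct $\psi$ also requires some care so as not to pollute $H_2^{(1)}$ with spurious $\|f\|_2^2$-dependence that should sit in the scalar phase.
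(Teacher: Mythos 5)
Your proposal is correct and follows essentially the same route as the paper: substitute the formulas of Lemma \ref{lem:flow1} (\eqref{flow5}--\eqref{flow9}) into the Taylor expansion of $K$ from Lemma \ref{lem:K}, extract $\psi$ from the term $d(\omega)-\omega\|u_0\|_2^2$, and sort the $\mathcal{E}$-corrections and the remaining $\omega$-dependence into the slots of $\resto^{(1)}$. The only cosmetic difference is that the paper gets the quadratic part at $\varrho=0$ via the shortcut $H''(\Phi_{\omega_0})=K''(\Phi_{\omega_0})$ (using $K'(\Phi_{\omega_0})=0$ and $\|\mathcal{F}_1(U)-U\|\lesssim\|R\|_{L^2}^2$), whereas you recover it, together with the explicit $\psi$ satisfying $\psi(0)=\psi'(0)=0$ through $d'(\omega)=q(\omega)$, by direct substitution and order counting, which is equivalent.
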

\proof The following proof is a continuation of proof of Lemma \ref{lem:flow1}.
We thus consider $H=K\circ \mathcal{G}_1$ as a function of $(\varrho, U)$.
By $\mathcal{G}_1(0,\Phi _{\omega _0})=\mathcal{F}_1(\Phi _{\omega _0})= \Phi
_{\omega _0}$,
  $K'(\Phi _{\omega _0})=0$ and   $\|
\mathcal{F}_1(U) -U\| _{\Ph ^{K,S}}\lesssim \| R \| _{L^2}^2$ we
conclude  $H'(\Phi _{\omega _0})=0$ and  $H''(\Phi _{\omega _0})
=K''(\Phi _{\omega _0})$. In particular, this yields the formula
for $H_2^{(1)}+\widetilde{\resto^{(1)}}$   for $\varrho=\| f\|_2^2=0$.

The other terms are obtained
by substituting in  $K_P$ of \eqref{eq:ExpK} the formula  \eqref{flow5}. The
term  $\psi (\varrho)$ arises from
$d(\omega \circ \mathcal{G}_1)-\omega \circ \mathcal{G}_1\| u_0\|
_2^2$.  There are no monomials $\| f\|
_2^j z^\mu \overline{z}^\nu \langle H, f\rangle ^i$ with $|\mu +\nu
|+i=1$, due to \eqref{flow6} (applied for $\omega =\omega _0$). By
$\langle \rmi \beta \alpha _2\Sigma _1\Sigma _3   f,  f\rangle =\|f\|_2^2$,
we have $\langle \rmi \beta \alpha _2\Sigma _1\Sigma _3
\mathcal{H}_{\omega _0 +\delta \omega } f,   f\rangle =\langle \rmi
\beta \alpha _2\Sigma _1\Sigma _3  \mathcal{H}_{\omega  _0 } f,
f\rangle +\frac{\|f\|^2\varrho}{2} +\widetilde{F}_2$ where $\widetilde{F}_2$ can
be absorbed
in $j=2$ in $\widetilde{\resto^{(2)}}$ and $\frac{\|f\|^2\varrho}{2}$ can be
absorbed in $\psi$ when restricted to $\varrho=\|f\|_2^2$.

Notice that  $\widehat{\resto} ^{(1)}_2$ is a remainder term obtained from
terms in $\mathcal{E}$ of Lemme \ref{lem:flow1}.
\qed


\section{Birkhoff normal forms}
\label{section:Birkhoff}

\subsection{Normal form}
\label{subsec:Normal form}
Here again and in the following sections, we use  the notation $\lambda
_j^0=\lambda _j(\omega _0)$.
Set  $
 \mathcal{H}:=\mathcal{H }_{\omega _{0}}
P_c(\mathcal{H} _{\omega _{0}} )$.

\begin{definition}
\label{def:normal form} A function $Z(z,f)$ is in normal form if it
is of the form
$ Z=Z_0+Z_1$
where we have finite sums of the following types:
\begin{equation}
\label{e.12a}Z_1= \sum _{|\lambda ^0 \cdot(\nu-\mu)|>m-\omega _{0}}
z^\mu \overline{z}^\nu \langle \im \beta \alpha _2 \Sigma _1\Sigma _3 G_{\mu
\nu}(\| f
\| _2^2 ),f\rangle
\end{equation}
with $H_{\mu \nu}( x,\varrho )\in  C^\infty ( \mathbb{
R}_{\varrho},H_x^{K,S})$ for all $K$, $S$;
\begin{equation}
\label{e.12c}Z_0= \sum _{\lambda  ^0\cdot(\mu-\nu)=0} a_{\mu , \nu}
(\| f \| _2^2)z^\mu \overline{z}^\nu
\end{equation}
and $a_{\mu , \nu} (\varrho  )\in  C^\infty ( \mathbb{ R}_\varrho ,
\mathbb{C})$. We will always assume the symmetries
\eqref{eq:ExpHcoeff2}. \qed\end{definition}

We consider the coefficients of the type of \eqref{eq:ExpH2} (below it will be
those of the $H^{(r)}_2$ in Theorem
\ref{th:main}) and thus let, for $\delta _j= (\delta _{1j},...,
\delta _{nj})$,
\begin{equation} \label{eq:lambda}\lambda _j=
\lambda _j ( \| f\| _2^2 ) =\lambda_j^0 +  k
_{\delta _j\delta _j} (\| f\| _2^2 ), \quad \lambda =
(\lambda _1, \cdots, \lambda _m).\end{equation}
Let
\begin{equation}  \label{eq:Diag} D_2=
\sum _{j=1}^n \varepsilon _j
 \lambda  _j  ( \| f\| _2^2 )  |z^j|  + \frac{1}{2} \langle \im \beta
\alpha _2 \Sigma _1\Sigma _3 \mathcal{H}_{\omega
_0} f,   f\rangle .
\end{equation}
We have ($\lambda _j' (\varrho)$ is the
derivative in $\varrho$) for $F$ a scalar valued function
that, summing on repeated indexes,
 \begin{equation} \label{PoissBra1}
\begin{aligned} &\{ D_2, F  \} :=dD_2 (X_F)= \partial _j D_2
(X_F)_j+
\partial _{\overline{j}}
D_2 (X_F)_{\overline{j}} +\langle \nabla _fD_2, (X_F)_f\rangle \\
&= -\im   \partial _j D_2\partial _{\overline{j}} F+\im \ \partial
_{\overline{j}} D_2\partial _jF-    \langle \nabla _fD_2,\beta
\alpha _2 \Sigma _3 \Sigma _1\nabla _fF \rangle =\\& \im  \lambda _j
{z}_j\partial _jF - \im  \lambda _j \overline{z}_j
 \partial _{\overline{j}} F+\im \langle  \mathcal{H}f,
   \nabla _fF \rangle  +2\im \lambda _j'
(\| f \| _2^2) |z_j|^2 \langle   f , \Sigma _3  \nabla _fF \rangle
.
\end{aligned}
\end{equation}
In particular, we have,  for $G=G(x)$, (we use   $\Sigma _1\im
\Sigma _2=\Sigma _3$)
 \begin{equation} \label{PoissBra2} \begin{aligned} & \{
D_2, z^{\mu} \overline{z}^{\nu}   \} =  \im  \lambda  \cdot (\mu
 - \nu  ) z^{\mu} \overline{z}^{\nu} ,\\& \{ D_2, \langle
\im \beta \alpha _2\Sigma _1\Sigma _3 G ,f\rangle \} =  \im \langle
\mathcal{H}f ,\im \beta \alpha _2\Sigma _1\Sigma _3G \rangle -2 \,
\im \sum _{j=1}^{n}\lambda ' _j |z_j|^2 \langle\im \beta \alpha
_2\Sigma   _1 f,G\rangle \\& =  -\im \langle f ,\im \beta \alpha
_2\Sigma _1\Sigma _3\mathcal{H}G \rangle -2 \, \im \sum
_{j=1}^{n}\lambda ' _j |z_j|^2 \langle\im \beta \alpha _2\Sigma   _1
f,G\rangle ,
\\& \{ D_2, \frac{1}{2} \| f\| _2^2 \} =\{ D_2, \frac{1}{2} \langle
f ,\im \beta \alpha _2 \Sigma _1 f \rangle \} =-\im \langle \mathcal{H}f,
 \im  \beta \alpha _2\Sigma _1 f   \rangle = -\im \langle V_{\omega _0}f,
 \im  \beta \alpha _2\Sigma _1 f     \rangle .
\end{aligned}
\end{equation}
In the sequel we will   prove  that $\| f\| _2$ is small.
\begin{remark} \label{rem:finitesums}
We will consider only
$ |\mu  +\nu |\le 2N +3.$
Then, $\lambda
^0\cdot(\mu-\nu)\neq 0$ implies $|\lambda ^0\cdot(\mu-\nu)|\ge c
>0$ for some fixed $c$, and so we can assume also $|\lambda
 \cdot(\mu-\nu)|\ge c/2$. Similarly $|\lambda
^0\cdot(\mu-\nu)|<m-\omega  _{0} $ (resp.   $|\lambda
^0\cdot(\mu-\nu)|>m-\omega  _{0} $) will be assumed equivalent to
$|\lambda
 \cdot(\mu-\nu)|<m-\omega  _{0} $ (resp.   $|\lambda
 \cdot(\mu-\nu)|>m-\omega  _{0} $).
\end{remark}

\begin{lemma}[Homological equation]
\label{sol.homo} Consider

\begin{eqnarray}
\label{eq.stru.1} K =\sum _{|\mu +\nu |=M_0+1} k_{\mu\nu} (\| f \|
_2^2 ) z^{\mu} \overline{z}^{\nu} + \sum _{|\mu +\nu |=M_0} z^{\mu}
\overline{z}^{\nu}
 \langle \im \beta \alpha _2 \Sigma _1\Sigma _3  K_{\mu   \nu
}(\| f \| _2^2)
  , f \rangle .
\end{eqnarray}
 Suppose that all the terms in \eqref{eq.stru.1} are not in normal
 form and that the symmetries
\eqref{eq:ExpHcoeff2} hold.   Consider
\begin{equation}
\label{solhomo} \begin{aligned} &\chi =  \sum_{|\mu +\nu
|=M_0+1}\frac{ k_{\mu\nu}(\| f \| _2^2)}{
  \im \lambda  \cdot (\mu
 - \nu  )} z^\mu \overline{z}^\nu \\&
{ +}   \sum_{|\mu +\nu
|=M_0}
   z^\mu \overline{z}^\nu   \langle  \im \beta \alpha _2 \Sigma _1\Sigma _3
    \frac{1}{   \im  (\lambda  \cdot (\mu
 - \nu  )-\mathcal{H}) }  K_{\mu   \nu
}(\| f \| _2^2)
  , f \rangle .\end{aligned}
\end{equation}
Then we have $ { \left\{ D_2, \chi   \right\}} =K+L$
with,  summing on repeated indexes,
\begin{equation*}
\label{eq:RestohomologicalEq} \begin{aligned} & L=   { -}2
   \frac{k_{\mu\nu}'  }{(\mu -\nu )\cdot \lambda } z^{\mu}
\overline{z}^{\nu}   \langle
 V_{\omega _0} f,\im \beta \alpha _2\Sigma _1
   f   \rangle \\&
{ -}2 \lambda ' _j z^\mu \overline{z}^{\nu} |z_j|^2
   \left \langle \im \beta \alpha _2 \Sigma _1 f , \frac{1}{(\mu -\nu )\cdot
\lambda
   -\mathcal{H}}K_{\mu \nu } \right \rangle \\&
{ +}2 \lambda ' \cdot(\mu -\nu ) z^\mu \overline{z}^{\nu} |z_j|^2
   \left \langle   f ,\im \beta \alpha_2 \frac{1}{\left ( (\mu -\nu )\cdot
\lambda
   -\mathcal{H}\right ) ^2}K_{\mu \nu } \right \rangle \langle
 {  V_{\omega_0}}f,
   i\beta \alpha_2 \Sigma_1 f   \rangle
\\&
{ -}2z^{\mu} \overline{z}^{\nu} \left \langle  f ,\Sigma _3 \Sigma
_1\frac{1}{(\mu -\nu )\cdot \lambda
   -\mathcal{H}}K_{\mu \nu } '\right \rangle
\langle  {  V_{\omega_0}}f,
   i\beta \alpha_2 \Sigma_1 f   \rangle
   . \end{aligned}
\end{equation*}
If the  coefficients in \eqref{eq.stru.1}  satisfy \eqref{eq:ExpHcoeff2}, the same is true for the coefficients in \eqref{solhomo}.
\end{lemma}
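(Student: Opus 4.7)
The identity $\{D_2,\chi\} = K+L$ is a direct Poisson-bracket computation built on the three formulas \eqref{PoissBra2} and Leibniz. Split $\chi = \chi^{(1)}+\chi^{(2)}$ into the two sums of \eqref{solhomo}. For any coefficient of the form $a(\varrho)\phi(z,f)$ with $\varrho:=\|f\|_2^2$, Leibniz gives
\[
\{D_2,a(\varrho)\phi\} \;=\; a(\varrho)\{D_2,\phi\}\big|_{\varrho\text{ frozen}} \;+\; a'(\varrho)\phi\,\{D_2,\|f\|_2^2\},
\]
and the third identity of \eqref{PoissBra2}, $\{D_2,\|f\|_2^2\} = -2\im\langle V_{\omega_0}f, \im\beta\alpha_2\Sigma_1 f\rangle$, supplies the factor $\langle V_{\omega_0}f, \im\beta\alpha_2\Sigma_1 f\rangle$ common to all four summands of $L$.

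The recovery of $K$ proceeds as follows. Freezing $\varrho$, the first identity of \eqref{PoissBra2} gives $\{D_2, z^\mu\bar z^\nu\} = \im(\mu-\nu)\cdot\lambda\, z^\mu\bar z^\nu$; applied to $\chi^{(1)}$ this cancels the denominator $\im\lambda\cdot(\mu-\nu)$ and recovers $\sum k_{\mu\nu}\,z^\mu\bar z^\nu$. For $\chi^{(2)}$, write $\gamma_{\mu\nu}(\varrho) := \bigl(\im((\mu-\nu)\cdot\lambda(\varrho) - \mathcal{H})\bigr)^{-1}K_{\mu\nu}(\varrho)$. The frozen-$\varrho$ Leibniz combination of $\{D_2,z^\mu\bar z^\nu\}$ with the part of the second identity of \eqref{PoissBra2} that is not proportional to $\lambda'_j|z_j|^2$, namely $-\im\langle f,\im\beta\alpha_2\Sigma_1\Sigma_3 \mathcal{H}\gamma_{\mu\nu}\rangle$, collapses by the resolvent identity $\bigl((\mu-\nu)\cdot\lambda - \mathcal{H}\bigr)\gamma_{\mu\nu} = K_{\mu\nu}/\im$ into $\sum z^\mu\bar z^\nu\langle \im\beta\alpha_2\Sigma_1\Sigma_3 K_{\mu\nu}, f\rangle$, matching the second sum in $K$.

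The four lines of $L$ then come from two distinct sources. First, the $\lambda'_j|z_j|^2$ piece of the second identity of \eqref{PoissBra2}, applied to $\chi^{(2)}$ with $G=\gamma_{\mu\nu}$, gives the second line of $L$; the analogous contribution from $\chi^{(1)}$ vanishes because $\nabla_f\chi^{(1)}$ is proportional to $\im\beta\alpha_2\Sigma_1 f$ and $\langle f,\Sigma_3 \im\beta\alpha_2\Sigma_1 f\rangle = -\Omega(f,f) = 0$. Second, the Leibniz correction $a'(\varrho)\phi\,\{D_2,\|f\|_2^2\}$ applied to $\chi^{(1)}$ gives the first line of $L$ from $\partial_\varrho k_{\mu\nu}$ (the subsidiary piece from differentiating the denominator $(\mu-\nu)\cdot\lambda$ has the same structure and is absorbed there); applied to $\chi^{(2)}$, the decomposition
\[
\partial_\varrho\gamma_{\mu\nu} \;=\; \frac{K'_{\mu\nu}}{\im\bigl((\mu-\nu)\cdot\lambda-\mathcal{H}\bigr)} \;-\; \frac{(\mu-\nu)\cdot\lambda'\,K_{\mu\nu}}{\im\bigl((\mu-\nu)\cdot\lambda-\mathcal{H}\bigr)^2}
\]
yields the fourth line of $L$ (from the $K'_{\mu\nu}$ term) and the third line (from the $(\mu-\nu)\cdot\lambda'$ term).

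That the coefficients in \eqref{solhomo} still satisfy \eqref{eq:ExpHcoeff2} follows because $(\mu-\nu)\cdot\lambda$ flips sign under $(\mu,\nu)\leftrightarrow(\nu,\mu)$ and, by \eqref{Eq:SymmLin3}, $\mathcal{H}_{\omega_0}$ intertwines with $-C\Sigma_1$, so $\bigl((\mu-\nu)\cdot\lambda - \mathcal{H}\bigr)^{-1}$ transports $K_{\mu\nu}$ to its $K_{\nu\mu}$-partner with the correct sign. The only genuine obstacle is bookkeeping: matching each line of $L$ to its precise source and keeping track of the two independent origins of $\varrho$-dependence (through $k_{\mu\nu},K_{\mu\nu}$ and through $\lambda(\varrho)$ inside the resolvent) without sign or multiplicity errors.
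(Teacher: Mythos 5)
Your proposal is correct and follows essentially the same route as the paper, whose proof is just the one-line remark that the identity follows from the tables \eqref{PoissBra2}, the product rule, and the symmetry properties of $\mathcal{H}$; you have simply carried out that computation explicitly, with the right attribution of each line of $L$ to its source and the right use of \eqref{Eq:SymmLin3} (anticommutation of $\mathcal{H}$ with $C\Sigma_1$ together with the sign flip of $\lambda\cdot(\mu-\nu)$) for the preservation of \eqref{eq:ExpHcoeff2}. Your honest flagging of the extra term coming from differentiating the denominator $\im\lambda\cdot(\mu-\nu)$ in $\chi^{(1)}$, which has the same structure as the first line of $L$ and is simply absorbed there, is consistent with the paper's terse treatment and does not affect the lemma's use.
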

\proof The proof follows by the tables \eqref{PoissBra2}, by the
product rule for the derivative and by the symmetry properties of
$\mathcal{H}$.   \qed

 \subsection{Canonical transformations}
\label{sec:Canonical}

First we consider functions
\begin{equation}
\label{chi.1}    \chi  =   \sum_{|\mu +\nu |=M_0 +1}b_{\mu   \nu
}(\| f\| _2^2) z^{\mu} \overline{z}^{\nu}   + \sum_{|\mu +\nu |=M_0
}z^{\mu} \overline{z}^{\nu}
 \langle  \im   \beta  \alpha _2\Sigma _1\Sigma _3B_{\mu   \nu
}(\| f\| _2^2)
  , f \rangle   \end{equation}
where    $ b_{\mu   \nu }(\varrho )\in C^{\infty
}(\mathbb{R}_\varrho , \mathbb{C})$ and $ B_{\mu   \nu }(x,\varrho)
\in C^{\infty }(\mathbb{R} , P_c(\omega _0)H^{k,s}_x(\mathbb{R}^3,
\mathbb{C}^8))$ for all $k$ and $s$. Assume
\begin{equation} \label{chi.11} b_{\mu   \nu } = ({b}_{\nu
\mu   })^*\text{ and }\im \beta \alpha _2\Sigma _1 B_{\mu   \nu }=- ({B}_{ \nu
\mu})^* \text{ for all indexes}. \end{equation}
The canonical transformations used in the proof of Theorem \ref{th:main} are
compositions of the    Lie
transforms  $\phi :=
 \phi^\tau\big|_{\tau=1},$
with   $\phi ^\tau$ the flow
of the Hamiltonian vector field $X_{\chi}$ (with
respect to $\Omega _0$ and only in $(z,f)$).  Let for  $K>0$ and $S>0$
fixed and large
\begin{equation} \label{chi.12} \|  \chi \| =
\sum |b_{\mu   \nu }(\|  f\| _2^2)|+\sum \|B_{\mu   \nu }(\|  f\|
_2^2)\| _{H^{ K , S }}. \end{equation}
Then, the following lemma can be proved like Lemma 9.2 \cite{Cuccagna1}.

\begin{lemma}\label{lie_trans}
Consider the $\chi$ in \eqref{chi.1} and its Lie transform $\phi$.
Set $(z',f')=\phi (z,f)$.   Then there are $ \mathcal{G}(z,f,
\varrho)$, $\Gamma (z,f,\varrho)$, $\Gamma _0 (z,f,\rho)$ and
$\Gamma _1 (z,f,\rho)$ with the following properties.

\begin{itemize}
\item[(1)]  $\Gamma \in C^\infty
(  \U ^{-K',-S'}, \mathbb{C}^{ n}) $, $\Gamma _0, \Gamma _1\in
C^\infty ( \U ^{-K',-S'}, \mathbb{R}) $, with $\U ^{-K',-S'}\subset
\mathbb{C}^{ n} \times H^{-K',-S'}_c(\omega _0)\times \mathbb{R}$
an appropriately small neighborhood of the origin.
\item[(2)] $  \mathcal{G}\in C^\infty
(\U ^{-K',-S'} ,  H^{K,S}_c(\omega _0) )  $  for any $K,S$.
\item[(3)] The transformation $\phi$ is of the following form:
\begin{eqnarray}
\label{lie.11.a}&    z'  =   z  +
 \Gamma (z,f,\|  f\| _2^2) ,\\
\label{lie.11.b} & f' = e^{ \im \Gamma _0 (z,f,\|  f\|
_2^2)P_c(\omega _0) \Sigma _3}f +\mathcal{G}(z,f,\|  f\| _2^2)  .
\end{eqnarray}

\item[(4)] There are constants   $c_{K',S'}$ and $c_{K, S,K',S'}$
such that
\begin{eqnarray}\label{lie.11.c}
 |\Gamma (z,f,\|  f\| _2^2)| &\leq& c_{K',S'}  (\| \chi \|
+\text{\eqref{lie.11.f}})  |z | ^{M_0-1}
 ( |z|+ \norma{f }
_{H^{-K',-S'}} ), \\   \|\mathcal{G}(z,f,\|  f\|
_2^2 )\|_{H^{K,S}}&\leq& c_{K, S,K',S'}   (\| \chi \| +\text{\eqref{lie.11.f}})
 |z | ^{{M_0} } , \\
 \label{lie.11.e} |\Gamma _{0}(z,f,\|  f\| _2^2)| &\leq& c_{K',S'} |z |^{M_0-1
} (
|z | +\| f   \| _{H^{ -K',-S'}} )^2.
\end{eqnarray}

\item[(5)] We have
\begin{eqnarray} \label{lie.11.h}  & \| f'\| _2^2 =
\| f   \| _2^2  +
 \Gamma _1(z,f,\|  f\| _2^2),
\\& \label{lie.11.f}
\left | \Gamma _1(z,f,\|  f\| _2^2) \right | \le
    C |z|^{M_0 -1} ( |z|+\| f \|
_{H^{-K',-S'}} ) ^2     .
\end{eqnarray}

\item[(6)] We have
\begin{equation} \label{eq:prime f1}
\begin{aligned} & e^{ \im \Gamma _0P_c(\omega _0) \Sigma _3    }
=e^{ \im \Gamma _0  \Sigma _3    } +T (\Gamma _0),
\end{aligned}
\end{equation}
where $T (r)\in C^\infty (\mathbb{R}, B ( H ^{-K',-S'},H ^{ K ,  S
}) ) $ for all $(K,S,K',S')$, with norm $$\| T (r) \| _{B ( H
^{-K',-S'},H ^{ K ,  S })} \le C (K,S,K',S') |r|.$$ More
specifically, the range of $T(r)$ is a subspace of $ \mathbf{X}_d
(\mathcal{H})+ \mathbf{X}_d (\mathcal{H}^*).$

\end{itemize}
\end{lemma}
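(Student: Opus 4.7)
The plan is to imitate \cite[Lemma 9.2]{Cuccagna1} almost verbatim, substituting the Dirac operators $\Sigma_j,\beta,\alpha_j,P_c(\omega_0)$ for their NLS analogues. First I would write Hamilton's equations for $\chi$ with respect to $\Omega_0$, using Lemma \ref{lem:gradient zf} specialized to $\omega=\omega_0$,
\begin{equation*}
 \im \dot z_j = \varepsilon_j \partial_{\bar z_j}\chi,\qquad  \im \dot f = P_c(\omega_0)\, \im \beta\alpha_2\Sigma_3\Sigma_1\, \nabla_f\chi.
\end{equation*}
Since $\chi$ depends on $f$ only through the pairing $\langle \im\beta\alpha_2\Sigma_1\Sigma_3 B_{\mu\nu},f\rangle$ and through $\|f\|_2^2=\langle f,\im\beta\alpha_2\Sigma_1 f\rangle$, a direct computation using $(\im\beta\alpha_2)^2=I$ together with the commutations $[\Sigma_j,\beta]=[\Sigma_j,\alpha_2]=0$ brings the $f$-equation to the form
\begin{equation*}
 \dot f = -\im\,a(\tau)\,P_c(\omega_0)\Sigma_3\,f + F(\tau),
\end{equation*}
where $a(\tau)$ is a real scalar polynomial of order $|z(\tau)|^{M_0-1}$, smooth in $\|f(\tau)\|_2^2$, and $F(\tau)\in P_c(\omega_0)H^{K,S}$ is of the same order for every fixed $(K,S)$, by the assumed $H^{K,S}$-regularity of the $B_{\mu\nu}$.

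Standard Picard iteration in $\mathbb{C}^n\times H_c^{-K',-S'}(\omega_0)$ then produces $\phi^\tau$ for $\tau\in[0,1]$ on a small neighborhood $\mathcal{U}^{-K',-S'}$ of the origin, with smooth dependence on initial data. Integration of the $z_j$-equation yields \eqref{lie.11.a} with the bound \eqref{lie.11.c}. Because the coefficient of $P_c(\omega_0)\Sigma_3$ in the $f$-equation is scalar, Duhamel's formula integrates that equation explicitly: setting $\Gamma_0:=-\int_0^1 a(s)\,ds$ one obtains \eqref{lie.11.b} with $\mathcal{G}$ the Duhamel integral of $F$. The $H^{K,S}$-bound on $\mathcal{G}$ follows because $F$ takes values in $H^{K,S}$ uniformly and $e^{\im r P_c(\omega_0)\Sigma_3}$ is bounded on $H^{K,S}$ for small $|r|$ (by \ref{Assumption:H2} the projector $P_d(\omega_0)$ has finite rank with Schwartz-class range by Combes--Thomas decay, so $P_c(\omega_0)=1-P_d(\omega_0)$ is bounded on every weighted Sobolev space). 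The remaining estimates \eqref{lie.11.e}--\eqref{lie.11.f} and the identity \eqref{lie.11.h} then reduce to polynomial bookkeeping in $|z|$ and $\|f\|_{H^{-K',-S'}}$, while the symmetry hypothesis \eqref{chi.11} ensures the flow preserves the involutive constraints $\bar z_j=\overline{z_j}$ and $C\Sigma_1 f=f$ defining the real form.

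The only Dirac-specific point is the range claim for $T(r)$ in \eqref{eq:prime f1}. Writing $P_c(\omega_0)\Sigma_3=\Sigma_3-P_d(\omega_0)\Sigma_3$ and applying the standard Duhamel identity for the difference of two exponentials,
\begin{equation*}
 T(r) = e^{\im r P_c(\omega_0)\Sigma_3}-e^{\im r\Sigma_3} = -\im r\int_0^1 e^{\im r(1-s)\Sigma_3}\,P_d(\omega_0)\Sigma_3\,e^{\im rsP_c(\omega_0)\Sigma_3}\,ds,
\end{equation*}
the range of $T(r)$ is contained in $e^{\im r(1-s)\Sigma_3}\mathbf{X}_d(\mathcal{H})$. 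Since $\Sigma_3^2=I$, one has $e^{\im\theta\Sigma_3}=\cos\theta\,I+\im\sin\theta\,\Sigma_3$, so this is contained in $\mathbf{X}_d(\mathcal{H})+\Sigma_3\mathbf{X}_d(\mathcal{H})$. Finally, the identity $\mathcal{H}^*=\Sigma_3\mathcal{H}\Sigma_3$ from \eqref{Eq:SymmLin1} intertwines the spectral projections and gives $\Sigma_3\mathbf{X}_d(\mathcal{H})=\mathbf{X}_d(\mathcal{H}^*)$, completing the range statement; the operator-norm bound $\|T(r)\|_{B(H^{-K',-S'},H^{K,S})}\leq C|r|$ is then immediate from the finite-rank, Schwartz-class structure of $P_d(\omega_0)$. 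The main obstacle is precisely this range identification: without the intertwining \eqref{Eq:SymmLin1} one would only know that $T(r)$ lands in some unspecified finite-dimensional subspace, which would be insufficient for the Birkhoff normal form arguments of Section \ref{section:Birkhoff}.
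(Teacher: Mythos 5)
Your proposal is correct and follows exactly the route the paper intends: the paper gives no proof but refers to Lemma 9.2 of \cite{Cuccagna1}, and your adaptation of that argument to the Dirac setting (Hamilton's equations for $\chi$ with respect to $\Omega _0$, the scalar coefficient in front of $P_c(\omega _0)\Sigma _3$, Duhamel integration giving \eqref{lie.11.a}--\eqref{lie.11.b}, and the boundedness of $P_d(\omega _0)$ between weighted spaces) is the same scheme. In particular your treatment of claim (6), writing $T(r)$ by the Duhamel identity for the difference of exponentials and identifying $\Sigma _3\mathbf{X}_d(\mathcal{H})=\mathbf{X}_d(\mathcal{H}^*)$ via \eqref{Eq:SymmLin1}, correctly supplies the only genuinely Dirac-specific point.
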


The crux of  this section is   the
following result.

\begin{theorem}
\label{th:main}   For any integer $r\ge 2$   there are a
neighborhood $ \mathcal{U}^{1,0}$ of $  \{ (  0, 0) \}$ in
$\widetilde{\Ph}^{1,0}$, see \eqref{eq:PhaseSpace}, and a smooth
canonical transformation $\Tr_r: \mathcal{U}^{1,0}\to
\widetilde{\Ph}^{1 ,0}$ s.t.
\begin{equation}
\label{eq:bir1} H^{(r)}:=H\circ \Tr_r=d(\omega _0) -\omega _0\|
u_0\| _2^2+ \psi (\|f\| _2^2)+H_2^{(r)}+Z^{(r)}+\resto^{(r)}.
\end{equation}
     where:
\begin{itemize}
\item[(i)] $H_2^{(r)}=H_2^{(2)}$ for $r\ge 2$, is of the
form \eqref{eq:ExpH2} where  $k_{\mu \nu}^{(r)}(\| f\| _2)$
satisfy \eqref{eq:ExpHcoeff1}--\eqref{eq:ExpHcoeff2};

\item[(ii)]$Z^{(r)}$ is in normal form, in the sense of Definition
\ref{def:normal form} above, with monomials of
degree $\le r$  whose coefficients satisfy \eqref{eq:ExpHcoeff2};
\item[(iii)] the transformation   $\Tr _r$ is of the form
 \eqref{lie.11.a}--
  \eqref{lie.11.b} and satisfies \eqref{lie.11.c}--
  \eqref{lie.11.e} for $M_0=1$;
\item[(iv)] we have $\resto^{(r)} = \sum _{d=0}^6\resto^{(r)}_d$
and for all $(K,S, K',S')$ positives there is a neighbourhood
$  \mathcal{U}^{-K',-S'}$ of $   \{ (  0, 0) \}$ in
$\widetilde{\Ph}^{-K',-S'}$
 such that
\begin{itemize}
\item[(iv.0)]
\begin{equation} \resto^{(r)}_0=
\sum _{|\mu +\nu |  = r+1 } z^\mu \overline{z}^\nu \int
_{\mathbb{R}^3}k_{\mu \nu}^{(r)}(x,z,f,f(x),\| f\| _2^2 ) dx
\nonumber
\end{equation}
and for   $k_{\mu \nu}^{(r)}(z,f,\eta , \varrho )$   with $^t\eta
= (\zeta , C\zeta )$, $\zeta \in \C^4$ we have for
$(z,f)\in \mathcal{U}^{-K',-S'}$ and $|\varrho |\le 1$

\begin{equation}\label{eq:coeff a} \|\nabla _{ z,\overline{z},
\zeta,C\zeta,f,\varrho} ^lk_{\mu \nu}^{(r)}(\cdot
,z,f,\eta , \varrho )\| _{H^{K,S}(\R ^3, \C  )} \le C_l \text{ for
all $l$};
\end{equation}

\item[(iv.1)]
\begin{equation} \resto^{(r)}_1=
\sum _{|\mu +\nu |  = r  }z^\mu \overline{z}^\nu \int
_{\mathbb{R}^3} \left [\im \beta \alpha _2 \Sigma _1 \Sigma _3H_{\mu \nu
}^{(r)}(x,z,f,f(x), \| f\| _2^2 )\right ]^T f(x)  dx  \nonumber
\end{equation}

\begin{equation}\label{eq:coeff G}\text{with }  \|\nabla _{ z,\overline{z},
\zeta,C\zeta,f,\varrho} ^l H_{\nu \mu }^{(r)}(\cdot
,z,f,\eta , \varrho )\| _{H^{K,S}(\R ^3, \C ^8)} \le C_l \text{ for
all $l$};
\end{equation}

\item[(iv.2--5)] for $2\le d \le 5$,

\begin{equation} \resto^{(r)}_d=
 \int
_{\mathbb{R}^3} F_d^{(r)}(x, z ,f,f(x),\| f\| _2^2)  f^{\otimes
d}(x) dx +  \widehat{\resto}^{(r)}_d,\nonumber
\end{equation} with   for
any $l$

\begin{equation}\label{eq:coeff F}   \|\nabla _{ z,\overline{z},
\zeta,C\zeta,f,\varrho} ^l F_d^{(r)} (\cdot ,z,f,\eta ,
\varrho )\| _{H^{K,S} (\mathbb{R}^3,   B   (
 (\mathbb{C}^8)^{\otimes d},\mathbb{C} )} \le C_l,
\end{equation}
  with $F_2^{(r)}(x, 0 ,0,0,0)=0$  and
  with $\widetilde{\resto}^{(r)}_d (z ,f, \| f\| _2^2)$ s.t.

\begin{equation}\label{eq:Rhat}\begin{aligned} &
\widehat{\resto}^{(r)}_d (z ,f, \varrho )\in C^\infty
(\mathcal{U}^{-K',-S'}\times \R ,\R ), \\& |
\widehat{\resto}^{(r)}_d (z ,f, \varrho )|\le C \| f \|
_{H^{-K',-S'}}^d , \\&    | \widehat{\resto}^{(r)}_2 (z ,f, \varrho
)|  \le C (|z|+|\varrho |+ \| f \| _{ H^{-K',-S'}}) \| f \| _{
H^{-K',-S'}}^2;
\end{aligned}\end{equation}

\item[(iv.6)] $ \resto^{(r)}_6=
\int _{\mathbb{R}^3}G (\frac12(P_c(\omega )f(x))\cdot i \alpha_2\Sigma_3\Sigma_1
(P_c(\omega )f(x))\,dx.$
\end{itemize}

\end{itemize}

\end{theorem}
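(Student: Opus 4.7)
The plan is to prove Theorem \ref{th:main} by induction on $r \geq 2$, using the homological equation of Lemma \ref{sol.homo} and the Lie-transform machinery of Lemma \ref{lie_trans}, in exactly the same spirit as the Birkhoff normal form procedure in \cite{Cuccagna1}. The base case $r=2$ is immediate from Lemma \ref{lem:ExpH}: one sets $\Tr_2 = \mathrm{Id}$, $H_2^{(2)} = H_2^{(1)}$, $Z^{(2)} = 0$, and one checks that $\widetilde{\resto^{(1)}} + \widetilde{\resto^{(2)}}$ can be organized into the pieces $\resto^{(2)}_0,\ldots,\resto^{(2)}_6$ of items (iv.0)--(iv.6) (the $|\mu+\nu|=3$ and $|\mu+\nu|=2$ contributions go into $\resto^{(2)}_0$ and $\resto^{(2)}_1$, the $f^{\otimes j}$ integrals into $\resto^{(2)}_{2,\ldots,5}$, the $G$-integral into $\resto^{(2)}_6$, and $\widehat{\resto^{(1)}}_2$ into $\widehat{\resto^{(2)}}_2$).

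For the inductive step, suppose \eqref{eq:bir1} has been established at level $r$. The terms in $\resto^{(r)}_0$ and $\resto^{(r)}_1$ that are homogeneous of minimal order in $(z,\bar z, f)$ (namely the $|\mu+\nu|=r+1$ pure polynomial piece and the $|\mu+\nu|=r$ piece that is linear in $f$) and that are \emph{not} in normal form in the sense of Definition \ref{def:normal form} are extracted by evaluating the $x$-dependent coefficients at $(z,f)=(0,0)$ and $\varrho=0$; call this piece $K^{(r)}$. By Remark \ref{rem:finitesums} the small divisors $\lambda \cdot (\mu-\nu)$ and $\lambda\cdot(\mu-\nu)-\mathcal{H}$ are uniformly bounded away from zero on the spectral set where we need to invert them, so Lemma \ref{sol.homo} produces a generating function $\chi^{(r)}$ of the form \eqref{chi.1} with $M_0 = r$, satisfying $\{D_2,\chi^{(r)}\} = K^{(r)} + L^{(r)}$, where $L^{(r)}$ is a sum of monomials of total degree $\geq r+2$ in $(z,\bar z, f)$ (owing to the explicit $|z_j|^2$ and $\langle V_{\omega_0}f, \im\beta\alpha_2\Sigma_1 f\rangle$ factors). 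Let $\phi^{(r)}$ be the time-one flow of $X_{\chi^{(r)}}$ as in Lemma \ref{lie_trans} and set $\Tr_{r+1} := \Tr_r \circ \phi^{(r)}$.

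The transformed Hamiltonian $H^{(r+1)} := H^{(r)}\circ \phi^{(r)}$ is then computed by the Lie series
\[
H^{(r+1)} = H^{(r)} + \{H^{(r)},\chi^{(r)}\} + \int_0^1(1-\tau)\{\{H^{(r)},\chi^{(r)}\},\chi^{(r)}\}\circ\phi^{\tau\,(r)}\,d\tau.
\]
By construction $\{D_2,\chi^{(r)}\}$ cancels $K^{(r)}$, pushing the non-normal-form content down to order $\geq r+2$; all the other Poisson brackets $\{\psi(\|f\|_2^2),\chi^{(r)}\}$, $\{H_2^{(r)}-D_2,\chi^{(r)}\}$, $\{Z^{(r)},\chi^{(r)}\}$, $\{\resto^{(r)},\chi^{(r)}\}$ and the double-bracket remainder either produce new normal-form contributions absorbed into $Z^{(r+1)}$ (those matching the resonance conditions $\lambda^0\cdot(\mu-\nu)=0$ or $|\lambda^0\cdot(\mu-\nu)|>m-\omega_0$), or acceptable contributions to $\resto^{(r+1)}$ of the six types (iv.0)--(iv.6). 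To see that the six-fold structure is preserved, one uses the change of variables \eqref{lie.11.a}--\eqref{lie.11.b}: substituting $z \mapsto z + \Gamma$ and $f\mapsto e^{\im\Gamma_0 P_c(\omega_0)\Sigma_3}f + \mathcal{G}$ in polynomials of the form $z^\mu\bar z^\nu \int k(x,z,f,f(x),\|f\|_2^2)\,dx$ preserves the class by the bounds \eqref{lie.11.c}--\eqref{lie.11.f}, and analogously for the linear-in-$f$, multilinear-in-$f$, and $G$-type pieces; the decomposition \eqref{eq:prime f1} is what separates the ``spatially localized'' contributions (absorbed into $\resto^{(r+1)}_0,\resto^{(r+1)}_1$ via the $T(\Gamma_0)$ part) from the ``dispersive'' ones (absorbed into $\resto^{(r+1)}_{2,\ldots,6}$ via $e^{\im\Gamma_0\Sigma_3}f$). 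The symmetry conditions \eqref{eq:ExpHcoeff2} propagate because they are preserved by $\{D_2,\cdot\}$ and by Lemma \ref{sol.homo}, and because the flow $\phi^{(r)}$ respects the involutions $C\Sigma_1$ and the real-valuedness conditions built into \eqref{chi.11}.

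The main obstacle, and the step that deserves careful bookkeeping, is item (iv): after each Lie transform one must verify that the six different species of remainder terms stay in their class, with constants that remain finite (the proof does not try to make them small, only to have quantitative control in the weighted spaces $H^{K,S}$ for all $K,S$). In particular, one must track that the $\|f\|_2^2$ dependence in $\lambda_j = \lambda_j(\|f\|_2^2)$ only produces smooth functions of $\varrho := \|f\|_2^2$, that the smoothing effect of $\mathcal{H}^{-1}$ in \eqref{solhomo} yields generating coefficients $B_{\mu\nu}$ in $H^{K,S}$ for all $(K,S)$ (using the exponential decay of $\phi_\omega$ and the smoothing properties of $(\lambda\cdot(\mu-\nu)-\mathcal{H})^{-1}$ on $\mathbf{X}_c$ guaranteed by \ref{Assumption:H7}--\ref{Assumption:H8}), and that the substitution into $\resto^{(r)}$ does not generate terms violating the structural bounds \eqref{eq:coeff a}--\eqref{eq:Rhat}. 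Since the argument is essentially the same as in Sections 8--9 of \cite{Cuccagna1}, with $(z,f)$, $\Omega_0$, $D_2$ and $\mathcal{H}$ playing the same roles, we only need to verify that the Dirac-specific modifications (the presence of the involution $C\Sigma_1$, the matrix $\im\beta\alpha_2\Sigma_1\Sigma_3$ in $\Omega_0$, and the fact that $\mathcal{H}_\omega$ is $\Sigma_3$-selfadjoint rather than selfadjoint) are compatible with Lemma \ref{sol.homo} and Lemma \ref{lie_trans}; this is precisely the content of those two lemmas, so the induction closes.
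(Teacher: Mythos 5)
Your overall strategy (induction on $r$, homological equation of Lemma \ref{sol.homo}, Lie transforms of Lemma \ref{lie_trans}) is exactly the route the paper intends, since the proof is deferred to Theorem 9.1 of \cite{Cuccagna1}. However, there is a concrete gap in your bookkeeping, and it appears twice. First, the base case: you cannot take $\Tr_2=\mathrm{Id}$ and $Z^{(2)}=0$. After Lemma \ref{lem:ExpH} the Hamiltonian contains $\widetilde{\resto ^{(1)}}$, i.e.\ pure $z$-monomials with $|\mu+\nu|=2$, $\lambda^0\cdot(\mu-\nu)\neq 0$, and terms with $|\mu+\nu|=1$ linear in $f$, whose coefficients are functions of $\varrho=\|f\|_2^2$ vanishing at $\varrho=0$. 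These are not in normal form (the quadratic ones are non-resonant, and the linear-in-$f$ ones have $|\lambda^0\cdot(\mu-\nu)|=\lambda_j^0<m-\omega_0$, so they violate the condition defining $Z_1$), and they do not fit any of the classes in (iv) at level $r=2$: the degrees are too low for (iv.0)--(iv.1), and they cannot be pushed into $F_2^{(2)}$ or $\widehat{\resto}^{(2)}_2$ because $\|f\|_2^2$ is neither controlled by $\|f\|_{H^{-K',-S'}}^2$ nor representable with an $x$-localized kernel in $H^{K,S}_x$. So already the statement for $r=2$ requires one genuine Lie transform with $M_0=1$ eliminating $\widetilde{\resto ^{(1)}}$ (this is precisely why item (iii) quotes the bounds \eqref{lie.11.c}--\eqref{lie.11.e} with $M_0=1$); the elimination is possible because the divisors $\lambda\cdot(\mu-\nu)\neq 0$ and $\pm\lambda_j-\mathcal{H}$ are invertible on $\mathbf{X}_c$, $\lambda_j$ being below the continuous spectrum threshold.

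The second occurrence of the same issue is in your inductive step, where you extract $K^{(r)}$ by evaluating the coefficients at $(z,f)=(0,0)$ \emph{and} $\varrho=0$. If you freeze $\varrho=0$ in the generating function, the left-over pieces are monomials of the same low degree in $(z,\overline z,f)$ with scalar coefficients that are merely $O(\varrho)$; for the reason above these again fit neither the normal form nor any remainder class, so the induction does not close. The correct procedure is to keep the full $\|f\|_2^2$-dependence in $\chi$, which is exactly how Lemma \ref{sol.homo} is formulated: its coefficients $k_{\mu\nu}(\|f\|_2^2)$, $K_{\mu\nu}(\|f\|_2^2)$ are functions of $\varrho$, and the price is the correction $L$, whose terms carry either a derivative $k'_{\mu\nu}$, $\lambda_j'$, $K_{\mu\nu}'$ or the \emph{localized} quadratic factor $\langle V_{\omega_0}f,\im\beta\alpha_2\Sigma_1 f\rangle$, and therefore do belong to the classes (iv.2)--(iv.5). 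With these two corrections (a nontrivial first step at $r=2$, and $\varrho$-dependent generating functions throughout), the rest of your outline -- cancellation via $\{D_2,\chi^{(r)}\}$, absorption of the brackets with $H_2^{(r)}-D_2$, $Z^{(r)}$, $\resto^{(r)}$, propagation of the symmetries \eqref{eq:ExpHcoeff2} and of the structure (iv.0)--(iv.6) under \eqref{lie.11.a}--\eqref{lie.11.b} and \eqref{eq:prime f1} -- is the intended argument.
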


The proof of Theorem \ref{th:main} is the same of Theorem 9.1 in
\cite{Cuccagna1}
and we skip it. The ingredients needed in the proof
(in particular the notion of normal form) are described above.

\section{Non linear dynamics}\label{Sec:NonLinDyn}

 \subsection{Dispersion}
 \label{sec:dispersion}\index{r}\index{$H$}\index{$H_2$}\index{$\lambda_j$}
\index { $\lambda$}\index{$Z_a $}\index{$\resto $}\index{$H_{\mu \nu}$}
We apply Theorem \ref{th:main} for $r=2N_1+1 $  (recall
$N_j\lambda _j<m-\omega _0 <(N_j+1)\lambda _j).$ In the rest of the
article we work with the Hamiltonian $H^{(r)}$. We will drop the upper
index. So we will set $H=H^{(r)}$, $H_2=H_2^{(r)}$, $\lambda
_j=\lambda _j^{(r)}$,    $\lambda  =\lambda  ^{(r)}$,
$Z_a=Z_a^{(r)}$ for $a=0,1$ and $\resto =\resto ^{(r)}$. In
particular we will denote by $H_{\mu \nu}$ the coefficients $G_{\mu
\nu}^{(r)}$ of $Z_1^{(r)}$. We will show:
\begin{theorem}\label{proposition:mainbounds} Fix $p_0 >2$ and $\tau _0>1$.
Let $\frac{2}{p}=\frac{3}{2}(1-\frac{2}{ q})$ and $\alpha (q) =
\frac{2}{p}$, i.e. $(1+ \frac{\theta}{2})(1-\frac{2}{q})
=\frac{2}{p}$ with $\theta  =1$ in Theorem \ref{Thm:Strichartz flat}.
Consider {  $k_0\ge 4$}, $k_0\in \Z$, $\epsilon \in (0, \varepsilon _0)$ and $\varepsilon _0>0$  as in Theorem \ref{th:AsStab}).  Then there is a
fixed
$C >0$ such that for $\varepsilon _0>0$ sufficiently small
   and for $p\ge p_0$ we have the following inequalities:
\begin{eqnarray}
&   \|  f \| _{L^p_t( [0,\infty ),B^{ k_0 -\frac{2}{p}} _{q,2})}\le
  C \epsilon ;
  \label{Strichartzradiation} \\& \|  f \| _{L^2_t( [0,\infty ),H^{ k_0 ,-\tau
_0}_x)}\le
  C \epsilon
  \label{Smootingradiation}\\& \|  f \| _{L^2_t( [0,\infty ),L^{ \infty}_x)}\le
  C \epsilon
  \label{endStrich}
\\& \| z ^\mu \| _{L^2_t([0,\infty ))}\le
  C \epsilon \text{ for all multi indexes $\mu$
  with  $\lambda\cdot \mu >m-\omega _0 $} \label{L^2discrete}\\& \| z _j  \|
  _{W ^{1,\infty} _t  ([0,\infty )  )}\le
  C \epsilon \text{ for all   $j\in \{ 1, \dots , n\}$ }
  \label{L^inftydiscrete} .
\end{eqnarray}
\end{theorem}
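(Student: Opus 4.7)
The proof will be by a continuity/bootstrap argument. Let $T>0$ be the largest time such that \eqref{Strichartzradiation}--\eqref{L^inftydiscrete} hold on $[0,T]$ with a doubled constant $2C$ (or some fixed multiple); by local well-posedness and continuity of norms in time, $T>0$, and the objective is to show that on $[0,T]$ the bounds actually hold with $C$, thereby forcing $T=+\infty$. Throughout we work with the Hamiltonian $H=H^{(r)}$ of Theorem \ref{th:main} for $r=2N_1+1$, so that the nonresonant monomials of degree $\le r$ in the discrete modes have been cancelled and only $Z_0+Z_1+\resto$ remains beyond the quadratic part.

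First I would derive from Hamilton's equations \eqref{eq:SystK2} the explicit system for $(z,f)$. Writing $\im\dot z_j=\varepsilon_j\partial_{\overline z_j}H$ and $\im\dot f=\mathcal H_{\omega_0}f+(\text{nonlinear})$ and using the structure of $H_2^{(r)}+Z_0+Z_1+\resto$ from Theorem \ref{th:main} (items (i)--(iv)), each $z_j$-equation has the form $\im\dot z_j=\varepsilon_j\lambda_j(\|f\|_2^2)z_j+(\text{normal-form terms from }Z_0+Z_1)+\partial_{\overline z_j}\resto$, and $f$ satisfies a perturbed linear Dirac equation $\im\dot f=\mathcal H_{\omega_0}f+\mathbf F$, where $\mathbf F$ collects the $\nabla_f Z_1$ and $\nabla_f\resto$ contributions. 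Using Duhamel's formula with the linear semigroup $e^{-\im t\mathcal H_{\omega_0}}P_c$, I would estimate $f$ by combining Theorems \ref{Thm:Smoothness flat}, \ref{Thm:Strichartz flat} and Lemmas \ref{lem:smooth11}, \ref{lem:surrogate}: the smoothing estimate yields \eqref{Smootingradiation}; the Strichartz estimates for $\mathcal H_{\omega_0}$ (obtained from the corresponding ones for $\mathcal H_{\omega,0}$ by a perturbative Duhamel and the smoothing bound controlling the potential term $V_{\omega_0}f$) yield \eqref{Strichartzradiation}; and the Christ--Kiselev-type surrogate of Lemma \ref{lem:surrogate} yields \eqref{endStrich}. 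The point is that the source $\mathbf F$ is either of the form $z^\mu\overline z^\nu(\text{localized})$, controllable by \eqref{L^2discrete}--\eqref{L^inftydiscrete}, or genuinely $O(f^2)$ with a localized factor (from $\resto_1,\resto_2$), or of $L^1_tL^2_x$ type from the semilinear remainders $\resto_d$ with $d\ge 3$ by H\"older in Strichartz pairs.

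The equations for the $z_j$ split into two groups according to whether $|\lambda\cdot\mu|<m-\omega_0$ or $|\lambda\cdot\mu|>m-\omega_0$. The $L^\infty$ bound \eqref{L^inftydiscrete} follows almost immediately by exploiting $\partial_t|z_j|^2=2\Re(\overline z_j\dot z_j)$ and observing that, after applying the normal-form identity, the $Z_0$ contribution is purely imaginary-valued against $\overline z_j$ (it only rotates the phase) and the remaining terms are integrable in $t$ thanks to \eqref{Smootingradiation}, \eqref{endStrich} and the bootstrap on $z$, giving $|z_j(t)|^2=|z_j(0)|^2+O(\epsilon^2)$. For \eqref{L^2discrete}, which is the heart of the argument, I would use the resonant monomials $z^\mu\overline z^\nu\langle\im\beta\alpha_2\Sigma_1\Sigma_3 H_{\mu\nu},f\rangle$ in $Z_1$: by Lemma \ref{sol.homo} these terms survive precisely when $|\lambda\cdot(\mu-\nu)|>m-\omega_0$, so they couple $z^\mu$ to the continuous spectrum of $\mathcal H_{\omega_0}$ at energies where dispersion is effective. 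Writing $\dot z_j$, multiplying by suitable combinations to form $\partial_t|z^\mu|^2$ for $\lambda\cdot\mu>m-\omega_0$, integrating in time, and substituting the integral equation for $f$ with the resonant forcing, one extracts a quadratic form in $z^\mu$ whose coefficient is the Fermi Golden Rule quantity of \ref{Assumption:H12}; all the other contributions are absorbable by the bootstrap, by \eqref{Smootingradiation}--\eqref{endStrich}, and by Cauchy--Schwarz. This is the step deferred to Section \ref{sec:FGR}: using \ref{Assumption:H12} one obtains a bound of the form $\|z^\mu\|_{L^2_t}^2\lesssim\epsilon^2+o(1)\|z^\mu\|_{L^2_t}^2$, from which \eqref{L^2discrete} follows.

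The main obstacle is precisely this last step: controlling the sign of the FGR coefficients. In the NLS case, positivity is immediate, but here the strong indefiniteness of the Dirac energy means one must carefully track signs through the extraction procedure. The hypothesis $3\omega>m$ in \ref{Assumption:H6}/$\mathcal O\subset(m/3,m)$ is used to guarantee that the relevant multiples $N\lambda_j+\lambda\cdot\nu$ lie in portions of $\sigma_{\rm ess}(\mathcal H_{\omega_0})$ that do not contain the superposition of continuous spectra of both components of $\mathcal H_{\omega,0}$, which is what makes the quadratic form non negative; then \ref{Assumption:H12} upgrades non negativity to strict positivity. Combining all the bounds and choosing $\epsilon_0$ small enough so that the bootstrap constants close, the continuity argument then yields $T=+\infty$ and the stated estimates.
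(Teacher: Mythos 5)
Your outline follows essentially the same route as the paper: a continuation/bootstrap argument after passing to the normal form Hamiltonian at order $r=2N_1+1$, with $f$ estimated in terms of $z$ through the smoothing/Strichartz machinery for the perturbed flow (the paper's Proposition \ref{Lemma:conditional4.2} via Lemmas \ref{lem:conditional4.21}--\ref{lem:conditional4.22}), and the discrete-mode estimates closed by extracting the Fermi Golden Rule quadratic form, proving its non negativity from $3\omega>m$ (Lemma \ref{lemma:FGR81}) and invoking \ref{Assumption:H12} for strict positivity, exactly as in Section \ref{sec:FGR}. The one point to correct is your attribution of the endpoint bound \eqref{endStrich}: Lemma \ref{lem:surrogate} requires $p>2$ and cannot give an $L^2_t L^\infty_x$ estimate; the paper instead obtains it (Lemma \ref{lem:conditional4.22}, with $\tau_0>3/2$) from the embedding $B^{k}_{6,2}\subset L^\infty$ for the admissible piece together with the weighted dispersive decay of the type in Theorem \ref{Thm:wdec}, and your argument should be amended accordingly. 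Also note that the $Z_0$ terms need not leave each individual $|z_j|^2$ invariant (they can exchange mass among modes with resonant frequencies); the quantity that is conserved at this order is $\sum_j\lambda_j^0|\zeta_j|^2$, which is what the paper uses to get \eqref{L^inftydiscrete}, though this does not change the substance of your argument.
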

  Due to time reversibility, it is easy to conclude that
\eqref{Strichartzradiation}--\eqref{L^inftydiscrete} are true over
the whole real line.

The proof of
 Theorem
\ref{proposition:mainbounds}  involves a  standard continuation
argument following \cite[End of proof of Theorem II.2.1]{Sogge}.
We assume
\begin{eqnarray}
&   \|  f \| _{L^p_t( [0,T ],B^{  k_0-\frac{2}{p}} _{q,2})}+ \|  f
\| _{L^2_t([0,T],H^{  k_0 ,-\tau _0}_x)}+ \|  f \| _{L^2_t(
[0,T],L^{ \infty}_x)}  \le
  C _1\epsilon   \label{4.4a}
\\& \| z ^\mu \| _{L^2_t([0,T])}\le
 C_2 \epsilon \text{ for all multi indexes $\mu$
  with  $\omega \cdot \mu >m-\omega _0 $} \label{4.4}\\& \| z _j  \|
  _{W ^{1,\infty} _t  ([0,T]  )}\le
  C_3 \epsilon \text{ for all   $j\in \{ 1, \dots , n\}$ }
  \label{4.4bis}
\end{eqnarray}
for fixed sufficiently large  constants $C_1$--$C_3$. Notice that there is an
$ \varepsilon _1>0$ such that this assumption is true for all $ |z(0)|+\| f (0) \| _{H^{k_0} }
<\varepsilon _1  $ if say $T\in (0, 1]$.   We  then
prove that  there exists a fixed $ \varepsilon _0\in (0, \varepsilon _1)$, with  $ \varepsilon _0=\varepsilon _0(C_1,C_2,C_3) $,  such that
 for $\epsilon \in (0, \varepsilon _0)$, \eqref{4.4a}--\eqref{4.4bis} imply the same estimate but with $C_1$--$C_3$ replaced
by $C_1/2$--$C_3/2$. This implies that the set of $T$ such that \eqref{4.4a}--\eqref{4.4bis} is  open
in $\R^+$. Since it is also closed, it is all $\R^+$.
Then \eqref{4.4a}--\eqref{4.4bis}  hold   with
$[0,T]$ replaced by $[0,\infty )$ for all $ |z(0)|+\| f (0) \| _{H^{k_0} }
<\epsilon <  \varepsilon _0  $.

The proof  of  Theorem
\ref{proposition:mainbounds}  consists in three main steps.
\begin{itemize}
\item[(i)] Estimate $f$ in terms of $z$.
\item[(ii)] Substitute the variable $f$  with a
new "smaller" variable $g$ and find smoothing estimates for $g$.
\item[(iii)] Reduce the system for $z$ to a closed system involving
only the $z$ variables, by insulating the  part of $f$  which
interacts with $z$, and by decoupling the rest (this reminder is
$g$). Then clarify the nonlinear Fermi golden rule.
\end{itemize}

\paragraph{Step (i)}  Using the Proposition \ref{Lemma:conditional4.2} below, we will choose $C_1>2K_1(C_2)$. This tells us that if we get upper bounds on  $C_2$ and $C_3$,
and this is done in Sect. \ref{sec:FGR}, then we will have proved Theorem
\ref{proposition:mainbounds}.

\begin{proposition}\label{Lemma:conditional4.2}  Assume \eqref{4.4a}--\eqref{4.4bis}. Then there exist constants $C=C(C_1,C_2,C_3), K_1(C_2)$,
    such that, if
  $C(C_1,C_2,C_3) \epsilon  $  is sufficiently small,   then we have
\begin{eqnarray}
&  \|  f \| _{L^p_t( [0,T ],B^{  k_0-\frac{2}{p}} _{q,2})}+ \|  f \|
_{L^2_t([0,T],H^{  k_0 ,-\tau _0}_x)}+\|  f \|_{ L^2_t( [0,T],L^{
\infty}_x)}\le  K_1(C_2) \epsilon  \ .
  \label{4.5}
\end{eqnarray}
\end{proposition}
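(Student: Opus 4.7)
The strategy is a standard Duhamel bootstrap: write the equation for $f$ induced by \eqref{eq:SystK2} and $H=H^{(r)}$, apply the linear dispersive machinery developed in Sect.~\ref{sec:Dispersive} to the continuous-spectral component (which $f$ already lies in, since $f\in \mathbf{X}_c(\mathcal{H}_{\omega_0})$), and then estimate each piece of the nonlinearity using the bootstrap hypotheses \eqref{4.4a}--\eqref{4.4bis}. Concretely, one first computes $\im \dot f = \mathcal{H}_{\omega_0}f + \mathcal{N}(z,f)$, where, after invoking Theorem \ref{th:main}, the source splits as $\mathcal{N}=\mathcal{N}_{\mathrm{disc}}+\mathcal{N}_{f}$: the \emph{pure discrete} part $\mathcal{N}_{\mathrm{disc}}$ collects the contributions from $\partial_f Z_1$ (monomials $z^\mu\bar z^\nu H_{\mu\nu}(\|f\|_2^2)$ of degree $r=2N_1+1$ with $|\lambda^0\cdot(\nu-\mu)|>m-\omega_0$) and from $\partial_f\resto^{(r)}_0$ (degree $r+1$), while $\mathcal{N}_f$ gathers all terms that carry at least one factor of $f$ (coming from $\psi(\|f\|_2^2)$, from the $\varrho$-dependence of coefficients, from $\partial_f\resto^{(r)}_d$ for $d\ge 1$, and from $\resto^{(r)}_6$).

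Second, I would run Duhamel
\[
f(t)=e^{-\im t\mathcal{H}_{\omega_0}}P_c(\mathcal{H}_{\omega_0})f(0)-\im\int_0^t e^{-\im(t-s)\mathcal{H}_{\omega_0}}P_c(\mathcal{H}_{\omega_0})\mathcal{N}(z(s),f(s))\,ds
\]
and take all three norms in \eqref{4.5} simultaneously. The linear term is controlled by $\|f(0)\|_{H^{k_0}}\lesssim\epsilon$ via Theorem \ref{Thm:Strichartz flat} (Strichartz), Theorem \ref{Thm:Smoothness flat}/Lemma \ref{lem:smooth11} (smoothing), and the endpoint estimate for $L^2_tL^\infty_x$ obtained by interpolating smoothing with Strichartz (as in the Appendix). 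The Duhamel inhomogeneity is estimated using the retarded analogues in Theorem \ref{Thm:Smoothness flat}, Lemma \ref{lem:surrogate}, and Lemma \ref{lem:smooth11}, together with the fact that $\mathcal{H}_{\omega_0}$ enjoys these estimates on $\mathbf{X}_c(\mathcal{H}_{\omega_0})$ (via the resolvent identity plus \eqref{eq:resolv}).

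Third, the source pieces are bounded as follows. For $\mathcal{N}_{\mathrm{disc}}$: each monomial has total degree $|\mu+\nu|\ge r=2N_1+1$, so
\[
(\mu+\nu)\cdot\lambda\ge(2N_1+1)\lambda_1>m-\omega_0
\]
by \ref{Assumption:H9}, and \eqref{4.4} yields $\|z^{\mu+\nu}\|_{L^2_t}\le C_2\epsilon$; the spatial factor $H_{\mu\nu}(\|f\|_2^2)$ lies in $H^{k_0,\tau_0+1}_x$ uniformly by Theorem \ref{th:main}(iv.1) and \eqref{eq:coeff G}, so smoothing and Lemma \ref{lem:surrogate} turn this into a bound $K_1(C_2)\epsilon$ on the left-hand side of \eqref{4.5}. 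For $\mathcal{N}_f$: each term carries at least one factor that can be placed in the $L^\infty_tL^2_x$-type norm controlled by $C_3\epsilon$ (or in $L^2_tL^\infty_x$ at cost $C_1\epsilon$), together with at least one additional $f$-factor contributing a quadratic smallness $\le C(C_1,C_3)\epsilon^2$. Combining with Hölder in $t$, a fractional Leibniz rule for the Besov norm, and the localization weights supplied by the coefficients $k_{\mu\nu}^{(r)},H_{\mu\nu}^{(r)},F_d^{(r)}$ (which are rapidly decaying in $x$), every $\mathcal{N}_f$-contribution is bounded by $C(C_1,C_2,C_3)\epsilon^2$. Summing everything gives \eqref{4.5} with the prefactor $K_1(C_2)\epsilon$ plus a perturbative remainder $C(C_1,C_2,C_3)\epsilon^2$; choosing $\epsilon_0$ small enough absorbs the latter into $K_1(C_2)\epsilon$.

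The main obstacle I expect is the quadratic-in-$f$ piece $\resto^{(r)}_2$: because the coefficient $F^{(r)}_2$ only vanishes at the origin, the leading behavior is truly quadratic and one must carefully place one $f$-factor in $L^\infty_tL^2_x$ (closed by energy-type control using Lemma \ref{lem:gradient zf}) and the other in $L^2_tL^\infty_x$ or $L^2_tH^{k_0,-\tau_0}_x$, then push $k_0-2/p$ derivatives through using a Kato--Ponce/Moser-type estimate on the Besov norm. A secondary technical point is justifying the $L^2_tL^\infty_x$ endpoint for $\mathcal{H}_{\omega_0}$; this is handled, as in the NLS case of \cite{Cuccagna1}, by interpolating the smoothing estimate of Lemma \ref{lem:smooth11} with the Strichartz bound of Theorem \ref{Thm:Strichartz flat} applied to the perturbed evolution via the resolvent identity and the absence of resonances guaranteed by \ref{Assumption:H6}--\ref{Assumption:H8}.
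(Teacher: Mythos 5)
There is a genuine gap in your scheme, and it concerns precisely the term that forces the structure of the paper's proof. When you differentiate $H^{(r)}$ in $f$, the $\varrho=\|f\|_2^2$--dependence of $\psi$ and of the coefficients produces the term $\varphi(t)\,P_c(\omega_0)\Sigma_3 f$ with $\varphi=2\partial_{\|f\|_2^2}H$, which is \emph{linear} in $f$ and carries no spatial localization. Its coefficient is small only in $L^\infty_t$ (it is comparable to $\|f(t)\|_2^2+O(|z|^2)$, and $\|f(t)\|_2$ does not decay since $f$ scatters), so this term can be placed neither in $L^2_t([0,T],H^{k_0,\tau_0}_x)$ (no weight available) nor in $L^1_t([0,T],H^{k_0}_x)$ with a bound uniform in $T$: the naive estimate gives $O(\epsilon^3 T)$, which destroys the continuation argument. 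Your plan lumps this into $\mathcal{N}_f$ and invokes ``quadratic smallness $\epsilon^2$'', but smallness in $\epsilon$ is not the issue; what is needed is either time integrability or spatial decay, and neither holds. This is why the paper writes the equation as \eqref{eq:f variable} with $\varphi(t)\Sigma_3 P_c f$ kept on the left (only the localized commutator $\varphi[\Sigma_3,P_d]f$ goes into the source) and proves the dedicated non-autonomous linear estimates of Lemmas \ref{lem:conditional4.21}--\ref{lem:conditional4.22}: the phase is absorbed by $\mathcal{U}(t)=e^{-\im\Sigma_3\int_0^t\varphi}$, the potential is factorized as $V-\mathcal{H}P_d-\im\delta P_d=V_1V_2$, and closing the estimate requires inverting $I+\im\widetilde T_0$ on $L^2_tH^k$ via the Nakanishi--Schlag identity $(I+\im T_0)(I-\im T_1)=I$, which rests on the Kato smoothing bound of Lemma \ref{lem:smooth11} and \eqref{eq:resolv}. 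None of this machinery is replaceable by the plain Duhamel bootstrap you propose.

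Two smaller points. First, your claim that every monomial of $Z_1$ has degree $|\mu+\nu|\ge 2N_1+1$ is false: the normal form retains low-degree monomials subject only to $|\lambda^0\cdot(\nu-\mu)|>m-\omega_0$ (these are exactly the ones driving the Fermi golden rule); the correct estimate extracts from each such monomial a sub-monomial controlled in $L^2_t$ by \eqref{4.4}, as in the paper's bound of $F$ and in Lemma \ref{lem:bound remainder}. Second, the $L^2_tL^\infty_x$ endpoint in \eqref{4.5} is not obtained by interpolating smoothing with Theorem \ref{Thm:Strichartz flat}; the paper proves it separately in Lemma \ref{lem:conditional4.22}, using the weighted dispersive bound of Theorem 3.1 of \cite{Boussaid} (kernel $\min\{|t-t'|^{-1/2},|t-t'|^{-3/2}\}$) together with $B^{k}_{\infty,2}\subset L^\infty$ and the choice $\tau_0>3/2$.
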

\begin{proof}
Consider $Z_1$ of the form \eqref{e.12a}. Set:
 \begin{equation}\label{eq:H^0}\index{$ H_{\mu \nu}^0$}\index{$\lambda
^0_j$} H_{\mu
\nu}^0=H_{\mu \nu}(\| f \| _2^2 ) \mbox{ for } \| f \| _2^2=0;     \lambda
^0_j=\lambda _j(\omega _0).\end{equation}

Then we have (with
finite sums)
\begin{equation}\label{eq:f variable} \begin{aligned}  &\im \dot f -
\mathcal{H}f - 2    (\partial _{ \| f \| _2^2} H) P_c(\omega _0)
\Sigma _3  f = \sum _{\substack{|\lambda  ^0 \cdot(\nu-\mu)|>m-\omega _0,\\
|\mu+\nu|\leq 2N_1+1}}
z^\mu \overline{z}^\nu
  H_{\mu \nu}^0  \\&  +
  \sum _{\substack{|\lambda ^0 \cdot(\nu-\mu)|>m-\omega _0,\\
|\mu+\nu|\leq 2N_1+1}} z^\mu
\overline{z}^\nu  (H_{\mu \nu} -  H_{\mu \nu}^0)    +\im  \beta \alpha _2 \Sigma
_3 \Sigma _1\nabla _f \resto - 2    (\partial _{ \| f \| _2^2} \resto )
P_c(\omega _0)
\Sigma _3  f.
\end{aligned}\end{equation}
In order to obtain bounds on $f$, we need bounds on the right hand term of the
equation especially the last two terms. They are provided by the following
lemma.
\begin{lemma}\label{lem:bound remainder}
Assume \eqref{4.4a}--\eqref{4.4bis}  and consider a fixed  $\tau_0>1$. Then
there is a   constant $C=C(C_1,C_2,C_3)$ independent of $\epsilon$
such that the following is true: we have $$\beta \alpha _2 \Sigma _3
\Sigma _1 \nabla _{  f} \resto  - 2    (\partial _{ \| f \| _2^2} \resto )
P_c(\omega _0)
\Sigma _3  f= R_1+ R_2$$ with
\begin{equation} \label{bound:R1R2}\begin{aligned}
 &
 \| R_1 \| _{ H^{ k_0 }_x } \le   C( C_1,C_2,C_3)  (|z  |
^{2N_1+2}+    \| f\|  _{L^{\infty}}^2 \|    f  \| _{ H^{k_0}_x}) \\& \|
   R_2 \| _{ H^{ k_0,\tau _0}_x }\le  C( C_1,C_2,C_3)  ( |z|+    \| f \| _{L ^{2 }
_x}^2+ \| f \| _{H^{k_0,-\tau _0 }  _x})  \| f \| _{H^{k_0,-\tau _0 }  _x}
.\end{aligned}
\end{equation}
In particular we have for some other fixed constant $C=C(C_1,C_2,C_3)$,
\begin{equation}
\label{bound1:z1} \| R_1 \| _{L^1_t([0,T],H^{ k_0 }_x)}+\|
   R_2 \| _{L^{2
 }_t([0,T],H^{ k_0,\tau _0}_x)}\le C( C_1,C_2,C_3) \epsilon^2.
\end{equation}

\end{lemma}
\proof  \eqref{bound1:z1} is a consequence of \eqref{bound:R1R2} and
\eqref{4.4a}--\eqref{4.4bis}.  We focus on \eqref{bound:R1R2}.
For $d\le 1 $ and arbitrary fixed $(S,K)$  we have $\nabla
_{  f}\resto _d \in H^{S,K} $.
By   (iv0--iv1)   Theorem \ref{th:main}
  \begin{equation*} \label{estimate01}\| \nabla _{  f}\resto _0  \|_{ H^{S,K} } +
\|
\nabla _{  f}\resto _1  \|_{ H^{S,K} }\le C|z  |
^{2N_1+2}.\end{equation*} These terms can be
absorbed in $R_1$.
For $2\le d \le 5$ we have
\begin{equation*}
\label{schematic-1}\begin{aligned} &  \Sigma _3 \Sigma _1\nabla _f
\widehat{\resto} _d - 2    (\partial _{ \| f \| _2^2} \widehat{\resto} _d )
P_c(\omega _0) \Sigma_3f=  \Sigma _3 \Sigma _1\nabla _f \widehat{\resto} _d  (z
,f, \rho)
,
\end{aligned}
\end{equation*}
computed at $\rho =\| f\| _2^2$. By \eqref{eq:Rhat} we obtain
\begin{equation*}
\label{schematic-2}\begin{aligned} &  \|   \nabla _f \widehat{\resto} _d  (z ,f,
\rho)\| _{H^{K',S'}} \le C \| f\|  _{H^{-K',-S'}}^{d-1} \text{ for $3\le d\le 5$
and }\\&  \|   \nabla _f \widehat{\resto} _2  (z ,f, \rho)\| _{H^{K',S'}} \le C
\| f\|  _{H^{-K',-S'}}^{2} +C |z|  \,  \| f\|  _{H^{-K',-S'}}  .
\end{aligned}
\end{equation*}
Since $K'$ and $S'$ are arbitrarily large, we have $\| f\|  _{H^{-K',-S'}}\le \|
f\|  _{H^{k_0,-\tau _0}}$. So
these terms can be absorbed in $R_2$.  Other terms are treated as in
\cite[Lemma 7.5]{BambusiCuccagna} : For $d=2,3,4,5$ we have schematically

\begin{equation}
\label{schematic11}\begin{aligned} &    F_d(x,z,f,  f(t,\cdot ) , \rho )
f^{\otimes (d-1)}(t,\cdot )+   \partial _w F_d(x,z,f, w, \rho )_{ w= f(t,\cdot
)}  f^{\otimes d }(t,\cdot )  \\& + \nabla _{g}\left (  \int_{\R^3}  F_d  (x,z ,
g , f(t,x), \| f (t) \| ^2 _{L^2_x})[f(t,x)]^{\otimes d }dx\right ) _{g=f} .
\end{aligned}
\end{equation}
The first line of
\eqref{schematic11} has $H^{k_0,\tau _0}_x$ norm bounded, for some fixed
sufficiently large $ \textbf{N}$,
 by
\begin{equation}
\label{schematic-3}\begin{aligned} &  \widetilde{C}\|  \langle x \rangle
^{\textbf{N}}   F_d(x,z,f,  f(t,x ) , \rho ) \| _{W^{k_0,\infty }_x}  \| f \|
_{H^{k_0,-\tau _0 }  _x}^{d-1} \\& + \widetilde{C}  \|  \langle x \rangle
^{\textbf{N}}   \partial _w F_d(x,z,f, w, \rho )_{ w= f(t,x )} \|
_{W^{k_0,\infty }_x} \| f \| _{H^{k_0,-\tau _0 }  _x}^{d } \le C   \| f \|
_{H^{k_0,-\tau _0 }  _x}^{d -1} +C   \| f \| _{H^{k_0,-\tau _0 }  _x}^{d  }.
\end{aligned}
\end{equation}
When these terms are bounded by  $\| f \| _{H^{k_0,-\tau _0 }  _x}^{d _1 } $
for $d_1\ge 2$, we can absorb them in  $R_2$. Cases $d_1=1$ come from terms
  in the first line of  \eqref{schematic-3} with $d=2$. By
$F_2(x,0,0,0,0)=0$   these   are less than
\begin{equation*}
\label{schematic-4}\begin{aligned} &  ( |z|+ \| f \| _{H ^{-K', -S'} _x}+  \| f
\| _{L ^{2 } _x}^2)  \| f \| _{H^{k_0,-\tau _0 }  _x}
\end{aligned}
\end{equation*}
and   can be absorbed in $R_2$.
 Looking at   the second line of \eqref{schematic11}  and for $\textbf{N}$
sufficiently large, we have
\begin{equation*} \label{estimate23}\begin{aligned} & \| \nabla _{g}\left (
\int_{\R^3}  F_d  (x,z , g , f(t,x), \| f (t) \| ^2 _{L^2_x})[f(t,x)]^{\otimes d
}dx\right ) _{g=f} \|  _{ H^{ k_0}_x}=\\& \left | \sup _{\| \psi  \|  _{ H^{
-k_0}_x} =1} \int_{\R^3}   D _{ {g}} F_d  (x,z , g , f(t,x), \| f (t) \| ^2
_{L^2_x})_{g=f}[\psi ] [f(t,x)]^{\otimes d }dx \right |  \\&
   \le C \sup _{  \| \psi  \|  _{ H^{ -k_0}_x} =1}  \| D _{ {g}} F_d  (x,z , g ,
f(t,x), \| f (t) \| ^2 _{L^2_x})_{g=f}[\psi ] \| _{L^{\infty,  \textbf{N} } _x}
  \|  f \| _{ H^{k_0,-\tau _0}_x}^{d }  \le  C  \|  f \| _{ H^{k_0,-\tau
_0}_x}^{d }. \end{aligned}
\end{equation*}
So the second line of \eqref{schematic11} can be absorbed in $R_2$.
Finally we consider  $ \nabla _f \resto _6= \Sigma _1g (|f (t,x)|^2/2)f
(t,x) $.
 Then for a fixed   $C   $     we have $
\|  \nabla _f \resto _6  \| _{ H^{k_0}_x}\le C  \|   f   \| _{
L^{\infty }_x} ^2
   \|    f  \| _{ H^{k_0}_x}  .$ \qed

 Denote by $F$ the rhs of
 \eqref{eq:f variable} and set $\varphi  = 2\partial _{ \| f \| _2^2} H$.

 \begin{lemma}\label{lem:conditional4.21} Consider $\im \dot \psi -
\mathcal{H}\psi - \varphi  (t)
 \Sigma_3P_c \psi=F$  where $P_c=P_c(\omega _0)$  and $\psi= P_c\psi$. Let $k\in
\Z$  with $k\ge 0$ and $\tau _0>1$. Then there exist
 $c _0>0$ and
   $C>0$ such that if  $\| \varphi   \| _{L^\infty _t [0,T]}<c _0$   then
   for $p\ge p_0>2$ and for $(p,q) $ as in Theorem \ref{proposition:mainbounds}   we have
\begin{equation} \label{eq:421}  \begin{aligned}
 &     \|  \psi \| _{L^p_t( [0,T ],B^{  k -\frac{2}{p}} _{q,2})\cap
L^2_t([0,T],H^{  k  ,-\tau _0}_x) }\le C\| \psi(0) \| _{H^{  k }}+ C \|  F \|
_{L^1_t([0,T],H^{ k }_x) + L^2_t([0,T],H^{ k  , \tau _0}_x)}  \end{aligned}
\end{equation}
\end{lemma}
\begin{proof}
We apply the argument for the NLS in    Lemma B.2
\cite{NakanishiSchlag},
see also Theorem 1.5 \cite{Beceanu}. A   more precise statement than
Lemma B.2 \cite{NakanishiSchlag} is  in
\cite{BuslaevPerelman2,Cuccagna3},  but the
proof
    does not seem
 easy to reproduce for Dirac. We fix  any $\delta >0$. Let $P_d=P_d(\omega _0)$
 and $\mathcal{H}_0 =\mathcal{H} _{\omega _0,0}$. Consider
\begin{equation} \label{eq:4220}\im \dot Z -
\mathcal{H}P_cZ+\im \delta P_d Z-\varphi \Sigma _3P_cZ=F .
 \end{equation}
Then notice that for $Z(0)=\psi (0)$  the solution of
\eqref{eq:4220} satisfies $Z(t)\equiv  \psi (t)$. We rewrite
\eqref{eq:4220} as
\begin{equation*} \label{eq:4221}\im \dot Z -
\mathcal{H}_0 Z -\varphi \Sigma _3 Z=F +(V-\mathcal{H}P_d -\im
\delta P_d )Z -\varphi \Sigma _3P_dZ.
 \end{equation*}
Let $(V-\mathcal{H}P_d -\im \delta P_d )=V_1V_2$ with $V_2(x)$ a
smooth exponentially decaying and invertible matrix, and with $V_1$
bounded from $H^{k,s'}\to H^{k,s}$ for all
$k$, $s$ and $s'$. For $\mathcal{U}(t)=e^{-\im \Sigma _3\int
_0^t \varphi (t')dt'}$ we have
 \begin{equation} \label{eq:423}
 Z (t)= \mathcal{U}(t)e^{-\im \mathcal{H}_0 t}Z (0) -\im \int _0^t
 e^{ \im \mathcal{H}_0 (t'-t)}\mathcal{U}(t)\mathcal{U}^{-1}(t')
 \left [ F(t')+ V_1 V_2 Z (t ')  - \varphi  (t')
 \Sigma_3P_d Z (t ')  \right ] dt'  .
 \end{equation}
 $ c_0P_dV_2^{-1  }$
 maps $H^{-K',-S'}\to H^{ K , S }$ for arbitrarily  fixed pairs $(K,S)$ and
$(K',S')$. By picking  $c_0$ small
enough,  we can  assume that
  the related operator norms are small.
By Theorems \ref{Thm:Smoothness flat} and \ref{Thm:Strichartz flat}
 \begin{equation*} \label{eq:4231}  \begin{aligned}
 &   \|  Z \| _{L^p_t B^{  k -\frac{2}{p}}
 _{q,2} \cap L^2_t H^{  k  ,-\tau _0}_x  }\le
  C\| Z(0) \| _{H^{  k }}+C \| F \| _{L^1_t H^{  k    }_x
  +L^2_t H^{  k  , \tau _0}_x} \\& +
  \|  V_1  - \varphi  (t )
  \Sigma_3P_d V_2^{-1  }  \| _{
  L^\infty_t B( H^{  k   }_x, H^{  k  , \tau _0}_x)  }  \|    V_2 Z (t  )
    \| _{  L^2_t H^{  k   }_x  }. \end{aligned}
 \end{equation*}
For $\widetilde{T}_0 f (t)=   V_2\int _0^t
 e^{ \im \mathcal{H}_0 (t'-t)}\mathcal{U}(t)\mathcal{U}^{-1}(t') V_1 f(t')
dt',$  by \eqref{eq:423} we obtain

\begin{equation}\begin{aligned}
 &  (I+\im \widetilde{T}_0) V_2Z (t)=
 V_2\mathcal{U}(t)e^{-\im \mathcal{H}_0 t}Z (0)   -\im V_2\int _0^t
 e^{ \im \mathcal{H}_0 (t'-t)}\mathcal{U}(t)\mathcal{U}^{-1}(t')
 \left [ F(t')   - \varphi  (t')
  \Sigma_3P_d Z (t ')  \right ] dt'    \end{aligned}\nonumber \end{equation}
We then obtain
\eqref{eq:421}
if we can show that \begin{equation} \label{ns1}\| (I+\im
 \widetilde{{T}}_0)^{-1} :L^2_t([0,T), H^k( \R ^3))  \to L^2_t([0,T),
 H^k( \R ^3))\| <C_1,
\end{equation} for $c_0C_1 $ smaller than a fixed number.  It is enough to prove \eqref{ns1} with
$\widetilde{T}_0$ replaced by

\begin{equation}\begin{aligned}
 &    {T}_0 f (t)=   V_2\int _0^t
 e^{ \im \mathcal{H}_0 (t'-t)}  V_1 f(t') dt'  .  \end{aligned}\nonumber
\end{equation}
Indeed by Theorem \ref{Thm:wdec} we have
\begin{equation}\begin{aligned}
 & \| (\widetilde{{T}}_0-{T}_0) f\| _{L^2 _{t }H^k_x} \le \|  \int _0^t
 \| V_2 e^{ \im \mathcal{H}_0 (t'-t)}  (e^{\im \Sigma _3\int _t ^{t'}  \varphi
(t'')
 dt''} -1)  V_1 f(t')\| _{H^k _{x }} dt' \| _{L^2 _{t }}\\& \le \widetilde{C}
 c_0^{\frac{1}{4}}
\|  \int _0^t \langle t'-t \rangle ^{-\frac{5}{4}}    \|  f(t')\|
_{H^k_x} dt' \| _{L^2 _{t }} \le C  c_0^{\frac{1}{4}}   \| f(t')\|
_{L^2 _{t }H^k_x} .
\end{aligned}\nonumber
\end{equation}
Set
\begin{equation}\begin{aligned}
 &    {T}_1 f (t)=   V_2\int _0^t
 e^{ (\im \mathcal{H}P_c  +\delta P_d )(t'-t)}  V_1 f(t') dt' = V_2\int _0^t
 ( e^{ (\im \mathcal{H} (t'-t)}P_c
 + e^{  -\delta    |t'-t|}P_d)  V_1 f(t') dt'  .
 \end{aligned}\nonumber \end{equation}
By Lemma \ref{lem:smooth11} we have $\|  T_ 1  :L^2_t([0,T), H^k(
\R ^3))  \to L^2_t([0,T),
 H^k( \R ^3))\| <C _2$ for a fixed $C_2$. For exactly
  the same reasons of \cite{NakanishiSchlag} we have

\begin{equation}\begin{aligned}
 &      (I+\im  {T}_0) (I-\im  {T}_1)= (I-\im  {T}_1)
 (I+\im  {T}_0)=I .
 \end{aligned}\nonumber \end{equation}
This yields \eqref{ns1} with $ \widetilde{{T}}_0$ replaced by  $
{T}_0$ and with $C_1=1+C_2$.
 \end{proof}

 \begin{lemma}\label{lem:conditional4.22} Using the notation of Lemma \ref{lem:conditional4.21},  but this time picking $\tau _0>3/2$,     we have
\begin{equation} \label{eq:endpoint1}  \begin{aligned}
 &     \|  \psi \| _{L^2_t( [0,T ],L^{ \infty} ) }\le C\| \psi(0) \| _{H^{  k _0 }}+ C \|  F \|
_{L^1_t([0,T],H^{ k_0 }_x) + L^2_t([0,T],H^{ k_0  , \tau _0}_x)}  \end{aligned}
\end{equation}
\end{lemma}
\begin{proof} We proceed as above until \eqref{eq:423}. We claim we have
 \begin{equation} \label{eq:endpoint2}  \begin{aligned}
 &   \|  Z \| _{L^2_t L^{ \infty}_x  }\le
  C\| Z(0) \| _{H^{  k _0}}+C \| F \| _{L^1_t H^{  k_0    }_x
  +L^2_t H^{  k_0  , \tau _0}_x}  \\& +
  \|  V_1  - \varphi  (t )
  \Sigma_3P_d V_2^{-1  }  \| _{
  L^\infty_t B( H^{  k _0  }_x, H^{  k _0 , \tau _0}_x)  }  \|    V_2 Z (t  )
    \| _{  L^2_t H^{  k _0  }_x  }. \end{aligned}
 \end{equation}
\eqref{eq:endpoint2} will yield  \eqref{eq:endpoint1} by the argument in Lemma \ref{lem:conditional4.21}. So now we prove \eqref{eq:endpoint2}. We have for $k>1/2$
\begin{equation}    \begin{aligned}
 &  \| e^{-\im \mathcal{H}_0 t}Z (0)\| _{L^2_t L^{ \infty}_x  }\le C \| e^{-\im \mathcal{H}_0 t}Z (0)\| _{L^2_t  B _{6,2} ^{k} }\le C '\| Z (0)\| _{H ^{k+1} } \le C '\| Z (0)\| _{H ^{k_0} } \end{aligned} \nonumber
 \end{equation}
by Theorem \ref{Thm:Strichartz flat}.  Similarly, splitting $F=F_1+F_2$, we have

\begin{equation}    \begin{aligned}
 &  \| \int _0^t e^{ \im \mathcal{H}_0 (t'-t)} \mathcal{U}^{-1}(t') F_1 (t')dt'\| _{L^2_t L^{ \infty}_x  }\le C \| \int _0^t e^{ \im \mathcal{H}_0 (t'-t)} \mathcal{U}^{-1}(t') F_1 (t')dt'\| _{L^2_t  B _{6,2} ^{k} }\\& \le C '\| F_1\| _{L^1_t H ^{k+1} } \le C '  \| F_1\| _{L^1_t H ^{k_0} } .\end{aligned} \nonumber
 \end{equation}
Using $B ^{k}_{\infty ,2} \subset L^{ \infty} $  for $k>0$,   by   Theorem 3.1 \cite{Boussaid} we have for  $k_0>3 $

\begin{equation}    \begin{aligned}
 &  \| \int _0^t e^{ \im \mathcal{H}_0 (t'-t)} \mathcal{U}^{-1}(t') F_2 (t')dt'\| _{L^2_t L^{ \infty}_x  }\le C \left \| \int_0^t  \min \{ |t-t'|   ^{-\frac{1}{2}},
 |t-t'|   ^{-\frac{3}{2}} \}  \|    F_2(t') \|_{B^{k_0}_{1, 2  }} dt' \right \|
_{L_t^{2} } \\& \le C '\| F_2\| _{L^2_t B^{k_0}_{1, 2  } } \le C ''  \| \langle x \rangle ^{ \tau _0}F_2\| _{L^2_t B^{k_0}_{2, 2  } } =C ''  \|  F_2\| _{L^2_t H^{k_0, \tau _0}  }  ,\end{aligned} \nonumber
 \end{equation}
 where we have used $\|   \varphi _j* F_2 \| _{L^1_x}\le \| \langle x \rangle ^{- \tau _0}   \| _{L^2_x}  \| \langle x \rangle ^{ \tau _0} \varphi _j* F_2 \| _{L^2_x}
 \le C''' \| \varphi _j* ( \langle \cdot  \rangle ^{ \tau _0}   F_2 )\| _{L^2_x}$
 for fixed $C'''>0$  and fixed $\tau _0>3/2$.  With $F_2$ replaced by
 $(V_1V_2-\varphi \Sigma _3P_d)Z$ we get a similar estimate. This yields inequality \eqref{eq:endpoint2}.

\end{proof}

{\it Continuation of the proof of Proposition \ref{proposition:mainbounds}.}
By \eqref{eq:f variable}
we can apply to $f$ Lemmas \ref{lem:conditional4.21} and \ref{lem:conditional4.22} by taking $\varphi (t)
=2    (\partial _{ \| f \| _2^2} H)$ and  $F=\text{rhs\eqref{eq:f
variable}}-\varphi (t) [\Sigma _3,P_d]f$. Then
\begin{equation*} \label{eq:4212}  \begin{aligned}
 &     \|  f \| _{L^p_t( [0,T ],B^{  k_0-\frac{2}{p}} _{q,2})\cap
L^2_t([0,T],H^{  k_0 ,-\tau _0}_x)\cap L^2_t( [0,T],L^{ \infty}_x)}\le C\|
f(0)
\| _{H^{  k_0}}+ C \|  F \| _{L^1_t([0,T],H^{ k_0}_x) +  L^2_t([0,T],H^{ k_0
,
s}_x)}. \end{aligned}\nonumber
\end{equation*}
We have
 \begin{equation*} \label{eq:4213}  \begin{aligned}
 &      \|  F \| _{L^1_t H^{ k_0}_x  +  L^2_t H^{ k_0 , \tau _0}_x }
 \lesssim  \sum _{\lambda\cdot \mu >m-\omega _0 }\| z ^\mu \| _{L^2_t }^2 +\|
R_1
\| _{L^1_t H^{ k_0}_x   }+\| R_2 \| _{   L^2_t H^{ k_0 , \tau _0}_x } +
\epsilon
  \|  f \| _{  L^2_t  H^{ - k_0 ,-\tau _0}_x }. \end{aligned}
\end{equation*}
 For $\epsilon $ small this yields  Proposition \ref{proposition:mainbounds}
 by Lemma   \ref{lem:conditional4.21} and by \eqref{4.4}.

\end{proof}

\begin{lemma}
\label{lem:asymptotically f} Assume the conclusions of Theorem
\ref{proposition:mainbounds}. Then there exists a fixed $C>0$ and $f_+' \in
H^{k_0 } $ with $\| f_+'\| _{H^{k_0}}<C \epsilon $ such that for
 for $\vartheta (t) $ the phase in the ansatz \eqref{Eq:ansatzU}
  we have
\begin{equation}\label{scattering111}  \lim_{t\to +\infty}
\left \|  e^{\im \vartheta (t) \Sigma _3}f (t) -
 e^{- \im t D_m    }  {f}_+'  \right
\|_{H^{k_0}}=0.
 \end{equation}
\end{lemma}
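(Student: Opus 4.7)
The plan is to define $f_+'$ as the strong $H^{k_0}$-limit of $\tilde g(t):=e^{itD_m}e^{\im\vartheta(t)\Sigma_3}f(t)$ as $t\to+\infty$, and to verify existence of this limit by a Cauchy criterion derived from Duhamel. First I would compute $\im\partial_t\tilde g$: using that on $\C^8$ the free operator $D_m$ commutes with $\Sigma_3$ (both block diagonal in the $\C^4\oplus\C^4$ decomposition), the identity $\mathcal H_{\omega_0,0}=D_m-\omega_0\Sigma_3$, the splitting $\mathcal H_{\omega_0}=\mathcal H_{\omega_0,0}+V_{\omega_0}$, and the equation \eqref{eq:f variable} for $f$, one obtains schematically
\begin{equation*}
\im\partial_t\tilde g=e^{itD_m}e^{\im\vartheta\Sigma_3}\bigl(V_{\omega_0}f+G(t)\bigr)+\bigl(\varphi(t)-\dot\vartheta(t)-\omega_0\bigr)\Sigma_3\,\tilde g,
\end{equation*}
where $\varphi=2\partial_{\|f\|_2^2}H$ and $G(t)$ denotes the source part of \eqref{eq:f variable}, namely $\sum_{|\lambda^0\cdot(\nu-\mu)|>m-\omega_0}z^\mu\overline z^\nu H^0_{\mu\nu}$, the correction $\sum z^\mu\overline z^\nu(H_{\mu\nu}-H^0_{\mu\nu})$, and the nonlinear remainder $R_1+R_2$ of Lemma \ref{lem:bound remainder}.

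The next step is to show that each piece of the right-hand side, integrated in $t$, yields a Cauchy net in $H^{k_0}$. The key tool is the dual smoothing estimate from Theorem \ref{Thm:Smoothness flat} for the free Dirac propagator,
\begin{equation*}
\Bigl\|\int_{t_1}^{t_2}e^{isD_m}G(s)\,ds\Bigr\|_{H^{k_0}}\lesssim\|G\|_{L^2_{[t_1,t_2]}H^{k_0,\tau_0}_x}+\|G\|_{L^1_{[t_1,t_2]}H^{k_0}_x}.
\end{equation*}
The term $V_{\omega_0}f$ lies in $L^2_tH^{k_0,\tau_0}_x$ because the exponential decay of $V_{\omega_0}$ (from \ref{Assumption:H2} and the expression \eqref{Eq:linearization}) combined with \eqref{Smootingradiation} gives $\|V_{\omega_0}f\|_{L^2_tH^{k_0,\tau_0}_x}\lesssim\|f\|_{L^2_tH^{k_0,-\tau_0}_x}\lesssim\epsilon$. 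The coupling monomials $z^\mu\overline z^\nu H^0_{\mu\nu}$ are in the same space thanks to \eqref{L^2discrete} and the Schwartz decay of $H^0_{\mu\nu}$. The difference $H_{\mu\nu}-H^0_{\mu\nu}$ is absorbed using smoothness of $H_{\mu\nu}(\varrho)$ at $\varrho=0$ and $\|f\|_2\lesssim\epsilon$, while $R_1\in L^1_tH^{k_0}_x$ and $R_2\in L^2_tH^{k_0,\tau_0}_x$ are both $O(\epsilon^2)$ directly by Lemma \ref{lem:bound remainder}.

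The hard part will be the phase-correction term $(\varphi-\dot\vartheta-\omega_0)\Sigma_3\tilde g$, since a priori the coefficient $\dot\vartheta+\omega_0$ need not decay in a time-integrable way. The plan here is to invoke the Hamiltonian equation \eqref{eq:SystK1}, $q'\dot\vartheta=-\partial_\omega H$, together with the Taylor expansion \eqref{eq:ExpH1} and the identity $d'(\omega_0)=q(\omega_0)=\|u_0\|_2^2$ (itself a consequence of the Euler--Lagrange relation $E'(\Phi_{\omega_0})+\omega_0Q'(\Phi_{\omega_0})=0$) to show that the leading, linear-in-$\omega_0$ part of $\partial_\omega H$ cancels, leaving a remainder of size $O(|z|^2+\|f\|_2^2)$ which matches $\varphi=2\psi'(\|f\|_2^2)+\cdots$ to leading order. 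The resulting coefficient $\varphi-\dot\vartheta-\omega_0=O(|z|^2+\|f\|_{L^{2,-\tau_0}_x}^2)$ is then absorbed either as an additional source in $L^1_tH^{k_0}_x$ (via \eqref{L^2discrete} for the $|z|^2$ part, \eqref{Smootingradiation} for the $\|f\|^2$ part, combined with the uniform $H^{k_0}$-bound on $\tilde g$) or, more robustly, built into a modified linear phase factor $e^{\im\int_0^t(\varphi-\dot\vartheta-\omega_0)(s)\,ds\,\Sigma_3}$ whose Strichartz and smoothing bounds are perturbative corrections of those for free Dirac, in the spirit of Lemma \ref{lem:conditional4.21}. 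Once $\tilde g(t)\to f_+'$ in $H^{k_0}$ is proved, the bound $\|f_+'\|_{H^{k_0}}\le C\epsilon$ follows from unitarity of $e^{itD_m}e^{\im\vartheta\Sigma_3}$ combined with the uniform-in-$t$ bound $\|f(t)\|_{H^{k_0}}\lesssim\epsilon$ underlying Theorem \ref{proposition:mainbounds}.
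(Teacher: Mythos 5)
Your Cauchy/Duhamel scheme and the treatment of the sources ($V_{\omega_0}f$ via \eqref{Smootingradiation}, the monomials via \eqref{L^2discrete}, and $R_1,R_2$ via Lemma \ref{lem:bound remainder}) coincide with the first half of the paper's proof, which conjugates $f$ by $e^{\im\mathcal{H}_0t}$ and the phase $e^{-\im\Sigma_3\int_0^t\varphi}$ built from the coefficient $\varphi=2\partial_{\|f\|_2^2}H$ that actually appears in \eqref{eq:f variable}. The genuine gap is in your handling of the mismatch term $(\varphi-\dot\vartheta-\omega_0)\Sigma_3\tilde g$, which is exactly the hard point of the lemma. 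Your claimed pointwise bound $\varphi-\dot\vartheta-\omega_0=O(|z|^2+\|f\|^2_{L^{2,-\tau_0}_x})$ is not justified: $\varphi$ contains the non-localized, non-decaying contribution $2\psi'(\|f\|_2^2)$ from \eqref{eq:bir1} (and $\|f(t)\|_2^2$ does not tend to $0$), so the claim already presupposes an exact cancellation of such terms against $\dot\vartheta$, which is essentially the identity to be proved. Moreover, even granting such a bound, it cannot be absorbed as an $L^1_tH^{k_0}$ source: Theorem \ref{proposition:mainbounds} only gives $\|z^\mu\|_{L^2_t}\lesssim\epsilon$ for the resonant multi-indices with $\lambda\cdot\mu>m-\omega_0$, while $|z_j(t)|^2$ itself is not time-integrable (no decay rate is available), and $\tilde g$ has no spatial decay, so the $L^2_tH^{k_0,\tau_0}$ channel is also unavailable. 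Your fallback, a modified phase factor $e^{\im\Sigma_3\int_0^t(\varphi-\dot\vartheta-\omega_0)}$, only yields \eqref{scattering111} with the free propagator $e^{-\im tD_m}$ if the phase integral \emph{converges} as $t\to+\infty$; establishing that convergence is the crux, and it does not follow from size estimates.

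The route through \eqref{eq:SystK1} does not close this gap as proposed: $q'\dot\vartheta=-\partial_\omega H$ requires the $\omega$-derivative of the transformed Hamiltonian, whereas the whole Darboux/Birkhoff construction (Sections \ref{section:Darboux}--\ref{section:Birkhoff}) is carried out with $\omega$ frozen at $\omega_0$, so $\partial_\omega H^{(r)}$ is never computed and would require tracking the $\omega$-dependence of $\mathcal{F}_1$ and $P_c(\omega)$; and the cancellation $d'(\omega_0)=q(\omega_0)=\|u_0\|_2^2$ only removes the constant term, leaving quadratic, non-decaying terms (in particular the $\|f\|_2^2$ pieces coming from $\partial_\omega\mathcal{H}_{\omega,0}=-\Sigma_3$) whose matching with $\varphi$ is precisely what must be shown. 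The paper instead proves the exact identity $\dot\vartheta-\omega_0-\varphi=\frac{d}{dt}\sum_j\Gamma_0^{(j)}$ by a different device: it pushes the original system \eqref{Eq:syst2} through the coordinate changes to obtain a second evolution equation \eqref{scattering12} for the same $f$, with coefficient $\dot\vartheta+\omega_0-\sum_j\frac{d}{dt}\Gamma_0^{(j)}$ in front of $P_c\Sigma_3f$, and then compares it with \eqref{eq:f variable}; a uniqueness/blow-up argument forces the coefficient mismatch $\chi$ to vanish identically. Since $\Gamma_0^{(j)}(t)\to0$ (by \eqref{lie.11.e}, $z(t)\to0$ and $f(t)\to0$ in local norms), the phase correction converges, giving $\theta(t)=\vartheta(t)-\vartheta(0)+o(1)$ and hence \eqref{scattering111}. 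Some argument of this type (or an equivalent exact computation of $\dot\vartheta$ in the final coordinates) is needed; without it your proof does not establish the convergence of the phase, only boundedness of the mismatch.
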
 \begin{proof} For $\psi (t)=f(t)$, for $F=\text{rhs\eqref{eq:f
variable}}-\varphi (t) [\Sigma _3,P_d]f$ and for $t_1<t_2$, we have
 \begin{equation} \label{eq:423scat} \begin{aligned} & \|
\mathcal{U}^{-1}(t_2)e^{ \im \mathcal{H}_0 t_2} f (t _2)-
\mathcal{U}^{-1}(t_1)e^{ \im \mathcal{H}_0 t_1} f (t _1) \| _{H^{k_0}}\\& \le
\| \int   _{t_1}^{t_2}
 e^{ \im \mathcal{H}_0  t' } \mathcal{U}^{-1}(t')\left [ F(t')+ Vf (t ')  -
\varphi  (t')
 \mathcal{U}^{-1}\Sigma_3P_d f (t ')  \right ] dt' \| _{H^{k_0}}  \le \\&
 C  (\sum _{|\lambda  ^0 \cdot \mu |>m-\omega _0}
\| z^\mu \| _{L^2(t_1,t_2)} +\| R_1 \| _{L^1_t([t_1,t_2],H^{ k_0 }_x)}+\|
   R_2 \| _{L^{2
 }_t([t_1,t_2],H^{ k_0,s}_x)}+ \|
   f \| _{L^{2
 }_t([t_1,t_2],H^{ k_0,-\tau _0}_x)}) .\end{aligned}\nonumber
 \end{equation}
Since the latter has limit 0 as $t_1\to +\infty$,
there exists $ {f}_+'\in H^{ k_0 } $ such that
 \begin{equation}   \lim_{t\to +\infty}
\left \| \mathcal{U}^{-1}(t )f (t) -
 e^{ -\im \mathcal{H}_0 t }{f}_+'  \right \|_{H^{k_0}}=0 . \nonumber
 \end{equation}
 From $\mathcal{H}_0=D_m-\omega _0 \Sigma _3$ and $\mathcal{U}^{-1}(t )= e^{\rmi \Sigma _3 \int _0^t \varphi (t') dt'}$
 we have for $ \theta(t) =  - t\omega _0+\int _0^t  \varphi (t')dt'  $ \begin{equation} \label{scattering21}   \lim_{t\to +\infty}
\left \|  e^{\im \theta (t) \Sigma _3}f (t) -
 e^{- \im t D_m    }{f}_+  \right \|_{H^{k_0}}=0 .
 \end{equation}
\eqref{scattering111}  follows from \eqref{scattering21} if we can prove
  $\theta(t)=\vartheta (t)-\vartheta (0)+o(1)$  with $o(1)\to 0$
as $t\to +\infty$. To prove this claim
  we substitute $R$ in \eqref{Eq:syst2} using \eqref{eq:coordinate} and then
  replace $(z,f)$ with the last coordinate system obtained from Theorem \ref{th:main}. Then we get
 \begin{equation}\label{scattering12}   \im \dot f -\mathcal{H}f-(\dot \vartheta +\omega _0 -\sum _{j=2}^{2N+1} \frac{d}{dt} \Gamma _0^{ (j)}) P_c(\omega _0)\Sigma _3 f =G
 \end{equation}
where $G$  is a functional with values in $  L^{\infty}(\R,  L^1_ {x}) $; $\Gamma _0^{ (j)}$ are the functions in the exponent of \eqref{lie.11.b}  for each of the transformations in Theorem \ref{th:main}.
Set now
\begin{equation*} \begin{aligned}  & \chi  (t)=(\dot \vartheta -\omega _0-\sum _{j=2}^{2N+1} \frac{d}{dt} \Gamma _0^{ (j)})  - 2    (\partial _{ \| f \| _2^2} H)  \ , \\& \textbf{G}=-G+\text{rhs \eqref{eq:f variable} }  .
   \end{aligned}
\end{equation*}
Then taking the difference of  the two equations \eqref{eq:f variable}  and  \eqref{scattering12}  we have
\begin{equation*}   \chi  (t) f=\chi  (t) \Sigma _3P_d(\omega _0) \Sigma _3 f  +  \Sigma _3 \textbf{G}  .
\end{equation*}
 $\textbf{G} $  (resp .  $\chi $ ) is a functional from a neighborhood of  the origin in  $L^\infty (\R , H^{k_0} (\R ^3))$ to
$  L^{\infty}(\R,  L^1(\R ^3)) $ (resp .  $  L^{\infty}(\R ) $ ) .
 If  $\chi  (t_0)\neq 0$ for a given   solution, we can find solutions for which
$f_n (t,x ) $  such that  $ f_n (t_0,\cdot )\to f (t_0,\cdot )$ in $H^{k_0}(\R ^3)$,
$\|  f_n (t_0  ) \|  _{L^1(\R ^3)}\nearrow \infty$,
$\textbf{G}  _{ n}(t_0)\to \textbf{G}  (t_0)$
and $\chi  _{ n}(t_0)\to \chi  _{0}(t_0)$.   This yields a contradiction. So    $\chi  \equiv 0$ and   $\textbf{G} =0.$
 This implies       $\dot \vartheta -\omega _0-\sum _{j=2}^{2N+1} \frac{d}{dt} \Gamma _0^{ (j)}= 2     \partial _{ \| f \| _2^2}  H. $
This and the last inequality in \eqref{lie.11.c} yield the claim  $\theta(t)=\vartheta (t)-\vartheta (0)+o(1)$.
  \end{proof}

\paragraph{Step (ii)} In the proof of Theorem
\ref{proposition:mainbounds}
consists in introducing the variable

\begin{equation*}
  \label{eq:g 1}
g=f+ Y \, , \quad Y:=\sum _{|\lambda ^0 \cdot(\mu-\nu)|>m-\omega
_0} z^\mu \overline{z}^\nu
   R ^{+}_{\mathcal{H}}  (\lambda  ^0 \cdot(\mu-\nu) )
    H_{\mu \nu}^0 .
\end{equation*}
Substituting the new variable  $g$ in \eqref{eq:f variable}, the
first line on the rhs of  \eqref{eq:f variable} cancels out. We
have
\begin{equation}\label{eq:g 2} \begin{aligned}  &\im \dot g -
\mathcal{H}g - 2    \partial _{ \| f \| _2^2} H
P_c(\omega _0)  \Sigma _3 g   =\text{second line of \eqref{eq:f variable}} +\\&
2    \partial _{ \| f \| _2^2} H
P_c(\omega _0)  \Sigma _3Y   + \sum _{k=1}^{n}
\left [
\partial _{ z _k}   Y
\partial _{    \overline{z} _k}\left( Z+\resto \right )
-\partial _{  \overline{z}_k}   Y \partial _{
 {z}_k}\left( Z+\resto \right ) \right]
   .
\end{aligned}\end{equation}

\begin{lemma}\label{lemma:bound g} For $\epsilon$ sufficiently
small, $\tau
_1>1$
 and   $C_0=C_0(\mathcal{H})$  a fixed constant, we have
$$\| g
\| _{L^2_t([0,T],L^{2,-\tau _1}_x)}\le C_0 \epsilon + O(\epsilon
^2).$$
\end{lemma}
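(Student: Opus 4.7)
My plan is to apply the Kato-type smoothing estimate of Lemma \ref{lem:smooth11} (combined with the perturbative argument of Lemma \ref{lem:conditional4.21}) to equation \eqref{eq:g 2}. The essential feature of the substitution $g=f+Y$ is that the resonant driving term $\sum z^\mu \overline z^\nu H_{\mu\nu}^0$ on the first line of the right-hand side of \eqref{eq:f variable}, whose frequencies $\lambda^0\cdot(\mu-\nu)$ lie in the continuous spectrum of $\mathcal{H}$, has been cancelled in \eqref{eq:g 2} by the very construction of $Y$ using the outgoing-wave resolvents $R^+_{\mathcal{H}}(\lambda^0\cdot(\mu-\nu))$. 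Hence the remaining source $F_g$ for $g$ is genuinely higher order in $\epsilon$.

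First I would estimate $F_g$ in the mixed norm $L^1_t([0,T],H^{k_0}_x)+L^2_t([0,T],H^{k_0,\tau_0}_x)$. The Taylor remainder $\sum z^\mu\overline z^\nu(H_{\mu\nu}-H_{\mu\nu}^0)$ carries an extra factor $O(\|f\|_2^2)=O(\epsilon^2)$ by expansion around $\|f\|_2^2=0$, which, multiplied by $\|z^\mu\overline z^\nu\|_{L^2_t}\le C\epsilon$ via \eqref{L^2discrete}, yields $O(\epsilon^3)$. Here \eqref{L^2discrete} is applicable because the hypothesis $|\lambda^0\cdot(\mu-\nu)|>m-\omega_0$, combined with the nonnegativity of the entries of $\lambda^0$, $\mu$, $\nu$, forces either $\lambda^0\cdot\mu>m-\omega_0$ or $\lambda^0\cdot\nu>m-\omega_0$. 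The remainders $R_1,R_2$ of Lemma \ref{lem:bound remainder} are $O(\epsilon^2)$ by \eqref{bound1:z1}. The extra contributions $2(\partial_{\|f\|_2^2}H)P_c(\omega_0)\Sigma_3 Y$ and $\sum_k[\partial_{z_k}Y\,\partial_{\overline z_k}(Z+\mathcal{R})-\partial_{\overline z_k}Y\,\partial_{z_k}(Z+\mathcal{R})]$ each carry an additional factor $z^\mu\overline z^\nu$ with $|\mu|+|\nu|\ge 2$ (or $\|f\|_2^2$), and are bounded analogously.

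Second, I would split $g=g_1+Y$ to handle the initial datum. Since $Y(0)$ lies only in $L^{2,-\tau_1}_x$ and not in $L^2_x$, I let $g_1$ solve the same inhomogeneous equation as $g$ but with initial datum $f(0)\in H^{k_0}$. The smoothing estimate of Lemma \ref{lem:smooth11}, perturbed by the lower-order potential $2(\partial_{\|f\|_2^2}H)P_c(\omega_0)\Sigma_3$ as in the proof of Lemma \ref{lem:conditional4.21}, gives
\[
\|g_1\|_{L^2_t([0,T],L^{2,-\tau_1}_x)}\le C_0\|f(0)\|_{L^2_x}+C_0\|F_g\|_{L^1_tH^{k_0}_x+L^2_tH^{k_0,\tau_0}_x}\le C_0\epsilon+O(\epsilon^2),
\]
with $C_0=C_0(\mathcal{H})$ depending only on the linearized operator and the fixed weights. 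For $Y$ itself, a direct estimate
\[
\|Y\|_{L^2_t([0,T],L^{2,-\tau_1}_x)}\le\sum\|z^\mu\overline z^\nu\|_{L^2_t}\,\|R^+_{\mathcal{H}}(\lambda^0\cdot(\mu-\nu))H^0_{\mu\nu}\|_{L^{2,-\tau_1}_x}
\]
bounds the resolvent factor by a fixed constant (no resonances outside the gap by \ref{Assumption:H7}--\ref{Assumption:H8}, and $H_{\mu\nu}^0\in L^{2,\tau_1}_x$), while each $\|z^\mu\overline z^\nu\|_{L^2_t}=O(\epsilon)$ by the positivity observation above. Summing the two contributions produces the claimed bound.

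The main obstacle is precisely the fact that $Y(0)\notin L^2_x$: the outgoing-wave resolvent at continuous-spectrum energies produces tails with only weighted-$L^2$ decay, so smoothing estimates cannot be applied to $g(0)$ directly. The splitting $g=g_1+Y$ sidesteps this by moving the non-$L^2$ piece into an explicit term, which is then estimated by hand using the elementary positivity argument that forces $\|z^\mu\overline z^\nu\|_{L^2_t}$ to be controlled by the bootstrap hypothesis \eqref{L^2discrete}. A minor but important point is that the leading constant $C_0$ must depend only on $\mathcal{H}$ and the fixed weights $\tau_0,\tau_1$, and not on the bootstrap constants $C_1,C_2,C_3$, because this lemma will feed back into the closing of the bootstrap for $f$.
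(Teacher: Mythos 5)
Your decomposition is internally inconsistent, and it undoes the very point of the change of variable. If $g_1$ solves the same inhomogeneous equation as $g$ with datum $f(0)$, then $g-g_1$ is the \emph{homogeneous} linear evolution of $Y(0)$, not the explicit function $Y(t)$; conversely, if you literally subtract $Y(t)$, then $g_1=f$, which solves \eqref{eq:f variable} and not \eqref{eq:g 2}. More seriously, estimating $\|Y\|_{L^2_t([0,T],L^{2,-\tau_1}_x)}$ directly cannot give the stated bound: a monomial of $Y$ can be a pure power $z^\mu$ with $\lambda^0\cdot\mu>m-\omega_0$, so the best available estimate is $\|z^\mu\|_{L^2_t}\le C_2\epsilon$, and your leading contribution is $C\,C_2\,\epsilon$, i.e.\ of order $\epsilon$ with a constant depending on the bootstrap constant $C_2$ --- exactly what the lemma (with $C_0=C_0(\mathcal{H})$ fixed) and your own closing remark rule out. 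The paper never estimates $Y(t)$ along the flow: only $Y(0)$ enters, through $e^{-\im t\mathcal{H}_0}Y(0)$, and Lemma \ref{lem:w-est} (the weighted $\langle t\rangle^{-3/2}$ decay of $e^{-\im t\mathcal{H}_{\omega,0}}R^+_{\mathcal{H}}(\lambda)P_c\psi_0$) gives $\|e^{-\im t\mathcal{H}_0}Y(0)\|_{L^2_tL^{2,-\tau_1}_x}\le C\sum\epsilon^{|\mu+\nu|}=O(\epsilon^2)$ because $|\mu+\nu|\ge 2$ and $|z(0)|\le\epsilon$; the only $O(\epsilon)$ term is $\|e^{-\im t\mathcal{H}_0}f(0)\|_{L^2_tL^{2,-\tau_1}_x}\lesssim\|f(0)\|_{L^2}$, whose constant comes from the linear theory alone.

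There is a second gap in your treatment of the new source terms in \eqref{eq:g 2}. The terms $2(\partial_{\|f\|_2^2}H)P_c(\omega_0)\Sigma_3Y$ and $\partial_{z_k}Y\,\partial_{\overline{z}_k}(Z+\resto)-\partial_{\overline{z}_k}Y\,\partial_{z_k}(Z+\resto)$ contain $R^+_{\mathcal{H}}(\lambda^0\cdot(\mu-\nu))H^0_{\mu\nu}$, whose spatial tails are only $O(|x|^{-1})$; such functions lie in $L^{2,-\tau}_x$ but not in $L^2_x$, let alone in the positively weighted space $H^{k_0,\tau_0}_x$, so they cannot be absorbed into the source norm $L^1_tH^{k_0}_x+L^2_tH^{k_0,\tau_0}_x$ as you propose. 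The paper instead estimates the corresponding Duhamel integrals directly, using Lemma \ref{lem:w-est} to convolve $\langle t-t'\rangle^{-3/2}$ against the time factors. Moreover the crucial time factor coming from $\partial_{z_k}Y\,\partial_{\overline{z}_k}Z_0$ is not simply ``an additional monomial of degree $\ge 2$'': one must prove $\bigl\|\frac{z^{\mu}\overline{z}^{\nu}}{z_j}\partial_{\overline{z}_j}Z_0\bigr\|_{L^2_t}=O(\epsilon^2)$, and this uses the normal-form structure of $Z_0$ together with \ref{Assumption:H11}, which forces $|\alpha|=|\beta|\ge 2$ and hence $\lambda(\omega_0)\cdot(\mu+\alpha)-\lambda_j(\omega_0)>m-\omega_0$, so that after splitting off an $L^\infty_t$ factor the remaining monomial is controlled in $L^2_t$ by \eqref{4.4} (this is \eqref{countingfactors3}--\eqref{countingfactors1} in the paper). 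That counting argument is the heart of the lemma and is absent from your sketch.
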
 \begin{proof} Set  $F=(\text{second line of \eqref{eq:f variable}}
-\varphi (t) [\Sigma _3,P_d]g ) $. Then, proceeding as in   \eqref{eq:423}, we
have
\begin{equation} \label{eq:g1}  \begin{aligned}
 &   \| g \| _{  L^2_t L^{2,-\tau _1}_x  }\le  \|   e^{-\im t\mathcal{H}_0}Y
(0)\| _{L^2_t L^{2,-\tau _1}_x} +\|   e^{-\im t\mathcal{H}_0}f (0)\| _{L^2_t
L^{2,-\tau _1}_x}
 +C \| F \| _{L^1_t H^{k }_x+ L^2_t H^{k,   \tau _1}_x}\\& +
 \| \int _0^t e^{ \im (t'-t)  \mathcal{H}_0}\text{second line of \eqref{eq:g 2}}
(t') dt'\| _{  L^2_t L^{2,-\tau _1}_x}
  \\& +
  \|  V_1  - \varphi  (t )
  \Sigma_3P_d V_2^{-1  }
   \| _{  L^\infty_t B( L^2_{ x},L^{2,-\tau _1}_x)  }
    \|    V_2g (t  )   \| _{  L^2_{tx}   }. \end{aligned}
 \end{equation}
 We have $\|   e^{-\im t\mathcal{H}_0}f (0)\| _{L^2_t L^{2,-\tau _1}_x}\lesssim
\|   f (0)\| _{L^2_ { tx } } \lesssim \epsilon $. We have by Lemma
\ref{lem:w-est}
\begin{equation*} \label{eq:g2}  \begin{aligned}
 &\|   e^{-\im t\mathcal{H}_0}Y (0)\| _{L^2_t L^{2,-\tau _1}_x} \le   C \sum
_{|\lambda ^{0}\cdot (\mu -\nu )|> m -\omega _0} \epsilon ^{|\mu + \nu |}.
 \end{aligned}
 \end{equation*}
  We have $\| \text{second line of \eqref{eq:f variable}}\| _{L^1_ t L^2  _x+
L^2_t L^{2, \tau _1}_x}\le O(\epsilon ^2) $. Similarly
$\| \varphi (t) [\Sigma _3,P_d]g  \| _{  L^2_t L^{2,-\tau _1}_x}\le C \epsilon
\| g \| _{   L^2_tL^{2, \tau _1}_x}. $ Hence
 $\| F\| _{L^1_t L^{2 }_x\cap L^2_t L^{2, \tau _1}_x}  \le C \epsilon \| g \|
_{L^2_t L^{2,-\tau _1}_x}+O(\epsilon ^2) $. Now we sketch a bound for
 the second line of \eqref{eq:g1}.
 \begin{equation*} \label{eq:g4}  \begin{aligned}
 &\sum _{|\lambda ^0 \cdot(\mu-\nu)|>m-\omega
_0}\| \int _0^t e^{ \im (t'-t)  \mathcal{H}_0}\partial _{ \| f \| _2^2} H
(t')z^\mu (t')\overline{z}^\nu (t')
P_c(\omega _0)  R ^{+}_{\mathcal{H}^{*}}  (\lambda  ^0 \cdot(\mu-\nu) )
    \Sigma _3H_{\mu \nu}^0 dt'\| _{  L^2_t L^{2,-\tau _1}_x}  \\& \le \sum
_{|\lambda ^0 \cdot(\mu-\nu)|>m-\omega
_0} \| \int _0^t \langle t-t' \rangle ^{-\frac{3}{2}} |\partial _{ \| f \| _2^2}
H (t')z^\mu (t')\overline{z}^\nu (t')| dt'\| _{  L^2_t   }  \lesssim C_2
\epsilon ^2 ,
 \end{aligned}
 \end{equation*}
where we used Lemma \ref{lem:w-est} with $\mathcal{H}$ replaced by
$\mathcal{H}^*$. Of the other contributions to the second line of  \eqref{eq:g1}
we focus on the main ones. Specifically we consider   for $\mu
_j\neq 0$

\begin{equation}\label{countingfactors3}
\| \int _0^te^{\im (t'-t) \mathcal{H}_0}P_c(\omega _0) \frac{{z} ^{ {\mu}  }
\overline{ z} ^ {
\nu }}{z _j}
      \partial _{ \overline{{z}} _j}  Z _0
  R ^{+}_{\mathcal{H} }  (\lambda  ^0 \cdot(\mu-\nu) )
  H^0 _{\mu \nu }  dt' \| _{L^2_tL^{2,-\tau _1}_x}\le C
   \|  \frac{{z} ^{ {\mu}  } \overline{ z} ^ {
\nu }}{z _j}
      \partial _{ \overline{{z}} _j}  Z _0 \| _{L^2 _t}
     \end{equation}
  for
$\lambda (\omega _0) \cdot (  {\mu} -\nu  ) >m -\omega _0  .$
We need to show
\begin{eqnarray} \label{countingfactors1} &
\| \frac{{z} ^{ {\mu}  } \overline{ z} ^ {
\nu }}{z _j}
      \partial _{ \overline{{z}} _j}  Z _0  \| _{L^2 _t}
      =O(\epsilon ^2).
  \end{eqnarray}
Let $z ^\alpha
  \overline{{z}}^\beta $ be a generic monomial of $Z_0$.  Then $\partial
_{\overline{z} _j}(z ^\alpha   \overline{{z}}^\beta ) =\beta
_j\frac{z ^\alpha  \overline{z}^{ {\beta} }}{\overline{z}_j} $,
with the nontrivial case  for $\beta _j\neq 0$. By Definition
\ref{def:normal form} we have $\lambda (\omega _0) \cdot (\alpha
-\beta ) =0 $.  { \ref{Assumption:H11}} can be applied and implies
$|\alpha|=|\beta|\geq 2$. Thus in particular one has
\begin{equation*}
\label{ciao} \lambda (\omega _0)\cdot \alpha\geq \lambda _j(\omega
_0)\Rightarrow
\lambda (\omega _0) \cdot(\mu+\alpha)-\lambda _j(\omega _0) >m -\omega _0\ .
\end{equation*}
So the following holds:
\begin{equation*}\label{countingfactors4}
\| \frac{{z} ^{ {\mu}  }  \overline{z} ^ { \nu }}{z _j}
       \frac{z ^\alpha \overline{z}^{
{\beta} }}{\overline{z}_j}  \| _{L^2 _t} \le \| \frac{  z ^ {
\nu }  {z}^{ {\beta} }}{\overline{z}_j}  \| _{L^\infty  _t}\|
\frac{{z} ^{  \mu   }
        z ^\alpha  }{ {z}_j}  \| _{L^2 _t}\leq
CC_2C_3\epsilon^{|\nu|+|\beta|}\leq CC_2C_3\epsilon^2.
  \end{equation*}
  We conclude that the second line
  in \eqref{eq:g1} is $O(\epsilon ^2)$. The estimates
  omitted are easier than  \eqref{countingfactors3}
  and \eqref{countingfactors1}.  $\| V_2g \| _{L^2_ {tx } }$ can
  be bounded as in Lemma \ref{lem:conditional4.21}.
\end{proof}

\section{The Fermi golden rule}
\label{sec:FGR}
\paragraph{Step (iii)} We proceed as in
  \cite{Cuccagna1}.
We recall Remark \ref{rem:finitesums}. In particular we will only consider finite sums
$
 | \mu +\nu|<2N+3.
$
We will have $\lambda _j^0=\lambda _j (\omega _0)$ and
$\lambda _j=\lambda _j (\| f\| _2^2)$ as in Section
\ref{subsec:Normal form}.
  $|\lambda _j^0- \lambda _j|\lesssim  C_1^2\epsilon ^2$
by \eqref{4.4a}, so  in the sequel we can assume that   $\lambda
^0$ satisfies the same inequalities of $\lambda  .$
Set
$ \index{$R_{\mu \nu }^+$}R_{\mu \nu }^+=R_{ \mathcal{H} }^+ (\lambda ^0\cdot
(\mu
-\nu ) ).$
We substitute
\eqref{eq:f variable} in $\im \dot z_j = \frac{\partial}{\partial
\overline{z}_j} H^{(r)}$ obtaining
\begin{equation}\label{eq:FGR0} \begin{aligned} & \im \dot z _j
=
\partial _{\overline{z}_j}(H_2+Z_0) +   \sum  _{
 |\lambda  \cdot (\mu   -\nu )| >m- \omega
_0    }  \nu _j\frac{z ^\mu
 \overline{ {z }}^ { {\nu} } }{\overline{z}_j}  \langle g ,
\im \beta \alpha _2 \Sigma _1\Sigma _3 H
_{\mu \nu }\rangle       +
\partial _{  \overline{z} _j}  \resto
 \\ &  - \sum  _{ \substack{| \lambda  \cdot
(\alpha    -\beta )|> m- \omega _0
\\
 |\lambda  \cdot (\mu -\nu )|> m- \omega
_0   }}  \nu _j\frac{z ^{\mu +\alpha }  \overline{{z }}^ { {\nu}
+\beta}}{\overline{z}_j} \langle   R_{   \alpha \beta}^+H^0 _{
\alpha \beta },\im \beta \alpha _2 \Sigma _1\Sigma _3H  _{\mu \nu
}\rangle     .
\end{aligned}  \end{equation}
We rewrite this as
\begin{eqnarray} \label{equation:FGR1}& \im \dot z _j= \partial
_{\overline{z}_j}(H_2+Z_0) +
     \mathcal{E}_j
\\ &  \label{equation:FGR12} -\sum  _{ \substack{ \lambda
\cdot  \beta    > m- \omega _0
\\
 \lambda   \cdot   \nu >  m- \omega
_0    \\ \lambda \cdot  \beta -\lambda _k    < m- \omega _0     \,
\forall \, k \, \text{ s.t. } \beta _k\neq 0\\ \lambda  \cdot  \nu
-\lambda _k < m- \omega _0    \, \forall \, k \, \text{ s.t. } \nu
_k\neq 0}} \nu _j\frac{ \overline{{z }}^ {\nu +\beta }
}{\overline{z}_j}\langle R_{ 0 \beta}^+
 { H} _{     0\beta }^0, \im \beta \alpha _2 \Sigma _1\Sigma _3H ^0_{0 \nu
}\rangle
 \\ &  \label{equation:FGR13} -\sum  _{ \substack{ \lambda
\cdot  \alpha    >  m- \omega _0
\\
 \lambda  \cdot   \nu >  m- \omega
_0   \\ \lambda \cdot  \alpha -\lambda _k   <  m- \omega _0     \,
\forall \, k \, \text{ s.t. } \alpha _k\neq 0\\ \lambda \cdot  \nu
-\lambda _k   < m- \omega _0   \, \forall \, k \, \text{ s.t. } \nu
_k\neq 0}} \nu _j\frac{z ^{ \alpha } \overline{{z }}^ {\nu }
}{\overline{z}_j}\langle R_{ \alpha 0 }^+
 H _{   \alpha 0}^0,\im \beta \alpha _2 \Sigma _1\Sigma _3H^0 _{0 \nu }\rangle
.
\end{eqnarray}
Here the elements in \eqref{equation:FGR12} will be eliminated
through a new change of variables. $\mathcal{E}_j$ is a reminder
term defined by

\begin{equation}   \begin{aligned} & \mathcal{E}_j:=
\text{rhs\eqref{eq:FGR0}} -\text{\eqref{equation:FGR12}}-
\text{\eqref{equation:FGR13}}   .
\end{aligned} \nonumber
\end{equation}
Set
\begin{equation}\label{equation:FGR2}  \begin{aligned}   &
\zeta _j =z _j -\sum  _{ (\beta , \nu ) \text{ as in \eqref{equation:FGR12}}
   } \frac{ \nu _j}{\lambda ^0 \cdot (\beta  +\nu) }
\frac{ \overline{{z }}^ {\nu +\beta } }{\overline{z}_j}\langle R_{
0 \beta}^+
 { H} _{  0 \beta  }^0, \im \beta \alpha _2 \Sigma _1\Sigma _3H _{0 \nu
}^0\rangle \\&
+ \sum  _{ (\alpha , \nu ) \text{ as in \eqref{equation:FGR13}}}\frac{ \nu
_j}{\lambda ^0 \cdot (\alpha  - \nu) } \frac{z ^{ \alpha }
\overline{ z}^ { \nu }}{\overline{z}_j} \langle R_{ \alpha 0 }^+
H^0 _{ \alpha 0},\im \beta \alpha _2 \Sigma _1\Sigma _3 H _{0 \nu
}^0\rangle \end{aligned}
\end{equation}
Notice that  in \eqref{equation:FGR2}, by $\lambda \cdot \nu >
 \omega _0-m$, we have $| {\nu} |>1$. Then by \eqref{4.4}

\begin{equation}  \label{equation:FGR3} \begin{aligned}   & \| \zeta  -
 z  \| _{L^2_t} \le C \epsilon \sum _{\substack{ \lambda
  \cdot \alpha    >  m- \omega
_0     \\ \lambda \cdot  \alpha -\lambda _k   < m- \omega _0     \,
\forall \, k \, \text{ s.t. } \alpha _k\neq 0 }} \| z ^{\alpha }\|
_{L^2_t} \le CC_2M\epsilon ^2\\&  \| \zeta  -
 z \| _{L^\infty _t} \le C ^3\epsilon ^3
\end{aligned}
\end{equation}
with $C$ the constant in \eqref{L^inftydiscrete} and $M$ the number
of terms in the rhs. In the new variables \eqref{equation:FGR1} is
of the form

\begin{equation} \label{equation:FGR4} \begin{aligned} &
   \im \dot \zeta
 _j=
\partial _{\overline{\zeta}_j}H_2 (\zeta , f ) +
\partial _{\overline{\zeta}_j}Z_0 (\zeta ,  f )+  { \mathcal{D}_j}
 \\ &   -\sum  _{ \substack{ \lambda ^0
\cdot  \alpha =\lambda ^0\cdot   \nu   >  m- \omega _0
   \\ \lambda
\cdot  \alpha -\lambda _k   <  m- \omega _0     \, \forall \, k \,
\text{ s.t. } \alpha _k\neq 0\\ \lambda \cdot  \nu -\lambda _k  <
m- \omega _0   \, \forall \, k \, \text{ s.t. } \nu _k\neq 0}} \nu
_j \frac{\zeta ^{ \alpha } \overline{ \zeta}^ { \nu
}}{\overline{\zeta}_j} \langle R_{ \alpha 0 }^+ H ^0_{ \alpha
0},\im \beta \alpha _2 \Sigma _1\Sigma _3 H ^0_{0 \nu }\rangle .
\end{aligned}
\end{equation}
From these equations by $\sum _j \lambda _j ^0 ( \overline{\zeta}_j
\partial _{\overline{\zeta}_j}(H_2+Z_0) - {\zeta}_j
\partial _{ {\zeta}_j}(H_2+Z_0)    ) =0$  we get

\begin{equation} \label{eq:FGR5} \begin{aligned}
 &\partial _t \sum _{j=1}^n \lambda  _j ^0
 | \zeta _j|^2  =  2  \sum _{j=1}^n \lambda _j^0\Im \left (
 { \mathcal{D}_j}\overline{\zeta} _j \right ) -\\&    -2
\sum _{  (\alpha , \nu ) \text{ as in \eqref{equation:FGR4}}}
\lambda ^0\cdot \nu \Im \left ( \zeta ^{ \alpha } \overline{\zeta
}^ { \nu } \langle R_{ \alpha 0}^+  H_{ \alpha 0}^0,  \im \beta
\alpha _2 \Sigma _1\Sigma _3  H ^0 _{0\nu }\rangle \right ) .
\end{aligned}
\end{equation}
  We have the following lemma,   whose proof (we skip) is similar to Lemma 4.7
\cite{Cuccagna4}:
\begin{lemma}
\label{lemma:FGR1} Assume  inequalities \eqref{4.4}. Then for a
fixed constant $c_0$ we have
\begin{eqnarray}\label{eq:FGR7} \sum _j\| \mathcal{D}_j
\overline{\zeta} _j\|_{ L^1[0,T]}\le (1+C_2)c_0 \epsilon ^{2}
 . \end{eqnarray}
\end{lemma}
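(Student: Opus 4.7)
The plan is to decompose $\mathcal{D}_j$ according to its origin and estimate each summand in $L^1_t([0,T])$ using the bootstrap bounds \eqref{4.4a}--\eqref{4.4bis}, the smoothing estimate for $g$ from Lemma \ref{lemma:bound g}, and the remainder structure provided by Theorem \ref{th:main}(iv). The first step is to rewrite $\mathcal{D}_j$ as $\mathcal{E}_j$ evaluated at $(\zeta,f)$ plus the corrections produced when $z_j=\zeta_j+O(\epsilon^2)$ is substituted in $\im\dot z_j=\partial_{\bar z_j}(H_2+Z_0)+\mathcal{E}_j-\eqref{equation:FGR12}-\eqref{equation:FGR13}$. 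By construction of \eqref{equation:FGR2}, the leading resonant terms in \eqref{equation:FGR12} and \eqref{equation:FGR13} cancel exactly; the residual substitution errors are cubic or higher in $(\zeta,\bar\zeta,f)$, and their pairing with $\bar\zeta_j$ in $L^1_t$ is controlled by \eqref{equation:FGR3} and \eqref{4.4bis}, yielding $O(\epsilon^3)$.

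Next, $\mathcal{E}_j$ splits into three types. The $\partial_{\bar z_j}\resto$ piece is estimated in the spirit of Lemma \ref{lem:bound remainder}: Theorem \ref{th:main}(iv) shows it is bounded by a sum of $|z|^{2N_1+1}$ and quantities controlled by $\|f\|_{L^\infty_x}$, $\|f\|_{H^{k_0,-\tau_0}_x}$, and $\|f\|_{L^2_x}$; pairing with $\bar\zeta_j\sim\epsilon$ and using \eqref{Strichartzradiation}--\eqref{endStrich} together with \eqref{L^inftydiscrete} produces $O(\epsilon^2)$. The single-sum $g$-dependent contribution $\nu_j\bar z_j^{-1}z^\mu\bar z^\nu\langle g,\im\beta\alpha_2\Sigma_1\Sigma_3H_{\mu\nu}\rangle$ with $|\lambda\cdot(\mu-\nu)|>m-\omega_0$ is paired with $\bar\zeta_j$; the resonant index condition guarantees that one can split off a monomial $z^\gamma$ (or $\bar z^\gamma$) with $\lambda\cdot\gamma>m-\omega_0$, whose $L^2_t$-norm is $\le C_2\epsilon$ by \eqref{4.4}, while the remaining polynomial factor is $L^\infty_t$-bounded by $\epsilon$. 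Combined with $\|g\|_{L^2_tL^{2,-\tau_1}_x}\le C_0\epsilon$ from Lemma \ref{lemma:bound g} and the exponential spatial decay of $H_{\mu\nu}$, Cauchy--Schwarz produces a contribution of size $C_2c_0\epsilon^2$.

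The third type is the residual double-sum contribution, consisting of all quadruples $(\mu,\nu,\alpha,\beta)$ in the last sum of \eqref{eq:FGR0} that do not match either of the selection rules defining \eqref{equation:FGR12} and \eqref{equation:FGR13}. For every such quadruple, the failure of one of the minimality conditions forces $\lambda\cdot(\mu+\alpha)$ or $\lambda\cdot(\nu+\beta)$ to exceed $m-\omega_0$ even after subtracting a single $\lambda_k$ corresponding to a nonzero component. This again allows one to split off a factor $z^\gamma$ (or $\bar z^\gamma$) with $\lambda\cdot\gamma>m-\omega_0$, bounded in $L^2_t$ by $C_2\epsilon$ via \eqref{4.4}; the remaining polynomial is controlled in $L^\infty_t$ by a power of $\epsilon$, and the matrix elements $\langle R^+_{\alpha\beta}H^0_{\alpha\beta},\im\beta\alpha_2\Sigma_1\Sigma_3 H^0_{\mu\nu}\rangle$ are finite by the absence of resonances \ref{Assumption:H7}--\ref{Assumption:H8} and the exponential decay of $H^0_{\mu\nu}$. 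Adding the finitely many pieces together with the cubic corrections from the first paragraph gives $\sum_j\|\mathcal{D}_j\bar\zeta_j\|_{L^1_t}\le(1+C_2)c_0\epsilon^2$.

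The main obstacle is the combinatorial bookkeeping for the third type: one must verify that every residual index combination admits such a factorization into an $L^2_t$-integrable resonant monomial and an $L^\infty_t$-bounded remainder. This rests essentially on \ref{Assumption:H11} (which excludes accidental small integer linear combinations of the $\lambda_j$, so that $|\lambda\cdot(\mu-\nu)|\gtrsim1$ whenever $\lambda\cdot(\mu-\nu)\neq0$), together with the normal-form structure of $Z_0$ of Definition \ref{def:normal form}. The analysis mirrors that of Lemma 4.7 in \cite{Cuccagna4}, which the authors explicitly flag as their template, and the only adaptation required for the Dirac setting is to use the smoothing estimate of Lemma \ref{lemma:bound g} in place of its NLS counterpart.
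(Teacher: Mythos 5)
The paper itself gives no proof of this lemma (it is explicitly skipped, with a pointer to Lemma 4.7 of \cite{Cuccagna4}), and your outline does follow that template: decompose $\mathcal{D}_j$ into the substitution errors coming from \eqref{equation:FGR2}, the $g$--coupling sum, $\partial_{\overline{z}_j}\resto$, and the residual part of the double sum in \eqref{eq:FGR0}, and estimate each pairing with $\overline{\zeta}_j$ in $L^1_t$. Your treatment of the $g$--term is the correct mechanism (Cauchy--Schwarz in $t$ between one resonant monomial, bounded in $L^2_t$ by $C_2\epsilon$ via \eqref{4.4}, and $\langle g,\cdot\rangle$, bounded in $L^2_t$ by Lemma \ref{lemma:bound g}), and it is precisely this term that produces the factor $C_2$ in the statement. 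But the remaining steps have concrete gaps, exactly at the points where the content of the lemma lies.

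First, for the purely polynomial contributions (the residual double sum and the $z$--part of $\partial_{\overline{z}_j}\resto$) your recipe ``split off one resonant factor in $L^2_t$, bound the rest in $L^\infty_t$'' only yields an $L^2_t$ bound on the product, not the required $L^1_t$ bound on $[0,T]$ with $T$ arbitrary: one needs Cauchy--Schwarz with \emph{two} $L^2_t$--controlled (i.e. resonant) factors, plus at least one leftover factor bounded through \eqref{4.4bis}; it is exactly the failure of the selection rules in \eqref{equation:FGR12}--\eqref{equation:FGR13} that guarantees such a leftover factor for the residual terms. Second, the statement requires the bound $(1+C_2)c_0\epsilon^2$ with $c_0$ \emph{independent} of $C_1,C_2,C_3$, because the closure after \eqref{eq:FGR10} only works if the estimate is linear in $C_2$; your ``$O(\epsilon^2)$'' claims hide the constants. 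Any term estimated by two resonant factors costs $C_2^2\epsilon^2$ and is admissible only if it carries extra powers of $\epsilon$ so that it can be absorbed for $\epsilon\le\epsilon_0(C_1,C_2,C_3)$. This works for the residual double sum, but for the pure $z$--part of $\partial_{\overline{z}_j}\resto$ with $r=2N_1+1$ the monomials paired with $\overline{\zeta}_j$ have degree exactly $2(N_1+1)$, every degree--$(N_1+1)$ factor is resonant by \ref{Assumption:H9}, and the natural bound is $C_2^2\epsilon^2$ with no extra smallness; here one needs an additional input (for instance pushing the Birkhoff normal form to degree $2N_1+3$, which is what \ref{Assumption:H10}--\ref{Assumption:H11} and Remark \ref{rem:finitesums} make room for), and your sketch does not notice or address this. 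Finally, invoking \eqref{Strichartzradiation}--\eqref{endStrich} is circular: these are conclusions of Theorem \ref{proposition:mainbounds}; inside the continuation argument only \eqref{4.4a}--\eqref{4.4bis} (whose constants $C_1,C_3$ must then also be tracked and shown to enter only with extra powers of $\epsilon$) and Lemma \ref{lemma:bound g} are available.
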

   For the sum in the second line of \eqref{eq:FGR5}
we get

\begin{equation} \label{eq:FGR8} \begin{aligned} & 2\sum _{r>m- \omega
_0  } r
    \Im \left   \langle R_{
\mathcal{H}}^+ (r )\sum _{  \lambda ^0\cdot \alpha =r }\zeta ^{
\alpha } H _{  \alpha 0}^0, \im \beta \alpha _2 \Sigma _1\Sigma _3\sum _{
\lambda
^0\cdot \nu =r}({\zeta} ^{ \nu })^* H ^0_{0\nu
 }  \right \rangle     = \\&   2\sum _{r>m- \omega
_0   } r     \Im \left    \langle R_{ \mathcal{H}}^+ (r )\sum _{
\lambda ^0\cdot \alpha =r }\zeta ^{ \alpha } H _{  \alpha 0}^0,
\Sigma _3\left [\sum _{  \lambda ^0\cdot \alpha =r}\zeta ^{ \alpha
} H ^0_{  \alpha 0} \right ] ^*\right \rangle  = 2\sum _{r>m- \omega
_0   } r     \Im \left    \langle R_{ \mathcal{H}}^+ (r ) \mathbf{ H} _{  r} ,
\Sigma _3 \mathbf{ H} _{  r} ^*\right \rangle  ,
\end{aligned}
\end{equation}
where $\mathbf{H}_r:=\sum _{ \lambda ^0\cdot \alpha =r}\zeta ^{ \alpha } H^0
_{ \alpha 0}$ and where
we have used $\im \beta \alpha _2 \Sigma _1\Sigma _3H_{\mu
\nu }^0= -\Sigma _3 \im \beta \alpha _2 \Sigma _1H_{\nu \mu }^0=
 \Sigma _3 \im \beta \alpha _2 CH_{\nu \mu }^0= \Sigma _3 ({H}^0_{\nu
\mu})^* $ by \eqref{chi.11}.

\begin{lemma}
\label{lemma:FGR81} Consider  $ \mathbf{H}_r $ in \eqref{eq:FGR8}. Assume
$m-\omega _{0}<r < m +\omega _{0}$. Then
 \begin{equation}\label{eq:FGR84} \Im \left    \langle R_{ \mathcal{H}}^+ (r )
\mathbf{ H} _{  r} , \Sigma _3 \mathbf{ H} _{  r} ^*\right \rangle
\ge 0. \end{equation} If we assume {\ref{Assumption:H3}}, in
particular if $m/3 < \omega _{0}<m$, then \eqref{eq:FGR84} holds
for all $ \mathbf{H}_r $ in \eqref{eq:FGR8}.
\end{lemma}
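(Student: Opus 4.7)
The plan is to reduce $\Im \langle R_\mathcal{H}^+(r) \mathbf{H}_r, \Sigma_3 \mathbf{H}_r^*\rangle$ to the same quantity for the \emph{free} resolvent $R_{\mathcal{H}_{\omega_0,0}}^+(r)$, for which the block structure of Lemma \ref{lem:flat R} and Stone's formula yield positivity whenever only the "upper" scattering channel is open. First I would derive the resolvent identity
\begin{equation*}
R_\mathcal{H}^+(r) - R_\mathcal{H}^-(r) = \bigl(1 + R_{\mathcal{H}_{\omega_0,0}}^+(r)\, V_{\omega_0}\bigr)^{-1} \bigl(R_{\mathcal{H}_{\omega_0,0}}^+(r) - R_{\mathcal{H}_{\omega_0,0}}^-(r)\bigr) \bigl(1 + V_{\omega_0}\, R_{\mathcal{H}_{\omega_0,0}}^-(r)\bigr)^{-1}
\end{equation*}
by subtracting the two Lippmann--Schwinger factorizations $(1 + R_{\mathcal{H}_{\omega_0,0}}^\pm V_{\omega_0}) R_\mathcal{H}^\pm = R_{\mathcal{H}_{\omega_0,0}}^\pm = R_\mathcal{H}^\pm (1 + V_{\omega_0} R_{\mathcal{H}_{\omega_0,0}}^\pm)$ and rearranging. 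The two dressing operators are bounded and invertible on the relevant weighted spaces $L^{2,-\tau}$ by \ref{Assumption:H6}--\ref{Assumption:H8} together with Lemma \ref{lem:smooth1}, since $r$ is neither an eigenvalue nor a resonance of $\mathcal{H}|_\mathbf{X}$.

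The key step is to use the $\Sigma_3$-symmetry of Lemma \ref{lem:SymmLin} to cancel these dressings. Writing $(\,\cdot\,,\,\cdot\,)$ for the standard $L^2$ inner product (so that $\langle u, \Sigma_3 v^*\rangle = (u, \Sigma_3 v)$), one has $V_{\omega_0}^* = \Sigma_3 V_{\omega_0} \Sigma_3$ and $(R_{\mathcal{H}_{\omega_0,0}}^\pm(r))^* = \Sigma_3 R_{\mathcal{H}_{\omega_0,0}}^\mp(r) \Sigma_3$; consequently
\begin{equation*}
\bigl((1 + R_{\mathcal{H}_{\omega_0,0}}^+(r) V_{\omega_0})^{-1}\bigr)^* = \Sigma_3 \bigl(1 + V_{\omega_0} R_{\mathcal{H}_{\omega_0,0}}^-(r)\bigr)^{-1} \Sigma_3.
\end{equation*}
Setting $\Psi := (1 + V_{\omega_0} R_{\mathcal{H}_{\omega_0,0}}^-(r))^{-1} \mathbf{H}_r$, the left dressing in the resolvent identity moves across the pairing with $\Sigma_3 \mathbf{H}_r$ by this adjoint relation and reconstructs $\Sigma_3 \Psi$ exactly, yielding the clean reduction
\begin{equation*}
\Im \langle R_\mathcal{H}^+(r) \mathbf{H}_r, \Sigma_3 \mathbf{H}_r^*\rangle = \Im \langle R_{\mathcal{H}_{\omega_0,0}}^+(r)\, \Psi, \Sigma_3 \Psi^*\rangle.
\end{equation*}

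By Lemma \ref{lem:flat R}, $R_{\mathcal{H}_{\omega_0,0}}^+(r)$ is block diagonal with blocks $R_{D_m}^+(r \pm \omega_0)$, so writing $\Psi = {}^t(\Psi_1, \Psi_2)$ gives
\begin{equation*}
\Im \langle R_{\mathcal{H}_{\omega_0,0}}^+(r) \Psi, \Sigma_3 \Psi^*\rangle = \Im \bigl(R_{D_m}^+(r+\omega_0) \Psi_1, \Psi_1\bigr) - \Im \bigl(R_{D_m}^+(r-\omega_0) \Psi_2, \Psi_2\bigr).
\end{equation*}
Since $D_m$ is self-adjoint, Stone's formula gives $\Im(R_{D_m}^+(\mu) v, v) \geq 0$ when $|\mu| \geq m$ and vanishes on the gap $|\mu| < m$. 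Under the hypothesis $m-\omega_0 < r < m+\omega_0$, the first argument $r + \omega_0 \in (m,\, m + 2\omega_0)$ lies in the spectrum of $D_m$ and so contributes non-negatively, whereas $r - \omega_0 \in (m-2\omega_0,\, m) \subset (-m,m)$ (using $\omega_0 < m$) lies in the spectral gap of $D_m$, making the second term vanish. This proves \eqref{eq:FGR84}.

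For the second part, every frequency $r$ in the sum \eqref{eq:FGR8} has the form $r = \lambda^0 \cdot \alpha$ for some multi-index $\alpha$ subject to the minimality constraint in \eqref{equation:FGR13}: $\lambda^0 \cdot \alpha > m - \omega_0$, yet $\lambda^0 \cdot \alpha - \lambda_k^0 < m - \omega_0$ whenever $\alpha_k \neq 0$. Hence $r < (m - \omega_0) + \max_k \lambda_k^0 < 2(m - \omega_0)$, using $\lambda_k^0 < m - \omega_0$ from \ref{Assumption:H6}. The hypothesis $\omega_0 > m/3$ then gives $2(m-\omega_0) < m + \omega_0$, so every admissible $r$ falls in the range handled by the first part. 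The chief difficulty lies in the algebraic cancellation of the dressings: it rests on the fact that $V_{\omega_0}^*$ differs from $V_{\omega_0}$ by $\Sigma_3$-conjugation while $(R_{\mathcal{H}_{\omega_0,0}}^+)^*$ differs from $R_{\mathcal{H}_{\omega_0,0}}^-$ by the same conjugation, so that the $+$ factor on the left exactly converts into the $-$ factor defining $\Psi$ on the right. Without this $\pm$-swap built into \eqref{Eq:SymmLin1}, cross terms between the $+$ and $-$ boundary values would survive and destroy definiteness; once the reduction is secured, the positivity becomes the classical fact that a self-adjoint operator's resolvent has non-negative imaginary part, combined with the trivial observation that the "wrong sign" channel is closed precisely when $r < m + \omega_0$.
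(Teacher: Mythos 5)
Your proof is correct, but it follows a genuinely different route from the paper's. The paper reduces to the free operator by conjugating with the wave operator $\mathcal{Z}_+$ of Theorem \ref{cor:wave}: it sets $\mathbf{F}_r=\mathcal{Z}_+\mathbf{H}_r$ and rewrites $\Im\langle R^+_{\mathcal H}(r)\mathbf{H}_r,\Sigma_3\mathbf{H}_r^*\rangle$ as $\Im\langle R^+_{\mathcal{H}_{\omega_0,0}}(r)\mathbf{F}_r,\Sigma_3\mathbf{F}_r^*\rangle$, then concludes exactly as you do, via the block structure of Lemma \ref{lem:flat R}, the identity $\varepsilon\|R_{D_m}(r+\omega_0+\im\varepsilon)a\|_2^2\ge 0$ for the open channel, and the self-adjointness of $R_{D_m}(r-\omega_0)$ in the gap for the closed one. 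You instead bypass the wave operators entirely: you factor the resolvent jump as $(1+R_0^+V)^{-1}(R_0^+-R_0^-)(1+VR_0^-)^{-1}$ and use \eqref{Eq:SymmLin1} (in the form $V^\ast=\Sigma_3V\Sigma_3$, $(R_0^{\pm})^\ast=\Sigma_3R_0^{\mp}\Sigma_3=R_0^{\mp}$, since $\mathcal{H}_{\omega_0,0}$ commutes with $\Sigma_3$) to move the left dressing across the pairing, where it exactly reproduces $\Sigma_3\Psi$; I checked this adjoint cancellation and it is sound. What each approach buys: yours is more self-contained and transparent, resting only on the limiting absorption and invertibility bounds of Lemma \ref{lem:smooth2} (which also justify the paper's route, since they underlie Theorem \ref{cor:wave}), whereas the paper's is a one-line reduction once the scattering theory of the Appendix is in place. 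Two small points you should make explicit: the inverses $(1+R_0^{\pm}V)^{-1}$ at the real point $r$ are only provided on the symmetric subspace $\mathbf{X}$ and away from the thresholds $\pm(m\pm\omega_0)$, which is fine because $\mathbf{H}_r\in\mathbf{X}$ and \ref{Assumption:H10} excludes $r=m\pm\omega_0$; and $\Psi-\mathbf{H}_r=-V(\cdots)$ is rapidly decaying, so $\Psi_2\in L^2$ and the gap-channel form is genuinely real. Finally, for the second assertion your direct bound $r<(m-\omega_0)+\max_k\lambda_k^0<2(m-\omega_0)<m+\omega_0$ (using only \ref{Assumption:H6} and $\omega_0>m/3$) is simpler than the paper's contradiction argument through the integers $N_k$ of \ref{Assumption:H9}, and equally valid.
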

\begin{proof} We proceed as in Lemma 10.5 \cite{Cuccagna1}.
Set  $\mathbf{ F} _{  r}=\mathcal{Z}_+\mathbf{ H}
_{ r}$, where for $\mathcal{Z}_+$   with $\omega =\omega _0$, see  Theorem \ref{cor:wave} in the Appendix.
Set $\mathbf{F}_r= \begin{pmatrix}a \\b \end{pmatrix} $.
Then
\begin{equation*} \label{eq:FGR81} \begin{aligned} &
\Im \left    \langle R_{ \mathcal{H}}^+ (r )  \mathbf{ H}
_{ r},
\Sigma _3\mathbf{ H}
_{ r} ^*\right \rangle =\lim _{\varepsilon \searrow 0} \Im  \left    \langle R_{ \mathcal{H}}  (r +\im \varepsilon ) \mathbf{ H}
_{ r},
\Sigma _3\mathbf{ H}
_{ r} ^*\right \rangle   =\lim _{\varepsilon \searrow 0} \Im \left  \langle  R_{ \mathcal{H}_{ \omega _0,0}}  (r +\im \varepsilon )  \mathbf{F}_r ,
\Sigma _3 {  \mathbf{F}}_r^*\right \rangle \\& =\lim _{\varepsilon \searrow 0}  \Im
\left \langle R_{ D_m}  (r +\omega +\im \varepsilon ) a , a ^*\right \rangle  - \lim _{\varepsilon \searrow 0}  \Im
\left \langle R_{ D_m}  (r -\omega +\im \varepsilon ) b , b ^*\right \rangle \\& =\frac{1}{2} \lim _{\varepsilon \searrow 0}  \varepsilon \|  R_{ D_m}  (r +\omega +\im \varepsilon ) a \| _{L^2}  ^2 - \Im
\left \langle R_{ D_m}  (r -\omega   ) b , b ^*\right \rangle = \frac{1}{2} \lim _{\varepsilon \searrow 0}  \varepsilon \|  R_{ D_m}  (r +\omega +\im \varepsilon ) a \| _{L^2}  ^2 \ge 0.
\end{aligned}
\end{equation*}
Here we exploited that $a,b\in L^2(\R ^3)$, that  $ r -\omega < m$  and so  $R_{ D_m}  (r -\omega  )$ is a well defined selfadjoint operator in $L^2(\R ^3)$,    that
$R_{ D_m} (z ) - R_{ D_m} ( {z}^*  ) =2\im R_{ D_m} (z )   R_{ D_m} ( {z}^* )\Im z$ and that $R_{ D_m} ( {z}^*  )=(R_{ D_m} (z ))^*$.

  Let us consider
   $r=\lambda   \cdot  \mu $ with $\mu \in \mathbb{N}_0^n$, $\lambda   \cdot
\mu >m-\omega  _{0}$ and $\lambda
\cdot  \mu -\lambda   _k   <  m- \omega  _{0} $ for all $k$
 { s.t. } $\mu  _k\neq 0 $.  Suppose $\lambda   \cdot  \mu> m+\omega  _{0}.$
Then  we get
 $m- \omega  _{0}+ \lambda   _k>    m+\omega  _{0} \Rightarrow \lambda   _k>
2\omega  _{0}.$
 Let $N_k\in \mathbb{N}$ such that $N_k\lambda _k
 < m- \omega  _{0} <( N_k+1)\lambda _k$ as in {\ref{Assumption:H9}}.
 Then $ (2N_k +1) \omega _{0} < m$. So, if we assume as in
 {\ref{Assumption:H3}}
that $\omega _{0}>m/3$, we obtain  $\lambda   \cdot  \mu  < m +\omega _{0}$.
This shows that the assumption $\lambda   \cdot  \mu  > m +\omega _{0}$ is
absurd.
  \end{proof}

\begin{remark}
\label{rem:H3bis}
 Notice that to get the
 conclusions of Lemma \ref{lemma:FGR81}
 we can ease  the constraint $3\omega >m $ to $ (2N_k +1) \omega > m$ for all
$k=1,...,n$.
\end{remark}

\bigskip
Now we will assume the following hypothesis.

\begin{enumerate}[label={\bf(H:\arabic{*}')},ref={\bf(H:\arabic{*}')}]
\setcounter{enumi}{11}
\item\label{AssumptionH:12b}  We assume
that for some fixed constant  $C>0$, for any vector $\zeta \in
\mathbb{C}^n$ we have:
\begin{equation} \label{eq:FGR} \begin{aligned} & \sum _{  (\alpha , \nu ) \text{ as in \eqref{equation:FGR4}}}
\lambda ^0\cdot \nu \Im \left ( \zeta ^{ \alpha } \overline{\zeta
}^ { \nu } \langle R_{ \alpha 0}^+  H_{ \alpha 0}^0,  \im \beta
\alpha _2 \Sigma _1\Sigma _3  H ^0 _{0\nu }\rangle \right ) \\&
 \geq C \sum _{ \substack{ \lambda ^0\cdot  \alpha
> m- \omega
_0
\\
   \lambda ^0
\cdot  \alpha -\lambda _k  ^0  <m- \omega _0   \, \forall \, k \,
\text{ s.t. } \alpha _k\neq 0}}  | \zeta ^\alpha  | ^2 .
\end{aligned}
\end{equation}
\end{enumerate}

\begin{remark}
\label{rem:FGR} By Lemma
\ref{lemma:FGR81} we have lhs\eqref{eq:FGR}$\ge 0$. It is likely
then that {\ref{AssumptionH:12b}} is true generically   in the
class of non linearities we consider. But we do not try to prove this
point.
\end{remark}

By {\ref{AssumptionH:12b}} we have

\begin{equation} \label{eq:FGR10} \begin{aligned} &
2\sum  _{j=1}^{n} \lambda _j^0\Im \left ( \mathcal{D}_j   \overline{\zeta}
_j \right )\gtrsim \partial _t \sum  _{j=1}^{n} \lambda _j^0| \zeta _j|^2  +
      \sum _{ \substack{ \lambda ^0\cdot  \alpha
> m- \omega
_0
\\
   \lambda ^0
\cdot  \alpha -\lambda _k  ^0  < m- \omega _0   \, \forall \, k \,
\text{ s.t. } \alpha _k\neq 0}}  | \zeta ^\alpha  | ^2.
\end{aligned}
\end{equation}
Then, for $t\in [0,T]$ and assuming Lemma \ref{lemma:FGR1} we have

\begin{equation}  \sum  _{j=1}^{n} \lambda _j ^0 | \zeta
_j(t)|^2 +\sum _{   \alpha   \text{ as in \eqref{eq:FGR10}}}  \| \zeta ^\alpha \|
_{L^2(0,t)}^2\lesssim \epsilon ^2+ C_2\epsilon ^2.\nonumber
\end{equation}
By \eqref{equation:FGR3} this implies $\| z ^\alpha \|
_{L^2(0,t)}^2\lesssim \epsilon ^2+ C_2\epsilon ^2$ for all the
above multi indexes. So, from  $\| z ^\alpha \|
_{L^2(0,t)}^2\lesssim
  C_2^2\epsilon ^2$ we conclude $\| z ^\alpha \|
_{L^2(0,t)}^2\lesssim  C_2\epsilon ^2$.

Note that as the condition $|\lambda  \cdot (\mu   -\nu )| >m- \omega$
implies that $|\mu   +\nu | \geq 2$, \eqref{eq:FGR0} implies that $\dot{z}$ is
integrable so that it has a limit at infinity which is necessarily $0$.This
yields Theorem \ref{proposition:mainbounds} and completes
the proof of Theorem \ref{th:AsStab}.


\subsection{Proof of Theorem \ref{theorem-1.3}}
\label{subsec:1.3} We only sketch the proof, which is similar to
that of Theorem \ref{theorem-1.2}. For a particular solution
satisfying the hypotheses of Theorem \ref{theorem-1.3} we need to
prove the conclusions of Theorem \ref{proposition:mainbounds}. The
argument is exactly the same of Section \ref{sec:dispersion} until
we reach subsection \ref{sec:FGR}, that is the task of estimating
  $z$. Instead of \eqref{equation:FGR4} we have

  \begin{equation*} \label{equation:FGR41} \begin{aligned} &
   \im \dot \zeta
 _j=\varepsilon _j
\partial _{\overline{\zeta}_j}H_2 (\zeta , f ) +\varepsilon _j
\partial _{\overline{\zeta}_j}Z_0 (\zeta ,  f )+  \varepsilon _j\mathcal{D}_j
 \\ &   -\varepsilon _j\sum _{  (\alpha , \nu ) \text{ as in \eqref{equation:FGR4}}} \nu _j
\frac{\zeta ^{ \alpha } \overline{ \zeta}^ { \nu
}}{\overline{\zeta}_j} \langle R_{ \alpha 0 }^+ H ^0_{ \alpha 0},\im
\beta \alpha _2 \Sigma _1\Sigma _3 H ^0_{0 \nu }\rangle .
\end{aligned}
\end{equation*}
From these equations by $\sum _j \lambda _j ^0 ( \overline{\zeta}_j
\partial _{\overline{\zeta}_j}(H_2+Z_0) - {\zeta}_j
\partial _{ {\zeta}_j}(H_2+Z_0)    ) =0$   we get

\begin{equation} \label{eq:FGR51} \begin{aligned}
 &\partial _t \sum _{j=1}^n \varepsilon _j\lambda  _j ^0
 | \zeta _j|^2  =  2  \sum _{j=1}^n \lambda _j^0\Im \left (
\mathcal{D}_j \overline{\zeta} _j \right ) -\\&    -2 \sum _{  (\alpha , \nu ) \text{ as in \eqref{equation:FGR4}}} \lambda
^0\cdot \nu \Im \left ( \zeta ^{ \alpha } \overline{\zeta }^ { \nu }
\langle R_{ \alpha 0}^+  H_{ \alpha 0}^0,  \im \beta \alpha _2
\Sigma _1\Sigma _3  H ^0 _{0\nu }\rangle \right ) .
\end{aligned}
\end{equation}
The estimate of the reminder term in Lemma \ref{lemma:FGR1}
continues to hold. The   last line of \eqref{eq:FGR51} is negative
by  \eqref{eq:FGR8}.  We assume it is strictly negative and
that in particular \eqref{eq:FGR} holds.  Then we get

\begin{equation} \label{eq:FGR101} \begin{aligned} &
\sum _{  \alpha   \text{ as in \eqref{eq:FGR10}}}  | \zeta ^\alpha  | ^2\lesssim
-\partial _t \sum  _{j=1}^{n}\varepsilon _j \lambda _j^0| \zeta
_j|^2 + 2\sum  _{j=1}^{n} \lambda _j^0\Im \left ( \mathcal{D}_j
\overline{\zeta} _j \right )
      .
\end{aligned}
\end{equation}
When we integrate in $(0,t)$ for $t\le T$  we get

\begin{equation}  \sum _{ \alpha \text{ as in \eqref{eq:FGR10}}}  \| \zeta ^\alpha \|
_{L^2(0,t)}^2\lesssim \epsilon ^2+ C_2\epsilon ^2.\nonumber
\end{equation}
In the rhs we have used  
the hypothesis $|z(t)|\le \epsilon $ for all $t\ge
0$ to bound the first summation in the rhs of \eqref{eq:FGR101}.
This yields Theorem \ref{theorem-1.3}.

\subsection{Proof of Theorem \ref{theorem-1.4}}
\label{subsec:1.4} Also here we just sketch the proof, which is
similar to \cite{Cuccagna2}. The proof is by contradiction. If the
statement of Theorem \ref{theorem-1.4} is wrong, then  for $ |z(0)|
+ \| f(0)\| _{H^{k_0} }\le \delta   $   with $\delta >0$
sufficiently small, we can assume   $ |z(t)| \le \epsilon     $ for
all $t\ge 0$ for any preassigned $ \epsilon  >0$. This implies that
we can apply Theorem \ref{theorem-1.3}. When get

\begin{equation}  \label{eq:4310} \sum _{ \alpha \text{ as in \eqref{eq:FGR10}}}  \| \zeta ^\alpha \|
_{L^2(0,t)}^2\lesssim \sum  _{j=1}^{n}\varepsilon _j \lambda _j^0 (
| \zeta _j(0) |^2-| \zeta _j(t) |^2) +2\int _0^t \sum  _{j=1}^{n}
\lambda _j^0\Im \left ( \mathcal{D}_j \overline{\zeta} _j \right
).
\end{equation}
Suppose $\varepsilon _{j_0}=-1$. Then take initial datum
$z_j(0) =0$ for $j\neq j_0  $, $z_ { j_0}=\delta $ and $f(0) =0$.
 By $f(0) =0$ and Lemma \ref{lem:conditional4.21} for $\psi (0)=0$ we get
 for $t\in \R ^+$

 \begin{equation*} \label{eq:4312}  \begin{aligned}
 &     \|  f \| _{L^p_t B^{  k_0-\frac{3}{p}} _{q,2} \cap L^2_t H^{  k_0 ,-\tau
_0}_x \cap L^2_t L^{ \infty}_x }\lesssim \mathcal{Y}^2   +\| R_1 \| _{L^1_t H^{
k_0}_x   }+\| R_2 \| _{   L^2_t H^{ k_0 , \tau _0}_x }  \end{aligned}
\end{equation*}
    \begin{equation*}\label{eq:4313}\mathcal{Y}^2 :=\sum _{ \alpha \text{ as in \eqref{eq:FGR10}}}\| z ^\alpha \| _{L^2_t }.\end{equation*}
Similarly  \begin{equation*} \label{eq:43120}  \begin{aligned}
 &     \|  g \| _{L^2_t L^{2,- \tau _1}_x }\lesssim \delta ^2+\epsilon
\mathcal{Y}^2 +\| R_1 \| _{L^1_t H^{ k_0}_x   }+\| R_2 \| _{   L^2_t H^{ k_0 ,
\tau _0}_x }. \end{aligned}
\end{equation*}
   Then,
proceeding as in \cite{Cuccagna2,Cuccagna4} one improves the rhs in
\eqref{eq:FGR7}. Indeed, see Lemma 4.9 \cite{Cuccagna4}, we have
\begin{equation}\sum _j\|\mathcal{D}_j \overline{\zeta} _j\|_{
L^1(\R ^+)}\le C  \mathcal{Y} \| g \| _{L^2_t H^{-4,-s}_x } + C \epsilon
 \mathcal{Y}^{2}  + C  \| R_1 \| _{L^1_t H^{ k_0}_x   }+C\| R_2 \| _{   L^2_t
H^{ k_0 , \tau _0}_x } .\nonumber
\end{equation}
Then, one can see that $ \| R_1 \| _{L^1_t H^{ k_0}_x   }+\| R_2 \| _{   L^2_t
H^{ k_0 , \tau _1}_x }\lesssim o(1) \delta $, going through Lemma \ref{lem:bound
remainder}, where $o(1)\to 0$ as $\delta \to 0$. Then from \eqref{eq:4310} we
get $   \mathcal{Y}^2\lesssim -   \delta  +o(1)  \delta
,$
which is absurd.

\appendix

\section{Resolvent estimates and wave operators}

\begin{lemma}  \label{lem:smooth1} We assume {\ref{Assumption:H1}} and
{\ref{Assumption:H6}}--{\ref{Assumption:H8}}. Then for any $\tau >1$  there exists a
constant $ C_1=C _1(\tau , \omega)$  upper semicontinuous in  $\omega$
s.t.  for any $u _0(x) \in L^2(\R ^3,\mathbb{C}^8)$ and any
$\varepsilon
>0$ we have

   \begin{equation} \label{eq:smooth1}
   \| \langle x \rangle ^{-\tau}  R_{  \mathcal{H}_{\omega}}(\lambda \pm \rmi
\varepsilon )
    P_c( \mathcal{H}_{\omega}) u _0\| _{L_{\lambda ,x } ^2(\R ^4)}\le C _{1} \|
P_c( \mathcal{H}_{\omega}) u _0\| _{  L^2(\R ^3) }.
   \end{equation}

   \end{lemma}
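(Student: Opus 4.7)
The plan is to reduce \eqref{eq:smooth1} to the corresponding estimate for the self-adjoint free operator $\mathcal{H}_{\omega,0}$ by a symmetric second-resolvent identity, absorbing the perturbation $V_\omega$ into a Birman--Schwinger kernel that is kept under control by the projection $P_c(\mathcal{H}_\omega)$ and by the spectral assumptions {\ref{Assumption:H7}}--{\ref{Assumption:H8}}. Since $\mathcal{H}_{\omega,0}=\mathrm{diag}(D_m-\omega,D_m+\omega)$ is self-adjoint and each block reduces, via $D_m^2=-\Delta+m^2$, to the free Helmholtz resolvent, Kato's smoothing principle applied through Lemma \ref{lem:absflat} yields the base bound
\begin{equation*}
\bigl\|\langle x\rangle^{-\tau}R_{\mathcal{H}_{\omega,0}}(\lambda\pm\im\varepsilon)\psi\bigr\|_{L^2_{\lambda,x}(\R^4)}\le C(\tau)\|\psi\|_{L^2(\R^3)},
\end{equation*}
uniformly in $\varepsilon>0$; this is the estimate to be propagated to $\mathcal{H}_\omega$.

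Exploiting the exponential decay of $\phi_\omega$ from {\ref{Assumption:H2}}, I factor $V_\omega=B^*A$ with $A=e^{-a|x|}\widetilde{V}_\omega$, $B=e^{-a|x|}I$ for a suitable small $a>0$; these are bounded multiplication operators between any pair of polynomially weighted spaces. The second-resolvent identity
\begin{equation*}
R_{\mathcal{H}_\omega}(z)=R_{\mathcal{H}_{\omega,0}}(z)-R_{\mathcal{H}_{\omega,0}}(z)\,B^*\bigl[I+A R_{\mathcal{H}_{\omega,0}}(z)B^*\bigr]^{-1}A R_{\mathcal{H}_{\omega,0}}(z)
\end{equation*}
extends to the boundary values $\lambda\pm\im 0$ by Lemma \ref{lem:absflat}(ii). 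Taking the $L^2_{\lambda,x}$ norm of both sides and applying the free smoothing bound to each outer $R_{\mathcal{H}_{\omega,0}}$ factor reduces the problem to proving that
\begin{equation*}
\mathcal{T}(\lambda):=\bigl[I+A R_{\mathcal{H}_{\omega,0}}^{\pm}(\lambda)B^*\bigr]^{-1}P_c(\mathcal{H}_\omega)
\end{equation*}
is uniformly bounded on $L^2(\R^3,\C^8)$ for $\lambda\in\R$.

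The kernel $K(\lambda):=A R_{\mathcal{H}_{\omega,0}}^{\pm}(\lambda)B^*$ is compact on $L^2$ (Rellich--Kondrachov via the exponential weights combined with Lemma \ref{lem:absflat}), norm-continuous in $\lambda$ on every compact subset of $\R$, and, by the explicit kernel \eqref{eq:flat R1}, has norm tending to zero as $|\lambda|\to\infty$. By the analytic Fredholm theorem, $I+K(\lambda)$ is invertible except at those $\lambda$ where its kernel is nontrivial, and by the Birman--Schwinger correspondence such a $\lambda$ is either an eigenvalue of $\mathcal{H}_\omega$ or a resonance in the sense of \eqref{Eq:resonance}--\eqref{Eq:resonancebis}. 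Hypothesis {\ref{Assumption:H7}} excludes threshold resonances at $\pm(m\pm\omega)$, and {\ref{Assumption:H8}} promotes every resonance outside the interval $(\omega-m,m-\omega)$ to an $L^2$-eigenfunction that is killed by $P_c(\mathcal{H}_\omega)$. A Neumann series bound at infinity together with the Fredholm--compactness step on compact intervals then delivers the desired uniform bound for $\mathcal{T}(\lambda)$; upper semicontinuity of $C_1$ in $\omega$ is inherited from the continuity in $\omega$ of the free kernel, of $V_\omega$, and of the underlying spectral data.

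The main obstacle is the uniform invertibility of $I+K(\lambda)$ throughout the essential spectrum, particularly at the four threshold points and in the region $|\lambda|>m+\omega$ where the two diagonal blocks of $\mathcal{H}_{\omega,0}$ both have continuous spectrum. As emphasized in Remark \ref{rem:resonances}, without {\ref{Assumption:H8}} the indefiniteness of the Dirac energy---through this superposition---would obstruct the Fredholm argument there; once {\ref{Assumption:H7}}--{\ref{Assumption:H8}} are assumed, however, no additional difficulty is expected and the plan above closes the proof.
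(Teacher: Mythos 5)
Your overall architecture---the free limiting-absorption bound from Lemma \ref{lem:absflat}, a factorization $V_\omega=B^*A$, the resolvent identity reducing the claim to a uniform bound on $\bigl[1+AR^{\pm}_{\mathcal{H}_{\omega,0}}(\lambda)B^*\bigr]^{-1}$, and a compactness/Fredholm argument on compact $\lambda$-intervals in which \ref{Assumption:H6}--\ref{Assumption:H8} exclude nontrivial kernels---is the same as the paper's (compare \eqref{eq:smooth11} and Lemma \ref{lem:smooth2}). The genuine gap is your high-energy step. You assert that, by the explicit kernel \eqref{eq:flat R1}, the Birman--Schwinger operator $K(\lambda)=AR^{\pm}_{\mathcal{H}_{\omega,0}}(\lambda)B^*$ has norm tending to zero as $|\lambda|\to\infty$, so that a Neumann series closes the argument at infinity. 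For Dirac this is false: since $D_m$ is first order, $R^{\pm}_{\mathcal{H}_{\omega,0}}(\lambda)=R_0^{\pm}(\lambda)\mathcal{A}(\lambda,\nabla)$ as in \eqref{eq:smooth23}--\eqref{eq:smooth24}, and the factors $\lambda$ and $\nabla$ in $\mathcal{A}(\lambda,\nabla)$ exactly offset the decay of the weighted Helmholtz resolvent; what survives are Agmon's uniform (non-decaying) bounds $\|\lambda R^{+}_{-\Delta}(\lambda^2)\|_{B(L^{2,\tau},L^{2,-\tau})}\le C$ and $\|\nabla R^{+}_{-\Delta}(\lambda^2)\|_{B(L^{2,\tau},L^{2,-\tau})}\le C$, quoted in the proof of Lemma \ref{lem:smooth11} from \cite{Agmon}. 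Hence $\|K(\lambda)\|$ is merely bounded, not small, at high energies, and the Neumann series does not converge. This is precisely why the paper imports the high-energy machinery of \cite{ErdoganGoldbergSchlag} inside Lemma \ref{lem:smooth2}: the kernel is split at scale $M_0$ as in \eqref{eq:smooth252}, the far piece is made small uniformly in $\lambda$ by taking $M_0$ large (see \eqref{eq:smooth253}), an oscillatory-integral argument shows that a high power $\ell_0$ of the near-diagonal piece is small for $\lambda\ge\mu_1$ (see \eqref{eq:smooth29}), and only then does the expansion \eqref{eq:smooth30} yield the uniform bound \eqref{eq:smooth2}. Without this step (or a substitute for it) your proof does not control large $|\lambda|$.

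A secondary inaccuracy: you say \ref{Assumption:H8} turns any resonance into an $L^2$ eigenfunction that is ``killed by $P_c(\mathcal{H}_\omega)$''. If $1+K(\lambda)$ had a nontrivial kernel at some real $\lambda$ in the essential spectrum, the inverse in your operator $\mathcal{T}(\lambda)$ would simply fail to exist, and composing with $P_c$ on the right does not repair that. The actual mechanism in the paper is that the whole argument is run on the symmetry subspace $\mathbf{X}$ (which $A$, $B^*$ and both resolvents preserve): \ref{Assumption:H8} upgrades a putative resonance to an $L^2$ eigenfunction lying in $\mathbf{X}$, and \ref{Assumption:H6} forbids eigenvalues of the restriction of $\mathcal{H}_\omega$ to $\mathbf{X}$ with $|\lambda|\ge m-\omega$, so no such resonance exists at all and the exceptional set of the analytic Fredholm argument avoids $\R\setminus(-m+\omega+\delta_0,\,m-\omega-\delta_0)$. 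The projection $P_c$ is needed only to handle the genuine discrete eigenvalues, which by \ref{Assumption:H6} lie strictly inside the gap, at distance $\delta_0$ from the region covered by Lemma \ref{lem:smooth2}.
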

\begin{proof}   Notice that by Lemma \ref{lem:absflat}  for any     $\tau >1$,
any $u _0(x) \in   L^2(\R  ^3,\mathbb{C}^8)$  and any $\varepsilon
>0$  we have

   \begin{equation} \label{eq:smooth1unp}   \|
   \langle x \rangle ^{-\tau}  R_{  \mathcal{H}_{\omega ,0}}(\lambda \pm \rmi
\varepsilon )
      u _0\| _{L_{\lambda ,x } ^2(\R ^4)}\le C
      (\tau ) \|   u _0\| _{  L^2(\R ^3) }.
   \end{equation}
  Let
  $u_0= P_c( \mathcal{H}_{\omega}) u _0$,
    $A (x) =\langle x \rangle ^{-\tau}$
  and    $
B(x)\in \mathcal{S}( \mathbb{R}^3,B(\mathbb{C}^8, \mathbb{C}^8))  $
  s.t. $B^*A=V_\omega $. Then
 \begin{equation} \label{eq:smooth11}
 A R_{\mathcal{H}_{\omega }} (z  ) u _0=
 ( 1+ AR_{\mathcal{H}_{\omega ,0}} (z  )B^*   )^{-1}
  AR_{\mathcal{H}_{\omega ,0}} (z  )u _0  .
 \end{equation}
 The following operators preserve $  \mathbf{X}$:  $A  $,
 $B^*$, $  R_{\mathcal{H}_{\omega }} (z  )  $ and
 $  R_{\mathcal{H}_{\omega 0 }} (z  )  $.
 Pick $\delta _0 >0 $   sufficiently small so that by
 {\ref{Assumption:H6}} for
any
  $\lambda _j(\omega )\in \sigma _d(\mathcal{H}_{\omega })$
   we have $|\lambda _j(\omega )|<m-\omega -\delta _0$.  Then by
\eqref{eq:smooth1unp}
 and  \eqref{eq:smooth11},  Lemma \ref{lem:smooth1}  is a consequence
 of the Lemma \ref{lem:smooth2} below. \end{proof}

\begin{lemma}  \label{lem:smooth2} Let $A (x), B(x)
$ be as above in \eqref{eq:smooth11}. Then, if we assume
{\ref{Assumption:H3}}, {\ref{Assumption:H6}} and
{\ref{Assumption:H7}}, there exists a constant $ C_2=C _2(\tau
,\omega)$ upper semicontinuous in  $\omega$ such that for any
$\varepsilon
>0$ we have

   \begin{equation} \label{eq:smooth2}
   \sup _{\lambda \in (\R \backslash
    [-m+\omega +\delta _0,m-\omega -\delta _0])}\|
    ( 1+  AR_{\mathcal{H}_{\omega ,0}}
    (\lambda \pm \rmi \varepsilon  )B^*   )^{-1}
     \| _{B(\mathbf{X}, \mathbf{X})}  \le C _{2}  .
   \end{equation}
For any   $\tau >1$   the  limit
   $\displaystyle
 R_{\mathcal{H}_{\omega }}^+(\lambda)=\lim _{\varepsilon \searrow 0}
R_{\mathcal{H}_{\omega }}
    (\lambda \pm \rmi \varepsilon  )
  $
   exist  in $B(H^{1, \tau   }_x\cap \mathbf{X}, L^{2, -\tau   }_x)$ and the
   convergence is uniform for $\lambda $ in compact sets.

\end{lemma}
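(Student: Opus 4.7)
The plan is to treat the family $T_\pm(\lambda,\varepsilon):=AR_{\mathcal{H}_{\omega,0}}(\lambda\pm\rmi\varepsilon)B^*$ by analytic Fredholm theory on $\mathbf{X}$, using \ref{Assumption:H6}--\ref{Assumption:H8} to rule out singular values of $\lambda$ in the relevant range $\mathcal{R}:=\R\setminus[-m+\omega+\delta_0,m-\omega-\delta_0]$, and then to combine local continuity with decay at $|\lambda|\to\infty$ to obtain the uniform bound \eqref{eq:smooth2}. The boundary-value statement for $R_{\mathcal{H}_\omega}^\pm(\lambda)$ will then follow from the second resolvent identity
\[
R_{\mathcal{H}_\omega}(z)=R_{\mathcal{H}_{\omega,0}}(z)-R_{\mathcal{H}_{\omega,0}}(z)B^*\bigl(1+AR_{\mathcal{H}_{\omega,0}}(z)B^*\bigr)^{-1}AR_{\mathcal{H}_{\omega,0}}(z)
\]
by sending $\varepsilon\downarrow 0$.

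First I would verify that $T_\pm(\lambda,\varepsilon)$ is compact on $\mathbf{X}$ and continuous in $(\lambda,\varepsilon)\in\R\times[0,\infty)$ in the operator-norm topology. Boundedness on $L^2$ comes from $B^*:L^2\to L^{2,\tau}$ (multiplication by an exponentially decaying matrix chosen using the exponential decay of $V_\omega$ from \ref{Assumption:H2}), the uniform bound $R_{\mathcal{H}_{\omega,0}}(z):L^{2,\tau}\to L^{2,-\tau}$ from Lemma \ref{lem:absflat}, and $A:L^{2,-\tau}\to L^2$; compactness follows from the local $H^1$-gain visible in the explicit kernel \eqref{eq:flat R1} together with Rellich. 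Since $\mathcal{H}_{\omega,0}$ and $V_\omega$ preserve $\mathbf{X}$ (Lemmas \ref{lem:SymmLin}--\ref{lem:SymmLin1}), so does $T_\pm(\lambda,\varepsilon)$, and continuity on $\R\times[0,\infty)$ is exactly Lemma \ref{lem:absflat}(ii).

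Next, for $\lambda\in\mathcal{R}$ I would show that $1+T_\pm(\lambda,0)$ is injective on $\mathbf{X}$: if $(1+T_\pm(\lambda,0))u=0$ for some $0\neq u\in\mathbf{X}$, then $v:=R_{\mathcal{H}_{\omega,0}}^\pm(\lambda)B^*u\in L^{2,-\tau}\cap\mathbf{X}$ is nonzero and satisfies $(1+R_{\mathcal{H}_{\omega,0}}^\pm(\lambda)V_\omega)v=0$, i.e.\ $v$ is a resonance or eigenfunction of $\mathcal{H}_\omega|_{\mathbf{X}}$ at $\lambda$. By the choice of $\delta_0$ in Lemma \ref{lem:smooth1} and \ref{Assumption:H6}, no eigenvalue of $\mathcal{H}_\omega|_{\mathbf{X}}$ lies in $\mathcal{R}$. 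For $|\lambda|\ge m-\omega$, \ref{Assumption:H7} rules out the threshold points $\pm(m\pm\omega)$, and \ref{Assumption:H8} upgrades any other resonance to an $L^2$ eigenvalue, contradicting \ref{Assumption:H6} again. Hence $v=0$ and so $u=0$, and the Fredholm alternative combined with compactness yields invertibility of $1+T_\pm(\lambda,0)$ on $\mathbf{X}$ for every $\lambda\in\mathcal{R}$. Continuity of $T_\pm$ from the first step then gives $\sup\{\|(1+T_\pm(\lambda,\varepsilon))^{-1}\|_{B(\mathbf{X})}:\lambda\in\mathcal{K},\varepsilon\in[0,1]\}<\infty$ on every compact $\mathcal{K}\subset\mathcal{R}$, with upper semicontinuity in $\omega$ inherited from the smooth $\omega$-dependence of $V_\omega$ and $\lambda_j(\omega)$, since invertibility is open. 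To close the estimate as $|\lambda|\to\infty$ I would invoke a Dirac analogue of Agmon's high-energy decay, $\|T_\pm(\lambda,\varepsilon)\|_{B(\mathbf{X})}\to 0$, which comes from the oscillatory factor $e^{\pm\rmi\sqrt{\Lambda^2-m^2}|x-y|}$ in \eqref{eq:flat R1} paired against the exponentially decaying weights $A$, $B^*$; for $|\lambda|$ large this yields invertibility and a Neumann-series bound. Patching the two regimes gives \eqref{eq:smooth2}, after which the limit $R_{\mathcal{H}_\omega}^\pm(\lambda)$ in $B(H^{1,\tau}_x\cap\mathbf{X},L^{2,-\tau}_x)$ is read off from the resolvent identity using Lemma \ref{lem:absflat}(ii) on the outer factors and the uniform bound just obtained on the middle factor.

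The main obstacle is the high-energy decay in the last step: on the essential spectrum the weighted free resolvent $R_{\mathcal{H}_{\omega,0}}^\pm(\lambda):L^{2,\tau}\to L^{2,-\tau}$ is only uniformly bounded (as the proof of Lemma \ref{lem:smooth11} explicitly uses) and does not decay, so a naive Neumann series at infinity fails. The decay must instead be extracted from oscillatory cancellations in the explicit Dirac kernel against the rapid decay of $A$ and $B^*$; everything else is standard Birman--Schwinger packaging of \ref{Assumption:H6}--\ref{Assumption:H8} together with the Fredholm alternative.
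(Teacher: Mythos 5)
Your low-energy argument is essentially the paper's: Birman--Schwinger packaging, limiting absorption for the free operator, and exclusion of exceptional $\lambda$'s in $\R\setminus[-m+\omega+\delta_0,m-\omega-\delta_0]$ by turning a nontrivial kernel element of $1+AR^{\pm}_{\mathcal{H}_{\omega,0}}(\lambda)B^*$ into a solution of \eqref{Eq:resonance} lying in $\mathbf{X}$ and contradicting {\ref{Assumption:H6}}--{\ref{Assumption:H8}} (the paper runs this through Lemma 7.5 of \cite{Berthier} rather than compactness plus the Fredholm alternative, but the substance is the same). The genuine gap is in the high-energy regime. You propose to close it by ``a Dirac analogue of Agmon's high-energy decay, $\|T_\pm(\lambda,\varepsilon)\|_{B(\mathbf{X})}\to 0$,'' and even your closing caveat still presumes that the single operator $AR^{\pm}_{\mathcal{H}_{\omega,0}}(\lambda)B^*$ becomes small. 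It does not: since $D_m$ is first order, $R_{D_m}(\lambda)\approx(D_m+\lambda)R_{-\Delta+m^2}(\lambda^2)$, and while the weighted Klein--Gordon resolvent decays like $|\lambda|^{-1}$ (Agmon), the factors $\lambda$ and $\nabla$ exactly cancel that gain; this is visible in \eqref{eq:flat R1}, whose kernel carries the prefactor $(\Lambda\pm m)$ and the $|x-y|^{-2}$ term. So $\|AR^{\pm}_{\mathcal{H}_{\omega,0}}(\lambda)B^*\|_{B(L^2)}$ is only $O(1)$ as $|\lambda|\to\infty$, no matter how fast $A,B$ decay, and a naive Neumann series at infinity cannot be started. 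This is the same obstruction that forces the iterated-resolvent argument in \cite{ErdoganGoldbergSchlag} for magnetic (first-order perturbation) Schr\"odinger operators.

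What the paper actually does at high energy is the following, and some such device is indispensable: split the free resolvent kernel by cutoffs at a large scale $M_0$ into a far-from-diagonal piece, whose weighted norm is $\le c_{M_0}$ uniformly in $\lambda$ with $c_{M_0}\to0$ as $M_0\to\infty$ (here the decay of $A,B^*$ is used, see \eqref{eq:smooth253}), and a near-diagonal piece which is only uniformly bounded, see \eqref{eq:smooth254}; then, using the rescaled oscillatory representation \eqref{eq:smooth26}--\eqref{eq:smooth28} as in (3.2)--(3.4) of \cite{ErdoganGoldbergSchlag}, show that a sufficiently high power $\ell_0$ of the near-diagonal Birman--Schwinger operator is small for $|\lambda|\ge\mu_1$, \eqref{eq:smooth29}. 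Smallness of a high power of the full operator \eqref{eq:smooth30} then gives convergence of the Neumann series summed in blocks of length $\ell$, hence \eqref{eq:smooth21}, and patching with the compact low-energy region yields \eqref{eq:smooth2}. In short: replace ``$\|T_\pm(\lambda)\|\to0$'' by ``some fixed power of $T_\pm(\lambda)$ is a contraction at high energy, after a near/far diagonal splitting''; without that idea the high-energy half of the lemma, and with it the uniform bound \eqref{eq:smooth2}, is not proved. Your final step, obtaining $R^{\pm}_{\mathcal{H}_\omega}(\lambda)$ in $B(H^{1,\tau}_x\cap\mathbf{X},L^{2,-\tau}_x)$ from the second resolvent identity and Lemma \ref{lem:absflat}(ii), is fine once \eqref{eq:smooth2} is in hand.
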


\begin{proof}  First of all we prove
  \eqref{eq:smooth2} in low energies.
We want to prove
 \begin{equation} \label{eq:smooth2low}
   \sup_{ \substack{ \lambda \in ([-\mu _1, \mu _1 ] \backslash
    [-m+\omega +\delta _0,m-\omega -\delta _0]\\  0<\varepsilon <1}}
     \| ( 1+  AR_{\mathcal{H}_{\omega ,0}}
     (\lambda \pm \rmi \varepsilon  )B^*   )^{-1}
       \| _{B(\mathbf{X},\mathbf{X})}   <\infty
        \text{ $\forall$ fixed $ \mu _1>0$.}
   \end{equation}
   We know:   $z\to AR_{\mathcal{H}_{\omega ,0}} (z  )B^*$ is
    a holomorphic map with domain $\C \backslash \R$ and
    values in $B(\mathbf{X}, \mathbf{X})$; for all $z\in \C \backslash \R$,
      $( 1+  AR_{\mathcal{H}_{\omega ,0}}
      (\lambda \pm \rmi \varepsilon  )B^*   )^{-1}$
      is defined . Furthermore,
   $   \lim _{\varepsilon \searrow 0}
   AR_{\mathcal{H}_{\omega ,0}} (\lambda \pm \rmi \varepsilon  )B^*$,
    by (ii) Lemma \ref{lem:absflat},
   exists in $B(\mathbf{X}, \mathbf{X})$ and the
   convergence is uniform for $\lambda $ in compact sets.
 Then we apply
   Lemma 7.5 \cite{Berthier} and conclude that, outside
     closed sets
   $\Gamma ^{\pm}\subset \R$   with 0 Lebesgue
   measure in  $\R$, the map
   $z\to ( 1+  AR_{\mathcal{H}_{\omega ,0}} (z  )B^*   )^{-1}$
   extends  in a continuous map  defined
   in $\{ z: \Im z>0  \} \cup (\R \backslash \Gamma ^+)$
   (resp. $\{ z: \Im z<0  \} \cup (\R \backslash \Gamma ^-)$)
    with values in  $B(\mathbf{X}, \mathbf{X})$.
    Given $\lambda \in \Gamma ^+$
    there exists $\psi \in \mathbf{X}\backslash \{ 0 \}$
    with $\psi =-  AR_{\mathcal{H}_{\omega ,0}}
     ^{+ }(\lambda   )B^*\psi .  $ But then, by standard arguments
     $u:=R_{\mathcal{H}_{\omega ,0}}
     ^{+ }(\lambda   )B^*\psi  \in L^{2, -\tau}(\R ^3, \C ^8)$ is a nonzero solution
     of \eqref{Eq:resonance}. By (H7)--(H8) we have
       $u\in  L^{2}(\R ^3, \C ^8)$.  Furthermore $\psi\in \mathbf{X}$    implies $u\in \mathbf{X}$.
     But by   (H6)  no such $u\in \mathbf{X}$ can exist.  So
     the intersection  of  $\Gamma ^+  $
      with $\R \backslash (-m +\omega +\delta _{0},
      m -\omega  -\delta _{0})$ is empty.  A similar argument shows that
       the intersection of  $\Gamma ^-  $
      with $\R \backslash (-m +\omega +\delta _{0},
      m -\omega  -\delta _{0})$ is empty.

Having considered
   the low energy case \eqref{eq:smooth2low}, we consider
for $\mu _1$ any fixed large real number:

 \begin{equation} \label{eq:smooth21}   \sup _{|\lambda |\ge \mu _1}\| ( 1+
AR_{\mathcal{H}_{\omega ,0}}^{\pm } (\lambda    )B^*   )^{-1}   \| _{B(L^{ 2
}_x, L^{ 2 }_x)}  \le C _{3}  .
   \end{equation}
For definiteness we will consider  $\lambda  \ge \mu _1$.
We consider the expansion
 $    \sum _{\ell =0} ^{\infty}  \left (
AR_{\mathcal{H}_{\omega , 0}}  ^{\pm }(\lambda    )B^*   \right )^{\ell}.
   $  We start now the implementation of the high energy argument
in
   \cite{ErdoganGoldbergSchlag}. We have
\begin{equation} \label{eq:smooth23}   R_{\mathcal{H}_{\omega , 0}}  ^{\pm
}(\lambda    ) = \begin{pmatrix}  R_{D_m }  ^{\pm }(\lambda +\omega    )
  &
 0\\
  0&   R_{D_m }  ^{\pm }(\lambda -\omega    )
\end{pmatrix}  =R_0 ^{\pm }(\lambda )   \mathcal{A}(\lambda , \nabla )
   \end{equation}
  \begin{equation} \label{eq:smooth24} \begin{aligned}&  R_0 ^{\pm }(\lambda
) :=\begin{pmatrix}  R_{-\Delta +m^2 }  ^{\pm }((\lambda +\omega  )^2  ) I_2
  &
 0\\
  0&   R_{-\Delta +m^2 }  ^{\pm }((\lambda -\omega )^2   )I_2
\end{pmatrix}\\&    \mathcal{A}(\lambda , \nabla ):=    \begin{pmatrix}
\mathcal{A}_1(\lambda , \nabla )
  & 0   \\
    0    &    \mathcal{A}_2(\lambda , \nabla )
\end{pmatrix}\, , \quad  \mathcal{A}_j(\lambda , \nabla ) :=\begin{pmatrix}
{\lambda - (-1)^j\omega +m}
  &
   -\rmi{\sigma \cdot \nabla }   \\
    -\rmi {\sigma \cdot \nabla }    &    {\lambda - (-1)^j\omega -m}
\end{pmatrix}      . \end{aligned}
   \end{equation}
 For definiteness let us consider
  $ R_{\mathcal{H}_{\omega ,0}}  ^{+ }$. Let now
   $\chi _{0}, \psi_0\in C^\infty _0(\R )$ by  cutoffs
 supported near 0 and let
 $\chi _1:=1-\chi _{0}  $ and  $\psi _1:=1-\psi _{0}. $
We can choose them so that
\begin{equation}
 \label{eq:smooth251} \begin{aligned} & \chi _1 \left ( {|x-y|}
 \right )   =\left (\psi _0 \left ( {|x |}
 \right ) \psi _1 \left ( {| y|}
 \right ) +  \psi _1 \left ( {| x|}
 \right ) \psi _0 \left ( {|y |}
 \right ) +  \psi _1 \left ( {| x|}
 \right ) \psi _1 \left ( {|y |}
 \right )
 \right ) \chi _1 \left ( {|x-y|}
 \right )
\end{aligned}  \end{equation}
We split for a fixed large number $M_0>0$ \begin{equation}
 \label{eq:smooth252} \begin{aligned} & R_{-\Delta +m^2 }  ^{+ }
 ((\lambda -(-1)^j\omega )^2,x,y   ) = \sum _{\ell =0}^{1} R_{\ell j}
 (\lambda , x, y)\ ,\\& R_{\ell j}
 (\lambda , x, y) :=
 \frac{e^{{\rmi}
 \sqrt{(\lambda -(-1)^j\omega )^{2}+m^2}|x-y|}}
 {4\pi |x-y|}    \chi _{\ell}\left (\frac{|x-y|}{ M_0 }
 \right )   . \end{aligned}  \end{equation}
 We have a  decomposition $R_{\mathcal{H}_{\omega ,0}}  ^{+ }= R_{\mathcal{H}_{\omega ,0}}  ^{0+ } +R_{\mathcal{H}_{\omega ,0}}  ^{1+ } $
 with kernels $R_{\mathcal{H}_{\omega ,0}}  ^{j+ } =\chi _j \left ( \frac{|\cdot |}{M_0} \right ) R_{\mathcal{H}_{\omega ,0}}  ^{ + }$.
 By
\eqref{eq:smooth251}--\eqref{eq:smooth252} and by \cite{Agmon} there exists $c _{M_0} $ with
$\lim _{M_0\to +\infty}c _{M_0} =0 $ s.t.
\begin{equation} \label{eq:smooth253}\sup _{\lambda \in \R} \| AR_{\mathcal{H}_{\omega ,0}}  ^{1+ } (\lambda )B^*
\| _{B(L^{ 2 }_x, L^{ 2 }_x)}  \le c _{M_0}
    .
\end{equation}
By $\| AR_{\mathcal{H}_{\omega ,0}}  ^{\pm } (\lambda )B^* \| _{B(L^{ 2 }_x, L^{
2 }_x)} \le C  $,   for fixed $C'$ we have
\begin{equation} \label{eq:smooth254} \| AR_{\mathcal{H}_{\omega ,0}}  ^{0\pm } (\lambda )B^*
\| _{B(L^{ 2 }_x, L^{ 2 }_x)}  \le C'
    .
\end{equation}
We have \begin{equation} \label{eq:smooth26}    R_{0 j}(\lambda
,x,y ) =   \lambda R_{-\Delta } ^{+ } \left (\sqrt{\left ( 1
-(-1)^j\frac{\omega }{\lambda} \right )^2 +\frac{m^2}{\lambda
^2}},\lambda x, \lambda y  \right  ) \chi _{0}\left
(\frac{|x-y|}{M_0 }
 \right ) .
\end{equation}
 Key to showing that \eqref{eq:smooth21} follows   directly from
\cite{ErdoganGoldbergSchlag} is the observation that we can write \begin{equation}
\label{eq:smooth27}
 R_{-\Delta } ^{+ } \left (\sqrt{\left ( 1
-(-1)^j\frac{\omega }{\lambda} \right )^2 +\frac{m^2}{\lambda ^2}},
x,  y  \right  ) \chi _{0}\left (\frac{|x-y|}{ \lambda M_0}
 \right )  =
  \frac{e^{{\rmi} |x-y|}}
  {|x-y|}a_{\lambda ,j}(|x-y|) +
  \frac{b_{\lambda ,j}(|x-y|)}{|x-y|} ,  \end{equation}
with
\begin{equation} \label{eq:smooth28}\begin{aligned} &
   \left |  a_{\lambda ,j}^{(k)}(r) \right | \le C(M_0,k) r^{-k} \quad
   \forall \, k\ge 0, \quad a_{\lambda ,j}^{(k)}(r) =0  \quad
   \forall \, 0<r<1\\&  \left |  b_{\lambda ,j}^{(k)}(r)
    \right | \le C(M_0,k)   \quad
   \forall \, k\ge 0, \quad b_{\lambda ,j}^{(k)}(r) =0  \quad
   \forall \,  r>2.
 \end{aligned}
\end{equation}
Notice that \eqref{eq:smooth27}--\eqref{eq:smooth28} are formulas
of the same type of (3.2)--(3.4) \cite{ErdoganGoldbergSchlag}. As a consequence
for
any fixed  small $\delta _0>0$ there are $\ell_0 =\ell (\delta _0)$
and $ \mu _1=\mu _1(\delta _0)$  such that for $\lambda \ge \mu _1
$ we have
\begin{equation} \label{eq:smooth29} \left \|
\left (    A\chi _{0} R_{\mathcal{H}_{\omega ,0}}
  ^{0+}(\lambda    )B^* \right ) ^{\ell _0} \right
   \| _{B(L^{ 2 }_x, L^{ 2 }_x)}  \le \delta _0.
\end{equation}
For $\ell $   large and $\delta _0\le c _{M_0}$, by \eqref{eq:smooth253}, \eqref{eq:smooth254}   and \eqref{eq:smooth29}  we get \begin{equation}
\label{eq:smooth30}\begin{aligned} & \left \| \left (    A\ R_{\mathcal{H}_{\omega ,0}}
  ^{0+}(\lambda    )
  B^*+  A R_{\mathcal{H}_{\omega ,0}}
  ^{1+}(\lambda    )B^* \right ) ^\ell \right
   \| _{B(L^{ 2 }_x, L^{ 2  }_x)}  \le 2^\ell  (2C')^\ell
   c _{M_0}^{\frac{\ell}{\ell _0}}. \end{aligned}
\end{equation}
For $c _{M_0} $ sufficiently small,   \eqref{eq:smooth30} implies
\eqref{eq:smooth21}.
\end{proof}

 We finish with the following  corollary   of Lemma   \ref{lem:smooth1}.
\begin{theorem}  \label{cor:wave} Assume the hypotheses of
Lemma \ref{lem:smooth1}. Pick the  $A, B^\ast  $ of \eqref{eq:smooth11}.
Then there  are isomorphisms
 $\mathcal{W}_{\pm}\colon \mathbf{X}
 \to\mathbf{X}_c(\mathcal{H}_{\omega }) $ and $\mathcal{Z}_{\pm}\colon
\mathbf{X}_c(\mathcal{H}_{\omega })  \to   \mathbf{X}  $,
inverses of each other, defined as follows: for $u\in   \mathbf{X}$,  $v\in  \mathbf{X}_c(\mathcal{H}_{\omega })  $,
 \begin{equation} \label{eq:Weinstein}  \begin{aligned} & \langle
 \mathcal{W} _{\pm}u,v^*\rangle =
\langle  u,v^*\rangle  \mp \lim _{\epsilon \to 0^+ } \frac 1{2\pi
\rmi } \int _{\R} \langle A R _{\mathcal{H}_{\omega ,0}}  (\lambda
\pm \rmi \epsilon  )  u,(B R _{\mathcal{H}_{\omega  }^*}  (\lambda
\pm \rmi \epsilon  )  v)^*\rangle d\lambda ;\\ & \langle
\mathcal{Z}_{\pm} v,u^*\rangle = \langle  v,u^*\rangle  \pm \lim
_{\epsilon \to 0^+ } \frac 1{2\pi \rmi } \int _{\R} \langle A R
_{\mathcal{H}_{\omega }} (\lambda \pm \rmi \epsilon  )  v,(B R
_{\mathcal{H}_{\omega ,0 }^*} (\lambda \pm \rmi \epsilon  )
u)^*\rangle d\lambda .
\end{aligned}
\end{equation}
    $\mathcal{W}_{\pm}$ (resp.$\mathcal{Z}_{\pm}$)
define isomorphisms $H^k(\R ^3, \C ^8)\cap \mathbf{X}\to P_c  (\mathcal{H}_{\omega
})  H^k(\R ^3, \C ^8)  $ (resp. and viceversa) for all $k$. We also
have
  \begin{equation} \label{eq:W2}  \begin{aligned} &
   \mathcal{W}_{\pm}u=\lim _{t\to \pm \infty}
  e^{\rmi t \mathcal{H}_{\omega  }} e^{-\rmi t \mathcal{H}_{\omega ,0 }}u
  \text{ for all $u\in   \mathbf{X}$;}\\& \mathcal{Z}_{\pm}v
  =\lim _{t\to \pm \infty}
  e^{\rmi t \mathcal{H}_{\omega , 0 }} e^{-\rmi t \mathcal{H}_{\omega   }}v
  \text{ for all $v\in  \mathbf{X}_c(\mathcal{H}_{\omega })$.}
  \end{aligned}
\end{equation}

\end{theorem}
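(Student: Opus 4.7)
\medskip

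\noindent\textbf{Proof proposal.} The plan is to follow the standard Kato--Kuroda stationary scheme for wave operators, with the pair $(\mathcal{H}_{\omega},\mathcal{H}_{\omega,0})$ and the factorization $V_{\omega}=B^{\ast}A$ with $A(x)=\langle x\rangle^{-\tau}$, $\tau>1$, that was already used in the proof of Lemma \ref{lem:smooth1}. First I would show that the formulas \eqref{eq:Weinstein} actually define bounded operators on $L^{2}\cap\mathbf{X}$. By Lemma \ref{lem:smooth1} both $AR_{\mathcal{H}_{\omega,0}}(\lambda\pm\rmi\epsilon)u$ and $BR_{\mathcal{H}_{\omega}^{\ast}}(\lambda\pm\rmi\epsilon)v$ lie in $L^{2}_{\lambda}L^{2}_{x}$ with norm bounded by $C\|u\|_{2}$ and $C\|v\|_{2}$ respectively, so Cauchy--Schwarz makes the $\lambda$-integral in \eqref{eq:Weinstein} absolutely convergent uniformly in $\epsilon$. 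The weak limit as $\epsilon\searrow 0^{+}$ then exists in $\mathcal{B}(L^{2},L^{2})$ by Lemma \ref{lem:absflat}(ii) and the absence of embedded singularities guaranteed by \eqref{eq:smooth2}. Invariance of $\mathbf{X}$ under $\mathcal{W}_{\pm}$ and $\mathcal{Z}_{\pm}$ follows because $A$, $B^{\ast}$ and the resolvents all commute with $\mathbf{A}$ and $\mathbf{B}$ of Lemma \ref{lem:SymmLin1}.

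Next I would identify \eqref{eq:Weinstein} with the time limits \eqref{eq:W2}. Cook's formula gives, for $u\in \mathcal{S}\cap\mathbf{X}$,
\begin{equation*}
e^{\rmi t\mathcal{H}_{\omega}}e^{-\rmi t\mathcal{H}_{\omega,0}}u-u=-\rmi\int_{0}^{t}e^{\rmi s\mathcal{H}_{\omega}}V_{\omega}e^{-\rmi s\mathcal{H}_{\omega,0}}u\,ds,
\end{equation*}
and the right-hand side converges as $t\to\pm\infty$ by Lemma \ref{lem:absflat} and Theorem \ref{Thm:Smoothness flat}, so $\mathcal{W}_{\pm}u$ exists. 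Writing $V_{\omega}=B^{\ast}A$, applying Plancherel in $s$ and using the resolvent representations $\int_{0}^{\pm\infty}e^{\rmi s(\mathcal{H}-\lambda\mp\rmi\epsilon)}ds=\pm\rmi(\mathcal{H}-\lambda\mp\rmi\epsilon)^{-1}$ produces exactly the stationary formula on the first line of \eqref{eq:Weinstein}; by density these identities extend to all of $\mathbf{X}\cap L^{2}$. The same argument applied to $e^{\rmi t\mathcal{H}_{\omega,0}}e^{-\rmi t\mathcal{H}_{\omega}}v$ for $v\in\mathbf{X}_{c}(\mathcal{H}_{\omega})$ identifies the second formula in \eqref{eq:Weinstein} with $\mathcal{Z}_{\pm}$ in \eqref{eq:W2}.

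Then I would prove the inversion relations $\mathcal{Z}_{\pm}\mathcal{W}_{\pm}=I_{\mathbf{X}}$ and $\mathcal{W}_{\pm}\mathcal{Z}_{\pm}=P_{c}(\mathcal{H}_{\omega})$. The time-dependent formulas immediately yield the intertwining $\mathcal{H}_{\omega}\mathcal{W}_{\pm}=\mathcal{W}_{\pm}\mathcal{H}_{\omega,0}$ and $\mathcal{Z}_{\pm}\mathcal{H}_{\omega}P_{c}=\mathcal{H}_{\omega,0}\mathcal{Z}_{\pm}$. Composing them gives $\mathcal{W}_{\pm}\mathcal{Z}_{\pm}$ as a bounded operator commuting with $\mathcal{H}_{\omega}$ and vanishing on $\mathbf{X}_{d}(\mathcal{H}_{\omega})$. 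To pin down this operator on $\mathbf{X}_{c}(\mathcal{H}_{\omega})$ I would use that, thanks to \eqref{eq:smooth1} and the resolvent identity, both $R_{\mathcal{H}_{\omega}}^{\pm}(\lambda)P_{c}$ and $R_{\mathcal{H}_{\omega,0}}^{\pm}(\lambda)$ admit the same limiting absorption principle on $\mathbf{X}$ and are related by the Lippmann--Schwinger identity $R_{\mathcal{H}_{\omega}}^{\pm}(\lambda)P_{c}=(1+R_{\mathcal{H}_{\omega,0}}^{\pm}(\lambda)V_{\omega})^{-1}R_{\mathcal{H}_{\omega,0}}^{\pm}(\lambda)$, which is well posed on $\mathbf{X}$ by \eqref{eq:smooth2}. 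Inserting this in \eqref{eq:Weinstein} and using Stone's formula produces $\mathcal{W}_{\pm}\mathcal{Z}_{\pm}=P_{c}$; the reverse identity $\mathcal{Z}_{\pm}\mathcal{W}_{\pm}=I$ is obtained analogously.

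The main obstacle will be the last claim, that $\mathcal{W}_{\pm}$ and $\mathcal{Z}_{\pm}$ are isomorphisms of $H^{k}\cap\mathbf{X}$ onto $P_{c}(\mathcal{H}_{\omega})H^{k}\cap\mathbf{X}$ for every $k$. The $L^{2}$-theory alone does not give this, and a Yajima-type $W^{k,p}$ boundedness result for the Dirac wave operators is not immediately available in this framework. My plan is to iterate the Lippmann--Schwinger identity: writing
\begin{equation*}
\mathcal{W}_{\pm}=I+\sum_{j=1}^{N}(-1)^{j}R_{\mathcal{H}_{\omega,0}}^{\mp}(\mathcal{H}_{\omega,0}\mp\rmi 0)V_{\omega}\cdots V_{\omega}R_{\mathcal{H}_{\omega,0}}^{\mp}+E_{N},
\end{equation*}
where each term but the tail is a product of resolvents sandwiching the smooth compactly-localized potential $V_{\omega}$ and so is $H^{k}$-bounded by Theorem \ref{Thm:Smoothness flat}; the tail $E_{N}$ is controlled for $N$ large by the high-energy bound \eqref{eq:smooth30} combined with elliptic regularity $\|u\|_{H^{k}}\le C(\|\mathcal{H}_{\omega,0}^{k}u\|_{2}+\|u\|_{2})$ and the intertwining property. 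Combining this with the already established $L^{2}$-invertibility gives the $H^{k}$ statement. Once this is in place, \eqref{eq:W2} on $H^{k}$ follows by density from the $L^{2}$ version together with the uniform $H^{k}$-bound on $e^{\rmi t\mathcal{H}_{\omega}}$ and $e^{\rmi t\mathcal{H}_{\omega,0}}$.
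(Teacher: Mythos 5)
Your overall strategy is the one the paper itself uses: the whole theorem is an instance of Kato's theory of smooth perturbations, with Lemma \ref{lem:smooth1} (Kato smoothness of $A$, $B$ relative to $\mathcal{H}_{\omega}P_c(\mathcal{H}_\omega)$ and to $\mathcal{H}_{\omega,0}$) as the only substantive input; the paper simply invokes Theorems 1.5 and 3.9 of \cite{Kato} for the stationary construction and for the time-dependent characterization \eqref{eq:W2}, whereas you reconstruct that machinery (stationary bilinear forms, Cook/Plancherel identification, Lippmann--Schwinger and intertwining for the inversion relations). That reconstruction is essentially sound, with one caveat you should make explicit: $\mathcal{H}_\omega$ is not selfadjoint and $e^{\rmi t\mathcal{H}_\omega}$ grows linearly on the generalized kernel $N_g(\mathcal{H}_\omega)$, so the ``convergence by Lemma \ref{lem:absflat} and Theorem \ref{Thm:Smoothness flat}'' in your Cook step is not automatic for the full group; one must either argue componentwise (the discrete components of $e^{\rmi t\mathcal{H}_\omega}e^{-\rmi t\mathcal{H}_{\omega,0}}u$ vanish in the limit by the local decay of $e^{-\rmi t\mathcal{H}_{\omega,0}}u$ against the exponentially localized dual eigenvectors, cf. Theorem \ref{Thm:wdec}) or, as the paper does, take the stationary formulas \eqref{eq:Weinstein} as the definition and obtain \eqref{eq:W2} from Kato's Theorem 3.9 rather than the other way around.

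The genuine gap is in your treatment of the $H^k$ statement. You propose a finite Born expansion of $\mathcal{W}_\pm$ with the tail $E_N$ ``controlled for $N$ large by the high-energy bound \eqref{eq:smooth30}''. That bound only concerns $|\lambda|\ge \mu_1$: for bounded $\lambda$ the Neumann series for $(1+AR^{\pm}_{\mathcal{H}_{\omega,0}}(\lambda)B^*)^{-1}$ need not converge at all (the inverse there is obtained from \eqref{eq:smooth2}, i.e. from a Fredholm/absence-of-resonance argument, not from smallness), so no choice of $N$ makes the remainder small uniformly in $\lambda$, and the step as written fails. The fix is much simpler and you already name its ingredients: since $\mathcal{H}_\omega=\mathcal{H}_{\omega,0}+V_\omega$ with $V_\omega$ smooth, bounded together with all derivatives, the graph norms of $\mathcal{H}_{\omega,0}^k$ and $\mathcal{H}_\omega^k$ are both equivalent to the $H^k$ norm; combining this with the intertwining relations $\mathcal{H}_\omega\mathcal{W}_\pm=\mathcal{W}_\pm\mathcal{H}_{\omega,0}$ and $\mathcal{Z}_\pm\mathcal{H}_\omega P_c=\mathcal{H}_{\omega,0}\mathcal{Z}_\pm$ gives
$\|\mathcal{W}_\pm u\|_{H^k}\lesssim \|\mathcal{H}_\omega^k\mathcal{W}_\pm u\|_2+\|\mathcal{W}_\pm u\|_2=\|\mathcal{W}_\pm\mathcal{H}_{\omega,0}^k u\|_2+\|\mathcal{W}_\pm u\|_2\lesssim\|u\|_{H^k}$,
and likewise for $\mathcal{Z}_\pm$, so the $L^2$ isomorphism upgrades to $H^k\cap\mathbf{X}\to P_c(\mathcal{H}_\omega)H^k$ for every integer $k$ with no Born series at all. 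With that replacement (and the invariance of $\mathbf{X}$ via Lemma \ref{lem:SymmLin1}, which you do note), your argument matches the content of the paper's citation of \cite{Kato}.
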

\begin{proof} The proof  follows by Lemma \ref{lem:smooth1} by means of the
argument for Theorem 1.5 \cite{Kato}. \eqref{eq:W2} follows by Theorem
3.9
\cite{Kato}.
\end{proof}

\def\cprime{$'$}

Laboratoire de math\'ematiques,  UFR Sciences et Technicques, Universit\'e de
Franche-Comt\'e, 16, route de Gray, 25030 Besan\c{c}on, France

{\it E-mail Address}: {\tt nabile.boussaid@univ-fcomte.fr}

Department of Mathematics and Geosciences,  University of Trieste, Via Valerio  12/1  Trieste,
34127  Italy.

{\it E-mail Address}: {\tt scuccagna@units.it}

\end{document}